\documentclass[10pt,reqno,oneside]{amsproc}
\title[Persistence of conormal analyticity]{Persistence of conormal analyticity for the Euler equations in the half-space}

\author[B.~Ingimarson]{Benjamin Ingimarson}
\address{Department of Mathematics, University of Southern California, Los Angeles, CA 90089}
\email{ingimars@usc.edu}

\author[I.~Kukavica]{Igor Kukavica}
\address{Department of Mathematics, University of Southern California, Los Angeles, CA 90089}
\email{kukavica@usc.edu}

\author[M.~Sammartino]{Marco Sammartino}
\address{Department of Mathematics, Via Archirafi, 34, Palermo Italy}
\email{marco.sammartino@unipa.it}

  \chardef\forshowkeys=0
  \chardef\refcheck=0
  \chardef\showllabel=0
  \chardef\sketches=0
  \chardef\figures=1
\usepackage{enumitem,mathtools}
\usepackage{datetime}
\usepackage{fancyhdr}
\usepackage{comment}
\allowdisplaybreaks
\ifnum\forshowkeys=1

  \usepackage[notref,notcite,color]{showkeys}
\fi
\usepackage[margin=1in]{geometry}
\usepackage{amsmath, amsthm, amssymb}
\usepackage{times}
\usepackage{graphicx}
\usepackage[usenames,dvipsnames,svgnames,table]{xcolor}
\usepackage{marginnote}
\usepackage[unicode,breaklinks=true,colorlinks=true,linkcolor=blue,urlcolor=blue,citecolor=blue]{hyperref}
\usepackage[most]{tcolorbox}
\usepackage{tikz}

\ifnum\refcheck=1
  \usepackage{refcheck}
\fi

\usepackage{import}
\usepackage{xifthen}
\usepackage{pdfpages}
\usepackage{transparent}

\begin{document}
\def\YY{X}
\def\OO{\mathcal O}
\def\SS{\mathbb S}
\def\CC{\mathbb C}
\def\RR{\mathbb R}
\def\TT{\mathbb T}
\def\ZZ{\mathbb Z}
\def\HH{\mathbb H}
\def\RSZ{\mathcal R}
\def\LL{\mathcal L}
\def\SL{\LL^1}
\def\ZL{\LL^\infty}
\def\GG{\mathcal G}
\def\tt{\langle t\rangle}
\def\erf{\mathrm{Erf}}
\def\mgt#1{\textcolor{magenta}{#1}}
\def\ff{\rho}
\def\gg{G}
\def\sqrtnu{\sqrt{\nu}}
\def\ww{w}
\def\ft#1{#1_\xi}
\def\les{\lesssim}
\def\lec{\lesssim}
\def\ges{\gtrsim}
\renewcommand*{\Re}{\ensuremath{\mathrm{{\mathbb R}e\,}}}
\renewcommand*{\Im}{\ensuremath{\mathrm{{\mathbb I}m\,}}}
\ifnum\showllabel=1
\newcommand{\llabel}[1]{%
  \marginnote{\color{lightgray}\normalfont\small (#1)}%
  \notag
}
\else
 \def\llabel#1{\notag}
\fi
\newcommand{\norm}[1]{\left\|#1\right\|}
\newcommand{\nnorm}[1]{\lVert #1\rVert}
\newcommand{\abs}[1]{\left|#1\right|}
\newcommand{\NORM}[1]{|\!|\!| #1|\!|\!|}
\newtheorem{theorem}{Theorem}[section]
\newtheorem{Theorem}{Theorem}[section]
\newtheorem{corollary}[theorem]{Corollary}
\newtheorem{Corollary}[theorem]{Corollary}
\newtheorem{proposition}[theorem]{Proposition}
\newtheorem{Proposition}[theorem]{Proposition}
\newtheorem{Lemma}[theorem]{Lemma}
\newtheorem{lemma}[theorem]{Lemma}
\theoremstyle{definition}
\newtheorem{definition}{Definition}[section]
\newtheorem{remark}[Theorem]{Remark}
\def\theequation{\thesection.\arabic{equation}}
\numberwithin{equation}{section}
\definecolor{mygray}{rgb}{.6,.6,.6}
\definecolor{myblue}{rgb}{9, 0, 1}
\definecolor{colorforkeys}{rgb}{1.0,0.0,0.0}
\newlength\mytemplen
\newsavebox\mytempbox
\makeatletter
\newcommand\mybluebox{%
    \@ifnextchar[
       {\@mybluebox}%
       {\@mybluebox[0pt]}}
\def\@mybluebox[#1]{%
    \@ifnextchar[
       {\@@mybluebox[#1]}%
       {\@@mybluebox[#1][0pt]}}
\def\@@mybluebox[#1][#2]#3{
    \sbox\mytempbox{#3}%
    \mytemplen\ht\mytempbox
    \advance\mytemplen #1\relax
    \ht\mytempbox\mytemplen
    \mytemplen\dp\mytempbox
    \advance\mytemplen #2\relax
    \dp\mytempbox\mytemplen
    \colorbox{myblue}{\hspace{1em}\usebox{\mytempbox}\hspace{1em}}}
\makeatother
\def\bnew{\colr }
\def\enew{\colb }
\def\bold{\colu }
\def\eold{\colb }
\def\rr{r}
\def\weaks{\text{\,\,\,\,\,\,weakly-* in }}
\def\inn{\text{\,\,\,\,\,\,in }}
\def\cof{\mathop{\rm cof\,}\nolimits}
\def\Dn{\frac{\partial}{\partial N}}
\def\Dnn#1{\frac{\partial #1}{\partial N}}
\def\tdb{\tilde{b}}
\def\tda{b}
\def\qqq{u}
\def\lat{\Delta_2}
\def\biglinem{\vskip0.5truecm\par==========================\par\vskip0.5truecm}
\def\inon#1{\hbox{\ \ \ \ \ \ \ }\hbox{#1}}                
\def\onon#1{\inon{on~$#1$}}
\def\inin#1{\inon{in~$#1$}}
\def\FF{F}
\def\andand{\text{\indeq and\indeq}}
\def\ww{w(y)}
\def\ll{{\color{red}\ell}}
\def\ee{\mathrm{e}}
\def\startnewsection#1#2{ \section{#1}\label{#2}\setcounter{equation}{0}}   
\def\nnewpage{ }
\def\sgn{\mathop{\rm sgn\,}\nolimits}    
\def\Tr{\mathop{\rm Tr}\nolimits}    
\def\div{\mathop{\rm div}\nolimits}
\def\curl{\mathop{\rm curl}\nolimits}
\def\dist{\mathop{\rm dist}\nolimits}
\def\id{\mathop{\rm id}\nolimits}
\def\supp{\mathop{\rm supp}\nolimits}
\def\indeq{\quad{}}           
\def\period{.}                       
\def\semicolon{\,;}                  
\def\nts#1{{\cor #1\cob}}
\def\cmi#1{\text{~{{\coli IK: \underline{#1}}}~}}
\def\coli{\color{colorigor}}
\definecolor{colorigor}{rgb}{1, 0.2, 0.8}
\def\colr{\color{red}}
\def\colrr{\color{black}}
\def\colb{\color{black}}
\def\coly{\color{lightgray}}
\definecolor{colorgggg}{rgb}{0.1,0.5,0.3}
\definecolor{colorllll}{rgb}{0.0,0.7,0.0}
\definecolor{colorhhhh}{rgb}{0.3,0.75,0.4}
\definecolor{colorpppp}{rgb}{0.7,0.0,0.2}
\definecolor{coloroooo}{rgb}{0.45,0.0,0.0}
\definecolor{colorqqqq}{rgb}{0.1,0.7,0}
\def\colg{\color{colorgggg}}
\def\collg{\color{colorllll}}
\def\cole{\color{coloroooo}}
\def\coleo{\color{colorpppp}}
\def\cole{\color{black}}
\def\colu{\color{blue}}
\def\colc{\color{colorhhhh}}
\def\colW{\colb}   
\definecolor{coloraaaa}{rgb}{0.6,0.6,0.6}
\def\colw{\color{coloraaaa}}
\def\comma{ {\rm ,\qquad{}} }            
\def\commaone{ {\rm ,\quad{}} }          
\def\les{\lesssim}
\def\nts#1{{\color{blue}\hbox{\bf ~#1~}}} 
\def\ntsf#1{\footnote{\color{colorgggg}\hbox{#1}}} 
\def\blackdot{{\color{red}{\hskip-.0truecm\rule[-1mm]{4mm}{4mm}\hskip.2truecm}}\hskip-.3truecm}
\def\bluedot{{\color{blue}{\hskip-.0truecm\rule[-1mm]{4mm}{4mm}\hskip.2truecm}}\hskip-.3truecm}
\def\purpledot{{\color{colorpppp}{\hskip-.0truecm\rule[-1mm]{4mm}{4mm}\hskip.2truecm}}\hskip-.3truecm}
\def\greendot{{\color{colorgggg}{\hskip-.0truecm\rule[-1mm]{4mm}{4mm}\hskip.2truecm}}\hskip-.3truecm}
\def\cyandot{{\color{cyan}{\hskip-.0truecm\rule[-1mm]{4mm}{4mm}\hskip.2truecm}}\hskip-.3truecm}
\def\reddot{{\color{red}{\hskip-.0truecm\rule[-1mm]{4mm}{4mm}\hskip.2truecm}}\hskip-.3truecm}
\def\tdot{{\color{green}{\hskip-.0truecm\rule[-.5mm]{3mm}{3mm}\hskip.2truecm}}\hskip-.1truecm}
\def\gdot{\greendot}
\def\bdot{\bluedot}
\def\ydot{\cyandot}
\def\rdot{\cyandot}
\def\fractext#1#2{{#1}/{#2}}
\def\ii{\hat\imath}
\def\fei#1{\textcolor{blue}{#1}}
\def\vlad#1{\textcolor{cyan}{#1}}
\def\igor#1{\text{{\textcolor{colorqqqq}{#1}}}}
\def\igorf#1{\footnote{\text{{\textcolor{colorqqqq}{#1}}}}}
\def\Omf{\Omega_{\text f}}
\def\Ome{\Omega_{\text e}}
\def\Omb{\Omega_{\text b}}
\def\Gaf{\Gamma_{\text f}}
\def\Gae{\Gamma_{\text e}}
\def\Gab{\Gamma_{\text b}}
\def\Gac{\Gamma_{\text c}}
\def\Nf{N^{\text f}}
\def\Ne{N^{\text e}}

\newcommand{\p}{\partial}
\renewcommand{\d}{\mathrm{d}}
\newcommand{\UE}{U^{\rm E}}
\newcommand{\PE}{P^{\rm E}}
\newcommand{\KP}{K_{\rm P}}
\newcommand{\uNS}{u^{\rm NS}}
\newcommand{\vNS}{v^{\rm NS}}
\newcommand{\pNS}{p^{\rm NS}}
\newcommand{\omegaNS}{\omega^{\rm NS}}
\newcommand{\uE}{u^{\rm E}}
\newcommand{\vE}{v^{\rm E}}
\newcommand{\pE}{p^{\rm E}}
\newcommand{\omegaE}{\omega^{\rm E}}
\newcommand{\ua}{u_{\rm   a}}
\newcommand{\va}{v_{\rm   a}}
\newcommand{\omegaa}{\omega_{\rm   a}}
\newcommand{\ue}{u_{\rm   e}}
\newcommand{\ve}{v_{\rm   e}}
\newcommand{\omegae}{\omega_{\rm e}}
\newcommand{\omegaeic}{\omega_{{\rm e}0}}
\newcommand{\ueic}{u_{{\rm   e}0}}
\newcommand{\veic}{v_{{\rm   e}0}}
\newcommand{\up}{u^{\rm P}}
\newcommand{\vp}{v^{\rm P}}
\newcommand{\tup}{{\tilde u}^{\rm P}}
\newcommand{\bvp}{{\bar v}^{\rm P}}
\newcommand{\omegap}{\omega^{\rm P}}
\newcommand{\tomegap}{\tilde \omega^{\rm P}}
\newcommand{\eps}{\varepsilon}  
\newcommand{\Hco}{\mathcal{H}_{\rm co}} 
\newcommand{\co}{{\rm co}}
\newcommand{\nor}{{\rm nor}}
\newcommand{\calC}{\mathcal{C}}
\newcommand{\calP}{\mathcal{P}}
\newcommand{\Eco}{\mathcal{E}_{\rm co}}

\newcommand{\eqnb}{\begin{equation}}
\newcommand{\eqne}{\end{equation}}
  
\renewcommand{\up}{u^{\rm P}}
\renewcommand{\vp}{v^{\rm P}}
\renewcommand{\omegap}{\Omega^{\rm P}}
\renewcommand{\tomegap}{\omega^{\rm P}}



\begin{abstract}
We prove the persistence of conormal analyticity through the $H^r$ norm, with $r \geq 4$, for the 3D incompressible Euler equations in the half-space $\Omega = \mathbb{R}_+^3$.
We also establish a lower bound on the radius of conormal analyticity which depends on $\|u\|_{H^r}$ and initial data.
In particular, persistence is established in both of the spaces $X_\tau^\co$ and $A_\tau^\co$, the former being based on the conormal operator $Z_k = \phi^k \partial_3^k$ and the latter being based on $L^k = (\phi \partial_3)^k$.
Finally, we show that the norms $\|\cdot \|_{X_\tau^\co}$ and $\|\cdot \|_{A_\tau^\co}$ are equivalent up to a small loss in radius, the proof of which uses explicit identities relating the two conormal operators.
\end{abstract}

\maketitle
\setcounter{tocdepth}{2} 
\section{Introduction}\label{sec01}
Consider the 3D incompressible Euler equations, 
  \begin{align}
   \begin{split}
    \partial_t u
    + u \cdot \nabla u 
    + \nabla p 
    &= 0,
    \\
    \div u &= 0
   \end{split}
   \label{EQ00}
  \end{align}
in the half-space $\Omega = \mathbb{R}_+^3$ subject to $u\cdot n = 0$, where $n = (0,0,-1)$.
In this paper, we show that a solution $u(t)$ of~\eqref{EQ00} with initial data in $X_{\tau_0}^{\co}\cap H^r$ continues in the space $X_{\tau(t)}^\co$ for as long as it continues in~$H^r$.
We also give lower bounds on the radius of conormal analyticity~$\tau(t)$.
Moreover, the same theorem is established for an alternative conormal analytic space $A_\tau^\co$, and we show that the norms for each space are equivalent up to a small loss in radius.  

Well-posedness for the 3D incompressible Euler equations is well known for $H^r$ with $r > 5/2$, see~\cite{EM,Ka,T}.
In the case of a bounded domain, the study of real analytic persistence began with Bardos and Benachour~\cite{BB}, who used a complex extension and an implicit argument.
Later works, such as~\cite{Ba,Be,LO}, found explicit estimates on the radius of analyticity in various settings. 
In the half-space, these results were improved by the second author and Vicol~\cite{KV2}, who established decay rates depending algebraically on $\exp \int_0^t \|\nabla u\|_{L^\infty}$; see also~\cite{KV1} for the periodic and whole-space settings.
The case when $\Omega$ is a general bounded domain with analytic or Gevrey-class regularity was handled similarly in~\cite{KV3}.
For other results on analyticity for the Euler equations, Navier-Stokes equations, nonlinear heat equation, primitive equations, divergence equation, and more,
see~\cite{CKV,FT,G1,G2,GK1,GK2,Ku1,Ku2,KLS,KM,KOX,KX,SC1}.


That the boundary acts as a catalyst for the generation of small scales, and
possibly of singularities, has recently been put in evidence in several settings.
In two dimensions, where solutions are global, Kiselev and \v{S}ver\'ak~\cite{KS}
constructed smooth solutions in the disk whose vorticity gradient grows at a
double-exponential rate, saturating the upper bound provided by the Yudovich
theory; the growth takes place at a hyperbolic stagnation point located on the
boundary, and away from boundaries only strictly slower growth is known. In three
dimensions, the numerical scenario of Luo and Hou~\cite{LH} for axisymmetric flows
develops its singularity on a ring at the boundary of the cylinder, and its
rigorous counterpart by Chen and Hou~\cite{CH} establishes finite-time blow-up
from smooth data precisely in this geometry; indeed, to the best of our knowledge,
the only blow-up proof currently available for smooth, unforced data is set in a
domain with boundary, the boundary-less construction of Elgindi~\cite{E} requiring
$C^{1,\alpha}$ velocities. In this light, Theorems~\ref{T01} and~\ref{T02}
quantify how fast the boundary can generate scales in the inviscid flow: by the
lower bound~\eqref{EQ15}, the smallest active scale shrinks at most exponentially
in the time-accumulated $H^r$ norm of the solution.

An important technique in the local-existence theory for analytic spaces is the creation of the analytic dissipative term.
This term appears from allowing the radius of analyticity $\tau(t)$ to depend on time and decrease sufficiently rapidly as time increases.  
Higher-order terms that are otherwise dangerous for non-analytic energy estimates may then be absorbed into the analytic dissipation.
This mechanism also has broader utility:
it shows that analyticity is an appropriate functional setting for inviscid problems which exhibit one derivative loss in their energy estimates. 
For a clear demonstration of this fact in application to the inflow-outflow problem, see~\cite{KOS}.
Analytic persistence, however, is when existence in analytic spaces can be continued for as long as the solution stays in a certain subcritical norm. 
In general, persistence depends on conservation properties of the nonlinear term. 
This can be observed in the proofs of Theorems~\ref{T01}--\ref{T02} below, where the top-order term coming from the advection is canceled do to incompressibility and impermeability, and a careful analysis ensues to ensure an additional factor of the analyticity radius $\tau$ is present in all quadratic terms $\|u\|_{X_\tau^\co} \|u\|_{Y_\tau^\co}$. 
In this way, local analytic persistence is substantially more difficult to obtain than local analytic existence.
We also note that persistence has unexpected fundamental applications, including a new method of constructing local-in-time $H^r$ solutions to the Euler equations, see~\cite{KO}. 


Conormal spaces are often used when studying the inviscid limit of the
Navier-Stokes equations.
Indeed, for the half-space and in a small vicinity of the boundary $\{x_3 = 0\}$,
a solution $u^\nu$ is expected to behave like $u^\nu \sim U(t,x_1,x_2,
x_3/\sqrt{\nu})$, where $\nu$ is the viscosity parameter and $U$ is some profile.
As $\nu \to 0$, the normal derivative of $u^\nu$ becomes singular.
Hence, a natural space for this problem replaces the normal derivative
$\partial_3$ with $Z = \phi(x_3)\partial_3$, where $\phi(x_3) \sim x_3$ near the
boundary: Flat normal derivatives of the profile are of size $\nu^{-1/2}$ near
the wall, while the conormal derivatives $Z^k u^\nu$ remain bounded uniformly in
$\nu$, since $x_3 \partial_3$ is invariant under the rescaling $x_3 \mapsto
x_3/\sqrt{\nu}$.
For a treatment of Sobolev conormal spaces for the Euler equations,
see~\cite{AK,MR}.
At the same time, in the absence of special structure, the available
justifications of the inviscid limit require analyticity
in the tangential variables near the boundary in order to control the
instabilities of the boundary layer~\cite{SC1,SC2,FTZ,NN,GVMM,DGV,KVW}.
The spaces $X_\tau^\co$ and $A_\tau^\co$ combine precisely these two
features---a tangential radius uniform up to the wall, and normal regularity
measured through $Z$---and Theorems~\ref{T01} and~\ref{T02} show that the Euler
flow propagates this structure, with the explicit radius~\eqref{EQ15}.
They may thus be regarded as the inviscid anchor of a uniform-in-$\nu$ theory in
conormal analytic classes, an analytic counterpart of the conormal Sobolev
framework of~\cite{MR}, in which the limit $u^\nu \to u$ would be pursued
without ever requiring normal analyticity---a property that no-slip
approximations do not enjoy uniformly in~$\nu$.
A final point concerning the zero-viscosity limit is that the analysis of the
Navier--Stokes equations requires handling the heat semigroup of the
half-space in the complex-analytic setting. Since $\mathrm{Re}\,(y-y')^{2}>0$
precisely when $|\mathrm{Im}\,(y-y')|<|\mathrm{Re}\,(y-y')|$, the Gaussian
kernel decays along shifted contours only in domains with a \emph{causal
	structure}, in which the real part dominates the imaginary
part~\cite{SC2,ACS}; near the boundary these are exactly the conoidal domains
of holomorphy whose real-variable counterpart is given by the conormal spaces
adopted in this paper.

The conormal analytic classes have a transparent geometric meaning. The finiteness
of~\eqref{EQ13} (or of~\eqref{EQ202}) yields the pointwise bounds
$|\partial^{\alpha'}\partial_3^{k} u(x)| \les |\alpha'|!\,k!\,
(c\tau)^{-|\alpha'|}(c\tau\phi(x_3))^{-k}$: the tangential radius of analyticity is
uniform up to the wall, while the normal radius is comparable to $\tau\phi(x_3)$ and
vanishes linearly at the boundary, so that the natural domain of holomorphy is a
conoid rather than a strip, as in~\cite{SC1,SC2}. The class is accordingly strictly larger than the
uniform one of~\cite{KV2,KV3}: it contains velocities which are tangentially
analytic but only finitely regular in the normal direction, for instance behaving
like $x_3^{7/2}$ near the wall.

In this paper, we consider the incompressible Euler equations in the half-space using the weighted conormal analytic norms $X_\tau^\co$ and~$A_\tau^\co$.
Compared to previous work on analyticity for the Euler equations in the presence of a boundary, 
the use of a conormal norm introduces several additional complications which we describe below.

The paper is summarized as follows. 
In Section~\ref{sec02}, we start by defining the analytic norms $X_\tau^\co$ and $Y_\tau^\co$, which are based on conormal Sobolev spaces $\Hco^{m,3}$ and $\Eco^{m,3}$, see~\eqref{EQ09}--\eqref{EQ13} and~\eqref{EQ200}--\eqref{EQ202} for definitions.
We then state Theorems~\ref{T01},~\ref{T02}, and~\ref{T03} which are proved in Sections~\ref{sec03},~\ref{sec04}, and~\ref{sec05}, respectively.
Note that in previous literature, such as~\cite{KV1,KV2}, the analytic norms are based on sums of $L^2$ norms $\|\partial^\alpha u\|_{L^2}$, 
whereas, in our approach,
we base them on the $H^3$ norm~$\|J^\alpha u\|_{H^3}$.
Indeed, the $H^3$ norm is essential due to the lack of embeddings for Sobolev conormal spaces.  

In Section~\ref{sec03} we prove Theorem~\ref{T01}, which relies on delicate a~priori estimates of infinite sums coming from the advection and pressure terms in~\eqref{EQ00}.
These estimates are listed in Lemmas~\ref{L01}--\ref{L03} and are proven in Sections~\ref{sec03a} and~\ref{sec03b}.
With these estimates, the proof of Theorem~\ref{T01} follows by choosing a function $\tau$ which decreases sufficiently rapidly so as to admit analytic dissipation, see~\eqref{EQ24}--\eqref{EQ25}.
For the construction of solutions, we must first obtain a local-existence result with weaker radius $\sigma(t)$ in order to get existence in arbitrarily high-order conormal Sobolev spaces. 
We then use the persistence estimates to upgrade the radius to $\tau(t)$ and iterate the local-existence argument up to the $H^r$ maximal time of existence~$T_\ast$.
Interestingly, the proof of this theorem and its associated lemmas do not require $\phi$ to be analytic. 
Though we do take $\phi(z) = z (1+z)^{-1}$, which is uniformly analytic on $\mathbb{R}^+$, it is likely that we could replace it with a non-analytic function which behaves like $\phi(x_3) \sim x_3$ for $x_3$ sufficiently small, see~\cite{FTZ}.

A key element of our estimates for Theorem~\ref{T01} is conormalization, which is the primary function of Lemmas~\ref{L06} and~\ref{L09} preceding the proofs of Lemmas~\ref{L01} and~\ref{L03}, respectively.
For instance, 
denoting $\bar{\partial}$ as any one of $\partial_1$ or $\partial_2$, 
$\beta,\gamma \in \mathbb{N}_0^3$ as multi-indices with $\gamma \leq \beta$,
and $J$ as defined in~\eqref{EQ09}--\eqref{EQ10},
the product $J^\gamma \bar{\partial}u_3 J^{\beta - \gamma} \partial_3 u_1$, 
which appears in the pressure estimates, is not immediately controlled by conormal norms due to the normal derivative. 
A simple remedy is to group the product as
  \begin{equation}
   J^\gamma \bar{\partial}u_3 J^{\beta - \gamma}\partial_3 u_1
   =
   (\phi^{-1} J^\gamma \bar{\partial}u_3) 
   (\phi J^{\beta - \gamma} \partial_3 u_1)
   =
   (\phi^{-1} J^{\gamma} \bar{\partial} u_3)
   (J^{\beta - \gamma +e_3}u_1)
   ,
   \label{introa}
  \end{equation}
from which the second term in the product is now ``conormalized'', and an elementary product bound may be used to control the entire term. 
In turn, this may produce a loss of one derivative relative to the original product when considering $\phi^{-1} J^\gamma \bar{\partial}  u_3$.
When $\gamma_3 \neq 0$, this causes no difficulty because, by incompressibility,
  \begin{equation}
   \phi^{-1} J^\gamma \bar{\partial} u_3 
   =
   \partial^{\gamma'} \phi^{\gamma_3-1} \partial_3^{\gamma_3-1} \bar{\partial} \partial_3u_3
   =
   J^{\gamma - e_3} \bar{\partial}(-\partial_1u_1 -\partial_2 u_2)
   ,
   \llabel{introb}
  \end{equation}
where $\gamma' = (\gamma_1,\gamma_2)$.
Indeed, after substitution into~\eqref{introa}, we obtain a conormalized product at an energy level analogous to the original product, $J^\gamma \bar{\partial} u_3 J^{\beta-\gamma} \partial_3 u_1$.
The resulting term can then be handled using standard product estimates and Sobolev embeddings. 
However, when $\gamma_3 = 0$, we must apply Hardy's inequality and incompressibility to bound, formally,
  \begin{equation}
   \|\phi^{-1}J^\gamma \bar{\partial} u_3\|
   \les
   \|J^\gamma \bar{\partial}^2 u\|
   ,
   \llabel{introc}
  \end{equation}
thus exhibiting a loss of one derivative when compared to the original product.
A similar situation occurs in the advection estimates.
To overcome this, we perform a delicate case analysis on the powers $J^\alpha$ in Lemmas~\ref{L06} and~\ref{L09} which exploits the ``room'' for embeddings allowed by the $H^3$ norm in the definition of $\Hco^{m,3}$ and certain restrictions on the powers of~$J$. 
Together, these features allow the estimates to close. 
Section~\ref{sec03a} then proceeds with the proof of Lemmas~\ref{L01} and~\ref{L02}.

Next, Section~\ref{sec03b} contains higher-regularity elliptic estimates, see Lemma~\ref{L07}, which antecede the main pressure estimate in Lemma~\ref{L03}. 
Note that previous literature suggests the inclusion of a small weight $\eps^{\alpha_3}$ in $\|\cdot\|_{\Hco^{m,3}}$ to handle the transfer of derivatives arising in such elliptic estimates.
However, we define $\Hco^{m,3}$, see~\eqref{EQ11}, in such a way that is compatible with the structure of the problem, see Lemmas~\ref{L07} and~\ref{L08} and the definition of $M_\alpha$, removing the need for this small weight.
The section then concludes with the proof of Lemma~\ref{L03}. 

In Section~\ref{sec04}, we state Lemmas~\ref{L10}--\ref{L12} and prove Theorem~\ref{T02}, establishing persistence in the space $A_\tau^\co$ through the $H^r$ norm, for $r \geq 4$. 
In Section~\ref{sec04a}, we prove Lemmas~\ref{L10} and~\ref{L11} which control terms arising from the advection.
The advection estimates for $A_\tau^\co$ share a similar structure to that of $X_\tau^\co$, however, the use of $L^k =(\phi \partial_3)^k$ in $A_\tau^\co$ makes a commutator term like $[L^{\alpha_3 - \beta_3},\partial_3]u$ unavoidable. 
This commutator necessitates an estimate on arbitrarily high orders of conormal derivatives~$L^j \phi$.
In Lemmas~\ref{L13} and~\ref{L14}, we control this term using the uniform analyticity of $\phi(z) = z(1+z)^{-1}$ and a change of variable through the Lambert-$W$ function.
This indicates that $A_\tau^\co$-persistence is dubious with a non-analytic conormal weight~$\phi$. 
In Section~\ref{sec04b}, we establish the pressure estimate in Lemma~\ref{L12}.
The technique used in Section~\ref{sec03b} to control the pressure for $X_\tau^\co$-persistence is not applicable here due to the structure of $L^k = (\phi \partial_3)^k$.
Instead, we employ a technique which transfers conormal derivatives to tangential derivatives and uses a small weight $\eps^{\alpha_3}$ in our definition of $\Eco^{m,3}$, see~\eqref{EQ201}, to absorb the highest-order terms appearing in the estimate.
The proof of Lemma~\ref{L12} then follows from techniques in previous sections. 

In Section~\ref{sec05}, we prove Theorem~\ref{T03} which states that, with $\eps = 1$ in the definition of $A_\tau^\co$, 
the norms $\|\cdot \|_{X_\tau^\co}$ and $\|\cdot \|_{A_\tau^\co}$ are equivalent up to a small loss in radius.
For the proof, it is necessary to derive estimates and exact identities relating the two operators $(\phi \partial_x)^n$ and~$\phi^n \partial_x^n$. 
This analysis is condensed into Propositions~\ref{P01} and~\ref{P02} which we state and prove in the Appendix.
When seeking such identities, 
they should be analogous to
  \begin{equation}
   (x\partial_x)^n
   =
   \sum_{k=1}^n 
     {n \brace k}
     x^k \partial_x^k
   \quad\quad
   \text{and}
   \quad\quad
   x^n \partial_x^n
   =
   \sum_{k=1}^n
     (-1)^{n-k}
     {n \brack k}
     (x\partial_x)^k
  ,
   \llabel{EQ400}
  \end{equation}
where ${n\brace k}$ and ${n \brack k}$ denote the Stirling numbers of the second and first kind, respectively.
These formulas were obtained by H.~Scherk in his 1823 PhD thesis\footnote{Since his thesis is in Latin, the reader may wish to read an abridged English translation in~\cite[Appendix~A]{BF}.}~\cite{S}.
Upon expanding $(x\partial_x)^n$ for small powers, Scherk found that the expansion of $(x\partial_x)^n$ is of the form~$\sum_{k=1}^n a_k^n x^k \partial_x^k$.
A recurrence relation,
  \begin{equation}
   a_k^n
   =
   a_{k-1}^{n-1}
   + k a_k^{n-1}
   ,
   \label{EQ400a}
  \end{equation}
is then promptly derived by considering $(x\partial_x)^n = (x\partial_x)(x\partial_x)^{n-1}$ and equating the coefficients of~$x^k\partial_x^k$.
From the recurrence~\eqref{EQ400a}, Scherk establishes an explicit formula for $a_k^n$ which, in the modern day, is identified with Stirling numbers of the second kind.
In our case, a similar analysis is necessary in expanding~$(\phi\partial_x)^n$.
However, a recurrence relation such as~\eqref{EQ400a} is not immediately at hand because an expansion of $(\phi \partial_x)^n$ involves coefficients which are polynomials of successively high derivatives of~$\phi$. 
Instead, our approach employs a much more abstract formula by Briand, Lopes, and Rosas~\cite{BLR} which computes the normally ordered form of $(h \partial)^n$, where $\partial$ is a derivation of a noncommutative ring $A$, and $h$ is a central element of~$A$; see also~\cite{M} for related results.
This formula, 
recounted in Lemma~\ref{L17} for $\partial = \partial_x$ and $h \colon \mathbb{R}_+\to \mathbb{R}$ a smooth function,
contains universal constant coefficients $c_\lambda^n$ whose explicit values are unknown aside from a wide array of implicit combinatorial definitions, see Remark~\ref{R06} and~\cite[Section~4.1]{BLR}.
However, the proof of Theorem~\ref{T03} compares analytic norms and, hence, requires precise estimates on the coefficients arising in the expansion
  \begin{equation}
   (\phi\partial_x)^n
   =
   \sum_{k=1}^n d_k^n(x) \phi^k \partial_x^k
   .
   \llabel{EQ400b}
  \end{equation}
Indeed, the desired estimates are established in Proposition~\ref{P01}, which we summarize below. 
Due to the special structure of $\phi$, the non-constant terms in $d_k^n(x)$ are quite tractable, see~\eqref{EQ401a} and~\eqref{EQ413}.
This leaves us with the task of explicitly bounding the constant coefficients 
  \begin{equation}
   S_{n,k} 
   =
   \sum_{\lambda \vdash n-k}
     c_\lambda^n
     \left( \prod_{i=1}^{\ell(\lambda)} \lambda_i!\right)
  ,
  \llabel{EQ400c}
  \end{equation}
where $\lambda \vdash n-k$ means that $\lambda = (\lambda_1, \dots, \lambda_\ell)$ is a partition of~$n-k$.
In fact, we derive a closed-form identity for $S_{n,k}$, which is given in Lemma~\ref{L18}.
The approach is similar to that of Scherk in that we use a recurrence relation for the normally ordered form $(h\partial)^n$, see~\cite{BLR} or~\eqref{EQ404a} below, to derive the recurrence
  \begin{equation}
   S_{n+1,k} 
   = 
   S_{n,k-1} + (2n-k) S_{n,k}
   .
   \llabel{EQ400e}
  \end{equation}
With Lemma~\ref{L18}, we then give an explicit bound on $\|d_k^n\|_{W^{3,\infty}}$, which is amenable to the analytic estimates in Theorem~\ref{T03}. 
Finally, in Proposition~\ref{P02}, we use a change of variables to derive 
  \begin{equation}
   \phi^n \partial_x^n
   =
   \sum_{k=1}^n a_k^n(x) (\phi \partial_x)^k
   ,
   \llabel{EQ400d}
  \end{equation}
along with explicit estimates on the coefficients~$a_k^n(x)$.
The change of variables is defined so that $\partial_x^k = (\psi(y) \partial_y)^k$, 
after which we are free to apply Proposition~\ref{P01} to complete the proof. 

\section{Preliminaries and main results}\label{sec02}
For a multi-index $\alpha = (\alpha_1, \alpha_2, \alpha_3) \in \mathbb{N}_0^3$, denote $\alpha' = (\alpha_1, \alpha_2)$.
We define the conormal operators
  \begin{equation}
   Z_k
   =
   \phi^k\partial_3^k
   \comma 
   k\in \mathbb{N}
   \label{EQ09}
  \end{equation}
and
  \begin{equation}
   J^\alpha
   =
   \partial^{\alpha'} Z_{\alpha_3}
   ,
   \label{EQ10}
  \end{equation}
where $\phi(z) = z(1+z)^{-1}$.
We also define the weighted conormal seminorm 
  \begin{equation}
   |u|_{\Hco^{m,3}}
   =
   \sum_{|\alpha| = m} M_\alpha \|J^\alpha u\|_{H^3}
   \label{EQ11}
  \end{equation}
with 
  \begin{equation}
   M_\alpha 
   = 
   {\alpha_1 + \alpha_2 \choose \alpha_1}
   .
   \llabel{EQ12}
  \end{equation}
For $\tau > 0$, we define the conormal analytic space $X_\tau^\co = \{v \in C^{\infty}(\Omega) \colon \|v\|_{X_\tau^\co} < \infty\}$, 
where
  \begin{equation}
   \|v\|_{X_\tau^\co} 
   = 
   \sum_{m =1}^\infty 
     |u|_{\Hco^{m,3}} \frac{\tau^{m-1}}{(m-1)!}
   .
   \label{EQ13}
  \end{equation}
We also define $Y_\tau^\co = \{v \in C^\infty(\Omega) \colon \|v\|_{Y_\tau^\co} < \infty\}$, 
with
  \begin{equation}
   \|v\|_{Y_\tau^\co}
   =
   \sum_{m = 2}^\infty 
     |u|_{\Hco^{m,3}} \frac{(m-1)\tau^{m-2}}{(m-1)!}
   .
   \label{EQ14}
  \end{equation}
Finally,
for $\tau(t)$ continuous,
we say that $u \in C([0,T_\ast); X_{\tau(t)}^\co)$ if, for all $t_0 < T_\ast$ and $\sigma < \tau(t_0)$,  
  \begin{equation}
   \lim_{t \to t_0} \|u(t) - u(t_0)\|_{X_\sigma^\co} 
   = 0
   .
   \llabel{EQ15a}
  \end{equation}

The first theorem gives persistence in $X_\tau^\co$ through the $H^r$ norm. 
The proof is in Section~\ref{sec03}.

\begin{Theorem}
\label{T01}
Let $u_0 \in H^r \cap X_{\tau_0}^\co$, where $r \geq 4$ and $ \tau_0 > 0$, be divergence free. 
Let $u \in  C([0,T_\ast); H^r)$ be the unique classical solution where $T_\ast$ is the maximal time of existence.
Then there exists a strictly decreasing positive smooth function $\tau(t)$ such that $\tau(0) = \tau_0$ and $u(t) \in X_{\tau(t)}^{\co}$ for all $t < T_\ast$. 
Moreover, $u \in C([0,T_\ast);X_{\tau(t)}^\co)$, and the radius $\tau(t)$ satisfies
  \begin{equation}
   \tau(t) 
   \geq 
   \tau_0  
   \exp \Bigg(
           -C \int_0^t \left( \|u\|_{H^r}^2 + \|u_0\|_{H^r} + \int_0^s \|u\|_{H^r}^2 \, d\sigma\right)
   ds \Bigg)
   .
   \label{EQ15}
  \end{equation}
\end{Theorem}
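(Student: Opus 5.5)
The plan is to derive an a~priori differential inequality for $\|u(t)\|_{X_{\tau(t)}^\co}$ and then justify it by approximation. Apply $J^\alpha$ with $|\alpha|=m$ to \eqref{EQ00}. Take the $H^3$ inner product with $J^\alpha u$, multiply by $M_\alpha\tau^{m-1}/(m-1)!$, and sum over $\alpha$ and $m\ge1$. Since $\tau$ depends on time, we obtain
\begin{equation*}
\frac{d}{dt}\|u\|_{X_\tau^\co}
\le
\dot\tau\,\|u\|_{Y_\tau^\co}
+\mathcal A+\mathcal P ,
\end{equation*}
where $\mathcal A$ and $\mathcal P$ collect the weighted sums of $|\langle J^\alpha(u\cdot\nabla u),J^\alpha u\rangle_{H^3}|/\|J^\alpha u\|_{H^3}$ and the corresponding pressure terms. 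The term $\dot\tau\|u\|_{Y_\tau^\co}$ is the analytic dissipation; it is negative because $\tau$ is decreasing.

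For the advection, I would use the Leibniz splitting $J^\alpha(u\cdot\nabla u)=u\cdot\nabla J^\alpha u+\sum_{0<\beta\le\alpha}\binom{\alpha}{\beta}\cdots$ together with the commutator $[Z_k,\partial_3]$. The top-order term $u\cdot\nabla J^\alpha u$ paired with $J^\alpha u$ vanishes by $\div u=0$ and $u_3|_{x_3=0}=0$, up to lower-order $H^3$ commutators bounded by $\|u\|_{H^r}$. The remaining sums are handled by Lemmas~\ref{L01}--\ref{L02}, which rest on the conormalization of Lemma~\ref{L06}. I expect these to give
\begin{equation*}
\mathcal A\le C\|u\|_{H^r}\|u\|_{X_\tau^\co}+C\tau\|u\|_{X_\tau^\co}\|u\|_{Y_\tau^\co}+C\|u\|_{H^r}^2 .
\end{equation*}
Taking the divergence of the equation gives the elliptic problem $-\Delta p=\partial_iu_j\partial_ju_i$ with Neumann data $\partial_3p=0$. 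For it, Lemma~\ref{L03}, built on the elliptic estimates of Lemmas~\ref{L07}--\ref{L09}, should give a bound on $\mathcal P$ of the same form. The essential point in both estimates is the extra factor $\tau$ in front of every quadratic $X\cdot Y$ term.

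Next I would choose $\tau$. Requiring $\dot\tau+C\tau\|u\|_{X_\tau^\co}\le0$ leaves
\begin{equation*}
\frac{d}{dt}\|u\|_{X_\tau^\co}\le C\|u\|_{H^r}\|u\|_{X_\tau^\co}+C\|u\|_{H^r}^2 .
\end{equation*}
By Gronwall, this gives $\|u\|_{X_\tau^\co}\le C(\|u_0\|_{X_{\tau_0}^\co}+\|u_0\|_{H^r}+\int_0^t\|u\|_{H^r}^2)$, where the low-order terms are absorbed by the $H^r$ control. Substituting this bound into the ODE for $\tau$ motivates the explicit choice
\begin{equation*}
\tau(t)=\tau_0\exp\Bigl(-C\int_0^t\bigl(\|u\|_{H^r}^2+\|u_0\|_{H^r}+\textstyle\int_0^s\|u\|_{H^r}^2\bigr)\,ds\Bigr),
\end{equation*}
with constants adjusted as in \eqref{EQ24}--\eqref{EQ25}. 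This choice closes a bootstrap on $\|u\|_{X_\tau^\co}$ and yields \eqref{EQ15}.

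To make the argument rigorous, I would first prove local existence in $X_{\sigma(t)}^\co$ with a faster-shrinking radius $\sigma$. This uses the same estimates but with the cruder bound $\tau\|u\|_X\|u\|_Y\lesssim\|u\|_Y^2$, absorbed by a large $|\dot\sigma|$. It guarantees that all conormal norms are finite, so the computation above is legitimate and the series converge. Uniqueness in $H^r$ identifies this solution with $u$. The persistence estimate then upgrades the radius from $\sigma$ to $\tau$ on the existence interval. I would iterate from later times $t_0$ up to $T_\ast$, using that $\tau$ stays positive whenever $\int_0^t\|u\|_{H^r}^2<\infty$. Continuity in $X_\sigma^\co$ for $\sigma<\tau(t_0)$ follows from the uniform bound and dominated convergence of the tails of the series.

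The main obstacle is the estimates of Lemmas~\ref{L01} and~\ref{L03}, specifically securing the factor $\tau$ in all quadratic terms. The dangerous configurations are the $\gamma_3=0$ terms, where Hardy's inequality loses a tangential derivative. I would handle them by the case analysis on $|\gamma|$ versus $|\beta-\gamma|$, exploiting the $H^3$ room, combined with the binomial weights $M_\alpha$ to make the combinatorial sums telescope into $\tau\|u\|_X\|u\|_Y$.
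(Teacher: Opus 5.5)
Your architecture matches the paper's: a time-dependent radius with dissipation $\dot\tau\|u\|_{Y_\tau^\co}$, Lemmas~\ref{L01}--\ref{L03}, local existence with a faster-shrinking radius, an upgrade of the radius, and iteration to $T_\ast$. The step that closes the a~priori estimate, however, does not work.

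The bound you expect contains $C\|u\|_{H^r}\|u\|_{X_\tau^\co}$, but that is not what Lemmas~\ref{L01}--\ref{L03} give. They give $C_0\|u\|_{H^4}^2+C_1(\tau+\tau^2)\|u\|_{Y_\tau^\co}(\|u\|_{H^4}+\|u\|_{X_\tau^\co})$, and the piece $\tau\|u\|_{H^4}\|u\|_{Y_\tau^\co}$ is genuine. For instance, $\phi[J^\alpha,\partial_3]u=-\alpha_3\phi'J^\alpha u$, so $u_3[J^\alpha,\partial_3]u$ contributes $\alpha_3\|u\|_{H^4}\|J^\alpha u\|_{H^3}$. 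Since $\alpha_3$ can be comparable to $m$, the weighted sum is of size $\|u\|_{H^4}(\|u\|_{X_\tau^\co}+\tau\|u\|_{Y_\tau^\co})$. The factor $m$ from Lemma~\ref{L08} does the same in the pressure. Now $\|u\|_{Y_\tau^\co}=\partial_\tau\|u\|_{X_\tau^\co}$ costs one more analytic derivative, and $\|u\|_{H^4}$ is not controlled by the conormal norm $\|u\|_{X_\tau^\co}$. So this term is neither absorbed by your choice $\dot\tau+C\tau\|u\|_{X_\tau^\co}\le0$ nor amenable to Gronwall.

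Even granting your form, Gronwall on $y'\le C\|u\|_{H^r}y+C\|u\|_{H^r}^2$ gives $y(t)\le e^{C\int_0^t\|u\|_{H^r}}\bigl(y(0)+C\int_0^t\|u\|_{H^r}^2\bigr)$. The exponential cannot be ``absorbed by the $H^r$ control'' into a universal constant. Inserted into the equation for $\tau$, it gives a double-exponential lower bound, not \eqref{EQ15}.

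The missing idea is to write every term linear in $\|u\|_{H^4}$ as $\tau\|u\|_{H^4}\|u\|_{Y_\tau^\co}$, using $\|u\|_{X_\tau^\co}\le|u|_{\Hco^{1,3}}+\tau\|u\|_{Y_\tau^\co}$ and $|u|_{\Hco^{1,3}}\lesssim\|u\|_{H^4}$. Then put $\|u\|_{H^4}$ into the radius equation, $\dot\tau+C_1(\tau+\tau^2)(\|u\|_{H^4}+\|u\|_{X_\tau^\co})\le0$, as in \eqref{EQ25}. With this, \eqref{EQ24} collapses to $\frac{d}{dt}\|u\|_{X_\tau^\co}\lesssim\|u\|_{H^4}^2$. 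That integrates linearly to \eqref{EQ27} with no Gronwall, and integrating \eqref{EQ25} then gives a single-exponential bound of the form \eqref{EQ15}.

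There are also three smaller problems in your construction step:
\begin{itemize}
\item The bound $\tau\|u\|_{X_\sigma^\co}\|u\|_{Y_\sigma^\co}\lesssim\|u\|_{Y_\sigma^\co}^2$ cannot be absorbed by $|\dot\sigma|\|u\|_{Y_\sigma^\co}$, because $\|u\|_{Y_\sigma^\co}$ is not bounded pointwise in time. You must keep the product and bootstrap $\sup_t\|u\|_{\widetilde X_\sigma^\co}\le A$ with $M\gtrsim A$, as in \eqref{EQ515}--\eqref{EQ516}.
\item Finiteness of $\|u\|_{X_\sigma^\co}$ does not make the $X_\tau^\co$ series converge for $\tau>\sigma$. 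The upgrade has to be run on truncated sums $m\le N$ with $N$-uniform constants, then $N\to\infty$.
\item Continuity in $X_\sigma^\co$ needs a bound on $\|\partial_t u\|_{X_\sigma^\co}$ (Lemma~\ref{L20}). $H^r$-continuity gives no control of the high-order terms $\|J^\alpha u(t)-J^\alpha u(t_0)\|_{H^3}$, so tail control alone is not enough.
\end{itemize}
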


Our second theorem establishes persistence in a different variant of the conormal norm.
We define the operators 
  \begin{equation}
   L^k
   =
   (\phi \partial_3)^k
   \comma
   k \geq 1
   \llabel{EQ200a}
  \end{equation}
and
  \begin{equation}
   K^\alpha
   =
   \partial^{\alpha'} L^{\alpha_3}
   \comma
   \alpha \in \mathbb{N}_0^3
   .
   \label{EQ200}
  \end{equation}
For $\eps > 0$, we introduce the seminorm
  \begin{equation}
   |u|_{\Eco^{m,3}}
   =
   \sum_{|\alpha| = m} 
     M_\alpha \eps^{\alpha_3} \|K^\alpha u\|_{H^3}
   .
   \label{EQ201}
  \end{equation}
For $\tau > 0$, we define  the conormal analytic space  $A_\tau^\co = \{v \in C^\infty(\Omega) \colon \|v\|_{A_\tau^\co} < \infty\}$,
where
  \begin{equation}
   \|v\|_{A_\tau^\co}
   =
   \sum_{m=1}^\infty
     |u|_{\Eco^{m,3}} \frac{\tau^{m-1}}{(m-1)!}
   .
   \label{EQ202}
  \end{equation}
We also introduce $B_\tau^\co = \{v \in C^\infty(\Omega) \colon \|v\|_{B_\tau^\co} < \infty\}$, where
  \begin{equation}
   \|v\|_{B_\tau^\co}
   =
   \sum_{m=2}^\infty |u|_{\Eco^{m,3}} \frac{(m-1)\tau^{m-2}}{(m-1)!}
   .
   \label{EQ203}
  \end{equation}
The second theorem establishes persistence in $A_\tau^\co$ through the $H^r$ norm.
The proof is in Section~\ref{sec04}.
\begin{Theorem}
\label{T02}
Let $u_0 \in H^r \cap A_{\tau_0}^\co$, where $r \geq 4$ and $\tau_0 > 0$, be divergence free.
Let $u \in C([0,T_\ast); H^r)$ be the unique classical solution where $T_\ast$ is the maximal time of existence.
Then there exists a strictly decreasing positive smooth function $\tau(t)$ such that $\tau(0) = \tau_0$ and $u(t) \in A_{\tau(t)}^\co$ for all $t < T_\ast$. 
Moreover, $u \in C([0,T_\ast); X_{\tau(t)}^\co)$, and the radius $\tau(t)$ of conormal analyticity satisfies~\eqref{EQ15}.
\end{Theorem}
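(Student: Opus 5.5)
We follow the scheme of Theorem~\ref{T01}, now in the norms $A_\tau^\co$ and $B_\tau^\co$. We begin with a priori estimates for a smooth solution lying in sufficiently high conormal Sobolev spaces. Apply $K^\alpha$ with $|\alpha|=m$ to \eqref{EQ00} and take the $H^3$ inner product with $K^\alpha u$. The top-order advection term $\langle u\cdot\nabla K^\alpha u, K^\alpha u\rangle_{H^3}$ is the dangerous one. Up to commutators with the $H^3$ derivatives, it vanishes by incompressibility and $u_3=0$ on $\{x_3=0\}$. Multiplying by $M_\alpha\eps^{\alpha_3}\tau^{m-1}/(m-1)!$ and summing then gives
\[
\frac{d}{dt}\|u\|_{A_\tau^\co}\le \dot\tau\,\|u\|_{B_\tau^\co}+\mathcal{A}+\mathcal{P},
\]
where $\mathcal A$ collects the advection commutator sums and $\mathcal P$ the pressure sums. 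The goal, stated as Lemmas~\ref{L10}--\ref{L12}, is bounds of the form
\[
\mathcal A+\mathcal P\les \bigl(\|u\|_{H^r}+\tau\|u\|_{A_\tau^\co}\bigr)\|u\|_{B_\tau^\co}+\|u\|_{H^r}\|u\|_{A_\tau^\co}+\|u\|_{H^r}^2 .
\]
Choose $\tau$ solving $\dot\tau=-C\tau\bigl(\|u\|_{H^r}+\tau\|u\|_{A_\tau^\co}\bigr)-\dots$, so that the $B_\tau^\co$ terms are absorbed. Gronwall then bounds $\|u\|_{A_\tau^\co}$ by $\|u_0\|_{A_{\tau_0}^\co}+\int_0^t\|u\|_{H^r}^2$, and inserting this into the ODE for $\tau$ yields exactly \eqref{EQ15}.

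For $\mathcal A$ we expand $K^\alpha(u\cdot\nabla u)$ by Leibniz. Unlike $J^\alpha$, the operator $L^{\alpha_3}$ does not commute past $\partial_3$. We therefore write $L^{j}(u_3\partial_3 u)$, conormalize via $u_3\partial_3=(u_3/\phi)\,\phi\partial_3$, and use Hardy's inequality together with $\partial_3u_3=-\partial_1u_1-\partial_2u_2$. This mirrors Lemma~\ref{L06}, but two new ingredients are needed. The commutator $[L^{k},\partial_3]$ produces factors $L^j\phi'$, i.e.\ iterated conormal derivatives of $\phi$. We will bound $\|L^j\phi'\|_{W^{3,\infty}}\les j!\,C^j$ using the analyticity of $\phi(z)=z/(1+z)$; the change of variables $\phi\partial_z=\partial_s$ is explicit through the Lambert-$W$ function, which is the content of Lemmas~\ref{L13}--\ref{L14}. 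With this factorial bound, the double sums over $\beta\le\alpha$ collapse by the usual binomial and factorial manipulations. The needed factor $\tau$ in quadratic terms comes from the relation $|\beta|\ge1$, $|\alpha-\beta|\ge1$, exactly as in Section~\ref{sec03a}, and low-order pieces are bounded by $\|u\|_{H^r}$ with $r\ge4$.

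The pressure estimate, Lemma~\ref{L12}, is the main obstacle. The elliptic argument of Section~\ref{sec03b} exploited the structure of $Z_k=\phi^k\partial_3^k$, which is lost for $L^k$. We plan to solve $-\Delta p=\partial_i\partial_j(u_iu_j)$ with Neumann condition $\partial_3p=-\partial_3(u_3^2)\,$-type data, so that $\partial_3p=0$ on the boundary. We then estimate $\|K^\alpha\nabla p\|_{H^3}$ by induction on $\alpha_3$. Using the equation, we trade each pair $\phi^2\partial_3^2$ for $-\phi^2(\partial_1^2+\partial_2^2)p$ plus the forcing and lower-order commutators, again controlled by Lemmas~\ref{L13}--\ref{L14}. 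Each such transfer converts one conormal derivative into tangential ones at cost a constant $C$. The weight $\eps^{\alpha_3}$ in \eqref{EQ201}, with $\eps$ small, makes these transferred highest-order contributions a small multiple of the left side, where they are absorbed. The remaining products are handled as for $\mathcal A$.

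Finally, construction and continuity follow Theorem~\ref{T01}. We first prove local existence in $A_{\sigma}^\co$ for a weaker radius, using the cruder local-existence estimate that does not require the extra $\tau$ factor. The persistence bound then upgrades the radius to $\tau(t)$, and we iterate up to $T_\ast$. Continuity in time follows from the bound on $\|u\|_{A_\tau^\co}$ at a slightly smaller radius, together with continuity of each seminorm $|u|_{\Eco^{m,3}}$.
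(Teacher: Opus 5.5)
Your route is essentially the paper's: test with $M_\alpha\eps^{\alpha_3}\partial^\gamma K^\alpha u$, conormalize $K^\beta u_3\,\partial_3K^{\alpha-\beta}u$, treat $[L^k,\partial_3]$ through the analyticity of $\phi$ and the substitution $y=x+\log x$, transfer $\phi^2\partial_3^2$ to tangential derivatives in the pressure and absorb with $\eps^{\alpha_3}$, then construct and iterate as in Theorem~\ref{T01}. However, the closing absorption step fails as you have written it, because the powers of $\tau$ do not match.

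Your target bound contains $\|u\|_{H^r}\|u\|_{B_\tau^\co}$ with no factor of $\tau$, and also $\tau\|u\|_{A_\tau^\co}\|u\|_{B_\tau^\co}$. Your choice $\dot\tau=-C\tau(\|u\|_{H^r}+\tau\|u\|_{A_\tau^\co})$ only supplies $-C\tau\|u\|_{H^r}\|u\|_{B_\tau^\co}-C\tau^2\|u\|_{A_\tau^\co}\|u\|_{B_\tau^\co}$. This controls neither term once $\tau<1$.

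Suppose the $\|u\|_{H^r}\|u\|_{B_\tau^\co}$ term really carried no $\tau$. Then you would need $\dot\tau\le-C\|u\|_{H^r}$, so $\tau(t)\le\tau_0-C\int_0^t\|u\|_{H^r}$, which may reach zero before $T_\ast$. That is the local-existence mechanism you correctly use for the weaker radius $\sigma$, not persistence, and it cannot give the exponential lower bound~\eqref{EQ15}.

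Conversely, the generic top-order quadratic terms carry exactly one factor of $\tau$. The weights $\tau^{j-1}/(j-1)!$ and $(m-j)\tau^{m-j-1}/(m-j)!$ multiply to $\tau^{m-2}$, against $\tau^{m-1}$. So the $\tau^2\|u\|_{A_\tau^\co}$ in your ODE is too weak.

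What must be proved, and what Lemmas~\ref{L10}--\ref{L12} give, is the bound $\|u\|_{H^4}^2+(\tau+\tau^2)\|u\|_{B_\tau^\co}(\|u\|_{H^4}+\|u\|_{A_\tau^\co})$. That is, every term containing $\|u\|_{B_\tau^\co}$ needs a factor $\tau$, including those paired with $\|u\|_{H^4}$.

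The latter arise when one factor carries at most one derivative:
\begin{itemize}
\item $\beta=\alpha$ or $|\beta|=1$ in the Leibniz sum;
\item the terms $\partial^\mu u\cdot\nabla\partial^\gamma K^\alpha u$;
\item the $\ell=1$ part of $[L^k,\partial_3]$;
\item the elliptic terms with $\alpha_3\le1$.
\end{itemize}
In these cases the other factor has order at most $m=|\alpha|$; the only order-$(m+1)$ contribution is the transport term, which cancels. The identity $\tau^{m-1}/(m-1)!=\tau\cdot\frac{(m-1)\tau^{m-2}}{(m-1)!}\cdot\frac{1}{m-1}$ then supplies the factor $\tau$, provided the accompanying combinatorial coefficient ($\binom{\alpha}{\beta}$, $\alpha_3$, \dots) is $O(m)$.

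The same identity shows your extra term $\|u\|_{H^r}\|u\|_{A_\tau^\co}$ is $\les\|u\|_{H^4}^2+\tau\|u\|_{H^4}\|u\|_{B_\tau^\co}$, so no Gronwall factor enters. One then takes $\dot\tau+C(\tau+\tau^2)(\|u\|_{H^4}+\|u\|_{A_\tau^\co})\le0$, obtains $\frac{d}{dt}\|u\|_{A_\tau^\co}\les\|u\|_{H^4}^2$, and integration yields~\eqref{EQ15}.

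Finally, the factorial bounds $C^\ell\ell!$ on the commutator coefficients do not simply ``collapse''. Summing over $\ell$ produces a series in $C\eps\tau$, so you need $C\eps\tau_0<1$ in the advection estimate as well, not only in the pressure.
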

\begin{remark}
\label{R10}
For the proofs of Theorems~\ref{T01} and~\ref{T02}, we take $r = 4$ for simplicity. 
However, the proof for $r \geq 4$ is obtained by following the same
arguments as for~$H^4$.
We may also consider strengthening $X_\tau^\co$ and $A_\tau^\co$ by replacing $\Hco^{m,3}$ and $\Eco^{m,3 }$ with
  \begin{equation}
   |u|_{\Hco^{m,r-1}}
   =
   \sum_{|\alpha| =m}
     M_\alpha \|J^\alpha u\|_{H^{r-1}}
   \llabel{EQ203a}
  \end{equation}
and 
  \begin{equation}
   |u|_{\Eco^{m,r-1}}
   =
   \sum_{|\alpha| =m} 
    M_\alpha \|K^\alpha u\|_{H^{r-1}}
  .
    \llabel{EQ203b}
  \end{equation}
In this case, persistence in $H^p$ holds in either analytic norm for $p \geq r$.
\end{remark}
\begin{remark}
\label{R11}
Our proofs for Theorems~\ref{T01} and~\ref{T03} actually give a slightly sharper persistence result.
The conormal analytic norm $X_\tau^\co$ persists through the $\mathcal{H}$ norm, defined in~\eqref{EQ500} below,
and the $A_\tau^\co$-norm persists through a similar norm as $\mathcal{H}$ with $J$ replaced by~$K$.
To see this, we do the same conormalization arguments demonstrated in the proof of Lemma~\ref{L21} below.
\end{remark}

In the following sections, we frequently use the inequality,
  \begin{equation}
   \|\phi^{-1}v_3\|_{L^\infty}
   \les
   \|v_3\|_{L^\infty}
   + \|\bar{\partial} v\|_{L^\infty}
   ,
   \label{EQ28}
  \end{equation}
where $v$ is incompressible with $v_3 = 0$ on~$\partial \Omega$. 
We also have
  \begin{equation}
   \|\phi^{-1} v_3\|_{H^3}
   \les
   \|v_3\|_{H^3}
   + \|\bar{\partial}v \|_{H^3}
   .
   \label{EQ28b}
  \end{equation}
Both~\eqref{EQ28} and~\eqref{EQ28b} are proven using Hardy's inequality and incompressibility. 

Occasionally, we use the multiplicative Sobolev inequality,
  \begin{equation}
   \|ab\|_{H^k} 
   \les
   \|a\|_{H^\ell} \|b\|_{H^m}
   ,
   \label{EQ28a}
  \end{equation}
where either $\ell, m \geq k$ and $\ell + m > k+1.5$, or $\ell,m > k \geq 0$ and $\ell + m = k +1.5$.

Next, we record two lemmas which are essential for the estimates in Sections~\ref{sec03} and~\ref{sec04}. 
For proofs, see~\cite[Lemmas~4.1 and~4.2]{KV2}. 

\begin{Lemma}
\label{L05a}
We have 
  \begin{equation}
   M_\alpha M_\beta^{-1} M_{\alpha- \beta}^{-1} 
   {\alpha \choose \beta}
   \leq 
   {|\alpha'| \choose |\beta'|}{\alpha_3 \choose \beta_3}
   \leq 
   {|\alpha| \choose |\beta|}
   \label{EQ30a}
  \end{equation}
for all $\alpha,\beta \in \mathbb{N}_0^3$ with $\beta \leq \alpha$.
\end{Lemma}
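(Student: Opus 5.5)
The plan is to reduce both inequalities in \eqref{EQ30a} to elementary binomial identities by separating the tangential and normal parts of the multi-indices. Write $\alpha' = (\alpha_1,\alpha_2)$ and $\beta'=(\beta_1,\beta_2)$. Since ${\alpha\choose\beta} = {\alpha_1\choose\beta_1}{\alpha_2\choose\beta_2}{\alpha_3\choose\beta_3}$ and $M_\alpha$ depends only on $\alpha'$, the factor ${\alpha_3\choose\beta_3}$ splits off immediately. The first inequality then reduces to the purely two-dimensional claim
  \begin{equation*}
   M_\alpha M_\beta^{-1} M_{\alpha-\beta}^{-1}{\alpha_1\choose\beta_1}{\alpha_2\choose\beta_2}
   \leq {|\alpha'|\choose|\beta'|}.
  \end{equation*}

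For this claim I expand everything in factorials. We have $M_\alpha = |\alpha'|!/(\alpha_1!\alpha_2!)$, $M_\beta^{-1} = \beta_1!\beta_2!/|\beta'|!$, and $M_{\alpha-\beta}^{-1} = (\alpha_1-\beta_1)!(\alpha_2-\beta_2)!/|\alpha'-\beta'|!$. Multiplying by ${\alpha_1\choose\beta_1}{\alpha_2\choose\beta_2} = \alpha_1!\alpha_2!/(\beta_1!(\alpha_1-\beta_1)!\beta_2!(\alpha_2-\beta_2)!)$, all the individual factorials cancel. What remains is exactly
  \begin{equation*}
   \frac{|\alpha'|!}{|\beta'|!\,(|\alpha'|-|\beta'|)!} = {|\alpha'|\choose|\beta'|}.
  \end{equation*}
So the first inequality in \eqref{EQ30a} in fact holds with equality. (Equivalently, this is the Vandermonde-type fact that the multinomial structure is consistent.)

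The second inequality, ${|\alpha'|\choose|\beta'|}{\alpha_3\choose\beta_3}\leq{|\alpha|\choose|\beta|}$, is the Vandermonde convolution. Set $a=|\alpha'|$, $b=\alpha_3$, $c=|\beta'|$, $d=\beta_3$, so that $|\alpha|=a+b$ and $|\beta|=c+d$. Then
  \begin{equation*}
   {a+b\choose c+d}=\sum_{j}{a\choose j}{b\choose c+d-j}\geq {a\choose c}{b\choose d},
  \end{equation*}
since all terms in the sum are nonnegative. Combinatorially, choosing $c$ items from the first $a$ and $d$ from the last $b$ is one particular way of choosing $c+d$ items out of $a+b$.

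No step here is a real obstacle. The only care needed is the bookkeeping of the factorial cancellation, and checking that $\beta\le\alpha$ guarantees all the binomials are well defined (with the convention ${n\choose 0}=1$).
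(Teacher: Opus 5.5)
Your argument is correct: the factorial cancellation shows the first inequality is actually an equality, $M_\alpha M_\beta^{-1}M_{\alpha-\beta}^{-1}{\alpha\choose\beta}={|\alpha'|\choose|\beta'|}{\alpha_3\choose\beta_3}$, and the second inequality follows from the Vandermonde convolution by keeping a single nonnegative term. The paper gives no proof of its own and simply refers to \cite[Lemma~4.1]{KV2}, so your computation serves as a complete, self-contained verification of the lemma.
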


\begin{Lemma}
\label{L05}
Let $\{x_\lambda\}_{\lambda \in \mathbb{N}_0^3}$ and $\{y_\lambda\}_{\lambda \in \mathbb{N}_0^3}$ be sequences of real numbers. 
Then we have
  \begin{equation}
   \sum_{|\alpha| = m} \sum_{|\beta| = j, \beta \leq \alpha} 
     x_\beta y_{\alpha- \beta}
   =
   \left( \sum_{|\beta| = j} x_\beta \right)
   \left( \sum_{|\gamma| = m-j} y_\gamma \right)
   .
   \llabel{EQ30}
  \end{equation}
\end{Lemma}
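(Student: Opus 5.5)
The statement is Lemma~\ref{L05}, a purely combinatorial identity. I will prove it by a bijective reindexing of the double sum. The left-hand side runs over pairs $(\alpha,\beta)$ with $|\alpha| = m$, $|\beta| = j$, and $\beta \leq \alpha$ componentwise.

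Set $\gamma = \alpha - \beta$. Since $\beta \leq \alpha$, we have $\gamma \in \mathbb{N}_0^3$, and $|\gamma| = |\alpha| - |\beta| = m - j$.

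Conversely, any pair $(\beta,\gamma) \in \mathbb{N}_0^3\times\mathbb{N}_0^3$ with $|\beta| = j$ and $|\gamma| = m-j$ gives $\alpha = \beta+\gamma$. This $\alpha$ satisfies $|\alpha| = m$ and $\beta \leq \alpha$.

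The maps $(\alpha,\beta)\mapsto(\beta,\alpha-\beta)$ and $(\beta,\gamma)\mapsto(\beta+\gamma,\beta)$ are mutually inverse. They therefore give a bijection between the index set $\{(\alpha,\beta)\colon |\alpha|=m,\ |\beta|=j,\ \beta\leq\alpha\}$ and the index set $\{(\beta,\gamma)\colon |\beta|=j,\ |\gamma|=m-j\}$.

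Under this bijection the summand $x_\beta y_{\alpha-\beta}$ becomes $x_\beta y_\gamma$. The left-hand side thus equals
\begin{equation*}
\sum_{|\beta|=j}\sum_{|\gamma|=m-j} x_\beta y_\gamma,
\end{equation*}
which factors as the product on the right-hand side.

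All the sums involved are finite, since there are finitely many multi-indices of each fixed order. Reordering is therefore harmless. If $j>m$, both sides are empty: no $\beta\le\alpha$ has $|\beta|>|\alpha|$, and no $\gamma$ has negative order. Both sides then equal $0$ by convention.

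There is no genuine obstacle here. The only point requiring care is checking that the bijection is onto, that is, that $\beta+\gamma$ automatically dominates $\beta$; this holds because $\gamma\geq 0$ componentwise.
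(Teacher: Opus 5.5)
Your proof is correct: the reindexing $\gamma=\alpha-\beta$ is a bijection between $\{(\alpha,\beta)\colon |\alpha|=m,\ |\beta|=j,\ \beta\le\alpha\}$ and $\{(\beta,\gamma)\colon |\beta|=j,\ |\gamma|=m-j\}$, after which the finite double sum factors. The paper gives no in-text proof and defers to \cite[Lemma~4.2]{KV2}; your reindexing argument is the standard proof of this identity and is complete as written.
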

Finally, we discuss our third theorem. 
Define the norm $P_\tau^\co$ as $A_\tau^\co$ with $\eps = 1$.
The following theorem shows that $\|\cdot \|_{X_\tau^\co}$ and $\|\cdot \|_{P_\tau^\co}$ are equivalent up to a small loss of radius.
The proof is provided in Section~\ref{sec05}.
\begin{Theorem}
\label{T03}
Let $C_1$ and $C_2$ be the constants from Propositions~\ref{P01} and~\ref{P02}, respectively. 
If $0 < \tau < \min\{1,(4C_1)^{-1}, (2C_2)^{-1}\}$, then
  \begin{equation}
   (1-2C_2\tau)
   \|u\|_{X_{\rho_2(\tau)}^\co}
   \les
   \|u\|_{P_\tau^\co}
   \les
   \frac{1}{\sqrt{1-4C_1\tau}}
   \|u\|_{X_{\rho_1(\tau)}^\co}
   ,
   \label{EQ30a}
  \end{equation}
where 
  \begin{equation}
   \rho_1(\tau)
   =
   \frac{2\tau}{1+ \sqrt{1-4C_1\tau}}
   =
   \tau + C_1 \tau^2
   + O(\tau^3)
   \quad\quad
   \text{as $\tau \to 0$}
   \llabel{EQ30b}
  \end{equation}
and $\rho_2(\tau) = \tau - C_2 \tau^2$.
\end{Theorem}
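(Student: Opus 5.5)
We will prove the two inequalities in \eqref{EQ30a} separately, using the operator identities of Propositions~\ref{P01} and~\ref{P02} (stated in the Appendix). These propositions give $L^n=(\phi\partial_3)^n=\sum_{k=1}^n d_k^n(x_3)\,Z_k$ and $Z_n=\phi^n\partial_3^n=\sum_{k=1}^n a_k^n(x_3)\,L^k$. We assume they come with bounds of the form
\[
\|d_k^n\|_{W^{3,\infty}}\les C_1^{\,n-k}\frac{(n-1)!\,(2n-k-1)!}{(k-1)!\,(n-k)!\,(n-1)!}
\qquad\text{and}\qquad
\|a_k^n\|_{W^{3,\infty}}\les C_2^{\,n-k}\binom{n-1}{k-1}\frac{(n-1)!}{(k-1)!}.
\]
These are the shapes that make the generating functions below close; the first is Catalan-type, consistent with the recurrence $S_{n+1,k}=S_{n,k-1}+(2n-k)S_{n,k}$. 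Since tangential derivatives commute with functions of $x_3$, we have $K^\alpha=\partial^{\alpha'}L^{\alpha_3}=\sum_{k\le\alpha_3} d_k^{\alpha_3}J^{(\alpha',k)}$. The $H^3$ norm of a product with $d_k^{\alpha_3}$ is controlled by $\|d_k^{\alpha_3}\|_{W^{3,\infty}}$ times the $H^3$ norm of the factor, so
\[
\|K^\alpha u\|_{H^3}\les\sum_{k=1}^{\alpha_3}\|d_k^{\alpha_3}\|_{W^{3,\infty}}\,\|J^{(\alpha',k)}u\|_{H^3}.
\]
The case $\alpha_3=0$ is trivial. The reverse direction is the same with $a_k^n$ in place of $d_k^n$.

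For the upper bound, we multiply by $M_\alpha\tau^{m-1}/(m-1)!$, noting that $M_\alpha$ depends only on $\alpha'$, and sum over $|\alpha|=m$ and $m\ge1$. We then reindex by $\beta=(\alpha',k)$ with $|\beta|=j=m-(\alpha_3-k)$ and set $p=\alpha_3-k$. This gives
\[
\|u\|_{P_\tau^\co}\les\sum_{j}\sum_{|\beta|=j}M_\beta\|J^\beta u\|_{H^3}\sum_{p\ge0}\|d_{\beta_3}^{\beta_3+p}\|_{W^{3,\infty}}\frac{\tau^{j+p-1}}{(j+p-1)!}.
\]
The task is to show that the inner sum is bounded by $(1-4C_1\tau)^{-1/2}\rho_1^{\,j-1}/(j-1)!$, uniformly in $\beta_3\le j$. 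Using the coefficient bound, the factorials combine to at most $\binom{2(j-1)+p}{p}$-type terms, possibly after crude estimates using $\beta_3\le j$ and $\tau<1$. We then invoke the classical generating-function identity
\[
\sum_{p\ge0}\binom{2q+p}{p}\frac{q}{q+p}\,x^p=\Bigl(\frac{2}{1+\sqrt{1-4x}}\Bigr)^{2q},
\]
or its companion $\sum_p\binom{2q+p}{p}x^p\cdots=\frac{1}{\sqrt{1-4x}}\bigl(\frac{2}{1+\sqrt{1-4x}}\bigr)^{2q}$, with $x=C_1\tau$. This produces exactly $(1-4C_1\tau)^{-1/2}\bigl(2\tau/(1+\sqrt{1-4C_1\tau})\bigr)^{j-1}$, which explains the form of $\rho_1$ and of the square-root prefactor.

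For the lower bound, we apply the same procedure to $Z_{\alpha_3}=\sum_k a_k^{\alpha_3}L^k$. This bounds $\|u\|_{X^\co_{\rho_2}}$ by $\sum_\beta M_\beta\|K^\beta u\|_{H^3}$ times $\sum_p C_2^p\binom{j-1+p}{p}\rho_2^{\,j+p-1}/(j+p-1)!$, with $\eps=1$ so that no weight appears. After simplifying the factorials, this is a binomial series of the type $\sum_p\binom{j-1+p}{p}(C_2\rho_2)^p\,\rho_2^{\,j-1}/(j-1)!=(1-C_2\rho_2)^{-j}\rho_2^{\,j-1}/(j-1)!$. Since $\rho_2=\tau-C_2\tau^2$, we have $\rho_2/(1-C_2\rho_2)\le\tau$ up to a second-order error, and the remaining factor $1/(1-C_2\rho_2)\le 1/(1-2C_2\tau)$ yields the constant $(1-2C_2\tau)$. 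The condition $\tau<(2C_2)^{-1}$ guarantees convergence.

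The main obstacle is the combinatorial step: matching the precise factorial shape of the coefficient bounds from Propositions~\ref{P01} and~\ref{P02} to a summable series uniformly in $\beta_3$ and $j$, so that exactly the radii $\rho_1,\rho_2$ emerge. We would first try to dominate everything by the central-binomial generating function, treating $\beta_3$ as a worst case equal to $j$, and adjust the constants $C_1$ and $C_2$ if the factorial ratios do not match exactly.
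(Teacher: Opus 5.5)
Your plan for the upper bound matches the paper's. You expand $K^\alpha$ through Proposition~\ref{P01}, reindex by $\beta=(\alpha',k)$, use $\beta_3\le j=|\beta|$ to remove the $\beta_3$-dependence, and sum a central-binomial generating function. The gap is in the one computation that actually constitutes the proof, which is wrong as written. With $n=\beta_3+p$ and $k=\beta_3$, the bound of Proposition~\ref{P01} gives
\[
C_1^p\,\frac{(\beta_3+2p-1)!}{p!\,(\beta_3-1)!}\,\frac{\tau^{j+p-1}}{(j+p-1)!}
\le \binom{j-1+2p}{p}(C_1\tau)^p\,\frac{\tau^{j-1}}{(j-1)!},
\]
because $b\mapsto(b+2p-1)!/(b-1)!$ is increasing. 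So the binomial is $\binom{j-1+2p}{p}$, not $\binom{2(j-1)+p}{p}$. The latter is not even an upper bound once $p>j-1$ (for $j=1$, compare $\binom{2p}{p}$ with $1$). Summing it would also give $(1-C_1\tau)^{1-2j}$, which yields neither $\rho_1$ nor the square root.

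Both identities you quote are false. The first already fails at $q=1$: the coefficient of $x$ is $3/2$ on the left and $2$ on the right. In the second, the binomial should be $\binom{2q+2p}{p}$. The identity that closes the argument, and the one the paper uses, is
\[
\sum_{p\ge0}\binom{r+2p}{p}x^p=\frac{1}{\sqrt{1-4x}}\Bigl(\frac{2}{1+\sqrt{1-4x}}\Bigr)^{r},
\]
applied with $r=j-1$ and $x=C_1\tau$. This gives exactly $(1-4C_1\tau)^{-1/2}\rho_1^{\,j-1}/(j-1)!$, and no adjustment of $C_1$ is needed.

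For the lower bound, Proposition~\ref{P02} does not supply the coefficient bound you assumed. It gives the same shape as for $d_k^n$, namely $\|a_k^n\|_{W^{3,\infty}}\les C_2^{n-k}(2n-k-1)!/((n-k)!(k-1)!)$. Your bound is smaller than this by the factor $\binom{2n-2}{k-1}/\binom{2n-2}{n-1}$, which at $k=1$ is as small as $\binom{2n-2}{n-1}^{-1}$. Consequently the binomial series $(1-C_2\rho)^{-j}$ is not available with the constant of Proposition~\ref{P02}. After the same reduction you again face $\sum_p\binom{j-1+2p}{p}(C_2\rho)^p$.

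Under your assumption the computation is internally fine; indeed $\rho_2/(1-C_2\rho_2)\le\tau$ holds exactly. But it proves the statement only after enlarging $C_2$ by a fixed factor, which shrinks the radius $\rho_2$.

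The paper instead reruns the Catalan argument to obtain $\|u\|_{X^\co_\rho}\les(1-4C_2\rho)^{-1/2}\|u\|_{P^\co_{\tau(\rho)}}$, where $\tau(\rho)=2\rho/(1+\sqrt{1-4C_2\rho})$. It then inverts this relation exactly: $\rho=\tau-C_2\tau^2$ and $\sqrt{1-4C_2\rho}=1-2C_2\tau$, while $\rho<(4C_2)^{-1}$ is equivalent to $\tau<(2C_2)^{-1}$. This exact inversion, not a crude bound on $1/(1-C_2\rho)$, is where the factor $1-2C_2\tau$, the radius $\rho_2$, and the threshold $(2C_2)^{-1}$ come from.
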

\begin{remark}
\label{R03a}
If we replace $P_\tau^\co$ by $A_\tau^\co$ and include $\eps^{\alpha_3}$ in our definition of $X_\tau$,
then Theorem~\ref{T03} holds with $C_1$ and $C_2$ replaced by $C_1\eps$ and $C_2\eps$, respectively. 
Moreover, Theorem~\ref{T01} still holds for $X_\tau^\co$ when  $\eps^{\alpha_3}$ is included.
One could then derive persistence in $A_\tau^\co$ through $X_\tau^\co$ and vice versa at the cost of an instantaneous loss of radius after $t= 0$.
\end{remark}

\section{Proof of Theorem~\ref{T01}: Persistence in $X_\tau^\co$}\label{sec03}
Throughout the rest of the paper, the notation ``$\les $'' means
$\leq$ up to a positive multiplicative constant which is independent of $m$ or~$\alpha$.
We will also often write $\bar{\partial}$ to mean any one of $\partial_1$ or $\partial_2$ and $u_h$ to mean any one of $u_1 $ or~$u_2$.
Also, $\{e_j\}_{j=1}^3$ denotes the standard $\mathbb{R}^3$ basis, and $\bar{e}$ denotes any one of $e_1$ or~$e_2$.

Before we prove Theorem~\ref{T01}, we state several lemmas.
The first one contains estimates on commutator terms originating from the advection in~\eqref{EQ00}.

\begin{Lemma}
\label{L01}
Assume the hypotheses of Theorem~\ref{T01}. 
Then
  \begin{equation}
   \sum_{m = 1}^\infty \sum_{|\alpha| = m} \sum_{0 < \beta \leq \alpha} 
   {\alpha \choose \beta} M_\alpha \|J^\beta u \cdot J^{\alpha - \beta} \nabla u\|_{H^3}
   \les
   \|u\|_{H^4}^2
   + (\tau + \tau^2)\|u\|_{Y_\tau^\co} ( \|u\|_{H^4} + \|u\|_{X_\tau^\co})
   .
   \label{EQ16}
  \end{equation}
\end{Lemma}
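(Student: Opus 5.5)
We will split the commutator sum according to where the derivatives fall and how many of them there are. First write $J^\beta u\cdot J^{\alpha-\beta}\nabla u=J^\beta u_h\,J^{\alpha-\beta}\bar\partial u+J^\beta u_3\,J^{\alpha-\beta}\partial_3 u$. The tangential part is already conormal, since $J^{\alpha-\beta}\bar\partial u=J^{\alpha-\beta+\bar e}u$. The normal part must be conormalized as in~\eqref{introa}:
\begin{equation*}
J^\beta u_3\,J^{\alpha-\beta}\partial_3 u=(\phi^{-1}J^\beta u_3)\,(\phi J^{\alpha-\beta}\partial_3u).
\end{equation*}
We then use $\phi Z_k\partial_3=\phi^{k+1}\partial_3^{k+1}=Z_{k+1}$, so that $\phi J^{\alpha-\beta}\partial_3 u=J^{\alpha-\beta+e_3}u$; derivatives of $\phi$ are harmless because $\phi$ and $1/(1+z)$ have bounded derivatives. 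For the factor $\phi^{-1}J^\beta u_3$ we distinguish two cases. If $\beta_3\ge1$, incompressibility gives
\begin{equation*}
\phi^{-1}J^\beta u_3=J^{\beta-e_3}(-\partial_1u_1-\partial_2u_2),
\end{equation*}
and the total order $|\beta|+|\alpha-\beta|+1$ equals $|\alpha|+1$, with no loss. If $\beta_3=0$, we use~\eqref{EQ28b} (or~\eqref{EQ28} in the $L^\infty$ factor). This is legitimate because $\partial^{\beta'}u_3$ vanishes on the boundary and $\partial^{\beta'}u$ is divergence free, and it yields $\|\phi^{-1}\partial^{\beta'}u_3\|\lesssim\|\partial^{\beta'}u_3\|+\|\partial^{\beta'}\bar\partial u\|$, a one-derivative loss in the tangential direction. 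The combinatorial weights are handled by Lemma~\ref{L05a}, which bounds $\binom{\alpha}{\beta}M_\alpha$ by $\binom{|\alpha|}{|\beta|}M_\beta M_{\alpha-\beta}$ (with the index shifts $\pm e_3$, $\bar e$ absorbed by constants). Lemma~\ref{L05} then factorizes the sums over $|\alpha|=m$, $|\beta|=j$ into products of seminorms $|u|_{\Hco^{j,3}}$ and $|u|_{\Hco^{m-j+1,3}}$.

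Next we estimate each product in $H^3$ using~\eqref{EQ28a}. When $j=|\beta|$ is small (say $j\le 2$) we place the low factor in $L^\infty$ or $W^{3,\infty}$-type norms and bound it by $\|u\|_{H^4}$ when $j\le1$. This is exactly where the $H^3$ base in $\Hco^{m,3}$ gives room for embeddings. When $m-j$ is small, the roles are swapped. In the middle range both factors carry at least two conormal derivatives and are bounded by the seminorms, with the product split as $\|a\|_{H^3}\|b\|_{H^{2}}$-type terms. The finitely many terms with $m\le 2$ or so produce $\|u\|_{H^4}^2$.

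Finally we resum. With weights $\tau^{m-1}/(m-1)!$, a typical term becomes
\begin{equation*}
\sum_m\sum_{j}\binom{m}{j}|u|_{\Hco^{j,3}}|u|_{\Hco^{m-j+1,3}}\frac{\tau^{m-1}}{(m-1)!}.
\end{equation*}
Writing $\binom{m}{j}\frac{(j-1)!(m-j)!}{(m-1)!}\lesssim \frac{m}{j(m-j+1)}\cdot\frac{1}{\binom{m-1}{j-1}}\cdot$ (bounded), we assign the factor $(m-j+1)$, which counts the higher derivatives, to the $Y_\tau^\co$ norm and the other factor to $X_\tau^\co$. The extra powers of $\tau$ come from $\tau^{m-1}=\tau^{j-1}\tau^{m-j-1}\cdot\tau$, producing $\tau\|u\|_{Y_\tau^\co}\|u\|_{X_\tau^\co}$. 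When one factor has order $\le1$ and is bounded by $\|u\|_{H^4}$, the same counting produces $\tau\|u\|_{Y}\|u\|_{H^4}$. In the $\beta_3=0$ case, where one extra tangential derivative appears, the shift in index may cost a power, giving the $\tau^2$ term.

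The main obstacle is the case $\beta_3=0$ with $|\beta|$ comparable to $|\alpha|$. Here the Hardy bound loses a derivative on $J^\beta u_3$ while $J^{\alpha-\beta+e_3}u$ also carries a full top-order derivative, so the total order is $m+1$ spread over two factors. We need to check that the weight $\binom{m}{j}$ against $(m-1)!$ still leaves a factor $\tau$, with only one factor measured in $Y_\tau^\co$. If this fails at the extreme $\alpha-\beta$ small, we instead keep $u_3\partial_3$ together, integrate by parts in the $H^3$ setting, or use that $\phi^{-1}\partial^{\beta'}u_3$ with $|\alpha-\beta|\le1$ can be placed in $H^3$ while $J^{\alpha-\beta+e_3}u$ is placed in $L^\infty\lesssim\|u\|_{H^4}$. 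This requires a careful case analysis on $\alpha_3-\beta_3$ and $|\alpha-\beta|$.
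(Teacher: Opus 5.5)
Your treatment of the tangential terms and of the case $\beta_3\ge1$, and your resummation scheme, broadly match the paper. The $\beta_3=0$ normal terms, however, contain a genuine gap, and it is not a matter of combinatorics.

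After \eqref{EQ28b}, $\|\phi^{-1}J^\beta u_3\|_{H^3}$ costs $\|J^\beta\bar\partial u\|_{H^3}$, which has order $j+1$. The other factor $J^{\alpha-\beta+e_3}u$ has order $m-j+1$, so the total is $m+2$, not $m+1$ as you state. Against the weight $\binom{m}{j}\tau^{m-1}/(m-1)!$, such a product can be bounded by $\|u\|_{X_\tau^\co}\|u\|_{Y_\tau^\co}$ without a factor $\tau$, or by $\tau\|u\|_{Y_\tau^\co}^2$. It cannot be bounded by the right side of \eqref{EQ16}. The powers of $\tau$ are dictated by the orders, so no redistribution of binomial factors helps. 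The problem also occurs for every $j$, not only when $|\alpha-\beta|\le1$.

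Your fallback for that extreme is in fact the right mechanism, and it must be used throughout. Bound $\|fg\|_{H^3}\les\|f\|_{L^\infty}\|g\|_{H^3}+\|f\|_{H^3}\|g\|_{L^\infty}$ with $f=\phi^{-1}J^\beta u_3$ and $g=J^{\alpha-\beta+e_3}u$.
The first term is loss-free, since \eqref{EQ28} and $H^3\hookrightarrow W^{1,\infty}$ give $\|f\|_{L^\infty}\les\|J^\beta u\|_{H^3}$.
In the second term, the derivative lost in $\|f\|_{H^3}$ is paid back by $\|g\|_{L^\infty}$ through the same embedding:
\begin{itemize}
\item If $\beta'<\alpha'$, then $g=\bar\partial J^{\alpha-\beta+e_3-\bar e}u$, so $\|g\|_{L^\infty}\les\|J^{\alpha-\beta+e_3-\bar e}u\|_{H^3}$.
\item If $\beta'=\alpha'$, so that $\alpha-\beta=\alpha_3e_3$, the identity $Z_{k+1}=\phi\partial_3Z_k-k\phi'Z_k$ gives $\|g\|_{L^\infty}\les\alpha_3\|J^{\alpha-\beta}u\|_{H^3}$. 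The factor $\alpha_3=m-j$ is then absorbed using the middle bound in Lemma~\ref{L05a}, which equals $1$ in this configuration.
\end{itemize}
This case analysis is Lemma~\ref{L06}, and it is the actual use of the ``room'' in the $H^3$ base norm.

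Two smaller corrections. First, the splitting $\|ab\|_{H^3}\les\|a\|_{H^3}\|b\|_{H^2}$ you propose for the middle range is false in three dimensions: take $a$ a cutoff equal to $1$ on the support of $b$. Likewise, $\|J^\beta u\|_{W^{3,\infty}}$ with $|\beta|=1$ is not controlled by $\|u\|_{H^4}$. In the top-order Leibniz term one factor must sit in $L^\infty$ (or $W^{1,\infty}$), and that is exactly where the derivative above is recovered.

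Second, the $\tau^2$ terms do not come from an extra tangential derivative, which would cost a power of $\tau$ rather than supply one. They come from lower-order products of total order $m$, such as $\|J^\beta u\|_{H^3}\|J^{\alpha-\beta}u\|_{H^3}$ in $E^2_{\alpha,\beta}$ and $E^4_{\alpha,\beta}$, which leave a spare factor of $\tau$.
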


Our estimates on the transport term also produce some lower-order quantities which are bounded in the second lemma.

\begin{Lemma}
\label{L02}
Assume the hypotheses of Theorem~\ref{T01}. 
Then
  \begin{equation}
   \sum_{m = 1}^\infty \sum_{|\alpha| = m} 
     M_\alpha \|u_3[J^\alpha, \partial_3] u\|_{H^3}
     \frac{\tau^{m-1}}{(m-1)!}
   \les
   \|u\|_{H^4}^2 
   + \tau\|u\|_{H^4}\|u\|_{Y_\tau^\co}
   \label{EQ17}
  \end{equation}
and 
  \begin{equation}
   \sum_{m =1}^\infty \sum_{|\alpha| = m} \sum_{|\gamma| \leq 3} \sum_{0 < \mu \leq \gamma}
     M_\alpha \|\partial^\mu u \cdot \nabla \partial^\gamma J^\alpha u\|_{L^2} 
     \frac{\tau^{m-1}}{(m-1)!}
   \les
   \|u\|_{H^4}^2 
   + \tau\|u\|_{H^4}\|u\|_{Y_\tau^\co}
   .
   \label{EQ18}
  \end{equation}
\end{Lemma}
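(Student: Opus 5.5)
The plan is to prove \eqref{EQ17} and \eqref{EQ18} separately. In both cases I would split according to the size of $m=|\alpha|$: a low range (say $m\le 1$ or $2$), bounded directly by $\|u\|_{H^4}^2$, and a high range, bounded by $\tau\|u\|_{H^4}\|u\|_{Y_\tau^\co}$. The factor $\tau$ in the high range should come from shifting the index, $\tau^{m-1}/(m-1)! = \tau\cdot\tau^{m-2}/(m-1)!$, and from the observation that each summand contains $J^{\alpha}u$, or something of the same order, which feeds into $|u|_{\Hco^{m,3}}$. The weight $(m-1)\tau^{m-2}/(m-1)!$ in $Y_\tau^\co$ then dominates $\tau^{m-2}/(m-2)!$ up to a constant.

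For \eqref{EQ18}, the commutator structure matters: $\mu>0$, so at most three derivatives fall on $\nabla\partial^\gamma J^\alpha u$ minus one. I would write $\partial^\mu u\cdot\nabla = \partial^\mu u_h\,\bar\partial + \partial^\mu u_3\,\partial_3$. The tangential part is estimated by $\|\partial^\mu u\|_{L^\infty}\|\bar\partial\partial^\gamma J^\alpha u\|_{L^2}$. Here $\|\partial^\mu u\|_{L^\infty}\les \|u\|_{H^4}$ when $|\mu|\le 2$; for $|\mu|=3$ I would use $L^4\times L^4$, or the product estimate \eqref{EQ28a}. Since $|\gamma-\mu|\le 2$, the second factor satisfies $\|\bar\partial \partial^{\gamma-\mu} J^\alpha u\|_{L^2}\le\|J^{\alpha+\bar e}u\|_{H^2}$. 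The normal part is conormalized: $\partial^\mu u_3\,\partial_3 = (\phi^{-1}\partial^\mu u_3)(\phi\partial_3)$. When $\mu_3=0$, $\partial^\mu u_3$ vanishes on the boundary, so \eqref{EQ28}--\eqref{EQ28b} bound $\phi^{-1}\partial^\mu u_3$ by $\|u\|_{H^4}$. When $\mu_3>0$, I would bound the factor directly and let $\phi\partial_3$ act on the second factor instead. In either case $\phi\partial_3\partial^{\gamma'} J^\alpha u$ must be rewritten via the commutator $[\partial_3^j,\phi]$ as $J^{\alpha+e_3}$ plus lower-order terms, using $\phi\partial_3 Z_k = Z_{k+1} + k\phi'\phi^{k}\partial_3^{k}$. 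Every term is then controlled by $|u|_{\Hco^{m+1,3}}$ or $|u|_{\Hco^{m,3}}$. The combinatorics match: $M_\alpha\le M_{\alpha+\bar e}$ up to the sum over $\bar e$, and each $\alpha$ of size $m+1$ arises from boundedly many predecessors. Summing with $\tau^{m-1}/(m-1)! = (m\tau^{m-1}/m!)$ yields $\|u\|_{H^4}\|u\|_{Y_\tau^\co}$; the $m=1$ terms absorb into $\|u\|_{H^4}^2$ if necessary.

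For \eqref{EQ17}, I would compute the commutator $[J^\alpha,\partial_3]u = \partial^{\alpha'}[Z_{\alpha_3},\partial_3]u = -\partial^{\alpha'}(\partial_3\phi^{\alpha_3})\partial_3^{\alpha_3}u = -\alpha_3\phi'\phi^{-1}\,\partial^{\alpha'}Z_{\alpha_3}u$. This vanishes if $\alpha_3=0$. The factor $\phi^{-1}$ is then moved onto $u_3$: $u_3[J^\alpha,\partial_3]u = -\alpha_3\phi'(\phi^{-1}u_3)J^\alpha u$, with $\|\phi^{-1}u_3\|_{H^3}\les\|u\|_{H^4}$ by \eqref{EQ28b}. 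The $\phi'$ factor is smooth and bounded with all derivatives. By \eqref{EQ28a} with $k=\ell=m=3$, the product is bounded by $\alpha_3\|u\|_{H^4}\|J^\alpha u\|_{H^3}$. The factor $\alpha_3\le m$ combines with $\tau^{m-1}/(m-1)!$ to give $m\tau^{m-1}/(m-1)!\les \tau\,(m-1)\tau^{m-2}/(m-1)! + \tau^{m-1}/(m-1)!$ for $m\ge2$. This yields $\tau\|u\|_{H^4}\|u\|_{Y_\tau^\co}$, while the $m=1$ term is $\les\|u\|_{H^4}^2$. There is one issue with this: a leftover $\|u\|_{H^4}\|u\|_{X_\tau}$-type term. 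To avoid it I would instead use $m\,\tau^{m-1}/(m-1)!\le 2\tau\,(m-1)\tau^{m-2}/(m-1)!$ for $m\ge2$, which holds since $m\le 2(m-1)$.

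The main obstacle I expect is \eqref{EQ18}. The difficulty is that $\partial^\gamma$ and $\partial_3$ do not commute with $Z_{\alpha_3}$, producing normal derivatives not of conormal form in terms like $\partial_3^{\gamma_3}J^\alpha u$ with $\gamma_3\le 3$. I would handle this by observing that the $H^3$ norm already contains $\partial_3^{\gamma_3}$: the term $\|\partial^\gamma J^\alpha u\|_{L^2}$ with one extra derivative is bounded by $\|J^\alpha u\|_{H^{3}}$ when $\mu$ absorbs one derivative. The only genuinely new derivative is therefore the one $\nabla$, treated as above via conormalization and incompressibility when the coefficient is $u_3$.
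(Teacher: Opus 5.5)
Your proof of \eqref{EQ17} is the paper's: the identity $\phi[J^\alpha,\partial_3]u=-\alpha_3\phi'J^\alpha u$, the Hardy bound \eqref{EQ28b} for $\phi^{-1}u_3$, and $\alpha_3\le m\le 2(m-1)$.

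\textbf{The gap is in \eqref{EQ18}.} You treat the $\nabla$ as a derivative beyond the three measured by $H^3$. You then conormalize and put the second factor at level $J^{\alpha+\bar e}u$ or $J^{\alpha+e_3}u$, i.e.\ in $|u|_{\Hco^{m+1,3}}$. But $\sum_{m\ge1}|u|_{\Hco^{m+1,3}}\tau^{m-1}/(m-1)!$ is precisely $\|u\|_{Y_\tau^\co}$. So this route gives, as you yourself conclude, $\|u\|_{H^4}\|u\|_{Y_\tau^\co}$ with no prefactor $\tau$. That is not \eqref{EQ18}: for small $\tau$, its leading piece $\|u\|_{H^4}|u|_{\Hco^{2,3}}$ is not controlled by $\|u\|_{H^4}^2+\tau\|u\|_{H^4}\|u\|_{Y_\tau^\co}$. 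The factor $\tau$ is the whole point of the lemma. In \eqref{EQ24}--\eqref{EQ25} the $Y_\tau^\co$ terms are absorbed by $\dot\tau\|u\|_{Y_\tau^\co}$. An unweighted term $C\|u\|_{H^4}\|u\|_{Y_\tau^\co}$ would force $\dot\tau\le -C\|u\|_{H^4}$, so $\tau$ could reach zero before $T_\ast$ and the exponential lower bound \eqref{EQ15} would be lost.

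\textbf{The missing observation is that there is no extra derivative.} Because $0<\mu\le\gamma$ and $|\gamma|\le 3$, the operator $\nabla\partial^{\gamma-\mu}$ carries at most $4-|\mu|\le 3$ derivatives. With $j=|\mu|$, the product inequality \eqref{EQ28a} gives $\|\partial^\mu u\cdot\nabla\partial^{\gamma-\mu}J^\alpha u\|_{L^2}\les\|\partial^\mu u\|_{H^{4-j}}\|\nabla\partial^{\gamma-\mu}J^\alpha u\|_{H^{j-1}}\les\|u\|_{H^4}\|J^\alpha u\|_{H^3}$. The flat normal derivatives, including the $\partial_3$ in $\nabla$, are simply part of $\|J^\alpha u\|_{H^3}$. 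No conormalization, commutator with $Z_{\alpha_3}$, or incompressibility is needed. Even in your tangential part, $\|\bar\partial\partial^{\gamma-\mu}J^\alpha u\|_{L^2}\le\|J^\alpha u\|_{H^3}$, since $\bar\partial$ commutes with $J^\alpha$. The same index shift as in \eqref{EQ17}, $\frac{\tau^{m-1}}{(m-1)!}=\tau\cdot\frac{(m-1)\tau^{m-2}}{(m-1)!}\cdot\frac{1}{m-1}$ for $m\ge2$, then gives $\tau\|u\|_{H^4}\|u\|_{Y_\tau^\co}$, and $m=1$ gives $\|u\|_{H^4}^2$. Note also that your conormalization of the $u_3$ part would not work as stated for $|\mu|\ge2$. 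There \eqref{EQ28} and \eqref{EQ28b} control $\phi^{-1}\partial^\mu u_3$ through $\bar\partial\partial^\mu u$ in $L^\infty$ or $H^3$, which $\|u\|_{H^4}$ does not bound.
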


The third lemma gives estimates on the pressure. 

\begin{Lemma}
\label{L03}
Assume the hypotheses of Theorem~\ref{T01}. 
Then
  \begin{equation}
   \|\nabla p\|_{X_\tau^\co} 
   \les
   \|u\|_{H^4}^2
   + (\tau + \tau^2 ) \|u\|_{Y_\tau^\co} (\|u\|_{H^4} + \|u\|_{X_\tau^\co})
   .
   \llabel{EQ19}
  \end{equation}
\end{Lemma}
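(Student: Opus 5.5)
We estimate $\nabla p$ through the elliptic Neumann problem satisfied by the pressure. Taking the divergence of~\eqref{EQ00} and the normal component on $\{x_3=0\}$ gives
\begin{equation*}
-\Delta p = \partial_i u_j \partial_j u_i \quad \text{in } \Omega,
\qquad
\partial_3 p = 0 \quad \text{on } \partial\Omega .
\end{equation*}
The boundary condition follows from $u_3=0$ and $\partial_t u_3 + u_h\cdot\bar\partial u_3=0$ on the boundary. Using incompressibility and $u_3|_{\partial\Omega}=0$, we rewrite the source in the divergence form
\begin{equation*}
\partial_i u_j\partial_j u_i=\partial_i\partial_j(u_iu_j).
\end{equation*}
We also isolate the terms containing $\partial_3 u_h$, because they must be conormalized as in~\eqref{introa}.

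The plan is to prove a higher-regularity conormal elliptic estimate (the role of Lemma~\ref{L07}): for each $\alpha$ with $|\alpha|=m$,
\begin{equation*}
\|J^\alpha\nabla p\|_{H^3}\les \|J^\alpha(\partial_iu_j\partial_ju_i)\|_{H^2}+\text{(terms with }J^{\beta}\nabla p,\ |\beta|<m\text{ or }\beta_3<\alpha_3).
\end{equation*}
For tangential $\alpha$ ($\alpha_3=0$) this is just $H^4$ Neumann regularity, since $\partial^{\alpha'}$ commutes with the problem. For $\alpha_3>0$ we use the equation to trade normal derivatives. We write
\begin{equation*}
\phi^k\partial_3^{k}\partial_3^2p=\phi^k\partial_3^{k}\bigl(-\bar\partial^2p-\text{source}\bigr).
\end{equation*}
The commutators $[Z_k,\partial_3]$ yield lower-order $Z_j$ with $j<k$, with coefficients bounded by binomial factors. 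These coefficients are absorbed thanks to the weights $M_\alpha$ (Lemma~\ref{L08}) rather than through a small $\eps^{\alpha_3}$. Summing with the weight $\tau^{m-1}/(m-1)!$ and inducting on $\alpha_3$ (each $Z$ is converted into two tangential derivatives) reduces $\|\nabla p\|_{X_\tau^\co}$ to $\|u\|_{H^4}^2$, plus the analytic norm of the source, plus a lower-order piece controlled by $\|p\|$ at low regularity.

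For the source we expand
\begin{equation*}
J^\alpha(\partial_iu_j\partial_ju_i)=\sum_{\beta\le\alpha}\binom{\alpha}{\beta}c_{\alpha\beta}\,J^\beta\partial_iu_j\,J^{\alpha-\beta}\partial_ju_i .
\end{equation*}
The Leibniz rule for $Z_k$ against products carries the same binomials up to harmless factors. Then Lemma~\ref{L05a} and Lemma~\ref{L05} factor the double sums into products of seminorms. Terms with a flat $\partial_3$ on $u_h$ are conormalized via~\eqref{introa}: the factor $\phi$ goes onto the $\partial_3u_h$ factor, and the factor $\phi^{-1}$ onto $J^\gamma\bar\partial u_3$. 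This $\phi^{-1}$ factor is handled by incompressibility when $\gamma_3\ne0$, and by Hardy,~\eqref{EQ28}--\eqref{EQ28b}, when $\gamma_3=0$. Products are then bounded by~\eqref{EQ28a}, placing the lower-order factor in $L^\infty$ or $H^2$, which is where the $H^3$ base norm provides room.

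The counting gives the final form. When $|\beta|\in\{0,1,m-1,m\}$, one factor is at level $\le 1$ and is bounded by $\|u\|_{H^4}$, while the other has one extra derivative relative to the $X_\tau^\co$ weight. This is exactly the $\tau\|u\|_{Y_\tau^\co}\|u\|_{H^4}$ structure, since $\frac{\tau^{m-1}}{(m-1)!}=\tau\frac{(m-1)\tau^{m-2}}{(m-1)!}\cdot\frac1{m-1}$. Middle terms give $\tau^2\|u\|_{Y_\tau^\co}\|u\|_{X_\tau^\co}$ through the convolution-type bound
\begin{equation*}
\binom{m-1}{j}\frac{\tau^{m-1}}{(m-1)!}\le \tau\frac{\tau^{j-1}}{(j-1)!}\cdot\frac{j\,\tau^{m-j-1}}{(m-j)!}\cdots,
\end{equation*}
as in the proof of Lemma~\ref{L01}. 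The $m=1$ and $m=2$ terms give $\|u\|_{H^4}^2$.

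The main obstacle is the case $\gamma_3=0$ with $|\gamma|$ near the top, where Hardy costs one tangential derivative. Here I would follow the case analysis of Lemma~\ref{L09}. The extra tangential derivative is moved across the product (integrating by parts in $x_h$ is not available in $H^3$, so instead we use the divergence form $\partial_i\partial_j(u_iu_j)$ before applying $J^\alpha$). This way the elliptic estimate only demands $\|J^\alpha(u_iu_j)\|_{H^3}$-type quantities, with no derivative loss, and the resulting loss is absorbed into $Y_\tau^\co$ with the gained factor of $\tau$.
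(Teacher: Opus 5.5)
Your outline has the paper's overall architecture: the Neumann problem, trading $\partial_3^2$ for $-\Delta'$, Lemma~\ref{L08} for the combinatorics, and conormalization of the source. But the two steps that carry the proof would fail as you set them up.

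\textbf{The elliptic step.} You write a recursive inequality with terms $J^\beta\nabla p$ of smaller $\beta_3$, induct on $\alpha_3$, and claim that $[Z_k,\partial_3]$ produces lower-order $Z_j$. It does not: $[Z_k,\partial_3]=-k\phi^{k-1}\phi'\partial_3^k=-k(\phi'/\phi)Z_k$ is of the same order with a singular weight. Unlike in Lemma~\ref{L02}, no factor $u_3$ is available to absorb the $\phi^{-1}$. Moreover, $X_\tau^\co$ carries no $\eps^{\alpha_3}$, so same-level recursive terms cannot be absorbed. Iterating an inequality then produces one product/commutator constant per step, i.e.\ a factor $C^{\alpha_3/2}$. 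The weights $M_\alpha$ cannot pay for this: for $\beta=(\beta_1,0,\beta_3)$ one has $M_\beta=1$, so there is no room. The paper iterates an \emph{identity} instead. It takes \eqref{EQ84}, $\partial_3^{2k+2}p=(-\Delta')^{k+1}p-\sum_{j=0}^k\partial_3^{2j}(-\Delta')^{k-j}v$, multiplies it once by $\phi^{\alpha_3}$ (after one more $\partial_3$ if $\alpha_3$ is even), and regroups via $\phi^{\alpha_3}\partial_3^{i}=\phi^{\alpha_3-i}Z_i$. The only multipliers are then powers $\phi^N$, which Lemma~\ref{L04} bounds in $W^{3,\infty}$ uniformly in $N$. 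The remaining purely tangential term in $(-\Delta')^{k+1}\partial^{\alpha'}p$ is handled by a single $H^5$ Neumann estimate. This gives Lemma~\ref{L07}, where the only combinatorial factors are the counts $\binom{s+t}{s}$ from expanding $(\Delta')^{k-j}$. Lemma~\ref{L08} converts these into the single loss $mM_\beta$, which is affordable because the source appears at level $m-1$.

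\textbf{The Hardy loss when $\gamma_3=0$.} Your divergence-form fix does not work either. First, $\partial_i\partial_j(u_iu_j)$ still contains $\partial_3^2(u_3^2)$ and $\partial_3\bar\partial(u_3u_h)$. Their flat normal derivatives cannot be moved through $J^\beta$ without the singular commutator above. Second, matching $\|J^\alpha\nabla p\|_{H^3}$ by $J^{\alpha''}(u_iu_j)$ in $H^3$ requires $|\alpha''|=m+1$. The Leibniz rule then produces terms $u_i\,J^{\alpha''}u_j$ with an undifferentiated factor. In the true source these are tamed by $\div u=0$, which is why $\partial_iu_k\partial_ku_i$ has no such factor. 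In a norm estimate, however, they are bounded only by $\|u\|_{H^4}\|u\|_{Y_\tau^\co}$ with no factor $\tau$, which destroys \eqref{EQ24}--\eqref{EQ25}.

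The paper keeps the form $\partial_iu_k\partial_ku_i$. By incompressibility it reduces to $\bar\partial u_h\,\bar\partial u_h$ and $J^\gamma\bar\partial u_3\,J^{\beta-\gamma}\partial_3u_h=(\phi^{-1}J^\gamma\bar\partial u_3)(J^{\beta-\gamma+e_3}u_h)$. It then runs the case analysis of Lemma~\ref{L09}:
\begin{itemize}
\item If $\gamma_3\ge1$, incompressibility removes $\phi^{-1}$ at no cost.
\item If $\gamma_3=0$, \eqref{EQ40} puts the Hardy factor in $H^3$ only against the other factor in $L^\infty$. There $H^3\hookrightarrow W^{1,\infty}$ gives back a tangential derivative: $\|J^{\beta-\gamma+e_3}u\|_{L^\infty}\les\|J^{\beta-\gamma+e_3-\bar e}u\|_{H^3}$ when $\gamma'<\beta'$.
\item If $\gamma_3=0$ and $\gamma'=\beta'$, the analogue of \eqref{EQ45} gives $\beta_3\|J^{\beta-\gamma}u\|_{H^3}$ with $\beta_3=m-1-j$.
\end{itemize}
These sums close only through the index restrictions, e.g.\ $\binom{m-1-\beta_3}{j}\le\binom{m-2}{j}$ for $\beta_3\ge1$.

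A minor point: the $m=2$ terms involve level-two norms and give $\tau\|u\|_{H^4}\|u\|_{Y_\tau^\co}$, not $\|u\|_{H^4}^2$.
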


Lemmas~\ref{L01} and~\ref{L02} are proven in Section~\ref{sec03} and Lemma~\ref{L03} is proven in Section~\ref{sec04}. 

We also note the following estimate which is used to show the time continuity of our solution in~$X_\tau^\co$.
It is proven at the end of Section~\ref{sec03b}. 

\begin{Lemma}
\label{L20}
Let $0 < \sigma < \tau$.
We have
  \begin{equation}
   \|u\cdot \nabla u\|_{X_\sigma^\co} 
   + \|\nabla p\|_{X_\sigma^\co}
   \les
   \|u\|_{H^4}^2
   + \|u\|_{H^4}\|u\|_{X_\tau^\co}
   + \|u\|_{X_\tau^\co}^2
   ,
   \llabel{EQ19a}
  \end{equation}
where the implicit constant depends on $\sigma$ and~$\tau$.
\end{Lemma}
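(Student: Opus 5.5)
The plan is to deduce Lemma~\ref{L20} from the same machinery as Lemmas~\ref{L01} and~\ref{L03}, without tracking the factor $\tau$: whenever a $Y$-type norm appears, I will trade the extra weight $(m-1)$ for the gap between $\sigma$ and $\tau$. The key elementary fact is
\begin{equation*}
(m-1)\frac{\sigma^{m-2}}{(m-1)!}\le C(\sigma,\tau)\frac{\tau^{m-1}}{(m-1)!}
\comma m\ge 2,
\end{equation*}
valid since $\sup_m (m-1)(\sigma/\tau)^{m-2}<\infty$. Summing gives $\|v\|_{Y_\sigma^\co}\le C(\sigma,\tau)\|v\|_{X_\tau^\co}$, and similarly any polynomial loss in $m$ is absorbed by the factor $(\sigma/\tau)^m$.

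For the advection term, I expand $J^\alpha(u\cdot\nabla u)$ by the Leibniz rule. This gives the top-order part $u\cdot\nabla J^\alpha u$, the commutator $u_3[J^\alpha,\partial_3]u$, and the sum $\sum_{0<\beta\le\alpha}\binom{\alpha}{\beta}J^\beta u\cdot J^{\alpha-\beta}\nabla u$.
\begin{itemize}
\item The commutator and the Leibniz sum are bounded in the $\sigma$-weighted sum directly by \eqref{EQ17} and \eqref{EQ16}, which hold at radius $\sigma$.
\item Their right-hand sides involve $\|u\|_{Y_\sigma^\co}$, which I convert to $\|u\|_{X_\tau^\co}$ by the inequality above.
\item Since $\sigma<\tau$ is fixed, the factors $\sigma+\sigma^2$ are harmless constants.
\end{itemize}

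The new ingredient is the top-order term $u\cdot\nabla J^\alpha u$ measured in $H^3$. It is not cancelled here, because we estimate the norm itself rather than an energy inner product. I write $u\cdot\nabla=u_h\bar\partial+(\phi^{-1}u_3)\,\phi\partial_3$.
\begin{itemize}
\item The tangential part gives $\|J^{\alpha+\bar e}u\|_{H^3}$.
\item The normal part gives $\phi\partial_3 J^\alpha u = J^{\alpha+e_3}u - \alpha_3\phi'\phi^{\alpha_3}\partial_3^{\alpha_3}u$, using $\phi\partial_3(\phi^k\partial_3^k)=\phi^{k+1}\partial_3^{k+1}+k\phi'\phi^k\partial_3^k$.
\item The coefficients $u$ and $\phi^{-1}u_3$ are bounded in $W^{3,\infty}$ by $\|u\|_{H^4}$. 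For this I use \eqref{EQ28} and \eqref{EQ28b}, together with the inequality $\|ab\|_{H^3}\les\|a\|_{W^{3,\infty}}\|b\|_{H^3}$, and the fact that $\phi'$ and its derivatives are bounded.
\end{itemize}
This yields a bound by $\|u\|_{H^4}\sum_m |u|_{\Hco^{m+1,3}}\,m\,\sigma^{m-1}/(m-1)!$. The weights $M_\alpha$ are controlled under the shift $\alpha\mapsto\alpha+\bar e$ by $M_\alpha\le M_{\alpha+\bar e}$. Reindexing, this is $\les \|u\|_{H^4}\|u\|_{Y_\sigma^\co}$, hence $\les\|u\|_{H^4}\|u\|_{X_\tau^\co}$.

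For the pressure, I apply Lemma~\ref{L03} at radius $\sigma$: $\|\nabla p\|_{X_\sigma^\co}\les\|u\|_{H^4}^2+(\sigma+\sigma^2)\|u\|_{Y_\sigma^\co}(\|u\|_{H^4}+\|u\|_{X_\sigma^\co})$. Converting $Y_\sigma$ to $X_\tau$ and using $\|u\|_{X_\sigma^\co}\le\|u\|_{X_\tau^\co}$ produces exactly the claimed quadratic terms.

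The main obstacle I expect is the $H^3$ estimate of the derivative falling on the coefficient $\phi^{-1}u_3$ in the top-order term. Derivatives of $\phi^{-1}u_3$ up to order three must be controlled in $L^\infty$ by $\|u\|_{H^4}$. This is only borderline, since the Hardy-type bound \eqref{EQ28} costs one tangential derivative. If the $L^\infty$ bound fails at the top order, I would instead place the top-order derivative of the coefficient in $L^2$, using \eqref{EQ28b} and \eqref{EQ28a}, and put $\phi\partial_3 J^\alpha u$ in a lower Sobolev norm. Failing that, I would absorb the loss by shifting one more derivative onto $J^\alpha u$, again paying with the $\sigma<\tau$ gap.
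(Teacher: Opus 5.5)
Your proposal is correct in substance and is essentially the paper's argument, organized more modularly. The paper re-runs the estimates of Lemmas~\ref{L01}--\ref{L03} at radius $\sigma$, including the $\beta=0$ term of the Leibniz expansion, which is the source of its $\sigma^{-1}\|u\|_{H^3}\|u\|_{X_\tau^\co}$ term. It then kills leftover polynomial factors in $m,j$ with $(\sigma/\tau)^{m-1}$. You instead apply \eqref{EQ16}, \eqref{EQ17} and Lemma~\ref{L03} as stated at radius $\sigma$, convert with $\|v\|_{Y_\sigma^\co}\le C(\sigma,\tau)\|v\|_{X_\tau^\co}$, and treat only the uncancelled term $u\cdot\nabla J^\alpha u$ by hand. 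That is a clean and valid way to package the same idea.

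One step, as you first state it, is false rather than borderline. In three dimensions $H^4$ embeds into $W^{2,\infty}$ but not into $W^{3,\infty}$, so neither $u$ nor $\phi^{-1}u_3$ is controlled in $W^{3,\infty}$ by $\|u\|_{H^4}$. You do not need any $L^\infty$ bound on derivatives of the coefficients. Since $H^3$ is an algebra,
\[
\|u\cdot\nabla J^\alpha u\|_{H^3}
\les
\|u_h\|_{H^3}\|J^{\alpha+\bar e}u\|_{H^3}
+\|\phi^{-1}u_3\|_{H^3}\|\phi\partial_3 J^\alpha u\|_{H^3},
\]
and $\|\phi^{-1}u_3\|_{H^3}\les\|u\|_{H^4}$ by \eqref{EQ28b}. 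This is your fallback, and it is exactly how the paper treats $u_3[J^\alpha,\partial_3]u$ in \eqref{EQ17} and the term $\mathcal{C}_{m,0}^{\rm nor}$ in Lemma~\ref{L21}.

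Two small bookkeeping slips do not affect the result. First, $\phi\partial_3 J^\alpha u=J^{\alpha+e_3}u+\alpha_3\phi' J^\alpha u$, with a plus sign, as your own identity gives. Second, the weighted sum of this term is bounded by $\|u\|_{H^4}\big(\|u\|_{Y_\sigma^\co}+\sum_{m}m|u|_{\Hco^{m,3}}\sigma^{m-1}/(m-1)!\big)$, with no extra factor $m$ on $|u|_{\Hco^{m+1,3}}$. In any case the radius gap turns this into $\|u\|_{H^4}^2+\|u\|_{H^4}\|u\|_{X_\tau^\co}$, so the conclusion stands.
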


Next, define $\|f\|_{\widetilde{X}_\tau^\co} = \|f\|_{\mathcal{H}} + \|f\|_{X_\tau^\co}$ and $\|f\|_{\widetilde{Y}_\tau^\co} = \|f\|_{H^3} + \|f\|_{Y_\tau^\co}$,
where
  \begin{equation}
   \|f\|_{\mathcal{H}} 
   =
   \|f\|_{H^3}
   + \sum_{|\alpha| = 1} \|J^\alpha f\|_{H^3}
   .
   \label{EQ500}
  \end{equation}
We note that $\|f\|_{\mathcal{H}} \les \|f\|_{H^4}$.
Finally, we state a lemma which is needed for the construction of solutions in the proof of Theorem~\ref{T01}.
It is proven in Section~\ref{sec03c}.

\begin{Lemma}
\label{L21}
We have
  \begin{equation}
   \|u \cdot \nabla u\|_{\widetilde{X}_\tau^\co}
   + \|\nabla p\|_{\widetilde{X}_\tau^\co}
   \leq
   C\|u\|_{\widetilde{X}_\tau^\co} \|u\|_{\widetilde{Y}_\tau^\co}
   .
   \label{EQ501}
  \end{equation}
\end{Lemma}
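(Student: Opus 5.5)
The estimate \eqref{EQ501} is a plain product estimate with no dissipative structure and no factor of $\tau$ needed, so I would split $\|\cdot\|_{\widetilde{X}_\tau^\co}=\|\cdot\|_{\mathcal H}+\|\cdot\|_{X_\tau^\co}$ and bound each piece separately, first for $u\cdot\nabla u$ and then for $\nabla p$.

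\emph{The $\mathcal H$ part.} For $\|u\cdot\nabla u\|_{H^3}$ and $\|J^{\bar e}(u\cdot\nabla u)\|_{H^3}$ the tangential derivative simply distributes. For $Z_1(u\cdot\nabla u)$ I would write $Z_1(u\cdot\nabla u)=Z_1u\cdot\nabla u+u_h\,\bar\partial Z_1u+u_3\partial_3Z_1 u - u_3\phi'\partial_3u$. The term with $u_3\partial_3$ is conormalized as $(\phi^{-1}u_3)(\phi\partial_3 Z_1u)$, using \eqref{EQ28} and \eqref{EQ28b} to control $\phi^{-1}u_3$ by $\|u\|_{H^3}+\|\bar\partial u\|_{H^3}\les\|u\|_{\mathcal H}$. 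I would then apply \eqref{EQ28a} with $k=3$. The factor $\phi\partial_3 Z_1 u$ is a second-order conormal derivative, so it is not in $\mathcal H$, but it is bounded by $|u|_{\Hco^{2,3}}$, which appears in $\|u\|_{Y_\tau^\co}$ with coefficient $1$ (the $m=2$ term). Hence this term is at most $\|u\|_{\widetilde X}\|u\|_{\widetilde Y}$. The gradient factor $\nabla u$ in $H^3$ is handled the same way: $\partial_3 u_3=-\bar\partial u_h$ gives no loss, and for $\partial_3u_h$ one pairs it against $u_3$ or $Z_1 u_3$ through $\phi^{-1}$.

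\emph{The $X_\tau^\co$ part.} I would write $J^\alpha(u\cdot\nabla u)=u\cdot\nabla J^\alpha u+u_3[J^\alpha,\partial_3]u+\sum_{0<\beta\le\alpha}\binom{\alpha}{\beta}J^\beta u\cdot J^{\alpha-\beta}\nabla u$.
\begin{itemize}
\item The first term is estimated directly in $H^3$ rather than through the energy cancellation. Its top-order piece $u_3\partial_3\partial^\gamma J^\alpha u$ becomes $(\phi^{-1}u_3)\,\partial^\gamma J^{\alpha+e_3}u$ up to commutators with $\phi$. Multiplying by $\tau^{m-1}/(m-1)!$ and shifting $m\to m+1$ produces exactly the $Y_\tau^\co$ weight $m\,\tau^{m-1}/m!$. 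It is therefore bounded by $\|u\|_{\mathcal H}\|u\|_{Y_\tau^\co}$.
\item The commutator term is handled by the proof of Lemma~\ref{L02}.
\item For the sum, I would reuse the case analysis of Lemma~\ref{L01} (Lemma~\ref{L06}): conormalize, use Lemma~\ref{L05a} to reduce $M_\alpha\binom{\alpha}{\beta}$ to $M_\beta M_{\alpha-\beta}\binom{|\alpha|}{|\beta|}$, and use Lemma~\ref{L05} to split the double sum into a Cauchy product. With $j=|\beta|$, the identity $\binom{m}{j}\frac{(j-1)!(m-j)!}{(m-1)!}\cdot\frac{1}{m}\le\ldots$ arranges the weights as $X$-weight times $Y$-weight.
\end{itemize}
Unlike in \eqref{EQ16}, I do not need the extra $\tau$ here. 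The low-order terms ($j$ or $m-j$ small), which produced $\|u\|_{H^4}^2$ there, must now be bounded by $\|u\|_{\mathcal H}$ and $\|u\|_{H^3}$ instead of $H^4$. That is why $\widetilde X$ and $\widetilde Y$ carry $\mathcal H$ and $H^3$; this is exactly the conormalization mentioned in Remark~\ref{R11}.

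\emph{Pressure.} I would use $-\Delta p=\partial_iu_j\partial_ju_i$ with Neumann condition $\partial_3p=-\partial_3$-free boundary data from $u\cdot\nabla u_3$. I would invoke the elliptic estimates Lemmas~\ref{L07} and~\ref{L08} to reduce $\|\nabla p\|_{X_\tau^\co}$ and $\|\nabla p\|_{\mathcal H}$ to conormal analytic norms of the quadratic forcing $\partial_iu_j\partial_ju_i$ and of $u\cdot\nabla u$. These are then bounded as above, via Lemma~\ref{L09} and the same Cauchy-product argument.

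\emph{Main obstacle.} I expect the hardest step to be the endpoint cases where the derivative loss from $\phi^{-1}J^\gamma\bar\partial u_3$ with $\gamma_3=0$ hits a factor that is only measured in $\mathcal H$ or $H^3$, with no $H^4$ available. There I would first try to place the lost derivative on the other factor, using the $H^3$ room from \eqref{EQ28a} with $\ell+m>k+1.5$, so that the high-order factor lands in $Y_\tau^\co$.
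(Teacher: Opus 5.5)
Your proposal is correct and follows essentially the same route as the paper. You split $\widetilde X_\tau^\co$ into its $\mathcal H$ and $X_\tau^\co$ parts, conormalize every $u_3\partial_3$ product via \eqref{EQ28}--\eqref{EQ28b}, charge the derivative lost to Hardy to $\|u\|_{Y_\tau^\co}$ (to its $m=2$ term in the $\mathcal H$ part), rerun the Lemma~\ref{L06}/Lemma~\ref{L01} casework with $\|u\|_{H^4}$ replaced by $\|u\|_{\mathcal H}$, and handle the pressure with Lemmas~\ref{L07}--\ref{L09}. The only differences are cosmetic: the paper keeps the $\beta=0$ Leibniz term whole as $u_3J^\alpha\partial_3u=(\phi^{-1}u_3)J^{\alpha+e_3}u$, so your commutator $-\alpha_3\phi' J^\alpha u$ never appears, and the Neumann datum is simply $\partial_3p=0$ because $u_3=0$ on the boundary, so no boundary contribution from $u\cdot\nabla u$ needs estimating.
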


We are now ready to prove Theorem~\ref{T01}. 

\begin{proof}[Proof of Theorem~\ref{T01}]
We begin with a priori estimates.
That is, we assume that $u(t) \in X_{\tau(t)}^\co$ for some decreasing
positive smooth function $\tau$  with $\tau(0) = \tau_0$ and show that if $\tau$ decreases sufficiently rapidly, then 
  \begin{equation}
   \|u\|_{X_\tau^\co} 
   \les 
   \|u_0\|_{X_{\tau_0}^\co}
   + \int_0^t \|u\|_{H^4}^2\, ds
   .
   \label{EQ27}
  \end{equation}
Indeed, from the definitions~\eqref{EQ13} and~\eqref{EQ14}, 
we have
  \begin{equation}
   \frac{d}{dt} \|u\|_{X_\tau^\co}
   = 
   \dot{\tau} \|u\|_{Y_\tau^\co}
   + \sum_{m= 1}^\infty
       \left( \frac{d}{dt} |u |_{\Hco^{m,3}}\right) 
       \frac{\tau^{m-1}}{(m-1)!}
   .    
   \label{EQ20}
  \end{equation}
We estimate $(d/dt) |u|_{\Hco^{m,3}}$. 
Applying $J^\alpha$ to~\eqref{EQ00} yields
  \begin{equation}
   \partial_t J^\alpha u 
   + u \cdot \nabla J^\alpha u
   + \sum_{0 < \beta \leq \alpha} {\alpha \choose \beta} 
       J^\beta u \cdot J^{\alpha - \beta} \nabla u
   + u_3 [J^\alpha, \partial_3] u
   + J^\alpha \nabla p 
   = 0
   .
   \llabel{EQ21}
  \end{equation}
Further applying $\partial^\gamma$, with $0 \leq |\gamma| \leq 3$, gives us 
\begin{align}
   \begin{split}
    &
    \partial_t \partial^\gamma J^\alpha u
    + u \cdot \nabla \partial^\gamma J^\alpha u
    + \sum_{0 < \mu \leq \gamma} {\gamma \choose \mu} 
      \partial^\mu u \cdot \nabla \partial^{\gamma-\mu} J^\alpha u
    \\&\indeq\indeq
    + \partial^\gamma \sum_{0 < \beta \leq \gamma } {\alpha \choose \beta} 
      J^\beta u \cdot J^{\alpha - \beta} \nabla u
    +\partial^\gamma (u_3 [J^\alpha, \partial_3] u)
    + \partial^\gamma J^\alpha \nabla p
    = 0
    .
   \end{split}
   \label{EQ22}
  \end{align}
Now, we multiply~\eqref{EQ22} with $M_\alpha$, test with $\partial^\gamma J^\alpha u$, and sum over $|\alpha| = m$ and $0 \leq |\gamma| \leq 3$,
to get
\begin{align}
   \begin{split}
    \frac{d}{dt} |u|_{\Hco^{m,3}}
    &\leq
    \sum_{|\alpha| = m} \sum_{0 < \beta \leq \alpha} {\alpha \choose \beta} 
      M_\alpha \|J^\beta u\cdot J^{\alpha - \beta} \nabla u\|_{H^3}
    + \sum_{|\alpha| = m} 
        M_\alpha \|u_3[J^\alpha, \partial_3] u\|_{H^3}
    \\&\indeq\indeq
    + \sum_{|\alpha| = m}\sum_{0 \leq |\gamma| \leq 3} \sum_{0 < \mu \leq \gamma} {\gamma \choose \mu} 
        M_\alpha \|\partial^\mu u \cdot \nabla \partial^\gamma J^\alpha u\|_{L^2}
    + \sum_{|\alpha| = m}
        M_\alpha \|J^\alpha \nabla p\|_{H^3}
    ,
   \end{split}
   \label{EQ23}
  \end{align}
where the transport term $u\cdot \nabla \partial^\gamma J^\alpha u$ vanished due to the incompressibility and impermeability conditions. 
Finally, after multiplying~\eqref{EQ23} by $\tau^{m-1}(m-1)!^{-s}$, summing over $m\geq 1$, and invoking Lemmas~\ref{L01}--\ref{L03}, 
we infer that
  \begin{equation}
   \frac{d}{dt} \|u\|_{X_\tau^\co}
   -\dot{\tau} \|u\|_{Y_\tau^\co}
   \leq
   C_0\|u\|_{H^4}^2
   + C_1(\tau+\tau^2) \|u\|_{Y_\tau^\co}( \|u\|_{H^4} + \|u\|_{X_\tau^\co})
   ,
   \label{EQ24}
  \end{equation}
where $C_0,C_1$ denote positive universal constants. 
Now, choose a smooth  positive decreasing function $\tau$ such that 
  \begin{equation}
   \dot{\tau} 
   + C_1(\tau + \tau^2) (\|u\|_{H^4} + \|u\|_{X_\tau^\co}) 
   \leq 
   0
   .
   \label{EQ25}
  \end{equation}
The inequalities \eqref{EQ24} and \eqref{EQ25} then imply
  \begin{equation}
   \frac{d}{dt} \|u\|_{X_\tau^\co} 
   \les
   \|u\|_{H^4}^2
   .
   \llabel{EQ26}
  \end{equation}
Upon integrating over $[0,t ]$, 
we obtain~\eqref{EQ27}, while
integration of~\eqref{EQ25} yields~\eqref{EQ15}.

To conclude the a priori estimates, we show that $u \in C([0,T_\ast); X_{\tau(t)}^\co)$.
Fix $t_0 < T_\ast$ and $\sigma < \tau(t_0)$.
By continuity of $\tau$, there exists a compact interval $I \ni t_0$ such that $\sigma < \tau(t)$ for all $t \in I$.
Then, by Lemma~\ref{L20}, the bound~\eqref{EQ27}, and time-continuity of $u$ in $H^4$, we have $\sup_{t \in I}\|u_t(t)\|_{X_{\sigma}^\co} \leq M$ for some $M > 0$.
Hence, for $t \in I$, 
  \begin{equation}
   \|u(t) - u(t_0)\|_{X_{\sigma}^\co}
   \leq
   \int_{t_0}^t \|u_t(s)\|_{X_\sigma^\co}
   \leq
   M(t-t_0)
   .
   \llabel{EQ27a}
  \end{equation}

Now, we prove that the solution is analytic as obtained in the
a~priori estimates; in particular, we prove that we can take
$\tau(t) = G(t)$, where $G(t)$ is the right-hand side of~\eqref{EQ15}.
We begin with establishing local existence in the space $\widetilde{X}_{\sigma(t)}^\co$, where $\sigma(t) = \tau_0 - Mt$ and $M > 1$.
More precisely, we show that for every  $u_0 \in X_{\tau_0}^\co \cap H^4$, there exists a sufficiently large $M > 1$, a sufficiently small $T_0 > 0$, and a unique local-in-time solution $u \in C([0,T_0]; \widetilde{X}_\sigma^\co)$ to~\eqref{EQ00} satisfying 
  \begin{equation}
   \sup_{t \in [0,T_0]} \|u\|_{\widetilde{X}_\sigma^\co}
   + M \int_0^{T_0} 
       \|u\|_{\widetilde{Y}_\sigma^\co}
   \leq
   A
   ,
   \llabel{EQ511}
  \end{equation}
where 
  \begin{equation}
   A
   =
   4\|u_0\|_{\widetilde{X}_{\tau_0}^\co}
   .
   \llabel{EQ512}
  \end{equation}
To show this, we consider the sequence given by 
  \begin{equation}
   u_t^{(n+1)}
   + u^{(n)} \cdot \nabla u^{(n)}
   + \nabla p^{(n)}
   = 0
   \label{EQ513}
  \end{equation}
and 
  \begin{align}
   \begin{split}
    -\Delta p^{(n)}
    &=
    \partial_i u_j^{(n)} \partial_j u_i^{(n)}
    \inon{in $\Omega$,}
    \\
    \partial_3 p^{(n)}
    &=
    0
    \inon{on $\partial \Omega$}
    ,
   \end{split}
   \label{EQ514}
  \end{align}
with $u^{(0)} = u_0$.
By~\eqref{EQ514} and induction on $n$, we have $\nabla \cdot u^{(n)} = 0$ and $u_3^{(n)} = 0$.
Now, we claim that there exist $M > 1$ and $T_0 = T_0(M)$ such that 
  \begin{equation}
   \sup_{t \in [0,T_0]} \|u^{(n)}\|_{\widetilde{X}_\sigma^\co}
   + M \int_0^{T_0} \|u^{(n)}\|_{\widetilde{Y}_\sigma^\co}
   \leq
   A
   \label{EQ515}
  \end{equation}
for all $n \geq 0$. 
Indeed, by Lemma~\ref{L21} and testing~\eqref{EQ513} with $u$ in the $\widetilde{X}_\sigma^\co$-norm, we obtain
  \begin{equation}
   \frac{d}{dt} \|u^{(n+1)}\|_{\widetilde{X}_\sigma^\co}
   + M \|u^{(n+1)}\|_{Y_\sigma^\co}
   \leq
   C \|u^{(n)}\|_{\widetilde{X}_\sigma^\co} \|u^{(n)}\|_{\widetilde{Y}_\sigma^\co}
   .
   \label{EQ516}
  \end{equation}
For $n=0$, the bound~\eqref{EQ515} is immediately true.
For $n \geq 0$, we assume that it holds for $n$ and use~\eqref{EQ516} to write
  \begin{align}
   \begin{split}
    &
    \sup_{t\in [0,T_0]} \|u^{(n+1)}\|_{\widetilde{X}_\sigma^\co}
    + M \int_0^{T_0} \|u^{(n+1)}\|_{\widetilde{Y}_\sigma^\co}
    \\&\indeq\indeq\leq
    (1+T_0M) \sup_{t \in [0,T_0]} \|u^{(n+1)}\|_{\widetilde{X}_\sigma^\co}
    + M \int_0^{T_0} \|u^{(n+1)}\|_{Y_\sigma^\co}
    \\&\indeq\indeq\leq
    2\|u_0\|_{\widetilde{X}_{\tau_0}^\co}
    + C\sup_{t \in [0,T_0]} 
      \|u^{(n)}\|_{\widetilde{X}_\sigma^\co}
      \int_0^{T_0} \|u^{(n)}\|_{\widetilde{Y}_\tau^\co}
    \\&\indeq\indeq\leq
    2 \|u_0\|_{\widetilde{X}_{\tau_0}^\co}
    + C A^2M^{-1}
    \leq 
    A
    ,
   \end{split}
   \llabel{EQ517}
  \end{align}
where we set $M \geq 10 CA$ and  $T_0 \leq \min \{1,\tau_0\}/M$.
Hence, the bound~\eqref{EQ515} holds uniformly for all $n \geq 0$. 
By a standard fixed point argument, we obtain a solution $u \in C([0,T_0]; \widetilde{X}_\sigma^\co)$.
Since the initial data is in $H^r$, this coincides with the classical $H^r$ solution on $[0,T_0]$ due to uniqueness.
This completes the proof of local existence.

Next, we show that, in fact, $u(t) \in X_{\tau(t)}^\co$ for all $t \in [0,T_0]$, where $\tau(t) = G(t)$.
For $N \geq 1$, let $\tau_N \colon [0,T_0] \to (0,\tau_0]$ be a strictly decreasing  positive smooth function with $\tau_N(0)=  \tau_0$
and define the truncated norm
  \begin{equation}
   \|u\|_{X_{\tau_N}^{\co,N}}
   =
   \sum_{m=1}^N \sum_{|\alpha| =m} 
     M_\alpha 
     \|J^\alpha u\|_{H^3}
     \frac{\tau_N^{m-1}}{(m-1)!}
   .
   \llabel{EQ518}
  \end{equation}
Note that this is well-defined since local existence in $C([0,T_0];\widetilde{X}_\sigma^\co)$ with any positive radius $\sigma$ implies $\|J^\alpha u\|_{H^3} < \infty$ for all $\alpha \in \mathbb{N}_0^3$.
We then repeat the a~priori estimates in~\eqref{EQ20}--\eqref{EQ24} for the truncated norm to obtain 
  \begin{equation}
   \frac{d}{dt} \|u\|_{X_{\tau_N}^{\co,N}}
   - \dot{\tau}_N \|u\|_{Y_{\tau_N}^{\co,N}}
   \leq
   C_0\|u\|_{H^4}^2
   + C_1(\tau_N + \tau_N^2) \|u\|_{Y_{\tau_N}^{\co,N}} 
   (\|u\|_{X_{\tau_N}^{\co,N}} + \|u\|_{H^4})
   ,
   \llabel{EQ519}
  \end{equation}
where $C_0,C_1$ are positive constants independent of~$N$.
If $\tau_N$ decreases sufficiently fast, then
  \begin{equation}
   \dot{\tau}_N
   + C_1(\tau_N + \tau_N^2)
     (\|u\|_{X_{\tau_N}^{\co,N}} + \|u\|_{H^4})
   \leq 0
   \label{EQ520}
  \end{equation}
and 
  \begin{equation}
   \|u(t)\|_{X_{\tau_N}^{\co,N}}
   \les
   \|u_0\|_{X_{\tau_0}^{\co,N}}
   + \int_0^t \|u\|_{H^4}^2
   \leq
   \|u_0\|_{X_{\tau_0}^\co}
   + \int_0^t \|u\|_{H^4}^2
   =: \widehat{M}(t)
   \label{EQ521}
  \end{equation}
for $t \in [0,T_0]$.
Hence, choosing $\tau_N = \tau$, where $\tau$ satisfies 
  \begin{equation}
  \dot{\tau}  
   + C_1(\tau + \tau^2)
   (\widehat{M}(t) + \|u\|_{H^4})
   ,
   \llabel{EQ522}
  \end{equation}
is sufficient for~\eqref{EQ520} to hold. 
Notably, $\tau$ is uniform in $N$ and $\|u(t)\|_{X_\tau^{\co,N}}$ is bounded and monotonic by~\eqref{EQ521}. 
Passing to the limit, we find that  $u(t) \in X_{\tau(t)}^\co$ and satisfies the estimate~\eqref{EQ521} for all $t \in [0,T_0]$.
Moreover, by the continuity arguments in the a~priori estimates, we have $u \in C([0,T_0];X_{\tau(t)}^\co)$.

Finally, we demonstrate that this construction may be iterated up to the $H^4$ maximal time of existence~$T_\ast$. 
Without loss of generality, assume that $\tau_0 < 1$.
Let $0 < \widehat{T} < T_\ast$. 
Note that 
  \begin{equation}
   10CA 
   \leq 
   C_2 \widehat{M}(\widehat{T}) 
   ,
   \llabel{EQ523}
  \end{equation}
for some time-independent constant~$C_2$.
Our previous local-existence and persistence arguments with $M$ replaced by $\widehat{M}(\widehat{T})$ yield a solution $u \in C([0,T_0]; X_{\tau(t)}^{\co})$ where $T_0 = \tau_0/(C_2 \widehat{M}(\widehat{T}))$.
Then, with the radius $\sigma(t) = \tau(T_0) - C Mt$ and $A$ replaced by 
  \begin{equation}
   A_1
   =
   4\|u(T_0)\|_{X_{\tau(T_0)}^\co}
   ,
   \llabel{EQ524}
  \end{equation}
we obtain a continuation of the solution to $T_1 = T_0 + \tau(T_0)/(C_2\widehat{M}(\widehat{T}))$ since $A_1 \leq C_2 \widehat{M}(\widehat{T})$ by the first iteration's persistence argument. 
Continuing with this, we obtain 
  \begin{equation}
   T_n 
   = 
   \frac{1}{C_2\widehat{M}(\widehat{T})}
   \sum_{k=0}^{n-1} \tau(T_k)
   .
   \llabel{EQ525}
  \end{equation}
Provided that $T_k < \widehat{T}$, the definition of $ G(t)$ implies that $\tau(T_k) > \tau(\widehat{T})$.
Hence, 
  \begin{equation}
   T_n 
   \geq 
   \frac{n \tau(\widehat{T})}{C_2 \widehat{M}(\widehat{T})}
   =:
   C(\widehat{T}) n
   .
   \llabel{EQ526}
  \end{equation}
Since $C(\widehat{T})$ is independent of $n$, there exists $n \geq 0$ such that $T_{n-1} < \widehat{T} < T_n$.
This completes the proof of the theorem. 
\end{proof}

\subsection{Advection estimates}\label{sec03a}
Here, we prove Lemmas~\ref{L01} and~\ref{L02}. 
Before this, we establish a conormalization lemma.  

\begin{Lemma}
\label{L06}
Let $\alpha,\beta \in \mathbb{N}_0^3$ with $0 < \beta < \alpha$. 
Then 
  \begin{equation}
   \|J^\beta u_3 J^{\alpha- \beta} \partial_3 u\|_{H^3}
   \les
   E_{\alpha,\beta}^1
   + E_{\alpha,\beta}^2
   + E_{\alpha,\beta}^3
   + E_{\alpha,\beta}^4
   ,
   \label{EQ31}
  \end{equation}
where
  \begin{align}
   \begin{split}
    E_{\alpha,\beta}^1
    &=
    1_{\{\beta_3 = 0, \alpha_3 \geq 1, \beta' < \alpha'\}}
      \|J^\beta \bar{\partial} u\|_{H^3} \|J^{\alpha -\beta + e_3 - \bar{e}}u\|_{H^3},
    \\
    E_{\alpha,\beta}^{2}
    &= 
    1_{\{\beta_3 = 0, \alpha_3 \geq 1, \beta' = \alpha'\}}
      \alpha_3
      (\|J^\beta u\|_{H^3} + \|J^\beta \bar{\partial} u\|_{H^3}) \|J^{\alpha- \beta} u\|_{H^3} ,
    \\
    E_{\alpha,\beta}^3
    &=
    1_{\{\beta_3 \geq 1\}}
      \|J^{\beta - e_3} \bar{\partial} u\|_{H^3} \|J^{\alpha - \beta + e_3} u\|_{H^3}
    + 1_{\{\beta_3 = 0\}} 
      \|J^\beta u\|_{H^3} \|J^{\alpha- \beta + e_3} u\|_{H^3},
    \\
    E_{\alpha,\beta}^4
    &=
    1_{\{\beta_3 = 0, \alpha_3 \geq 1, \beta' < \alpha'\}}
      \|J^\beta u\|_{H^3} \|J^{\alpha - \beta + e_3 -\bar{e}} u\|_{H^3}
      + 1_{\{\beta_3 = 0, \alpha_3 = 0\}}
      \|J^\beta u\|_{H^3} \|J^{\alpha - \beta} u\|_{H^3}
    .
   \end{split}
   \llabel{EQ32}
  \end{align}
\end{Lemma}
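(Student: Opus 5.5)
The plan is to conormalize the product as in~\eqref{introa}: move a factor $\phi^{-1}$ onto $J^\beta u_3$ and a factor $\phi$ onto $J^{\alpha-\beta}\partial_3 u$. Since $\phi J^{\alpha-\beta}\partial_3 u$ is not literally $J^{\alpha-\beta+e_3}u$, I would first record the elementary identity
\begin{equation*}
\phi\, Z_k \partial_3 = \phi^{k+1}\partial_3^{k+1} = Z_{k+1}.
\end{equation*}
Hence $\phi J^{\alpha-\beta}\partial_3 u = J^{\alpha-\beta+e_3}u$ exactly, because $\phi$ depends only on $x_3$ and commutes with $\partial^{\alpha'}$. This gives
\begin{equation*}
J^\beta u_3\, J^{\alpha-\beta}\partial_3 u = (\phi^{-1}J^\beta u_3)(J^{\alpha-\beta+e_3}u).
\end{equation*}
In each case I would then use the product estimate~\eqref{EQ28a} with $k=3$, $\ell=m=3$, which is admissible since $6>4.5$, so that
\begin{equation*}
\|ab\|_{H^3}\les \|a\|_{H^3}\|b\|_{H^3}.
\end{equation*}

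\emph{Case $\beta_3\ge1$.} Write $\phi^{-1}J^\beta u_3=\partial^{\beta'}\phi^{\beta_3-1}\partial_3^{\beta_3-1}\partial_3u_3$. Incompressibility, $\partial_3u_3=-\partial_1u_1-\partial_2u_2$, turns this into $-J^{\beta-e_3}\bar\partial u_h$. The product bound then gives the first term of $E^3_{\alpha,\beta}$.

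\emph{Case $\beta_3=0$.} Here $J^\beta u_3=\partial^{\beta'}u_3$ still vanishes on $\partial\Omega$ and is divergence-free as a component of $\partial^{\beta'}u$. The Hardy estimate~\eqref{EQ28b} gives
\begin{equation*}
\|\phi^{-1}J^\beta u_3\|_{H^3}\les \|J^\beta u\|_{H^3}+\|J^\beta\bar\partial u\|_{H^3}.
\end{equation*}
The $\|J^\beta u\|_{H^3}$ piece produces the second term of $E^3$. The derivative-loss piece $\|J^\beta\bar\partial u\|\,\|J^{\alpha-\beta+e_3}u\|$ must be rebalanced, and I would split according to $\alpha_3$ and $\beta'$.

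If $\alpha_3=0$, then $J^{\alpha-\beta}\partial_3u=\partial^{\alpha'-\beta'}\partial_3 u$, and no conormalization is needed. I would estimate the original product directly by $\|J^\beta u\|_{H^3}\|\partial^{\alpha'-\beta'}\partial_3u\|_{H^3}$. The normal derivative must then be absorbed, either via the $H^3$ room or by writing $\partial_3 u$ through tangential derivatives. I expect this to yield the second term of $E^4$, possibly after a modest reinterpretation of which index carries the derivative.

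If $\alpha_3\ge1$ and $\beta'<\alpha'$, some tangential index of $\alpha-\beta$ is positive. I would move one tangential derivative $\bar\partial$ from the second factor to the first, writing $J^{\alpha-\beta+e_3}=\bar\partial J^{\alpha-\beta+e_3-\bar e}$. This can be done at the level of the product before taking norms, via the Leibniz rule
\begin{equation*}
a\,\bar\partial b=\bar\partial(ab)-(\bar\partial a)b.
\end{equation*}
Alternatively, and more cleanly, I would apply~\eqref{EQ28b} in the form $\|\phi^{-1}v_3\|\les\|v_3\|+\|\bar\partial v\|$ and keep the factor $\phi^{-1}J^\beta u_3$ paired with $J^{\alpha-\beta+e_3-\bar e}u$. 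The outcome should be $E^1$ plus the lower-order $E^4$ term.

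If $\alpha_3\ge1$ and $\beta'=\alpha'$, then $J^{\alpha-\beta}=Z_{\alpha_3}$ is purely conormal. I would use the commutator identity
\begin{equation*}
Z_{\alpha_3}\partial_3 = \phi^{-1}Z_{\alpha_3+1}-\alpha_3\phi^{-1}\phi' Z_{\alpha_3}\quad\text{(up to bounded coefficients)},
\end{equation*}
or equivalently $\partial_3 Z_k = \phi^{-1}Z_{k+1}+k\phi'\phi^{-1}Z_k$. The factor $\phi^{-1}$ is again placed on $u_3$, and the terms collected carry the factor $\alpha_3$, giving $E^2$.

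The main obstacle is the $\beta_3=0$ case, specifically the subcase $\beta'=\alpha'$. There the derivative loss from Hardy cannot be shifted tangentially, and the $\alpha_3$ factor together with the bounded derivatives of $\phi'/\phi\cdot\phi$ must be tracked carefully so that the $H^3$ norms of the coefficient functions stay uniformly bounded.
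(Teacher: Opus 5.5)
Your case $\beta_3\ge1$ is correct and matches the paper. The cases with $\beta_3=0$, however, have a genuine gap.

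\textbf{The cases $\beta_3=0$, $\alpha_3\ge1$.} Because you commit to the algebra bound $\|ab\|_{H^3}\lesssim\|a\|_{H^3}\|b\|_{H^3}$, the lossy Hardy bound \eqref{EQ28b} on $\phi^{-1}J^\beta u_3$ is always paired with the full norm $\|J^{\alpha-\beta+e_3}u\|_{H^3}$. This produces $\|J^\beta\bar\partial u\|_{H^3}\|J^{\alpha-\beta+e_3}u\|_{H^3}$, which is one level too high and is not among $E^1_{\alpha,\beta},\dots,E^4_{\alpha,\beta}$. None of your proposed repairs removes it:
\begin{itemize}
\item The rewrite $a\,\bar\partial b=\bar\partial(ab)-(\bar\partial a)b$ makes things worse. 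It needs $\|ab\|_{H^4}$ and puts a second tangential derivative on the Hardy factor.
\item "Pairing $\phi^{-1}J^\beta u_3$ with $J^{\alpha-\beta+e_3-\bar e}u$" is not a mechanism while both factors sit in $H^3$.
\item The identity $Z_{k+1}=\phi\partial_3Z_k-k\phi'Z_k$ does not lower the order of $J^{\alpha-\beta+e_3}u$ in $H^3$. (Your first displayed version is off: $Z_k\partial_3=\phi^{-1}Z_{k+1}$ exactly, as you yourself noted.)
\end{itemize}

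The missing idea is the tame product estimate
\[
\|fg\|_{H^3}\lesssim\|f\|_{L^\infty}\|g\|_{H^3}+\|f\|_{H^3}\|g\|_{L^\infty},\qquad f=\phi^{-1}J^\beta u_3,\quad g=J^{\alpha-\beta+e_3}u,
\]
combined with the $L^\infty$ Hardy inequality \eqref{EQ28}, which you never invoke. Together with $H^3\hookrightarrow W^{1,\infty}$ it gives $\|f\|_{L^\infty}\lesssim\|J^\beta u\|_{H^3}$ with no loss. The term $\|f\|_{L^\infty}\|g\|_{H^3}$ then gives the second part of $E^3_{\alpha,\beta}$. The lossy $\|f\|_{H^3}$ meets only $\|g\|_{L^\infty}$, where the $H^3$ room lets you drop one order:
\begin{itemize}
\item If $\beta'<\alpha'$, then $\|g\|_{L^\infty}=\|\bar\partial J^{\alpha-\beta+e_3-\bar e}u\|_{L^\infty}\lesssim\|J^{\alpha-\beta+e_3-\bar e}u\|_{H^3}$, giving $E^1_{\alpha,\beta}$ and the first part of $E^4_{\alpha,\beta}$.
\item If $\beta'=\alpha'$, then $\|g\|_{L^\infty}\lesssim\alpha_3\|J^{\alpha-\beta}u\|_{H^3}$, giving $E^2_{\alpha,\beta}$. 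This is the only place the identity $Z_{k+1}=\phi\partial_3Z_k-k\phi'Z_k$ is needed, and only in $L^\infty$.
\end{itemize}

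\textbf{The case $\alpha_3=0$.} Your direct estimate fails. $\|\partial^{\alpha'-\beta'}\partial_3u\|_{H^3}$ contains an unweighted $\partial_3^4u$, which is not controlled by $\|J^{\alpha-\beta}u\|_{H^3}+\|J^{\alpha-\beta+e_3}u\|_{H^3}$; it can be infinite for $u\sim x_3^{7/2}$ at the wall. Also, $\partial_3u_h$ cannot be rewritten tangentially. The tame route above does not help here either: it would produce an $E^1$-type term, but the statement's $E^1_{\alpha,\beta}$ carries the indicator $\alpha_3\ge1$. What the paper does is expand the $H^3$ norm by Leibniz:
\begin{itemize}
\item When all three derivatives fall on the second factor, conormalize as $\|\phi^{-1}J^\beta u_3\|_{L^\infty}\|\phi\,\partial^\gamma J^{\alpha-\beta}\partial_3u\|_{L^2}$. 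Then commute $\phi$ past $\partial^\gamma$; the commutator costs at most two derivatives and yields $\|J^{\alpha-\beta}u\|_{H^3}$.
\item When at least one derivative falls on $J^\beta u_3$, the second factor carries at most three derivatives in total, and $\partial_3$ commutes with $J^{\alpha-\beta}$. The multiplicative inequality \eqref{EQ28a} then gives $\|J^\beta u\|_{H^3}\|J^{\alpha-\beta}u\|_{H^3}$ with no conormalization at all.
\end{itemize}
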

\begin{remark}
\label{R01}
The terms $E_{\alpha,\beta}^i$ are organized based on energy level. 
Indeed, for the estimates in the proof of Lemma~\ref{L01}, $E_{\alpha,\beta}^3$ and $E_{\alpha,\beta}^4$ can effectively be identified as
  \begin{equation}
   E_{\alpha,\beta}^3
   \sim
   \|J^\beta u\|_{H^3} \|J^{\alpha - \beta + e_3}u\|_{H^3}
   \llabel{EQ34}
  \end{equation}
and
  \begin{equation}
   E_{\alpha,\beta}^4
   \sim 
   1_{\{\beta_3 = 0, \alpha_3 \geq 1, \beta' < \alpha'\}}
     \|J^\beta u\|_{H^3} \|J^{\alpha - \beta} u\|_{H^3}
   .
   \llabel{EQ35}
  \end{equation}
The terms $E_{\alpha,\beta}^1$ and $E_{\alpha,\beta}^2$ are separated by energy level as well; however, they are special in that  the restriction on $\alpha, \beta$ plays a significant role in the closure of their estimates in the proof of Lemma~\ref{L01}.
\end{remark}

\begin{proof}[Proof of Lemma~\ref{L06}]
The proof consists of estimating the left-hand side of~\eqref{EQ31} under each of the following cases:
  \begin{align}
   \begin{split}
    &
    \beta_3 \geq 1,
    \\&
    \beta_3 =0, 
    \alpha_3 \geq 1, 
    \beta' < \alpha',
    \\&
    \beta_3 = 0,
    \alpha_3 \geq 1,
    \beta' = \alpha',
    \\&
    \beta_3 = 0,
    \alpha_3 = 0.
   \end{split}
   \label{EQ33}
  \end{align}
Before any casework, note that
  \begin{equation}
   J^\beta u_3 J^{\alpha - \beta}\partial_3 u
   =
   \phi^{-1} J^\beta u_3 \phi J^{\alpha - \beta} \partial_3 u
   =
   \phi^{-1} J^\beta u_3 J^{\alpha - \beta + e_3} u
   .
   \label{EQ36}
  \end{equation}
If $\beta_3 \geq 1$,
we have
  \begin{equation}
   \phi^{-1} J^\beta u_3
   =
   \partial^{\beta'} \phi^{\beta_3 - 1} \partial_3^{\beta_3 -1} \partial_3 u_3
   =
   J^{\beta - e_3} (- \partial_1 u_1 -\partial_2u_2)
   \llabel{EQ37}
  \end{equation}
by incompressibility.
Since $H^3$ is an algebra,
we bound 
  \begin{equation}
   1_{\{\beta_3 \geq 1\}}
     \|J^\beta u _3 J^{\alpha - \beta} \partial_3 u\|_{H^3}
   \les
   1_{\{\beta_3 \geq 1\}}
     \|J^{\beta - e_3} \bar{\partial} u\|_{H^3} \|J^{\alpha - \beta + e_3}u\|_{H^3}
   .
   \label{EQ39}
  \end{equation}
If $\beta_3 = 0$, $\alpha_3 \geq 1$, and $\beta' < \alpha'$,
we instead use the product estimate,
  \begin{equation}
   \|fg\|_{H^3} 
   \les
   \|f\|_{L^\infty} \|g\|_{H^3}
   + \|f\|_{H^3} \|g\|_{L^\infty}
   ,
   \label{EQ40}
  \end{equation}
on~\eqref{EQ36}.
By~\eqref{EQ28b}, 
  \begin{equation}
   \|\phi^{-1} J^\beta u_3\|_{H^3}
   \les
   \|J^\beta u\|_{H^3}
   + \|J^\beta \bar{\partial} u\|_{H^3}
   ,
   \label{EQ41}
  \end{equation}
and, by~\eqref{EQ28} and the embedding $H^3 \hookrightarrow W^{1,\infty}$,
  \begin{equation}
   \|\phi^{-1} J^\beta u_3\|_{L^\infty}
   \les
   \|J^\beta u\|_{L^\infty} 
   + \|J^\beta \bar{\partial} u\|_{L^\infty}
   \les
   \|J^\beta u\|_{H^3}
   .
   \llabel{EQ42}
  \end{equation}
For $J^{\alpha - \beta + e_3}u$, 
observe that
  \begin{equation}
   \|J^{\alpha - \beta + e_3} u\|_{L^\infty}
   =
   \|\bar{\partial} J^{\alpha - \beta + e_3 -\bar{e}} u\|_{L^\infty}
   \les
   \|J^{\alpha - \beta + e_3 - \bar{e}} u\|_{H^3}
   ,
   \label{EQ43}
  \end{equation}
where we used $\beta' < \alpha'$ in the first equality.
Combining the estimates~\eqref{EQ41}--\eqref{EQ43} with~\eqref{EQ36} and~\eqref{EQ40}
yields
  \begin{align}
   \begin{split}
    &
    1_{\{\beta_3 = 0, \alpha_3 \geq 1, \beta' < \alpha'\}}
      \|J^\beta u_3 J^{\alpha- \beta}\partial_3 u\|_{H^3}
    \\&\indeq\les
    1_{\{\beta_3 = 0, \alpha_3 \geq 1, \beta' < \alpha'\}}
      (\|J^\beta u\|_{H^3} + \|J^\beta \bar{\partial} u\|_{H^3}) \|J^{\alpha - \beta + e_3 - \bar{e}} u\|_{H^3}
    \\&\indeq\indeq
    +1_{\{\beta_3 = 0, \alpha_3 \geq 1, \beta' < \alpha'\}}
      \|J^\beta u\|_{H^3} \|J^{\alpha- \beta + e_3} u\|_{H^3}
    .
   \end{split}
   \label{EQ44}
  \end{align}
If $\beta_3 =0$, $\alpha_3 \geq 1$, and $\beta' = \alpha'$, 
we repeat the computations in the previous case, except that we replace~\eqref{EQ43} with
  \begin{equation}
   \|J^{\alpha - \beta + e_3}u\|_{L^\infty}
   \les
   \alpha_3
   \|J^{\alpha - \beta} u\|_{H^3}
   ,
   \label{EQ45}
  \end{equation}
giving
  \begin{align}
   \begin{split}
   &
   1_{\{\beta_3 = 0, \alpha_3 \geq 1, \beta' = \alpha'\}} 
     \|J^\beta u_3J^{\alpha - \beta} \partial_3 u\|_{H^3}
   \\&\indeq\les
   1_{\{\beta_3 = 0, \alpha_3 \geq 1, \beta' = \alpha'\}}
     \alpha_3
     (\|J^\beta u\|_{H^3} + \|J^\beta \bar{\partial} u\|_{H^3})\|J^{\alpha - \beta} u\|_{H^3} 
   \\&\indeq\indeq
   + 1_{\{\beta_3 = 0, \alpha_3 \geq 1, \beta' = \alpha'\}}
     \|J^\beta u\|_{H^3} \|J^{\alpha - \beta + e_3}u\|_{H^3}
     .
   \end{split} 
   \label{EQ46}
  \end{align}
To see that~\eqref{EQ45} holds,
note that 
  \begin{equation}
   \phi J^{\alpha - \beta} \partial_3 u
   = 
   \phi \partial_3 (J^{\alpha - \beta} u)
   - \alpha_3 \phi' J^{\alpha - \beta} u
   ,
   \llabel{EQ47}
  \end{equation}
where $\|\phi\|_{W^{1,\infty}} \les 1$.

Finally, we consider the case when $\beta_3 = 0$ and $\alpha_3 = 0$.
We take a different approach in that, using the Leibniz rule, we explicitly expand the $H^3$ norm, 
obtaining
  \begin{equation}
   \|J^\beta u_3 J^{\alpha - \beta} \partial_3 u\|_{H^3}
   \les
   \sum_{0 \leq |\gamma| \leq 3} \sum_{0 \leq \mu \leq \gamma} {\gamma \choose \mu}
     \|\partial^\mu J^\beta u_3 \partial^{\gamma - \mu} J^{\alpha - \beta} \partial_3 u\|_{L^2}
   .
   \label{EQ48}
  \end{equation}
Without loss of generality, we only consider the terms in the sum corresponding to $|\gamma| = 3$ as the terms from $|\gamma|=0,1,2$ give subordinate bounds. 
Splitting off the case when $\mu = 0$, 
we have
  \begin{align}
   \begin{split}
    &
    \sum_{|\gamma| = 3} \sum_{0 \leq \mu \leq \gamma} {\gamma \choose \mu}
     \|\partial^\mu J^\beta u_3 \partial^{\gamma - \mu} J^{\alpha - \beta} \partial_3 u\|_{L^2}
    \\&\indeq\indeq \les
    \sum_{|\gamma| = 3}
      \|\phi^{-1}J^\beta u_3 \phi\partial^\gamma J^{\alpha - \beta}\partial_3 u\|_{L^2}
    + \sum_{|\gamma| = 3} \sum_{j=1}^3 \sum_{|\mu| = j, \mu \leq \gamma}
    \|\partial^\mu J^\beta u_3 \partial^{\gamma - \mu} J^{\alpha - \beta} \partial_3 u\|_{L^2} 
    ,
   \end{split} 
   \label{EQ49}
  \end{align}
where, by~\eqref{EQ28} and the embedding $H^3 \hookrightarrow W^{1,\infty}$,
  \begin{align}
   \begin{split}
    &
    \|\phi^{-1} J^\beta u_3 \phi \partial^\gamma J^{\alpha - \beta} \partial_3 u\|_{L^2} 
    \\&\indeq\les
    \|\phi^{-1} J^\beta u_3\|_{L^\infty}
    \|\phi \partial^\gamma J^{\alpha - \beta} \partial_3 u\|_{L^2}
    \les
    \|J^\beta u\|_{H^3}
    \left( \|[\phi, \partial^\gamma] J^{\alpha - \beta} \partial_3 u\|_{L^2}+ \|J^{\alpha - \beta + e_3} u\|_{H^3} \right)
    \\&\indeq\les
    \|J^\beta u\|_{H^3}
    \left(\|J^{\alpha - \beta} u\|_{H^3} + \|J^{\alpha - \beta + e_3} u\|_{H^3}\right)
    .
   \end{split}
   \label{EQ50}
  \end{align}
For the third inequality above,
note that $[\phi,\partial^\gamma]f$ contains at most two derivatives on~$f$.
Next, the rightmost term in~\eqref{EQ49} is bounded without any conormalization.
Indeed, we simply note that $[\partial_3,J^{\alpha-\beta}] = 0$ when $\alpha_3 = \beta_3 = 0$ and use the multiplicative Sobolev inequality~\eqref{EQ28a} 
to get 
  \begin{align}
   \begin{split}
   &
   \sum_{|\gamma| = 3} \sum_{j=1}^3 \sum_{|\mu| = j, \mu \leq \gamma}
     \|\partial^\mu J^\beta u_3 \partial^{\gamma - \mu} J^{\alpha - \beta} \partial_3 u\|_{L^2}
   \\&\indeq
   \les
   \sum_{|\gamma| = 3} \sum_{j=1}^3 \sum_{|\mu| = j , \mu \leq \gamma}
     \|\partial^\mu J^\beta u_3\|_{H^{3-j}} \|\partial^{\gamma - \mu} \partial_3 J^{\alpha - \beta} u\|_{H^{j-1}}
   \\&\indeq
   \les 
   \|J^\beta u\|_{H^3}\|J^{\alpha - \beta} u\|_{H^3}
   .
   \end{split}
   \label{EQ51}
  \end{align}
Hence, from~\eqref{EQ48},~\eqref{EQ50}, and~\eqref{EQ51}, 
we obtain
  \begin{equation}
   1_{\{\beta_3 = 0, \alpha_3 = 0\}}
     \|J^\beta u_3 J^{\alpha - \beta} \partial_3 u\|_{H^3}
   \les
   1_{\{\beta_3 =0, \alpha_3 = 0\}}
     \|J^\beta u\|_{H^3}
     \left( \|J^{\alpha - \beta} u\|_{H^3} + \|J^{\alpha - \beta + e_3} u\|_{H^3}\right)
   .
   \label{EQ52}
  \end{equation}

Finally, since the cases given by~\eqref{EQ33} are exhaustive, we add together the estimates~\eqref{EQ39},~\eqref{EQ44},~\eqref{EQ46}, and~\eqref{EQ52} to acquire~\eqref{EQ31}. 
\end{proof}

We are now ready to prove Lemma~\ref{L01}.

\begin{proof}[Proof of Lemma~\ref{L01}]
First, we split the sum on the left-hand side of~\eqref{EQ16} into tangential and normal parts.
More precisely,
  \begin{equation}
   \sum_{m=1}^\infty \sum_{|\alpha| = m} \sum_{0 < \beta \leq \alpha} {\alpha \choose \beta} 
     M_\alpha \|J^\alpha u \cdot J^{\alpha - \beta} \nabla u\|_{H^3}
     \frac{\tau^{m-1}}{(m-1)!}
   \leq
   \sum_{m= 1}^\infty \sum_{j=1}^m 
     (\calC_{m,j}^{\rm tan} 
     + \calC_{m,j}^{\rm nor})
   ,
   \label{EQ53}
  \end{equation}
where 
  \begin{equation}
   \calC_{m,j}^{\rm tan} 
   = 
   \sum_{|\alpha| = m} \sum_{ |\beta| = j, \beta \leq \alpha} {\alpha \choose \beta}
   M_\alpha \|J^\beta u_h J^{\alpha - \beta} \bar{\partial} u\|_{H^3}
     \frac{\tau^{m-1}}{(m-1)!}
   ,
   \llabel{EQ54}
  \end{equation}
and 
  \begin{equation}
   \calC_{m,j}^{\rm nor}
   =
   \sum_{|\alpha| = m} \sum_{|\beta| =j, \beta \leq \alpha} {\alpha \choose \beta}
     M_\alpha \|J^\beta u_3 J^{\alpha - \beta} \partial_3 u\|_{H^3}
     \frac{\tau^{m-1}}{(m-1)!}
   .
   \llabel{EQ55}
  \end{equation}

We claim that
  \begin{equation}
   \sum_{m=1}^\infty \sum_{j=1}^m 
     \calC_{m,j}^{\rm tan}
   \les
   \|u\|_{H^4}^2
   + \tau \|u\|_{H^4} \|u\|_{Y_\tau^\co}
   + \tau \|u\|_{X_\tau^\co} \|u\|_{Y_\tau^\co}
   .
   \label{EQ55a}
  \end{equation}
Indeed, 
we start by extracting the lower-order terms out of the sum.
That is,
  \begin{equation}
   \sum_{m = 1}^\infty \sum_{j =1}^m 
     \calC_{m,j}^{\rm tan} 
   = 
   \calC_{1,1}^{\rm tan}
   + \sum_{m =2}^\infty 
       \calC_{m,m}^{\rm tan}
   + \sum_{m =2}^\infty \sum_{j=1}^{m-1} 
       \calC_{m,j}^{\rm tan}
   ,
   \label{EQ56}
  \end{equation}
where, for the first two terms,
  \begin{equation}
   \calC_{1,1}^{\rm tan}
   = 
   \sum_{|\alpha| = 1} \sum_{|\beta| = 1, \beta \leq \alpha} {\alpha \choose \beta}
     M_\alpha \|J^\beta u_h J^{\alpha - \beta} \bar{\partial} u\|_{H^3}
   \les
   \sum_{|\alpha| = 1} \sum_{|\beta| = 1}
     \|J^\beta u\|_{H^3}\|J^{\alpha - \beta} \bar{\partial}u\|_{H^3}
   \les 
   \|u\|_{H^4}^2
   ,
   \llabel{EQ57}
  \end{equation}
and, denoting $\mathcal{A}_{\alpha,\beta,s} = M_\alpha M_\beta^{-1} {\alpha \choose \beta} (m-1)^{-1}$,
  \begin{align}
   \begin{split}
    \sum_{m =2}^\infty \calC_{m,m}^{\rm tan}
    &=
    \sum_{m =2}^\infty \sum_{|\alpha| = m} \sum_{|\beta| = m} {\alpha \choose \beta}
      M_\alpha \|J^\beta u_h J^{\alpha - \beta} \bar{\partial} u\|_{H^3}
      \frac{\tau^{m-1}}{(m-1)!}
    \\&\les
    \tau \|u\|_{H^4} 
    \sum_{m= 2}^\infty \sum_{|\alpha| = m} \sum_{|\beta| = m}
      \left( M_\beta \|J^\beta u\|_{H^3} \frac{(m-1)\tau^{m-2}}{(m-1)!}\right)
      \mathcal{A}_{\alpha,\beta,s}
    \\&\les
    \tau \|u\|_{H^4}\|u\|_{Y_\tau^\co}
    .
   \end{split}
   \label{EQ58}
  \end{align}
In order to justify the last inequality in~\eqref{EQ58}, we must show that $\mathcal{A}_{\alpha,\beta,s}$ is uniformly bounded in $m$ for $m \geq 2$.  
Indeed, since $|\alpha| = |\beta| = m$ and $\beta \leq \alpha$ imply $\alpha = \beta$, we get 
  \begin{equation}
   \mathcal{A}_{\alpha,\beta,s} 
   = 
   \frac{1}{m-1}
   \leq
   1
   \llabel{EQ59}
  \end{equation}
for $m \geq 2$.
Finally, we consider the third term on the right-hand side of~\eqref{EQ56}. 
We have
  \begin{align}
   \begin{split}
    \sum_{m =2}^\infty \sum_{j=1}^{m-1} 
      \calC_{m,j}^{\rm tan}
    &=
    \sum_{m= 2}^\infty \sum_{|\alpha| = m} \sum_{j=1}^{m-1} \sum_{|\beta| = j, \beta \leq \alpha} {\alpha \choose \beta}
      M_\alpha \|J^\beta u_h J^{\alpha - \beta} \bar{\partial} u\|_{H^3}
      \frac{\tau^{m-1}}{(m-1)!}
    \\&\les
    \tau 
    \sum_{m=2}^\infty \sum_{|\alpha| = m} \sum_{j=1}^{m-1} \sum_{|\beta| = j, \beta \leq \alpha}
      \left(M_\beta \|J^\beta u\|_{H^3} \frac{\tau^{j-1}}{(j-1)!}\right)
    \\&\indeq\indeq\indeq\indeq
      \times 
      \left( M_{\alpha - \beta} \|J^{\alpha - \beta} \bar{\partial} u\|_{H^3} \frac{(m-j) \tau^{m-j-1}}{(m-j)!}\right)
      \mathcal{A}_{\alpha,\beta,s}
    \\&\les
    \tau \|u\|_{X_\tau^\co} \|u\|_{Y_\tau^\co}
    .
   \end{split}
   \llabel{EQ60}
  \end{align}
In the last line of the above, we used Lemma~\ref{L05a} to obtain
  \begin{align}
   \begin{split}
    \mathcal{A}_{\alpha, \beta, s}
    &=
    M_\alpha M_\beta^{-1}M_{\alpha-\beta}^{-1} {\alpha \choose \beta} \frac{(j-1)! (m-j)!}{(m-1)!} \frac{1}{m-j}
  \\&\leq
    {m \choose j} {m-1\choose j-1}^{-s} \frac{1}{m-j}
    = 
    {m-1 \choose j-1}^{1-s} \frac{m}{j(m-j)} 
    \les 1
   \end{split}
   \llabel{EQ61}
  \end{align}
for $m \geq 2$ and $1\leq j \leq m-1$. 
Also, we used Lemma~\ref{L05} to change the mixed sum to the product $\|u\|_{X_\tau^\co} \|u\|_{Y_\tau^\co}$.
Throughout the paper, Lemma~\ref{L05} is frequently used in precisely this manner, 
and we henceforth omit explicit reference to it. 
This completes the proof of~\eqref{EQ55a}.

Next, we claim that
  \begin{equation}
   \sum_{m=1}^\infty \sum_{j=1}^m 
     \calC_{m,j}^{\nor}
   \les
   \|u\|_{H^4}^2 
   + (\tau + \tau^2)\|u\|_{Y_\tau^\co}
     (\|u\|_{H^4} + \|u\|_{X_\tau^\co})
   .
   \llabel{EQ61a}
  \end{equation}
Indeed,
  \begin{equation}
   \sum_{m =1}^\infty \sum_{j=1}^m 
     \calC_{m,j}^{\rm nor}
   =
   \calC_{1,1}^{\rm nor}
   + \sum_{m =1}^\infty
       \calC_{m,m}^{\rm \nor}
   + \sum_{m = 2}^\infty \sum_{j=1}^{m-1} 
       \calC_{m,j}^{\rm nor}
   ,
   \label{EQ62}
  \end{equation}
where, similarly to the tangential estimates above,
  \begin{equation}
   \calC_{1,1}^{\rm nor} 
   + \sum_{m =1}^\infty 
       \calC_{m,m}^{\rm nor} 
   \les 
   \|u\|_{H^4}^2
   + \tau \|u\|_{H^4} \|u\|_{Y_\tau^\co}
   .
   \label{EQ63}
  \end{equation}
For the third term on the right-hand side of~\eqref{EQ62}, Lemma~\ref{L06} gives
  \begin{equation}
   \sum_{m =2}^\infty \sum_{j=1}^{m-1} 
     \calC_{m,j}^{\rm nor}
   \les
   \sum_{m=2}^\infty \sum_{|\alpha| = m} \sum_{j=1}^{m-1} \sum_{|\beta| =j,\beta \leq \alpha} 
     \sum_{i=1}^4 {\alpha \choose \beta}
     M_\alpha E_{\alpha,\beta}^i 
     \frac{\tau^{m-1}}{(m-1)!}
   .
   \llabel{EQ64}
  \end{equation}
For $E_{\alpha,\beta}^1$, 
we obtain
  \begin{align}
   \begin{split}
    &
    \sum_{m =2}^\infty \sum_{|\alpha| = m} \sum_{j=1}^{m-1} \sum_{|\beta| =j} {\alpha \choose \beta}
      M_\alpha 1_{\{\beta_3 = 0, \alpha_3 \geq 1, \beta' < \alpha'\}}
      \|J^\beta \bar{\partial} u\|_{H^3} \|J^{\alpha - \beta + e_3 - \bar{e}} u\|_{H^3}
      \frac{\tau^{m-1}}{(m-1)!}
    \\&\les
    \tau
    \sum_{m=2}^\infty \sum_{|\alpha| = m} \sum_{j=1}^{m-1} \sum_{|\beta| =j}
      \left( M_\beta \|J^\beta \bar{\partial} u\|_{H^3}\frac{j \tau^{j-1}}{j!}\right)
    \\&\indeq\indeq\indeq\indeq
      \times \left( M_{\alpha - \beta} \|J^{\alpha - \beta + e_3 - \bar{e}} u\|_{H^3} \frac{\tau^{m-j-1}}{(m-j-1)!}\right)
      \mathcal{A}_{\alpha, \beta, s}
      1_{\{\beta_3 = 0, \alpha_3 \geq 1\}}
    \\&\les
    \tau \|u\|_{X_\tau^\co} \|u\|_{Y_\tau^\co}
    ,
   \end{split}
   \llabel{EQ65}
  \end{align}
where we note that, by Lemma~\ref{L05a}, 
  \begin{equation}
   M_\alpha M_\beta^{-1} M_{\alpha-\beta}^{-1} {\alpha \choose \beta}
   1_{\{\beta_3 =0, \alpha_3 \geq 1\}}
   \leq
   {m- \alpha_3 \choose j}
   ,
   \llabel{EQ66}
  \end{equation}
and hence,
for $m\geq  2$ and $1\leq j \leq m-1$,
  \begin{align}
   \begin{split}
   \mathcal{A}_{\alpha,\beta,s} 
   1_{\{\beta_3 = 0, \alpha_3 \geq 1\}}
   &= 
   M_\alpha M_\beta^{-1} M_{\alpha- \beta}^{-1} {\alpha \choose \beta}
   \frac{j! (m-j-1)!}{(m-1)!} 
   \frac{1}{j}
   1_{\{\beta_3 = 0, \alpha_3 \geq 1\}}
   \\&\leq
   {m-\alpha_3\choose j} {m-1 \choose j}^{-s} \frac{1}{j}
   1_{\{\alpha_3 \geq 1\}}
   \leq 
   {m-1\choose j}^{1-s} \frac{1}{j}
   \leq 1
   .
   \end{split}
   \llabel{EQ66b}
  \end{align}

Next, we consider
  \begin{equation}
   E_{\alpha,\beta}^2
   =
   1_{\{\beta_3 = 0, \alpha_3 \geq 1, \beta' = \alpha'\}}
     \alpha_3
     (\|J^\beta u\|_{H^3} + \|J^\beta \bar{\partial} u\|_{H^3}) 
     \|J^{\alpha- \beta} u\|_{H^3} 
   ,  
   \label{EQ66a}
  \end{equation}
and begin by bounding the sum on the highest order product in~\eqref{EQ66a}. 
We write 
  \begin{align}
   \begin{split}
    & 
    \sum_{m =2}^\infty \sum_{|\alpha| = m} \sum_{j=1}^{m-1} \sum_{|\beta| =j} {\alpha \choose \beta}
      M_\alpha 1_{\{\beta_3 = 0, \alpha_3 \geq 1, \beta' = \alpha'\}}
      \alpha_3
      \|J^\beta \bar{\partial} u\|_{H^3} \|J^{\alpha - \beta} u\|_{H^3}
      \frac{\tau^{m-1}}{(m-1)!}
    \\&\indeq\indeq\les
    \tau \|u\|_{H^4}
    \sum_{m=2}^\infty \sum_{|\alpha| = m} \sum_{|\beta| = m-1}
      \left( M_\beta \|J^\beta \bar{\partial} u\|_{H^3} \frac{(m-1)\tau^{m-1}}{(m-1)!}\right)
      \mathcal{A}_{\alpha,\beta,s}^1 1_{\{\beta_3 = 0, \alpha' = \beta'\}}
    \\&\indeq\indeq\indeq
    + \tau 
    \sum_{m = 3} \sum_{|\alpha| =m} \sum_{j=1}^{m-2} \sum_{|\beta| = j}
      \left( M_\beta \|J^\beta \bar{\partial} u\|_{H^3} \frac{\tau^j}{j!}\right)
    \\&\indeq\indeq\indeq\indeq\indeq\indeq
      \times \left( M_{\alpha-\beta} \|J^{\alpha - \beta} u\|_{H^3} \frac{(m-j-1)\tau^{m-j-2}}{(m-j-1)!}\right)
      \mathcal{A}_{\alpha, \beta,s}^21_{\{\beta_3 = 0, \alpha' = \beta'\}}
    \\&\indeq\indeq\les
    \tau\|u\|_{H^4} \|u\|_{Y_\tau^\co}
    + \tau \|u\|_{X_\tau^\co} \|u\|_{Y_\tau^\co}
    .
   \end{split}
   \llabel{EQ67}
  \end{align}
Note that $\alpha_3 = m-j$ when $\beta_3 = 0$ and $\beta' = \alpha'$.
Hence, for $m\geq 2$ and $|\beta| = m-1$,
  \begin{align}
   \begin{split}
   \mathcal{A}_{\alpha,\beta,s}^1 
   1_{\{\beta_3 = 0, \beta' = \alpha'\}}
   &=
   M_\alpha M_\beta^{-1}{\alpha \choose \beta}\frac{\alpha_3 }{m-1}1_{\{\beta_3 = 0, \beta' = \alpha'\}}
   \\&\leq
   {|\alpha'|\choose |\beta'|}{\alpha_3 \choose 0} 
   \frac{1}{m-1}
   1_{\{\beta' = \alpha'\}}
   = 
   \frac{1}{m-1}
   \leq 1
   ,
   \end{split}
   \llabel{EQ68}
  \end{align}
and, for $m\geq 3$ and $1\leq j \leq m-2$, 
  \begin{align}
   \begin{split}
    \mathcal{A}_{\alpha,\beta,s}^{2} 
    1_{\{\beta_3 = 0, \beta' = \alpha'\}}
    &=
    M_\alpha M_\beta^{-1} M_{\alpha_\beta}^{-1} {\alpha \choose \beta}
    \frac{j!(m-j-1)!}{(m-1)!} \frac{\alpha_3}{m-j-1}
    1_{\{\beta_3 = 0, \beta' = \alpha'\}}
    \\&\leq
    {|\alpha'|\choose |\beta'|} {\alpha_3 \choose 0}
    {m-1 \choose j}^{-s} \frac{m-j}{m-j-1}
    1_{\{\beta' = \alpha'\}}
    \les 1
    .
   \end{split}
   \llabel{EQ69}
  \end{align}
For the lower order term in $E_{\alpha,\beta}^2$, a similar computation yields
  \begin{align}
   \begin{split}
    &
    \sum_{m = 2}^\infty \sum_{|\alpha| = m} \sum_{j=1}^{m-1} \sum_{|\beta| = j} {\alpha \choose \beta}
      M_\alpha 1_{\{\beta_3 = 0, \alpha_3 \geq 1, \beta' = \alpha'\}}
      \alpha_3
      \|J^\beta u\|_{H^3}\|J^{\alpha - \beta} u\|_{H^3} 
      \frac{\tau^{m-1}}{(m-1)!}
    \\&\indeq\indeq\les
    \tau \|u\|_{H^4}^2
    + \tau^2 \|u\|_{H^4} \|u\|_{Y_\tau^\co}
    + \tau^2 \|u\|_{X_\tau^\co} \|u\|_{Y_\tau^\co}
    .
   \end{split}
   \llabel{EQ70}
  \end{align}
Next, we consider~$E_{\alpha,\beta}^3$. 
Due to Remark~\ref{R01}, it suffices to replace $E_{\alpha,\beta}^3$ with $\|J^\beta u\|_{H^3}\|J^{\alpha - \beta + e_3}u\|_{H^3}$.
Observe that
  \begin{align}
   \begin{split}
    &
    \sum_{m =2}^\infty \sum_{|\alpha| = m} \sum_{j=1}^{m-1} \sum_{|\beta| =j} {\alpha \choose \beta}
      M_\alpha \|J^\beta u\|_{H^3} \|J^{\alpha - \beta+e_3} u\|_{H^3} 
      \frac{\tau^{m-1}}{(m-1)!}
    \\&\indeq\indeq\les
    \tau
    \sum_{m=2}^\infty \sum_{|\alpha| = m} \sum_{j=1}^{m-1} \sum_{|\beta| =j} 
      \left( M_\beta \|J^\beta u\|_{H^3} \frac{\tau^{j-1}}{(j-1)!}\right)
    \\&\indeq\indeq\indeq\indeq\indeq\indeq
      \times \left(M_{\alpha - \beta}\|J^{\alpha - \beta + e_3}u\|_{H^3} \frac{(m-j) \tau^{m-j-1}}{(m-j)!}\right)
      \mathcal{A}_{\alpha,\beta,s}
    \\&\indeq\indeq\les
    \tau\|u\|_{X_\tau^\co} \|u\|_{Y_\tau^\co}
    ,
   \end{split}
   \llabel{EQ71}
  \end{align}
where, for $m\geq 2$ and $1\leq j \leq m-1$, 
  \begin{align}
   \begin{split}
    \mathcal{A}_{\alpha,\beta,s}
    &= 
    M_\alpha M_\beta^{-1} M_{\alpha-\beta}^{-1}{\alpha \choose \beta}
    \frac{(j-1)! (m-j)!}{(m-1)!} \frac{1}{m-j}
    \\&\leq
    {m \choose j} {m-1 \choose j-1}^{-s} \frac{1}{m-j}
    =
    {m-1\choose j-1}^{1-s} \frac{m}{j(m-j)}
    \leq 1
    .
   \end{split}
   \llabel{EQ72}
  \end{align}
Finally, for $E_{\alpha,\beta}^4$, it suffices to replace it with $1_{\{\beta_3 = 0, \beta' < \alpha'\}}\|J^\beta u\|_{H^3} \|J^{\alpha - \beta} u\|_{H^3}$. 
We obtain
  \begin{align}
   \begin{split}
    &
    \sum_{m =2}^\infty \sum_{|\alpha| = m}\sum_{j=1}^{m-1} \sum_{|\beta| = j}{\alpha \choose \beta}
      M_\alpha 1_{\{\beta_3 = 0, \beta' < \alpha'\}} \|J^\beta u\|_{H^3} \|J^{\alpha - \beta} u\|_{H^3} 
      \frac{\tau^{m-1}}{(m-1)!}
    \\&\indeq\indeq\les
    \tau \|u\|_{H^4}^2
    + \tau^2 \|u\|_{H^4} 
      \sum_{m= 3}^\infty \sum_{|\alpha| = m} \sum_{|\beta| = m-1} 
        \left( M_\beta \|J^\beta u\|_{H^3} \frac{(m-2) \tau^{m-3}}{(m-2)!} \right) 
      \mathcal{A}_{\alpha,\beta,s}^1 1_{\{\beta_3 = 0\}}
    \\&\indeq\indeq\indeq
    + \tau^2 
      \sum_{m= 3}^\infty \sum_{|\alpha| = m} \sum_{j=1}^{m-2} \sum_{|\beta| = j} 
        \left( M_\beta \|J^\beta u\|_{H^3} \frac{\tau^{j-1}}{(j-1)!} \right)
    \\&\indeq\indeq\indeq\indeq\indeq\indeq
    \times \left( M_{\alpha - \beta} \|J^{\alpha - \beta} u\|_{H^3} \frac{(m-j-1)\tau^{m-j-2}}{(m-j-1)!}\right)
    \mathcal{A}_{\alpha,\beta,s}^21_{\{\beta_3 = 0\}}
    \\&\indeq\indeq\les
    \tau \|u\|_{H^4}^2 
    + \tau^2\|u\|_{H^4} \|u\|_{Y_\tau^\co}
    + \tau^2 \|u\|_{X_\tau^\co}\|u\|_{Y_\tau^\co}
    .
   \end{split}
   \llabel{EQ73}
  \end{align}
Indeed, $m\geq 3$, 
  \begin{align}
   \begin{split}
   \mathcal{A}_{\alpha,\beta,s}^11_{\{\beta_3 = 0\}}
   &= 
   M_\alpha M_\beta^{-1} {\alpha \choose \beta} \frac{(m-2)!}{(m-1)!} \frac{1}{m-2}
   1_{\{\beta_3 = 0\}}
   \\&\les 
   M_{\alpha - \beta}{m \choose m-1} \frac{1}{(m-1)}\frac{1}{m-2}
   \les 1
   ,
   \end{split}
   \label{EQ74}
  \end{align}
and, for $m \geq 3$ and $1 \leq j \leq m-2$,
  \begin{align}
   \begin{split}
    \mathcal{A}_{\alpha,\beta,s}^2
    1_{\{\beta_3 = 0\}}
    &=
    M_\alpha M_\beta^{-1}M_{\alpha-\beta}^{-1} {\alpha \choose \beta}
    \frac{(j-1)! (m-j-1)!}{(m-1)!} \frac{1}{m-j-1}
    1_{\{\beta_3 = 0\}}
    \\&\leq
    {m\choose j} {m-1\choose j-1}^{-s} \frac{1}{(m-j) (m-j-1)}
    =
    {m-1\choose j-1}^{1-s} \frac{m}{j(m-j) (m-j-1)}
    \leq 1
    .
   \end{split}
   \label{EQ75}
  \end{align}
Note that in~\eqref{EQ74} and~\eqref{EQ75}, we did not need to utilize the restriction $\beta_3 = 0$.
This completes the proof. 
\end{proof}

Now we prove Lemma~\ref{L02}, which gives estimates on two lower-order terms associated with the advection term in~\eqref{EQ00}.

\begin{proof}[Proof of Lemma~\ref{L02}]
First, we establish~\eqref{EQ17}.  
Since $\phi[J^\alpha, \partial_3] u = -\alpha_3 \phi' J^\alpha u$, we get
  \begin{equation}
   \|u_3[J^\alpha,\partial_3]u\|_{H^3}
   \les
   \|\phi^{-1}u_3\|_{H^3} \|\phi [J^\alpha,\partial_3]u\|_{H^3}
   \les
   \alpha_3 \|u\|_{H^4} \|J^{\alpha} u\|_{H^3}
   .
   \llabel{EQ76}
  \end{equation}
Hence,
  \begin{align}
   \begin{split}
    &
    \sum_{m= 1}^\infty \sum_{|\alpha| = m}
      M_\alpha \|u_3 [J^\alpha,\partial_3]u\|_{H^3} 
      \frac{\tau^{m-1}}{(m-1)!}
    \\&\indeq\les
    \|u\|_{H^4}^2
    + \tau\|u\|_{H^4}
    \sum_{m= 2}^\infty \sum_{|\alpha| = m}
      \left(M_\alpha \|J^\alpha u\|_{H^3} \frac{(m-1)\tau^{m-2}}{(m-1)!}\right)
      \frac{\alpha_3}{m-1}
    \\&\indeq\les
    \|u\|_{H^4}^2
    + \tau \|u\|_{H^4} \|u\|_{Y_\tau^\co}
    ,
   \end{split}
   \llabel{EQ77}
  \end{align}
where we note that $\frac{\alpha_3}{m-1} \leq \frac{m}{m-1}\leq 1$ for  $m \geq 2$. 

Next, we prove~\eqref{EQ18}. 
We only consider the terms in the sum corresponding to $|\gamma| = 3$ as the terms for $|\gamma|=0,1,2$ give subordinate bounds.
From the multiplicative Sobolev inequality~\eqref{EQ28a}, we have
  \begin{align}
   \begin{split}
    &
    \sum_{m = 1}^\infty \sum_{|\alpha| = m} \sum_{j=1}^3 \sum_{|\mu| = j, |\gamma| = 3} {\gamma \choose \mu}
      M_\alpha \|\partial^\mu u \cdot \nabla \partial^{\gamma-\mu} J^\alpha u\|_{L^2}
      \frac{\tau^{m-1}}{(m-1)!}
    \\&\indeq\les
    \|u\|_{H^4}^2
    +\sum_{m= 2}^\infty \sum_{|\alpha| = m} \sum_{j=1}^3 \sum_{|\mu| = j, |\gamma| = 3}
      M_\alpha \|\partial^\mu u\|_{H^{4-j}} \|\nabla \partial^{\gamma - \mu} J^\alpha u\|_{H^{j-1}}
      \frac{\tau^{m-1}}{(m-1)!}
    \\&\indeq\les
    \|u\|_{H^4}^2
     + \tau \|u\|_{H^4}
     \sum_{m= 2}^\infty \sum_{|\alpha| = m} 
       \left(M_\alpha \|J^\alpha u\|_{H^3} \frac{(m-1)\tau^{m-2}}{(m-1)!}\right)
       \frac{1}{m-1}
     \\&\indeq\les
     \|u\|_{H^4}^2
     + \tau \|u\|_{H^4}\|u\|_{Y_\tau^\co}
     .
   \end{split}
   \llabel{EQ78}
  \end{align}
This completes the proof of the lemma. 
\end{proof}

\subsection{Pressure Estimates}\label{sec03b}
We begin with a uniform estimate on $\phi(z) = z(1+z)^{-1}$.

\begin{Lemma}
\label{L04}
We have 
  \begin{equation}
   \sup_{N \geq 1}\|\phi^N\|_{W^{3,\infty}(\mathbb{R_+})}
   \les 1
   .
   \label{EQ78b}
  \end{equation}
\end{Lemma}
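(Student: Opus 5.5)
We need $\sup_N \|\phi^N\|_{W^{3,\infty}(\mathbb{R}_+)}\lesssim 1$ for $\phi(z)=z(1+z)^{-1}$. The plan is to differentiate $\phi^N$ explicitly up to third order and bound each term uniformly in $N$. The one structural fact we use is that $0\le \phi<1$ on $\mathbb{R}_+$, so powers $\phi^{N-k}$ are at most $1$. The only danger is the polynomial factors $N$, $N^2$, $N^3$ produced by the chain rule. These must be absorbed by the decay of $\phi'$ and its derivatives.

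First, $\|\phi^N\|_{L^\infty}\le 1$ trivially. Next record $\phi'(z)=(1+z)^{-2}$, $\phi''(z)=-2(1+z)^{-3}$, $\phi'''(z)=6(1+z)^{-4}$, together with $1-\phi=(1+z)^{-1}$. Thus $\phi^{(j)}=c_j(1-\phi)^{j+1}$ for $j\ge1$, and every derivative of $\phi^N$ up to order three is a finite combination of terms
\[
N^{k}\,\phi^{N-k}(1-\phi)^{k+\ell},\qquad 1\le k\le 3,\ \ell\ge 1 ,
\]
with at most $N^k$ coming from falling factorials $N(N-1)\cdots$, and with coefficients independent of $N$. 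Concretely, $(\phi^N)'=N\phi^{N-1}\phi'$, $(\phi^N)''=N(N-1)\phi^{N-2}(\phi')^2+N\phi^{N-1}\phi''$, and similarly for the third derivative.

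Setting $s=\phi\in[0,1)$, it suffices to bound $\sup_{s\in[0,1]}N^k s^{N-k}(1-s)^{k}$ uniformly in $N$. The extra powers $(1-s)^{\ell}\le1$ can be discarded. For $N\ge k+1$ the maximum occurs at $s=(N-k)/N$ and equals
\[
N^k\Big(1-\tfrac kN\Big)^{N-k}\Big(\tfrac kN\Big)^k\le k^k\le 27 .
\]
For small $N\le 3$ the terms are trivially bounded, and when $N<k$ the falling factorial vanishes.

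The only point requiring care is this balancing of $N^k$ against $(1-\phi)^k$, i.e.\ noting that each derivative produces both a factor $N$ and at least one factor $(1-\phi)$. Once the derivatives are written in terms of $s=\phi$, this is an elementary one-variable maximization.
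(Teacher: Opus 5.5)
Your proof is correct and follows the paper's argument: you differentiate $\phi^N$ explicitly, rewrite $\phi^{(j)}=(-1)^{j+1}j!(1-\phi)^{j+1}$, and absorb the powers of $N$ by maximizing a one-variable function $s^{a}(1-s)^{b}$ on $[0,1]$. The only difference is cosmetic. The paper crudely bounds every third-derivative term by $N^3\phi^{N-3}(1-\phi)^4$ and obtains even $N^{-1}$ decay, whereas you pair $N^k$ with $(1-\phi)^k$ term by term to get the uniform bound $k^k$.
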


\begin{proof}
Without loss of generality, suppose $N \geq 4$.
We compute
  \begin{equation}
   \left( \phi^N\right)^{(3)}
   =
   N(N-1)(N-2)\phi^{N-3}(\phi')^3
   + 3N(N-1)\phi^{N-2} \phi' \phi''
   + N \phi^{N-1} \phi^{(3)}
   .
   \label{EQ78c}
  \end{equation}
Also, note that
  \begin{equation}
   \phi^{(k)} 
   = 
   (-1)^{k+1} k! (1-\phi)^{k+1} 
   \llabel{EQ78d}
  \end{equation}
for $k \geq 1$.
Since $0 \leq \phi \leq 1$,~\eqref{EQ78c} then gives us
  \begin{equation}
   |\left(\phi^N\right)^{(3)}|
   \les 
   N^3\phi^{N-3}(1-\phi)^4
   \leq
   N^3 
   \left(\frac{N-3}{N+1}\right)^{N-3} 
   \left( \frac{4}{N+1}\right)^4
   \les
   N^{-1}
   ,
   \label{EQ78e}
  \end{equation}
where, in the second inequality, we optimized $x^{N-3}(1-x)^4$ for $x \in [0,1]$.
Indeed,~\eqref{EQ78e} is stronger than~\eqref{EQ78b}; 
however, uniform boundedness is sufficient for the purposes of this paper.
The estimates are similar for lower-order derivatives. 
This completes the proof. 
\end{proof}
The pressure $p$ satisfies the elliptic Neumann problem given by
  \begin{align}
   \begin{split}
    -\Delta p 
     &=
     \partial_i u_k \partial_k u_i
     \inon{in $\mathbb{R}_+^3$,}
     \\
     \partial_3 p
     &= 0
     \inon{on $\{x_3 = 0\}$.}
   \end{split}
   \llabel{EQ78a}
  \end{align}
The following lemma gives $\Hco^{m,3}$ elliptic estimates where the dependence on $m$ in the constants is explicit.

\begin{Lemma}
\label{L07}
Assume that $p$ is a smooth solution to 
  \begin{align}
   \begin{split}
    -\Delta p &= v 
    \inon{in $\mathbb{R}_+^3$,}
    \\
    \partial_3 p &= 0
    \inon{on $\{x_3 = 0\}$.}
   \end{split}
   \label{EQ79}
  \end{align}
Then, for any $\alpha \in \mathbb{N}_0^3$ with $|\alpha| = m$ and $\alpha_3 \neq 0$, 
  \begin{equation}
   \|J^\alpha \partial_3 p\|_{H^3} 
   \les
   \sum_{\substack{s,t \in \mathbb{N}_0, |\beta| = m-1 \\ \beta' - \alpha' = (2s,2t)}} {s+t \choose s}
     \|J^\beta v\|_{H^3}
   .
  \label{EQ80}
  \end{equation}
Moreover, if $\alpha_3 \geq 2$, then there exists a universal constant $C > 0$ such that
  \begin{align}
   \begin{split}
    \|J^\alpha \partial_1 p\|_{H^3} 
    &\les
    \sum_{\substack{s,t \in \mathbb{N}_0, |\beta| = m-1 \\ \beta' - \alpha' = (2s+1,2t)}} {s+t \choose s}
      \|J^\beta v\|_{H^3}
      ,
    \\
    \|J^\alpha \partial_2 p\|_{H^3} 
    &\les
    \sum_{\substack{s,t \in \mathbb{N}_0, |\beta| = m-1 \\ \beta' - \alpha' = (2s, 2t+1)}} {s+t \choose s}
      \|J^\beta v\|_{H^3}
    .
   \end{split}
   \label{EQ81}
  \end{align}
Finally, if $\alpha_3 = 1$, we have
  \begin{align}
   \begin{split}
    \|J^\alpha \partial_1 p\|_{H^3}
    &\les
    \|\partial^{\alpha'} v\|_{H^3}
    ,
    \\
    \|J^\alpha \partial_2 p\|_{H^3}
    &\les
    \|\partial^{\alpha'} v\|_{H^3}
    .
   \end{split}
   \label{EQ82}
  \end{align}
\end{Lemma}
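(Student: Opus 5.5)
The plan is to use the equation $\partial_3^2 p = -v - \partial_1^2 p - \partial_2^2 p$ repeatedly to trade pairs of normal derivatives for pairs of tangential derivatives. This continues until at most one normal derivative remains on $p$, and the leftover term is then handled by a standard tangential elliptic estimate. Write $k=\alpha_3\geq 1$. Then
\begin{equation*}
J^\alpha \partial_3 p = \partial^{\alpha'} \phi^{k} \partial_3^{k+1} p ,
\end{equation*}
and for $n\geq 1$ with $k+1-2n\geq 0$ I will prove by induction on $n$ the identity
\begin{equation*}
\partial_3^{k+1}p = -\sum_{l=1}^{n} \Delta_h^{l-1}\partial_3^{k+1-2l} v + (-1)^n \Delta_h^{n} \partial_3^{k+1-2n} p ,
\end{equation*}
where $\Delta_h=\partial_1^2+\partial_2^2$. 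Each step substitutes $\partial_3^2 p=-v-\Delta_h p$ inside the remaining $\partial_3^{k+1-2(l-1)}p$; tangential derivatives commute with $\partial_3$. I stop at $n_0=\lfloor (k+1)/2\rfloor$, so that the last term carries $\partial_3^{j}$ with $j\in\{0,1\}$.

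Next I conormalize each term. Since $\phi^k = \phi^{2l-1}\,\phi^{k+1-2l}$, the $l$-th source term equals
\begin{equation*}
\phi^{2l-1}\,\partial^{\alpha'}\Delta_h^{l-1} Z_{k+1-2l} v .
\end{equation*}
Multiplication by $\phi^{2l-1}$ is bounded on $H^3$ uniformly in $l$ by Lemma~\ref{L04}; this is where uniformity of the constant in $m$ comes from. Expanding $\Delta_h^{l-1}=\sum_{s+t=l-1}{s+t\choose s}\partial_1^{2s}\partial_2^{2t}$ yields exactly the terms ${s+t\choose s}\|J^\beta v\|_{H^3}$. Here $\beta'=\alpha'+(2s,2t)$ and $\beta_3=k+1-2l$, so $|\beta|=|\alpha'|+2(l-1)+k+1-2l=m-1$. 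These are the terms in the sum in~\eqref{EQ80}, with $\beta'-\alpha'=(2s,2t)$.

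For the remainder $\phi^k\partial^{\alpha'}\Delta_h^{n_0}\partial_3^{j}p$ I again drop $\phi^k$ using Lemma~\ref{L04}. Then I use the $H^3$ Neumann estimate in the half-space, $\|\nabla^2 q\|_{H^3}\les \|\Delta q\|_{H^3}$, obtained for instance by even reflection. It applies to $q=\partial^{\gamma'}p$ for any tangential $\gamma'$, because tangential derivatives preserve the problem~\eqref{EQ79}. Writing $\Delta_h^{n_0}\partial_3^j = \Delta_h^{n_0-1}\cdot(\partial_h^2\partial_3^{j})$ with $j+2\geq 2$, the remainder is controlled by $\sum_{s+t=n_0-1}{s+t\choose s}\|\partial^{\alpha'+(2s,2t)}\nabla^{\,j}v\|_{H^3}$-type quantities. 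This needs care when $j=1$: I would first peel off two more tangential derivatives so that exactly $\nabla^2$ falls on $p$. The resulting term has $\beta_3=0$ and $|\beta|=m-1$, so it fits~\eqref{EQ80}. The parity of $k$ decides whether the last step produces a $\partial_3^2$ (handled by one more substitution) or a mixed $\partial_h\partial_3$ (handled by the elliptic estimate).

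The estimates~\eqref{EQ81} follow by the same computation with $\partial_3^{k}\partial_1 p$ in place of $\partial_3^{k+1}p$. The extra $\partial_1$ shifts $\beta'$ by $(2s+1,2t)$, and $\alpha_3\geq 2$ guarantees at least one substitution step before the elliptic remainder. For~\eqref{EQ82}, $J^\alpha\partial_1 p=\phi\,\partial^{\alpha'}\partial_3\partial_1 p$, and Lemma~\ref{L04} together with the elliptic estimate gives $\|\partial^{\alpha'}\nabla^2 p\|_{H^3}\les\|\partial^{\alpha'}v\|_{H^3}$ directly.

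The main obstacle I expect is bookkeeping the endpoint of the iteration (the parity cases $j=0,1$) so that every remainder lands exactly on some $J^\beta v$ with $|\beta|=m-1$ and the correct parity pattern. It is also essential that no constant grows with $n$: all multipliers must be powers $\phi^N$ controlled via Lemma~\ref{L04}, and the only combinatorial factors should be the binomials from expanding $\Delta_h^{l}$.
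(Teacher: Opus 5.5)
The step that fails is the remainder when $j=1$, i.e.\ $\alpha_3$ even in \eqref{EQ80}, and in the same way $\alpha_3\geq 3$ odd in \eqref{EQ81}. Everything else matches the paper's argument: the iteration identity (the $l$-th source term is really $-(-\Delta_h)^{l-1}\partial_3^{k+1-2l}v$, a harmless sign slip), the splitting $\phi^k=\phi^{2l-1}\phi^{k+1-2l}$, Lemma~\ref{L04}, the expansion of $\Delta_h^{l-1}$, the $j=0$ remainder, and \eqref{EQ82}.

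Your claim that the $j=1$ remainder gives a term with $\beta_3=0$, $|\beta|=m-1$ that fits \eqref{EQ80} is impossible by parity. Every $\beta$ on the right of \eqref{EQ80} has $|\beta'|=|\alpha'|+2(s+t)$, hence $\beta_3=\alpha_3-1-2(s+t)\equiv\alpha_3-1 \pmod 2$. So for even $\alpha_3$ only odd $\beta_3\geq 1$ can occur.

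Concretely, once you discard all of $\phi^{\alpha_3}$ and apply the flat estimate to $\partial_h\partial_3(\partial^{\alpha'}\partial_h\Delta_h^{n_0-1}p)$, you get $\|\partial^{\alpha'}\partial_h\Delta_h^{n_0-1}v\|_{H^3}$. That term has $\beta'-\alpha'=(2s+1,2t)$ or $(2s,2t+1)$, the wrong parity. The alternative of leaving an unweighted $\partial_3$ on $v$ is not of the form $J^\beta v$ at all. In \eqref{EQ81} with $\alpha_3$ odd, the same step gives $\beta'-\alpha'=(2s,2t)$ instead of $(2s+1,2t)$. So you prove a different inequality, not the lemma as stated. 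That inequality would likely still suffice for Lemma~\ref{L03}, since the odd-parity sums are covered by the second and third bounds of Lemma~\ref{L08}.

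The missing idea is to keep one conormal derivative on the remainder instead of discarding every power of $\phi$:
\begin{itemize}
\item Write the remainder as $\phi^{\alpha_3-1}Z\Delta_h g$ with $g=\partial^{\alpha'}\Delta_h^{n_0-1}p$, and remove only $\phi^{\alpha_3-1}$ by Lemma~\ref{L04}.
\item Bound $\|Z\Delta_h g\|_{H^3}$ by $\|Z\partial^{\alpha'}\Delta_h^{n_0-1}v\|_{H^3}$, which is the term with $\beta_3=1$, $\beta'-\alpha'=(2s,2t)$, $s+t=n_0-1$, $|\beta|=m-1$. The tool is an elliptic estimate for $Zg$, which again satisfies the homogeneous Neumann condition because $\phi(0)=0$ and $\partial_3 g=0$ on $\{x_3=0\}$.
\end{itemize}
This is what the paper does in \eqref{EQ89}, and \eqref{EQ81} with odd $\alpha_3$ is handled the same way. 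This conormal estimate needs more than the boundary condition, since $-\Delta(Zg)=Z(-\Delta g)-\phi''\partial_3 g-2\phi'\partial_3^2 g$; the commutator terms must also be controlled.

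A minor point: even reflection by itself does not give $\|\nabla^2 q\|_{H^3}\lesssim\|\Delta q\|_{H^3}$, because the even extension of $v$ is generally not in $H^3$. The estimate is standard, but it comes from tangential regularity plus recovering $\partial_3^2 q$ from the equation.
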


\begin{proof}
Denote $\Delta' = \partial_{11} + \partial_{22}$. 
From~\eqref{EQ79}$_1$, we get
  \begin{equation}
   \partial_{3}^2 p
   =
   -\Delta' p
   .
   \llabel{EQ83}
  \end{equation}
By induction, 
we have
  \begin{equation}
   \partial_3^{2k+2} p
   =
   (-\Delta')^{k+1}p
   -\sum_{j=0}^k \partial_3^{2j} (-\Delta')^{k-j} v
   \label{EQ84}
  \end{equation}
for $k \geq 0$.
Let $|\alpha| = m$ with $\alpha_3 = 2k+1$.
Multiplying both sides of~\eqref{EQ84} by $\phi^{2k+1}$ and applying $\partial^{\alpha'}$
gives us 
  \begin{equation}
   J^\alpha \partial_3 p
   =
   \partial^{\alpha'}Z_{2k+1} \partial_3 p
   = 
   \phi^{2k+1} (-\Delta')^{k+1}\partial^{\alpha'}p
   -\sum_{j=0}^k \phi^{2(k-j) + 1} Z_{2j} (-\Delta')^{k-j}\partial^{\alpha'} v
   .
   \label{EQ85}
  \end{equation}
If $\alpha_3 = 2k+2$,  then
applying $\partial_3$ to~\eqref{EQ84} and multiplying both sides of the resulting equation by $\phi^{2k+2}$
yields
  \begin{equation}
   J^\alpha \partial_3 p
   =
   \partial^{\alpha'}Z_{2k+2} \partial_3 p
   =
   \phi^{2k+1} Z (-\Delta')^{k+1} \partial^{\alpha'}p
   - \sum_{j=0}^k \phi^{2(k-j) +1} Z_{2j+1}(-\Delta')^{k-j}\partial^{\alpha'} v
   .
   \label{EQ86}
  \end{equation}

Next, we estimate the first term on the right-hand sides of~\eqref{EQ85} and~\eqref{EQ86}.
Note that $g = (-\Delta')^k \partial^{\alpha'}p$ contains only tangential derivatives, and hence satisfies
  \begin{align}
   \begin{split}
    -\Delta g &= (-\Delta')^k \partial^{\alpha'} v
    \inon{in $\mathbb{R}_+^3$,}
    \\
    \partial_3 g &= 0
    \inon{on $\{x_3 = 0\}$.}
   \end{split}
   \llabel{EQ87}
  \end{align}    
Thus, elliptic regularity gives 
  \begin{equation}
   \|\Delta' g\|_{H^3}
   \leq
   \|g\|_{H^5}
   \les
   \|(-\Delta')^k \partial^{\alpha'} v\|_{H^3}
   \llabel{EQ88}
  \end{equation}
and 
  \begin{equation}
   \|Z \Delta' g\|_{H^3}
   \leq
   \|Zg\|_{H^5}
   \les
   \|Z (-\Delta')^k\partial^{\alpha'} v\|_{H^3}
   .
   \label{EQ89}
  \end{equation}
Note that for~\eqref{EQ89}, we used $\partial_3(Zg) = 0$ on $\{x_3= 0\}$.

Hence, for $\alpha_3 = 2k+1 \geq 1$,~\eqref{EQ85} 
implies
  \begin{align}
   \begin{split}
    \|J^\alpha \partial_3 p\|_{H^3}
    &\leq
    \|\phi^{2k+1}(-\Delta')^{k+1} \partial^{\alpha'} p\|_{H^3}
    + \sum_{j=0}^k \|\phi^{2(k-j)+1} Z_{2j} (-\Delta')^{k-j} \partial^{\alpha'} v\|_{H^3}
    \\&\les
    \|(-\Delta')^{k+1} \partial^{\alpha'} p\|_{H^3}
    + \sum_{j=0}^k \|Z_{2j} (-\Delta')^{k-j}\partial^{\alpha'} f\|_{H^3}
    \\&\les
    \sum_{j=0}^k \sum_{s=0}^{k-j} {k-j \choose s}
      \|\partial_1^{2s + \alpha_1} \partial_2^{2k -2j - 2s +\alpha_2} Z_{2j} v\|_{H^3}
    ,
   \end{split}
   \label{EQ90}
  \end{align}
where we used Lemma~\ref{L04} in the second line and the identity,
  \begin{equation}
   (\Delta')^m w 
   =
   \sum_{s=0}^m {m\choose s}
     \partial_1^{2s} \partial_2^{2m-2s} w
   ,
   \llabel{EQ91}
  \end{equation}
in the last line. 
If $\alpha_3 = 2k+2 \geq 2$, we similarly get 
  \begin{equation}
   \|J^\alpha \partial_3 p\|_{H^3}
   \les
   \sum_{j=0}^k \sum_{s=0}^{k-j}{ k-j \choose s}
     \|\partial_1^{2s+\alpha_1} \partial_2^{2k-2j-2s + \alpha_2} \partial_3^{2j+1} v\|_{H^3}
   .
   \label{EQ92}
  \end{equation}

For brevity, we re-index the right-hand side of~\eqref{EQ90} with
$t = k-j-s \geq 0$ and $\beta = (2s+ \alpha_1, 2k-2j -2s + \alpha_2, 2j) = (2s + \alpha_1, 2t + \alpha_2 , \alpha_3 - 1 -2s -2t) \in \mathbb{N}_0^3$, from which we are left with~\eqref{EQ80} for $\alpha_3 =2k+1 \geq 1$. 
Similarly, from~\eqref{EQ92}, re-indexing with  $t = k-j-s$ and $\beta = (2s+\alpha_1, 2k-2j-2s + \alpha_2, 2j+1)$ gives~\eqref{EQ80} for $\alpha_3 = 2k+2 \geq 2$.

Next, the estimates in~\eqref{EQ81} follow similarly after multiplying~\eqref{EQ85} and~\eqref{EQ86} by $\phi$ and applying $\partial_1$ or~$\partial_2$. 

Finally, to see~\eqref{EQ82}, note that $g = \partial^{\alpha'} p$ satisfies $-\Delta g = \partial^{\alpha'} v$ in $\mathbb{R}_+^3$ with $\partial_3 g = 0$ on $\{x_3 =0\}$.
Hence, for $\alpha_3 = 1$, elliptic regularity implies
  \begin{equation}
   \|J^\alpha \partial_1 p\|_{H^3}
   =
   \|\phi \partial_3\partial_1 \partial^{\alpha'} p\|_{H^3}
   =
   \|\phi \partial_3 \partial_1 g\|_{H^3}
   \les
   \|\partial^{\alpha'} v\|_{H^3}
   .
   \llabel{EQ93}
  \end{equation}
The same argument gives $\|J^\alpha \partial_2 p\|_{H^3} \les \|\partial^{\alpha'} v\|_{H^3}$.
This completes the proof of the lemma. 
\end{proof}

The next lemma is proven in~\cite{KV2} for $m \geq 3$.
In fact, it is true for $m \geq 1$, which can be seen by direct computation.
We omit the proof. 

\begin{Lemma}
\label{L08}
We have 
  \begin{equation}
   \sum_{s=0}^{[\beta_1/2]} \sum_{t=0}^{[\beta_2/2]}
     {\beta_1 + \beta_2 - 2s - 2t \choose \beta_1 - 2s} {s+t \choose s}
   \les
   m {\beta_1 + \beta_2 \choose \beta_1}
   \llabel{EQ94}
  \end{equation}
for any $m \geq 1$ and any $\beta = (\beta_1,\beta_2, m-1 - \beta_1 -\beta_2) \in \mathbb{N}_0^3$. 
Additionally, if $\beta_1 \geq 1$, we have
  \begin{equation}
   \sum_{s=0}^{[(\beta_1-  1)/2]} \sum_{t=0}^{[\beta_2/2]}
     {\beta_1 + \beta_2 - 2s -1 -2t \choose \beta_1 - 2s -1} {s+t \choose s}
   \les
   m {\beta_1 + \beta_2 \choose \beta_1}
   ,
   \llabel{EQ95}
  \end{equation}
while if $\beta_2 \geq 1$, we have
  \begin{equation}
   \sum_{s=0}^{[\beta_1/2]}\sum_{t=0}^{[(\beta_2 - 1)/2]}
     {\beta_1 + \beta_2 - 2s - 2t - 1 \choose \beta_1 - 2s}{s + t \choose s}
   \les
   m {\beta_1 + \beta_2 \choose \beta_1}
   .
   \llabel{EQ96}
  \end{equation}
\end{Lemma}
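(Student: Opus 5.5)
The plan is to bound each summand by a fixed multiple of ${\beta_1+\beta_2\choose \beta_1}$ times a geometric weight, and then to count how many summands there are. Write $n=\beta_1+\beta_2$ and $a=\beta_1$, and note that $n\le m-1$. The first step is a Vandermonde-type inequality: since ${p\choose s}{q\choose r}\le {p+q\choose s+r}$, which follows from $\sum_i {p\choose i}{q\choose s+r-i}={p+q\choose s+r}$, we get with $k=s+t$
\begin{equation*}
{n-2k\choose a-2s}{k\choose s}\le {n-k\choose a-s}.
\end{equation*}
This removes the factor ${s+t\choose s}$ and leaves a single binomial coefficient.

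The second step compares this coefficient with ${n\choose a}$. Writing $(x)_j=x(x-1)\cdots(x-j+1)$ for the falling factorial,
\begin{equation*}
\frac{{n-k\choose a-s}}{{n\choose a}}=\frac{(a)_s\,(n-a)_t}{(n)_{k}}\le \Bigl(\frac{a}{n}\Bigr)^{s}\Bigl(\frac{n-a}{n}\Bigr)^{t}.
\end{equation*}
The inequality holds because the factors of the product can be paired so that each ratio $(a-i)/(n-j)$ is at most $a/n$, and similarly for $n-a$. We may assume $0<a<n$; the degenerate cases $a=0$ or $a=n$ are trivial, since then only $O(n)$ terms appear, each bounded by $1={n\choose a}$.

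The third step, which is the only real obstacle, sums the weights $x^sy^t$ with $x=a/n$ and $y=(n-a)/n=1-x$. One cannot simply sum the full geometric series, because $1/((1-x)(1-y))=n^2/(a(n-a))$ may be of size $n$ or larger in some configurations. The point is to combine two facts: the geometric bound $\sum_s x^s\le 1/(1-x)=n/(n-a)$, and the range restriction $s\le a/2$, which allows at most $a/2+1$ values of $s$ with every weight at most $1$. The same two facts hold for $t$. Hence
\begin{equation*}
\sum_{s,t}x^sy^t\le \min\Bigl\{\frac{n}{n-a},\frac a2+1\Bigr\}\min\Bigl\{\frac na,\frac{n-a}2+1\Bigr\}.
\end{equation*}
If $a\le n/2$, the first factor is at most $2$ and the second is at most $n/2+1$. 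The case $a\ge n/2$ is symmetric. In either case the sum is $\lesssim n+1\le m$, which gives the first inequality of the lemma.

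For the two remaining inequalities, with $\beta_1\ge1$ and with $\beta_2\ge1$ respectively, I would run the identical argument after shifting. In the first case replace $a$ by $a-1$ and $n$ by $n-1$; Steps 1--3 then give a bound $\lesssim m{n-1\choose a-1}$, and ${n-1\choose a-1}\le {n\choose a}$. In the second case keep $a$ and replace $n$ by $n-1$; this gives $\lesssim m{n-1\choose a}\le m{n\choose a}$. The only care needed is that the summation ranges $s\le(\beta_1-1)/2$ and $t\le(\beta_2-1)/2$ still provide the counting bounds used in Step 3, and they do.
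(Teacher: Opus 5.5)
\textbf{The gap is in Step 2.} Step 1 is correct. So are the case analysis in Step 3 and the reduction of the two shifted sums to the first one.

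The inequality $\binom{n-k}{a-s}\le\binom{n}{a}x^sy^t$ is false, already inside the summation range. For $s=t=1$ one has $\binom{n-2}{a-1}/\binom{n}{a}=a(n-a)/(n(n-1))$. This exceeds $xy=a(n-a)/n^2$ by the factor $n/(n-1)$; for example, $a=n-a=2$, $s=t=1$ gives $1/3>1/4$.

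The pairing you describe cannot exist once $s,t\ge1$. The leading factor $a$ of $(a)_s$ satisfies $a/(n-j)\le a/n$ only for $j=0$. The same holds for the leading factor $n-a$ of $(n-a)_t$. But the factor $n$ of $(n)_k$ can be used only once.

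What is true is the bound up to a universal constant for $s\le a/2$, $t\le(n-a)/2$. This is a hypergeometric-versus-binomial comparison and needs Stirling-type estimates. The range restriction is essential: for $s=t=a$, $n=2a$, the ratio is $4^a/\binom{2a}{a}\sim\sqrt{\pi a}$. With that constant your Step 3 would close, but the proposal does not supply this argument.

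\textbf{A shorter repair avoids Steps 2 and 3 entirely.} After Step 1 every term $\binom{(a-s)+(n-a-t)}{a-s}$ is nonnegative. So you may enlarge the range to $0\le s\le a$, $0\le t\le n-a$ and sum exactly, applying the hockey-stick identity twice:
\[
\sum_{i=0}^{a}\sum_{j=0}^{n-a}\binom{i+j}{i}
=\binom{n+2}{a+1}-1
\le\frac{(n+1)(n+2)}{(a+1)(n-a+1)}\binom{n}{a}
\le(n+2)\binom{n}{a}\le 2m\binom{n}{a}.
\]
This uses $(a+1)(n-a+1)\ge n+1$ and $n\le m-1$. The same computation bounds the two shifted sums by $\binom{n+1}{a}-1\le\frac{n+1}{n-a+1}\binom{n}{a}$ and $\binom{n+1}{a+1}-1\le\frac{n+1}{a+1}\binom{n}{a}$, respectively. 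Both bounds hold uniformly for all $m\ge1$.
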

The following conormalization lemma helps control the term $J^\gamma \partial_i u_3 J^{\beta - \gamma}\partial_3 u_i$ for $i=1,2$;
it is essential for the proof of Lemma~\ref{L03}. 
The proof, which we omit, is almost identical to that of Lemma~\ref{L06}. 

\begin{Lemma}
\label{L09}
Let $0 \leq \gamma < \beta$. 
Then 
  \begin{equation}
   \|J^\gamma \bar{\partial} u_3 J^{\beta - \gamma} \partial_3 u_h\|_{H^3}
   \les
   E_{\beta,\gamma}^1
   + E_{\beta,\gamma}^2
   + E_{\beta, \gamma}^3
   + E_{\beta, \gamma}^4
   ,
   \llabel{EQ97}
  \end{equation}
where
  \begin{align}
   \begin{split}
    E_{\beta,\gamma}^1
    &=
    1_{\{\gamma_3 = 0, \beta_3 \geq 1, \gamma' < \beta'\}}
      \|J^\gamma \bar{\partial}^2 u\|_{H^3} \|J^{\beta -\gamma + e_3 - \bar{e}}u\|_{H^3},
    \\
    E_{\beta,\gamma}^{2}
    &= 
    1_{\{\gamma_3 = 0, \beta_3 \geq 1, \gamma' = \beta'\}}
      \beta_3
      (\|J^\gamma \bar{\partial}u\|_{H^3} + \|J^\gamma \bar{\partial}^2 u\|_{H^3}) \|J^{\beta- \gamma} u\|_{H^3} ,
    \\
    E_{\beta,\gamma}^3
    &=
    1_{\{\gamma_3 \geq 1\}}
      \|J^{\gamma - e_3} \bar{\partial}^2 u\|_{H^3} \|J^{\beta - \gamma + e_3} u\|_{H^3}
    + 1_{\{\gamma_3 = 0\}} 
      \|J^\gamma \bar{\partial} u\|_{H^3} \|J^{\beta- \gamma + e_3} u\|_{H^3},
    \\
    E_{\beta,\gamma}^4
    &=
    1_{\{\gamma_3 = 0, \beta_3 \geq 1, \gamma' < \beta'\}}
    \|J^\gamma \bar{\partial} u\|_{H^3} \|J^{\beta - \gamma + e_3 -\bar{e}} u\|_{H^3}
      + 1_{\{\gamma_3 = 0, \beta_3 = 0\}}
      \|J^\gamma \bar{\partial} u\|_{H^3} \|J^{\beta- \gamma} u\|_{H^3}
    .
   \end{split}
   \llabel{EQ98}
  \end{align}
\end{Lemma}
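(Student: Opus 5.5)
The plan is to mirror the proof of Lemma~\ref{L06}, with $u_3$ replaced by $\bar{\partial}u_3$ and $J^{\alpha-\beta}\partial_3 u$ replaced by $J^{\beta-\gamma}\partial_3 u_h$. First conormalize as in~\eqref{EQ36}:
\begin{equation*}
J^\gamma \bar{\partial} u_3 \, J^{\beta-\gamma}\partial_3 u_h
=
(\phi^{-1} J^\gamma \bar{\partial} u_3)(\phi J^{\beta-\gamma}\partial_3 u_h)
=
(\phi^{-1} J^\gamma \bar{\partial} u_3)\, J^{\beta-\gamma+e_3} u_h .
\end{equation*}
This works because $\bar{\partial}$ commutes with $J^\gamma$ and $\phi$. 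The key point is that $w=\bar{\partial}u$ is again divergence free and satisfies $w_3=\bar{\partial}u_3=0$ on $\partial\Omega$. Therefore~\eqref{EQ28} and~\eqref{EQ28b} apply to $w$, with $\bar{\partial}w=\bar{\partial}^2u$. This explains why each occurrence of $u$ (resp.\ $\bar{\partial}u$) in Lemma~\ref{L06} is replaced by $\bar{\partial}u$ (resp.\ $\bar{\partial}^2 u$) in the $E^i_{\beta,\gamma}$. I then split into the four exhaustive cases of~\eqref{EQ33}, with $(\alpha,\beta)$ replaced by $(\beta,\gamma)$.

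\emph{Case $\gamma_3\ge1$.} Incompressibility gives $\phi^{-1}J^\gamma\bar{\partial}u_3=J^{\gamma-e_3}\bar{\partial}(-\partial_1u_1-\partial_2u_2)$. The algebra property of $H^3$ then yields the first term of $E^3_{\beta,\gamma}$.

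\emph{Case $\gamma_3=0$, $\beta_3\ge1$, $\gamma'<\beta'$.} Use~\eqref{EQ40}. Here~\eqref{EQ28b} gives $\|\phi^{-1}J^\gamma\bar{\partial}u_3\|_{H^3}\les\|J^\gamma\bar{\partial}u\|_{H^3}+\|J^\gamma\bar{\partial}^2u\|_{H^3}$, and~\eqref{EQ28} with $H^3\hookrightarrow W^{1,\infty}$ gives the $L^\infty$ bound $\les\|J^\gamma\bar{\partial}u\|_{H^3}$. The factor $J^{\beta-\gamma+e_3}u_h$ is placed in $L^\infty$ by pulling out a tangential derivative as in~\eqref{EQ43}, which is available since $\gamma'<\beta'$. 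This produces $E^1_{\beta,\gamma}$ and the first part of $E^4_{\beta,\gamma}$; the remaining product gives the second part of $E^3_{\beta,\gamma}$.

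\emph{Case $\gamma_3=0$, $\beta_3\ge1$, $\gamma'=\beta'$.} Replace~\eqref{EQ43} by~\eqref{EQ45}, using $\phi J^{\beta-\gamma}\partial_3 u=\phi\partial_3 J^{\beta-\gamma}u-\beta_3\phi' J^{\beta-\gamma}u$. This gives $E^2_{\beta,\gamma}$.

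\emph{Case $\gamma_3=\beta_3=0$.} Expand the $H^3$ norm by Leibniz as in~\eqref{EQ48}--\eqref{EQ51}. The $\mu=0$ term is conormalized using~\eqref{EQ28} for $w$ and the commutator $[\phi,\partial^\gamma]$. The remaining terms are handled directly by~\eqref{EQ28a}, since $[\partial_3,J^{\beta-\gamma}]=0$. These give the last terms of $E^3_{\beta,\gamma}$ and $E^4_{\beta,\gamma}$.

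The only step needing care, and the main obstacle, is the Hardy-type bound in the cases with $\gamma_3=0$. One must check that~\eqref{EQ28} and~\eqref{EQ28b} genuinely hold for $J^\gamma w$ with $w=\bar{\partial}u$. This requires that $J^\gamma$ with $\gamma_3=0$ is purely tangential, so that it preserves both the divergence-free condition and the vanishing trace of the third component. Once this is verified, the derivative loss to $\bar{\partial}^2u$ is exactly what the statement records, and the rest is bookkeeping identical to Lemma~\ref{L06}.
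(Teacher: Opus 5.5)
Your proposal is correct and is essentially the paper's proof. The paper omits the argument as ``almost identical to that of Lemma~\ref{L06}'', and your substitution $w=\bar{\partial}u$ (divergence free, with $w_3=0$ on $\partial\Omega$) in the first factor, together with the same four-case split~\eqref{EQ33}, is exactly that adaptation. One small completion: in the case $\gamma_3=0$, $\beta_3\geq 1$, $\gamma'=\beta'$ there is also the leftover product $\|J^\gamma\bar{\partial}u\|_{H^3}\|J^{\beta-\gamma+e_3}u\|_{H^3}$, which goes into $E^3_{\beta,\gamma}$ just as in~\eqref{EQ46}.
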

\begin{remark}
\label{R02}
The casework above is exhaustive and is the same as~\eqref{EQ33} after relabeling.
The terms $E_{\beta,\gamma}^i$ are organized according to energy level.
With regard to the estimates in the proof of Lemma~\ref{L03}, we can identify
  \begin{equation}
   E_{\beta,\gamma}^3 
   \sim
   \|J^\beta \bar{\partial} u\|_{H^3}\|J^{\alpha - \beta + e_3} u\|_{H^3}
   \llabel{EQ99}
  \end{equation}
and 
  \begin{equation}
   E_{\beta,\gamma}^4
   \sim 
   1_{\{\gamma_3 = 0, \gamma' < \alpha'\}}
     \|J^\beta \bar{\partial} u\|_{H^3} \|J^{\alpha- \beta} u\|_{H^3}
   .
   \llabel{EQ100}
  \end{equation}
We note that $E_{\beta,\gamma}^1$ and $E_{\beta,\gamma}^2$ are special in that the restriction on $\alpha,\beta$ plays a crucial role in the closure of the pressure estimate. 
\end{remark}

We are now ready to prove Lemma~\ref{L03}.

\begin{proof}[Proof of Lemma~\ref{L03}]
We claim that
  \begin{equation}
   \sum_{m= 1}^\infty \sum_{|\alpha| = m, \alpha_3 \neq 0}
     M_\alpha \|J^\alpha\partial_\ell p\|_{H^3}
     \frac{\tau^{m-1}}{(m-1)!}
   \les
   \sum_{m = 1}^\infty \sum_{|\beta| = m-1}
     M_\beta \|J^\beta(\partial_i u_k \partial_k u_i)\|_{H^3}
     \frac{m\tau^{m-1}}{(m-1)!}
   ,
   \quad
   \ell = 1,2,3
   .
   \label{EQ101}
  \end{equation}
For $\ell = 3$, we obtain
  \begin{align}
   \begin{split}
    &
    \sum_{|\alpha| = m, \alpha_3 \neq 0}
      M_\alpha \|J^\alpha \partial_3 p\|_{H^3}
    \\&\indeq\indeq\les
    \sum_{|\alpha| = m, \alpha_3 \neq 0} \sum_{\substack{s,t\in \mathbb{N}_0, |\beta| = m-1 \\\beta' -\alpha' =(2s,2t)}}
      M_\alpha {s+t \choose s} \|J^\beta(\partial_i u_k \partial_k u_i)\|_{H^3}
    \\&\indeq\indeq\les
    \sum_{|\beta| = m-1} \sum_{s=0}^{[\beta_1/2]} \sum_{t=0}^{[\beta_2/2]}
     {\beta_1 + \beta_2 -2s -2t \choose \beta_1 - 2s} {s+t \choose s}
     \|J^\beta(\partial_i u_k \partial_k u_i)\|_{H^3}
    \\&\indeq\indeq\les
    m
    \sum_{|\beta| = m-1}
      M_\beta \|J^\beta(\partial_i u_k \partial_k u_i)\|_{H^3}
    ,
   \end{split}
   \label{EQ102}
  \end{align}
where, in the first and third lines, we used Lemmas~\ref{L07} and~\ref{L08}, respectively.
In the second line, note that $\beta' -\alpha' = (2s,2t)$ implies
  \begin{equation}
   M_\alpha
   =
   {\alpha_1 + \alpha_2 \choose \alpha_1}
   =
   {\beta_1 + \beta_2 - 2s - 2t \choose \beta_1 - 2s}
   .
   \llabel{EQ103}
  \end{equation}

Next, we consider $\ell = 1$ in~\eqref{EQ101}. We write
  \begin{equation}
   \sum_{|\alpha| = m, \alpha_3 \neq 0}
     M_\alpha \|J^\alpha \partial_1 p\|_{H^3}
   =
   \sum_{|\alpha| = m, \alpha_3 = 1}
     M_\alpha \|J^\alpha \partial_1 p\|_{H^3}
   + \sum_{|\alpha| = m, \alpha_3 \geq 2 }
       M_\alpha \|J^\alpha \partial_1 p\|_{H^3}  
   ,
   \llabel{EQ104}
  \end{equation}
where, by~\eqref{EQ82} in Lemma~\ref{L07},
  \begin{equation}
   \sum_{|\alpha| =m, \alpha_3 =1}
     M_\alpha\|J^\alpha \partial_1 p\|_{H^3}
   \les
   \sum_{|\alpha| =m,\alpha_3 = 1}
   M_\alpha \|\partial^{\alpha'}( \partial_i u_k \partial_k u_i) \|_{H^3}
   =
   \sum_{|\beta| = m-1, \beta_3 =0}
   M_\beta \|J^\beta(\partial_i u_k \partial_k u_i)\|_{H^3}
   ,
   \label{EQ105}
  \end{equation}
and, by Lemmas~\ref{L07} and~\ref{L08},
  \begin{align}
   \begin{split}
    &
    \sum_{|\alpha| = m, \alpha_3 \geq 2}
      M_\alpha\|J^\alpha \partial_1 p\|_{H^3}
    \\&\indeq\indeq\les
    \sum_{|\alpha| = m, \alpha_3 \geq 2}
    \sum_{\substack{s,t \in \mathbb{N}_0, |\beta| = m-1 \\ \beta' - \alpha' = (2s+1,2t)}}
      M_\alpha {s+t \choose s} \|J^\beta(\partial_i u_k \partial_k u_i) \|_{H^3}
    \\&\indeq\indeq=
    \sum_{|\beta| = m-1, \beta_1 \geq 1}\sum_{s=0}^{[(\beta_1-1)/2]} \sum_{t=0}^{[\beta_2/2]}
      {\beta_1 - 1 + \beta_2 -2s-2t \choose \beta_1 -2s - 1}{s+t \choose s}
      \|J^\beta (\partial_i u_k \partial_k u_i)\|_{H^3}
    \\&\indeq\indeq\les
    m 
    \sum_{|\beta| = m-1, \beta_1 \geq 1}
      M_\beta \|J^\beta(\partial_i u_k \partial_k u_i)\|_{H^3}
    .
   \end{split}
   \label{EQ106}
  \end{align}
Upon combining~\eqref{EQ105} and~\eqref{EQ106},
we infer
  \begin{equation}
   \sum_{|\alpha| = m, \alpha_3 \neq 0}
     M_\alpha \|J^\alpha \partial_1 p\|_{H^3}
   \les
   m
   \sum_{|\beta| =m-1} 
     M_\beta \|J^\beta(\partial_i u_k \partial_k u_i)\|_{H^3}
   .
   \label{EQ107}
  \end{equation}
The same bound holds for $\ell = 2$. 
We then multiply~\eqref{EQ102} and~\eqref{EQ107} by $\tau^{m-1}(m-1)!^{-s}$ and sum over $m\geq 1$ to obtain~\eqref{EQ101}, completing the proof of our claim.

Next, using the Leibniz rule on~\eqref{EQ101} yields 
  \begin{align}
    \begin{split}
    &
   \sum_{m= 1}^\infty \sum_{|\alpha| = m, \alpha_3 \neq 0}
     M_\alpha \|J^\alpha \nabla p\|_{H^3}
     \frac{\tau^{m-1}}{(m-1)!}
   \\&\indeq
   \les
   \sum_{m = 1}^\infty \sum_{|\beta| = m-1} \sum_{j=0}^{m-1} \sum_{|\gamma| = j}{\beta \choose \gamma}
     M_\beta \|J^\gamma \partial_i u_k J^{\beta-\gamma} \partial_k u_i\|_{H^3}
     \frac{m\tau^{m-1}}{(m-1)!}
   .
  \end{split}
   \llabel{EQ108}
  \end{align}
We split $\|J^\gamma \partial_i u_k J^{\beta - \gamma} \partial_k u_i\|_{H^3}$ into tangential and normal parts. 
We write
  \begin{align}
   \begin{split}
    &
    \|J^\gamma \partial_i u_k J^{\beta - \gamma} \partial_k u_i\|_{H^3}
    \\&\indeq
    \leq
    \sum_{i,k=1,2} \|J^\gamma\partial_i u_k J^{\beta - \gamma} \partial_k u_i\|_{H^3}
    + \|J^\gamma \partial_3 u_3 J^{\beta - \gamma} \partial_3 u_3\|_{H^3}
    \\&\indeq\indeq
    + \|J^\gamma \bar{\partial} u_3 J^{\beta - \gamma} \partial_3 u_h\|_{H^3}
    + \|J^\gamma \partial_3 u_h J^{\beta - \gamma} \bar{\partial} u_3\|_{H^3}
    \\&\indeq
    \leq
    \|J^\gamma \bar{\partial} u_h J^{\beta - \gamma} \bar{\partial} u_h\|_{H^3}
    + \|J^\gamma \bar{\partial} u_3 J^{\beta - \gamma} \partial_3 u_h\|_{H^3}
    + \|J^\gamma \partial_3 u_h J^{\beta - \gamma} \bar{\partial} u_3\|_{H^3}
    ,
   \end{split}
   \llabel{EQ109}
  \end{align}
where we used the incompressibility condition in the last inequality. 
Our estimates are then organized through the following bound:
  \begin{equation}
   \sum_{m = 1}^\infty \sum_{|\alpha| = m} 
     M_\alpha \|J^\alpha \nabla p\|_{H^3}
     \frac{\tau^{m-1}}{(m-1)!}
   \les
   \sum_{m = 1}^\infty \sum_{j=0}^{m-1}
     (
       \calP_{m,j}^{\rm tan}
       + \calP_{m,j}^{\nor, 1}
       + \calP_{m,j}^{\nor,2}
     )
   ,
   \label{EQ110}
  \end{equation}
where 
  \begin{align}
   \begin{split}
    \calP_{m,j}^{\rm tan}
    &=
    \sum_{|\beta| = m-1} \sum_{|\gamma| = j}{\beta \choose \gamma}
      M_\beta \|J^\gamma \bar{\partial} u_h J^{\beta - \gamma} \bar{\partial} u_h\|_{H^3}
      \frac{m\tau^{m-1}}{(m-1)!}
    ,
    \\
    \calP_{m,j}^{\nor,1}
    &=
    \sum_{|\beta| = m-1} \sum_{|\gamma| = j}{\beta \choose \gamma}
      M_\beta \|J^\gamma \bar{\partial} u_3 J^{\beta - \gamma } \partial_3 u_h\|_{H^3} 
      \frac{m\tau^{m-1}}{(m-1)!}
    ,
    \\
    \calP_{m,j}^{\nor,2}
    &=
    \sum_{|\beta| = m-1} \sum_{|\gamma| = j} {\beta \choose \gamma}
      M_\beta \|J^\gamma \partial_3 u_h J^{\beta - \gamma} \bar{\partial} u_3\|_{H^3}
      \frac{m\tau^{m-1}}{(m-1)!}
    .
   \end{split}
   \llabel{EQ111}
  \end{align}
First, we show that 
  \begin{equation}
   \sum_{m = 1}^\infty \sum_{j=0}^{m-1}
     \calP_{m,j}^{\rm tan}
   \les
   \|u\|_{H^4}^2
   +\tau\|u\|_{H^4} \|u\|_{Y_\tau^\co}
   +\tau\|u\|_{X_\tau^\co}\|u\|_{Y_\tau^\co}
   .
   \label{EQ112}
  \end{equation}
Indeed, we extract the lower-order terms to obtain
  \begin{equation}
   \sum_{m = 1}^\infty \sum_{j=0}^{m-1} 
     \calP_{m,j}^{\rm tan}
   \leq 
   \calP_{1,0}^{\rm tan} 
   + \sum_{m = 2}^\infty 
       \calP_{m,m-1}^{\rm tan}
   + \sum_{m = 2}^\infty \sum_{j=0}^{m-2}
       \calP_{m,j}^{\rm tan}
   ,
   \label{EQ113}
  \end{equation}
where $\calP_{1,0}^{\rm tan} \les \|u\|_{H^4}^2$, and
  \begin{align}
   \begin{split}
    \sum_{m = 2}^\infty \calP_{m,m-1}^{\rm tan}
    &\les
    \tau \|u\|_{H^4} 
    \sum_{m = 2}^\infty \sum_{|\beta| = m-1} \sum_{|\gamma| = m-1, \gamma \leq \beta}
      \Big( 
        M_\gamma \|J^\gamma \bar{\partial} u\|_{H^3} 
        \frac{(m-1)\tau^{m-2}}{(m-1)!}
      \Big)
      \frac{m}{m-1}
    \\&\les
    \tau \|u\|_{H^4} \|u\|_{Y_\tau^\co}
    .
   \end{split}
   \llabel{EQ114}
  \end{align}
For the third term on the right-hand side of~\eqref{EQ113}, 
we consider the cases when $j$ is small or large compared to~$m-2$.
For small $j$, we have
  \begin{align}
   \begin{split}
    \sum_{m = 2}^\infty \sum_{j=0}^{[(m-2)/2]} \calP_{m,j}
    &\les
    \tau
    \sum_{m = 2}^\infty \sum_{|\beta| = m-1} \sum_{j=0}^{[(m-2)/2]} \sum_{|\gamma| = j}
      \Big(
       M_\gamma \|J^\gamma \bar{\partial} u\|_{H^3} 
       \frac{\tau^j}{j!}
      \Big)
    \\&\indeq\indeq\indeq\indeq
    \times 
    \Big(
     M_{\beta-\gamma}\|J^{\beta-\gamma} \bar{\partial} u\|_{H^3}
     \frac{(m-j-1)\tau^{m-j-2}}{(m-j-1)!}
     \Big) 
     \mathcal{A}_{\alpha,\beta,s}
    \\&\les
    \tau \|u\|_{X_\tau^\co} \|u\|_{Y_\tau^\co}
    ,
   \end{split}
   \label{EQ115}
  \end{align}
where, for $m \geq 2$ and $0 \leq j \leq [(m-1)/2]$,
  \begin{equation}
   \mathcal{A}_{\alpha,\beta,s}
   =
   M_\beta M_{\gamma}^{-1} M_{\beta - \gamma}^{-1} {\beta \choose \gamma}
   \frac{j! (m-j-1)!}{(m-1)!}
   \frac{m}{m-j-1}
   \leq
   {m-1\choose j}^{1-s} \frac{m}{m-j-1}
   \les 1
   .
   \llabel{EQ116}
  \end{equation}
For large $j$, 
we have 
  \begin{align}
   \begin{split}
    \sum_{m \geq 2} \sum_{j=[(m-2)/2]+1}^{m-2} 
      \calP_{m,j}^{\rm tan}
    &\les
    \tau
    \sum_{m \geq 2} \sum_{|\beta| = m-1} \sum_{j=[(m-2)/2]+1}^{m-2} \sum_{|\gamma| = j} 
      \Big( 
       M_\gamma \|J^\gamma \bar{\partial}u\|_{H^3}
       \frac{j\tau^{j-1}}{j!}
       \Big)
    \\&\indeq\indeq\indeq\indeq
    \times
    \Big(
     M_{\beta - \gamma} \|J^{\beta - \gamma} \bar{\partial} u\|_{H^3}
     \frac{\tau^{m-j-1}}{(m-j-1)!}
     \Big)
     \mathcal{A}_{\alpha,\beta,s}
    \\&\les
    \tau \|u\|_{X_\tau^\co}\|u\|_{Y_\tau^\co}
   ,
   \end{split}
   \label{EQ117}
  \end{align}
where, for $m \geq 2$ and $[(m-2)/2] < j \leq m-2$, 
  \begin{equation}
   \mathcal{A}_{\alpha,\beta,s}
   =
   {m-1\choose j}^{1-s} \frac{m}{j} 
   \les 
   1
   .
   \llabel{EQ118}
  \end{equation}
This completes the proof of~\eqref{EQ112}.

Now, we establish
  \begin{equation}
   \sum_{m=1}^\infty \sum_{j=0}^{m-1} 
     \left( \calP_{m,j}^{\nor, 1} + \calP_{m,j}^{\nor,2}\right)
   \les
   \|u\|_{H^4}^2
   + (\tau + \tau^2)\|u\|_{Y_\tau^\co}
     (\|u\|_{H^4} + \|u\|_{X_\tau^\co})
   .
   \label{EQ119}
  \end{equation}
By interchanging indices, it suffices to show the bound holds only for $\calP_{m,j}^{\nor,1}$. 
Similarly to~\eqref{EQ110}, 
we bound
\begin{equation}
   \sum_{m=1}^\infty \sum_{j=0}^{m-1} 
    \calP_{m,j}^{\nor, 1}
   \les
   \calP_{1,0}^{\nor, 1}
   + \sum_{m=1}^\infty
       \calP_{m,m-1}^{\nor,1}
   + \sum_{m=2}^\infty \sum_{j=0}^{m-2}
       \calP_{m,j}^{\nor,1}
   ,
   \llabel{EQ120}
  \end{equation}
where
  \begin{equation}
   \calP_{1,0}^{\nor,1}
   + \sum_{m=1}^\infty \calP_{m,m-1}^{\nor,1}
   \les
   \|u\|_{H^4}^2
   + \tau \|u\|_{H^4}\|u\|_{Y_\tau^\co}
   .
   \llabel{EQ121}
  \end{equation}
For the third term, we use Lemma~\ref{L09} to write
  \begin{equation}
   \sum_{m=2}^\infty \sum_{j=0}^{m-2} 
     \calP_{m,j}^{\nor,1}
   \les
   \sum_{m=2}^\infty \sum_{|\beta| = m-1} \sum_{j=0}^{m-2} \sum_{|\gamma| = j} 
     \sum_{i=1}^4 {\beta \choose \gamma} M_\beta E_{\beta,\gamma}^i
     \frac{m \tau^{m-1}}{(m-1)!}
   .
   \llabel{EQ122}
  \end{equation}
For $E_{\beta,\gamma}^1$,
we get
  \begin{align}
   \begin{split}
    &
    \sum_{m=2}^\infty \sum_{|\beta| = m-1} \sum_{j=0}^{m-2} \sum_{|\gamma| = j} {\beta \choose \gamma}
      M_\beta 1_{\{\gamma_3 =0, \beta_3 \geq 1, \gamma' < \beta'\}}
      \|J^\gamma \bar{\partial}^2 u\|_{H^3} \|J^{\beta - \gamma + e_3 - \bar{e}} u\|_{H^3}
      \frac{m\tau^{m-2}}{(m-1)!}
    \\&\indeq\indeq\les
    \tau
    \sum_{m=2}^\infty \sum_{|\beta| = m-1} \sum_{j=0}^{m-2} \sum_{|\gamma|=j}
      \Big(
        M_\gamma \|J^\gamma \bar{\partial}^2 u\|_{H^3}
        \frac{(j+1)\tau^j}{(j+1)!}
      \Big)
    \\&\indeq\indeq\indeq\indeq\indeq\indeq
    \times
    \Big(
      M_{\beta - \gamma} \|J^{\beta - \gamma + e_3-\bar{e}}u\|_{H^3}
      \frac{\tau^{m-j-2}}{(m-j-2)!}
    \Big)
    \mathcal{A}_{\alpha,\beta,s}1_{\{\gamma_3 =0, \beta_3 \geq 1\}}
    \\&\indeq\indeq\les
    \tau\|u\|_{X_\tau^\co}\|u\|_{Y_\tau^\co}
    ,
   \end{split}
   \llabel{EQ123}
  \end{align}
since, for $m \geq 1$ and $0\leq j \leq m-2$, 
  \begin{align}
   \begin{split}
    \mathcal{A}_{\alpha,\beta,s}
    1_{\{\gamma_3 =0, \beta_3 \geq 1\}}
    &=
    M_\beta M_{\gamma}^{-1} M_{\beta - \gamma}^{-1} {\beta \choose \gamma}
    \frac{(j+1)! (m-j-2)!}{(m-1)!}
    \frac{m}{j+1}
    1_{\{\gamma_3 =0, \beta_3 \geq 1\}}
    \\&\leq
    {m-1-\beta_3 \choose j} {m-1 \choose j+1}^{-s} \frac{m}{j+1}
    \leq
    {m-2 \choose j} {m-1 \choose j+1}^{-s} \frac{m}{j+1}
    \\&\leq
    {m-1 \choose j+1}^{1-s} \frac{j+1}{m-1} \frac{m}{j+1}
    \leq 1
    .
   \end{split}
   \llabel{EQ124}
  \end{align}
For $E_{\beta,\gamma}^2$, 
we first estimate the highest order product in $E_{\beta,\gamma}^2$, namely $\beta_3\|J^\gamma \bar{\partial}^2 u\|_{H^3} \|J^{\beta - \gamma} u\|_{H^3}$.
We acquire
  \begin{align}
   \begin{split}
    &
    \sum_{m=2}^\infty \sum_{|\beta| =m-1} \sum_{j=0}^{m-2}\sum_{|\gamma| = j} {\beta \choose \gamma}
      M_\beta1_{\{\gamma_3 = 0, \beta_3 \geq 1, \gamma' = \beta'\}}
      \beta_3
      \|J^\gamma \bar{\partial}^2 u\|_{H^3}\|J^{\beta - \gamma} u\|_{H^3}
      \frac{m\tau^{m-1}}{(m-1)!}
    \\&\indeq\indeq\les
    \tau\|u\|_{H^4}
    \sum_{m=2}^\infty \sum_{|\beta| =m-1} \sum_{|\gamma| = m-2} 
      \Big(
        M_\gamma \|J^\gamma \bar{\partial}^2 u\|_{H^3}
        \frac{(m-1)\tau^{m-2}}{(m-1)!}
      \Big)
      \mathcal{A}_{\alpha,\beta,s}^1
      1_{\{\gamma_3 = 0, \gamma' = \beta'\}}
    \\&\indeq\indeq\indeq\indeq
    + \tau
    \sum_{m=3}^\infty \sum_{|\beta| = m-1} \sum_{j=0}^{m-3}\sum_{|\gamma| = j}
        \Big(
          M_\gamma \|J^\gamma \bar{\partial}^2 u\|_{H^3}
          \frac{\tau^{j+1}}{(j+1)!}
        \Big)
    \\&\indeq\indeq\indeq\indeq\indeq\indeq\indeq
    \times
      \Big(
        M_{\beta-\gamma} \|J^{\beta-\gamma} u\|_{H^3}
        \frac{(m-j-2)\tau^{m-j-3}}{(m-j-2)!}
      \Big)
      \mathcal{A}_{\alpha,\beta,s}^2 
      1_{\{\gamma_3 = 0, \gamma' = \beta'\}}
    \\&\indeq\indeq\les
    \tau\|u\|_{H^4}\|u\|_{Y_\tau^\co}
    + \tau \|u\|_{X_\tau^\co}\|u\|_{Y_\tau^\co}
    .
   \end{split}
   \label{EQ125}
  \end{align}
To see this, note that $\beta_31_{\{\gamma_3 = 0, \gamma' = \beta'\}} = 1$ 
when $|\beta| = |\gamma| +1$.
Hence, for $m \geq 2$,
  \begin{align}
   \begin{split}
    \mathcal{A}_{\alpha,\beta,s}^1
    1_{\{\gamma_3 =0, \gamma' = \beta'\}}
    &=
    M_\beta M_\gamma^{-1} {\beta \choose \gamma} \frac{\beta_3 m}{m-1}
    1_{\{\gamma_3 =0, \gamma' = \beta'\}}
    \\&\leq
    {|\beta'| \choose |\gamma'| }{\beta_3 \choose \gamma_3} 
    \frac{\beta_3m}{m-1}
    1_{\{\gamma_3 = 0, \gamma' = \beta'\}}
    \leq
    \frac{m}{m-1}
    \leq 1
    .
   \end{split}
   \llabel{EQ126}
  \end{align}
More generally, 
$\beta_3 1_{\{\gamma_3 =0, \beta' =\gamma'\}} = m-j-1$ when $|\beta| = m-1$ and $|\gamma| =j$.
Thus, for $m\geq 3$ and $0 \leq j \leq m-3$, 
  \begin{align}
   \begin{split}
    \mathcal{A}_{\alpha,\beta,s}^2
    1_{\{\gamma_3 = 0, \gamma' = \beta'\}}
    &=
    M_{\beta} M_{\gamma}^{-1} M_{\beta-\gamma}^{-1}{\beta \choose \gamma}
    \frac{(j+1)! (m-j-2)!}{(m-1)!}
    \frac{\beta_3m}{m-j-2}
    1_{\{\gamma_3 = 0, \beta' = \gamma'\}}
    \\&\leq
    {|\beta'|\choose |\gamma'|}{\beta_3 \choose 0}
    {m-1 \choose j+1}^{-s}
    m
    \les 1
    ,
   \end{split}
   \llabel{EQ127}
  \end{align}
where, in the second line, we bounded $(m-j-1)(m-j-2)^{-1} \les 1$ and, in the last inequality, we used ${m-1\choose j+1}^{-s} \les (m-1)^{-s}$ for $0 \leq j \leq m-3$. 

For the lower-order product in $E_{\beta,\gamma}^{2}$, namely $\|J^\gamma \bar{\partial}u\|_{H^3} \|J^{\beta - \gamma} u\|_{H^3}$, a similar computation to~\eqref{EQ125} gives 
  \begin{equation}
   \tau^2 \|u\|_{H^4}\|u\|_{Y_\tau^\co} 
   + \tau^2 \|u\|_{X_\tau^\co}\|u\|_{Y_\tau^\co}
   .
   \llabel{EQ128}
  \end{equation}
For $E_{\beta,\gamma}^3$, we follow the same computation as in~\eqref{EQ115}--\eqref{EQ117}, 
to attain
  \begin{equation}
   \sum_{m=2}^\infty \sum_{|\beta| = m-1} \sum_{j=0}^{m-2} \sum_{|\gamma| = j} {\beta \choose \gamma}
     E_{\beta,\gamma}^3
     \frac{m\tau^{m-1}}{(m-1)!}
   \les
   \tau \|u\|_{X_\tau^\co} \|u\|_{Y_\tau^\co}
   .
   \llabel{EQ129}
  \end{equation}
Finally, the lower-order term $E_{\beta,\gamma}^4$ admits the bound
  \begin{equation}
   \sum_{m=2}^\infty \sum_{|\beta| = m-1} \sum_{j=0}^{m-2} \sum_{|\gamma| = j} {\beta \choose \gamma}
     M_\beta E_{\beta,\gamma}^4 
     \frac{m\tau^{m-1}}{(m-1)!}
   \les
   \tau^2 \|u\|_{X_\tau^\co} \|u\|_{Y_\tau^\co}
   ,
   \llabel{EQ130}
  \end{equation}
though we omit the computation for the sake of brevity. 
This completes the proof of~\eqref{EQ119} and, therefore, the proof of the lemma.
\end{proof}
\subsection{Auxiliary estimates}\label{sec03c}
Here, we prove Lemma~\ref{L20}, which is used to show that $u \in C([0,T_\ast); X_{\tau(t)}^\co)$.

\begin{proof}[Proof of Lemma~\ref{L20}]
The proof is similar to that of previous lemmas, so we only estimate one term and explain how to apply the same reasoning to the others.
Let $\sigma < \tau$.
Recall that 
  \begin{equation}
   \|u\cdot \nabla u\|_{X_\sigma^\co}
   + \|\nabla p\|_{X_\sigma^\co}
   \leq
   \|u_h \bar{\partial} u\|_{X_\sigma^\co}
   + \|u_3 \partial_3 u\|_{X_\sigma^\co}
   + \|\nabla p\|_{X_\sigma^\co}
   .
   \label{EQ130a}
  \end{equation}
Our chosen example is $\|u_h \bar{\partial} u\|_{X_\sigma^\co}$. 
We have
  \begin{align}
   \begin{split}
    \|u_h \bar{\partial} u\|_{X_\sigma^\co}
    &\leq
    \sum_{m=1}^\infty \sum_{|\alpha| = m} \sum_{j=0}^m \sum_{|\beta| = j}{\alpha \choose \beta}
      M_\alpha 
      \|J^\beta u\|_{H^3} \|J^{\alpha - \beta} \bar{\partial} u\|_{H^3}
      \frac{\sigma^{m-1}}{(m-1)!}
    \\&\les
    \sigma^{-1}\|u\|_{H^3}\|u\|_{X_\tau^\co}
    + \sum_{m=1}^{\infty} \sum_{|\alpha| =m} \sum_{j=1}^m \sum_{|\beta|= j}
        \Big( 
         M_\beta 
         \|J^\beta u\|_{H^3} 
         \frac{\tau^{j-1}}{(j-1)!}
        \Big)
    \\&\indeq\indeq\indeq\indeq
        \times \Big(
         M_{\alpha -\beta} 
         \|J^{\alpha - \beta} \bar{\partial}u\|_{H^3}
         \frac{\tau^{m-j}}{(m-j)!}
        \Big)
        \mathcal{A}_{\alpha,\beta,s}
        \left( \frac{\sigma}{\tau}\right)^{m-1}
    \\&\les
    \sigma^{-1} \|u\|_{H^3} \|u\|_{X_\tau^\co}
    + \|u\|_{X_\tau^\co}^2
    ,
   \end{split}
   \llabel{EQ130b}
  \end{align}
where, since $\sigma < \tau$,
  \begin{equation}
   \mathcal{A}_{\alpha,\beta,s}
   \left( \frac{\sigma}{\tau}\right)^{m-1}
   =
   M_\alpha M_\beta^{-1} M_{\alpha- \beta}^{-1} {\alpha \choose \beta}
   \frac{(j-1)! (m-j)!}{(m-1)!}
   \left( \frac{\sigma}{\tau}\right)^{m-1}
   \leq
   \frac{m}{j} \left( \frac{\sigma}{\tau}\right)^{m-1}
   \les
   1
   .
   \llabel{EQ130c}
  \end{equation}
Indeed, for the other terms in~\eqref{EQ130a}, we repeat most of the estimates in Lemmas~\ref{L01}--\ref{L03}, but instead bound the terms by a polynomial in $\|u\|_{H^4}$ and~$\|u\|_{X_\tau^\co}$.
Then we use the difference between $\sigma$ and $\tau$ to control any leftover polynomial of the indices  $m$ or~$j$.
The factor $\sigma^{-1}$ often appears when estimating lower order terms, but this is harmless for the continuity argument in which this lemma is used.
\end{proof}

Finally, we prove Lemma~\ref{L21}.
The proof is very similar to that of Lemmas~\ref{L01} and~\ref{L02},
so we give a sketch, pointing out the differences. 

\begin{proof}[Proof of Lemma~\ref{L21}]
First we show that 
  \begin{equation}
   \|u \cdot \nabla u\|_{X_\tau^\co}
   \les
   \|u\|_{\widetilde{X}_\tau^\co} \|u\|_{\widetilde{Y}_\tau^\co}
   .
   \llabel{EQ502}
  \end{equation}
Similarly to~\eqref{EQ53}, 
we may write 
  \begin{equation}
   \|u\cdot \nabla u\|_{X_\tau^\co}
   \leq
   \sum_{m=1}^\infty \sum_{j=0}^m
     (\mathcal{C}_{m,j}^{\rm tan} + \mathcal{C}_{m,j}^{\rm nor})
   ,
   \llabel{EQ503}
  \end{equation}
where
  \begin{equation}
   C_{m,j}^{\rm tan} 
   =
   \sum_{|\alpha| =m} \sum_{|\beta| =j, \beta \leq \alpha} {\alpha \choose \beta}
     M_\alpha \|J^\beta u_h J^{\alpha - \beta} \bar{\partial} u\|_{H^3}
     \frac{\tau^{m-1}}{(m-1)!}
   ,
   \llabel{EQ504}
  \end{equation}
and 
  \begin{equation}
   C_{m,j}^{\rm nor}
   = 
   \sum_{|\alpha| =m} \sum_{|\beta| = j, \beta \leq \alpha} {\alpha \choose \beta}
     M_\alpha \|J^\beta u_3 J^{\alpha - \beta} \partial_3 u\|_{H^3}
     \frac{\tau^{m-1}}{(m-1)!}
     .
   \llabel{EQ505}
  \end{equation}
We only consider $C_{m,j}^{\rm nor}$ as the tangential term is simpler. 
Extracting lower-order terms, 
we get 
  \begin{equation}
   \sum_{m=1}^\infty \sum_{j=0}^m \mathcal{C}_{m,j}^{\nor}
   =
   \mathcal{C}_{1,1}^{\rm nor}
   + \sum_{m=1}^\infty \mathcal{C}_{m,0}^{\rm nor}
   + \sum_{m=1}^\infty \mathcal{C}_{m,m}^{\rm nor}
   + \sum_{m=2}^\infty \sum_{j=1}^{m-1} \mathcal{C}_{m,j}^{\rm nor}
   .
   \label{EQ506}
  \end{equation}
Note that the only difference between~\eqref{EQ506} and~\eqref{EQ62} is the additional term $\sum \mathcal{C}_{m,0}^{\rm nor}$. 
For this term, we have
  \begin{equation}
    \sum_{m=1}^\infty \mathcal{C}_{m,0}
    =
    \sum_{m=1}^\infty \sum_{|\alpha| =m} 
      M_\alpha \|u_3 J^\alpha \partial_3 u\|_{H^3}
      \frac{\tau^{m-1}}{(m-1)!}
    \les
    \|u\|_{\mathcal{H}}
    \sum_{m=1}^\infty
      M_\alpha 
      \|J^{\alpha + e_3} u\|_{H^3}
      \frac{m\tau^{m-1}}{m!}
    \les
    \|u\|_{\widetilde{X}_\tau^\co} \|u\|_{\widetilde{Y}_\tau^\co}
    ,
    \llabel{EQ507}
  \end{equation}
where we used~\eqref{EQ28b} in the second inequality.
For the first term in~\eqref{EQ506}, 
we get
  \begin{equation}
   \mathcal{C}_{1,1}^{\rm nor}
   =
   \sum_{|\alpha| = 1} \sum_{|\beta|=1, \beta \leq \alpha}{\alpha \choose \beta}
     M_\alpha
     \|J^\beta u_3 J^{\alpha-\beta} \partial_3u\|_{H^3}
   \les
   \|\phi^{-1} J^\beta u_3\|_{H^3}\|J^{e_3} u\|_{H^3}
   .
   \label{EQ508}
  \end{equation}
If $\beta_3 =1$, then this is immediately bounded by
$\|u\|_{\mathcal{H}}^2$, which in turn is bounded by $\|u\|_{\widetilde{X}_\tau^\co} \|u\|_{\widetilde{Y}_\tau^\co}$.
If $\beta_3 =0$, then, using~\eqref{EQ28b}, we bound
  \begin{equation}
   \mathcal{C}_{1,1}^{\rm nor}
   \les
   (\|J^\beta u\|_{H^3} + \|J^{\beta + \bar{e}} u_h\|_{H^3})
   \|J^{e_3} u\|_{H^3}
   \les
   \|u\|_{\mathcal{H}}^2
   + \|u\|_{Y_\tau^\co} \|u\|_{\mathcal{H}}
   \les
   \|u\|_{\widetilde{X}_\tau^\co} \|u\|_{\widetilde{Y}_\tau^\co}
   .
   \label{EQ509}
  \end{equation}
Note that the gain in derivative from the case $\beta_3 = 0$ is bounded by the first term in the sum~$\|u\|_{Y_\tau^\co}$.
Next, the third and fourth terms in~\eqref{EQ506} are bounded as they were in the proof of Lemma~\ref{L01}, except the appearance of $\|u\|_{H^4}$ in all estimates may be replaced by $\|u\|_{\mathcal{H}}$ by the same reasoning as in~\eqref{EQ508} and~\eqref{EQ509}.
Bounding the gain in derivative by the $Y_\tau$ norm and performing a similar conormalization also yields
  \begin{equation}
   \|u\cdot \nabla u\|_{\mathcal{H}}
   \les
   \|u\|_{\widetilde{X}_\tau^\co}\|u\|_{\widetilde{Y}_\tau^\co}
   .
   \llabel{EQ510}
  \end{equation}
This completes the proof of the advection bound in~\eqref{EQ501}. 
The pressure estimate follows a similar analysis. 
\end{proof}

\section{Proof of Theorem~\ref{T02}: Persistence in $A_\tau^\co$}\label{sec04}
The purpose of this section is to prove Theorem~\ref{T02}, which establishes persistence in the analytic norm $A_\tau^\co$ equipped with the conormal operator $L^k = (\phi \partial_3)^k$.
Before this, we state three lemmas. 
The first two lemmas bound commutator terms coming from the advection in~\eqref{EQ00}.

\begin{Lemma}
\label{L10}
We have 
  \begin{equation}
   \sum_{m=1}^\infty \sum_{|\alpha| = m} \sum_{0 < \beta \leq \alpha} {\alpha \choose \beta}
     M_\alpha \eps^{\alpha_3}
     \|K^\beta u\cdot \nabla K^{\alpha - \beta} u\|_{H^3}
     \frac{\tau^{m-1}}{(m-1)!}
   \les
   \|u\|_{H^4}^2
   + (\tau + \tau^2)
     \|u\|_{B_\tau^\co}
     (\|u\|_{H^4} + \|u\|_{A_\tau^\co})
   .
   \label{EQ205}
  \end{equation}
\end{Lemma}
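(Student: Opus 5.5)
The plan is to follow the proof of Lemma~\ref{L01}, with $J$ replaced by $K$ and the weights $\eps^{\alpha_3}$ carried along. Since $\alpha_3=\beta_3+(\alpha-\beta)_3$, we have $\eps^{\alpha_3}=\eps^{\beta_3}\eps^{\alpha_3-\beta_3}$, so the weights factor exactly across products. This lets every quadratic bound be written in terms of $\|u\|_{A_\tau^\co}$ and $\|u\|_{B_\tau^\co}$, just as $X_\tau^\co$ and $Y_\tau^\co$ appear in Lemma~\ref{L01}.

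First, I split $K^\beta u\cdot\nabla K^{\alpha-\beta}u$ into a tangential part $K^\beta u_h\,\bar\partial K^{\alpha-\beta}u$ and a normal part $K^\beta u_3\,\partial_3K^{\alpha-\beta}u$. This defines quantities $\calC_{m,j}^{\rm tan}$ and $\calC_{m,j}^{\rm nor}$ as in~\eqref{EQ53}. Here $\bar\partial$ commutes with $K^{\alpha-\beta}$, so the tangential part is bounded verbatim as in~\eqref{EQ55a}. I extract the terms $m=j=1$ and $j=m$, which give $\|u\|_{H^4}^2+\tau\|u\|_{H^4}\|u\|_{B_\tau^\co}$. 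For $1\le j\le m-1$, I use Lemma~\ref{L05a} for the combinatorial factors and Lemma~\ref{L05} to turn the double sums into the product $\tau\|u\|_{A_\tau^\co}\|u\|_{B_\tau^\co}$.

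For the normal part I conormalize as in~\eqref{EQ36}, but with the operator $L$: $\phi\,\partial_3K^{\alpha-\beta}u=K^{\alpha-\beta+e_3}u$ holds exactly, since $L=\phi\partial_3$ is applied last. This is actually cleaner than the $J$ case, because no commutator appears in this step. I then reproduce the case analysis of Lemma~\ref{L06} on $\beta_3\ge1$, or $\beta_3=0$ with $\alpha_3\ge1$ and $\beta'<\alpha'$ or $\beta'=\alpha'$, or $\beta_3=\alpha_3=0$.

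When $\beta_3\ge1$, I write $\phi^{-1}K^\beta u_3=\partial^{\beta'}L^{\beta_3-1}\partial_3u_3=-K^{\beta-e_3}(\partial_1u_1+\partial_2u_2)$ by incompressibility. This works because $L^{\beta_3}=L^{\beta_3-1}\phi\partial_3$, and $\phi^{-1}$ must be pulled through $L^{\beta_3-1}$. Doing so produces the commutator $[\phi^{-1},L^{\beta_3-1}]$, that is, $\phi^{-1}L^{\beta_3-1}\phi-L^{\beta_3-1}$. Equivalently, I use $L^{\beta_3-1}\phi=\sum_{i}\binom{\beta_3-1}{i}(L^i\phi)L^{\beta_3-1-i}$, which gives terms $\phi^{-1}(L^i\phi)\,K^{\beta-e_3-ie_3}\partial_3u_3$. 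The $\beta_3=0$ cases use~\eqref{EQ28}--\eqref{EQ28b} and the embedding $H^3\hookrightarrow W^{1,\infty}$ as before. The analogue of~\eqref{EQ45}, bounding $\|K^{\alpha-\beta+e_3}u\|_{L^\infty}$, is now immediate since $K^{\alpha-\beta+e_3}u=\phi\partial_3K^{\alpha-\beta}u$. The extra factor $\alpha_3$ in $E^2$ is no longer needed there. In the case $\beta_3=\alpha_3=0$, however, the $H^3$ Leibniz expansion still generates $[\partial^\gamma,\phi]$ and $[\partial_3,L^{\alpha_3-\beta_3}]$ type terms.

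The main obstacle is controlling these commutators, $[L^{k},\partial_3]$ and $\phi^{-1}L^{k}\phi-L^k$, uniformly with factorial bounds. Their coefficients involve $L^i\phi$ and $L^i\phi'$ for arbitrarily large $i$. I would first prove a lemma of the form $\|L^i\phi\|_{W^{3,\infty}}+\|\phi^{-1}L^i\phi\|_{W^{3,\infty}}\le C^{i}\,i!$, which is the role of Lemmas~\ref{L13}--\ref{L14}. The proof would use the uniform analyticity of $\phi(z)=z/(1+z)$ and a change of variable $\partial_y=\phi\partial_z$, so that $L$ becomes a flat derivative. Here $y=\log z+z$, which is inverted through the Lambert-$W$ function. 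Cauchy estimates in a uniform strip then give the bounds.

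With these bounds, the commutator contributions become sums $\sum_i\binom{k}{i}C^i i!\,\|K^{\cdot-ie_3}u\|_{H^3}$. After inserting the weights, this is a convolution of the analytic norm with a geometric series in $C\eps\tau$. This is precisely where the small weight $\eps^{\alpha_3}$ is used: choosing $\eps$ small (depending only on $C$) makes the series summable. The result is again bounded by $\tau\|u\|_{A_\tau^\co}\|u\|_{B_\tau^\co}$ plus lower-order $\tau^2$ and $\|u\|_{H^4}$ terms. The remaining index bookkeeping for the $E^i$-type terms, with $\mathcal A_{\alpha,\beta,s}\lesssim1$, is identical to the proof of Lemma~\ref{L01}.
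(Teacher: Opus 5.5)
Your proposal is correct, but in the one case that is genuinely new for $K$ (the normal term with $\beta_3\ge1$) you take a heavier route than the paper.

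\textbf{Your route.} Following Lemma~\ref{L06}, you peel off the \emph{innermost} factor, $L^{\beta_3}=L^{\beta_3-1}\phi\partial_3$, and use incompressibility. You must therefore commute $\phi^{-1}$ through $L^{\beta_3-1}$. Your first displayed equality $\phi^{-1}K^\beta u_3=\partial^{\beta'}L^{\beta_3-1}\partial_3u_3$ holds only modulo that commutator. The correct identity is the Leibniz sum $\sum_i\binom{\beta_3-1}{i}\phi^{-1}(L^i\phi)K^{\beta-(i+1)e_3}\partial_3u_3$ that you write next.

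This does close:
\begin{itemize}
\item $\phi^{-1}L^i\phi=d_1^{i+1}$ in the notation of Proposition~\ref{P01}, so \eqref{EQ402} gives $W^{3,\infty}$ bounds $\les(4C)^i\,i!$.
\item In these terms the $\eps$-powers match exactly.
\item The combinatorial factor is $\les m/(j(m-j))$ uniformly in $i$.
\item You are left with $\tau\sum_i(C\eps\tau)^i$, which requires $C\eps\tau_0<1$.
\end{itemize}

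\textbf{The paper's route.} Lemma~\ref{L13} instead peels off the \emph{outermost} factor, $L^{\beta_3}=(\phi\partial_3)L^{\beta_3-1}$, so that $\phi^{-1}K^\beta u_3=\partial_3K^{\beta-e_3}u_3$ exactly. It then expands the $H^3$ norm by the Leibniz rule rather than using the algebra property. The conormalized factor is needed only in $L^\infty$, where $H^3\hookrightarrow W^{1,\infty}$ absorbs the extra $\partial_3$. The pieces with $\mu\neq0$ need no conormalization, because $\partial_3$ sits outside $K^{\alpha-\beta}$. For $\beta_3=0$, only \eqref{EQ28} in $L^\infty$ is used.

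The result is just $F^1,F^2,F^3$. There is no incompressibility for $\beta_3\ge1$, no tangential loss from \eqref{EQ28b}, and no restricted $E^1$/$E^2$-type terms. There are also no commutators, and no use of the analyticity of $\phi$ or of smallness of $\eps$.

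\textbf{Comparison.} Your route yields a full $H^3$ bound on $\phi^{-1}K^\beta u_3$ and mirrors Section~\ref{sec03}. The paper's route keeps Lemma~\ref{L10} elementary and independent of Lemma~\ref{L14}.

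Two smaller points.
\begin{itemize}
\item The commutator $[L^k,\partial_3]$, which you call the main obstacle, never arises in this lemma. In \eqref{EQ205}, $\nabla$ acts after $K^{\alpha-\beta}$, so that commutator is exactly what was split off into \eqref{EQ206}. For $\alpha_3=\beta_3=0$, $L^0$ is simply the identity.
\item After shifts such as $K^{\alpha-\beta+e_3}$ in the $\beta_3=0$ terms, the weights no longer factor exactly: a factor $\eps^{-1}$ appears. This is harmless, since $\eps$ is fixed.
\end{itemize}
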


\begin{Lemma}
\label{L11}
We have 
  \begin{align}
   \begin{split}
   &
   \sum_{m=1}^\infty \sum_{|\alpha| = m} \sum_{0 \leq \beta < \alpha} {\alpha \choose \beta}
     M_\alpha \eps^{\alpha_3} 
     \|K^\beta u_3 [K^{\alpha - \beta}, \partial_3] u\|_{H^3}
     \frac{\tau^{m-1}}{(m-1)!}
   \\&\indeq\indeq\les
   \|u\|_{H^4}^2
   + (\tau + \tau^2)
     \|u\|_{B_\tau^\co}
     (\|u\|_{H^4} + \|u\|_{A_\tau^\co})
   \end{split}
   \label{EQ206}
  \end{align}
and 
  \begin{equation}
   \sum_{m=1}^\infty \sum_{|\alpha| = m} \sum_{0 \leq |\gamma| \leq 3} \sum_{0 < \mu \leq \gamma}{\gamma \choose \mu}
     M_\alpha \eps^{\alpha_3}
     \|\partial^\mu u \cdot \nabla \partial^\gamma K^\alpha u\|_{H^3}
     \frac{\tau^{m-1}}{(m-1)!}
   \les
   \|u\|_{H^4}^2
   + \tau \|u\|_{H^4} \|u\|_{B_\tau^\co}
   .
   \label{EQ207}
  \end{equation}
\end{Lemma}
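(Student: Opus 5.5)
The plan is to prove~\eqref{EQ206} and~\eqref{EQ207} along the lines of Lemma~\ref{L02}. The new feature is that $[K^{\alpha-\beta},\partial_3]$ is no longer a simple multiple of $\phi' J^\alpha$. Instead it is a genuine commutator involving conormal derivatives of $\phi$.

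\emph{The bound~\eqref{EQ207}.} This is essentially identical to~\eqref{EQ18}. The derivatives $\partial^\mu$ fall only on the coefficient $u$. We apply the multiplicative Sobolev inequality~\eqref{EQ28a} to bound $\|\partial^\mu u\|_{H^{4-j}}\|\nabla\partial^{\gamma-\mu}K^\alpha u\|_{H^{j-1}}\les \|u\|_{H^4}\|K^\alpha u\|_{H^3}$ for $|\mu|=j\ge1$. Summing with the weight $\tau^{m-1}/(m-1)!$, separating $m=1$ (bounded by $\|u\|_{H^4}^2$), and using $1/(m-1)\le 1$ gives $\tau\|u\|_{H^4}\|u\|_{B_\tau^\co}$.

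\emph{The bound~\eqref{EQ206}.} Write $k=\alpha_3-\beta_3$.

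First, derive an identity for the commutator. Since $\partial_3(\phi\partial_3)=\phi\partial_3\partial_3+\phi'\partial_3$, induction gives
\[
[L^k,\partial_3]=-\sum_{i=0}^{k-1} L^{i}\bigl(\phi^{-1}(L\phi)\, L^{k-i}\bigr)\cdot\phi^{-1}\!\!,
\]
or, in a cleaner form,
\[
\phi[L^k,\partial_3] f=-\sum_{j=1}^{k}\binom{k}{j}\,c_j\, L^{k-j+1}f ,
\]
where $c_j=L^{j-1}\phi'$. Here $L\phi=\phi\phi'$, so $\phi^{-1}L^j\phi$ are smooth bounded functions. This is exactly where Lemmas~\ref{L13}--\ref{L14} enter. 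They give $\|L^{j}\phi'\|_{W^{3,\infty}}\les C^j j!$, using the analyticity of $\phi(z)=z(1+z)^{-1}$ and the Lambert-$W$ change of variables. I assume these are available.

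Second, conormalize as in Lemma~\ref{L06}:
\[
K^\beta u_3[K^{\alpha-\beta},\partial_3]u=(\phi^{-1}K^\beta u_3)\,\partial^{\alpha'-\beta'}\bigl(\phi[L^{k},\partial_3]u\bigr).
\]
Bound $\phi^{-1}K^\beta u_3$ by~\eqref{EQ28b}, or, when $\beta_3\ge1$, by incompressibility, writing $\phi^{-1}L^{\beta_3}u_3=L^{\beta_3-1}\partial_3u_3$ up to commutator terms of the same type. This yields terms of size $\|K^\beta u\|_{H^3}+\|K^{\beta}\bar\partial u\|_{H^3}$, or $\|K^{\beta-e_3}\bar\partial u\|_{H^3}$.

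Third, sum the terms. Each term is a triple convolution in $\beta$ and $j$, with coefficient
\[
\binom{\alpha}{\beta}\binom{k}{j} C^j (j-1)!\,\|K^\beta(\cdot)\|\,\|K^{\alpha-\beta-(j-1)e_3}u\|_{H^3},
\]
weighted by $\eps^{\alpha_3}\tau^{m-1}/(m-1)!$. The factorial gain $(j-1)!$ against $\binom{k}{j}$ leaves a geometric factor $(C\tau)^{j-1}$. For $\tau$ small, which we may assume (otherwise the smallness comes from $\eps$), this sums as in the proof of Lemma~\ref{L01}. Lemmas~\ref{L05a} and~\ref{L05} then produce $\|u\|_{H^4}^2+(\tau+\tau^2)\|u\|_{B_\tau^\co}(\|u\|_{H^4}+\|u\|_{A_\tau^\co})$. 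The case $\beta=0$ is handled like~\eqref{EQ17}, using $\|\phi^{-1}u_3\|_{H^3}\les\|u\|_{H^4}$.

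\emph{Main obstacle.} The hardest step is the combinatorial bookkeeping of this three-index sum. The $j=1$ term carries a full $L^k$ and must be charged to $B_\tau^\co$ with one power of $\tau$. The case $\beta_3=0$, $\beta'=\alpha'$ requires the extra care of $E^2$ in Lemma~\ref{L06}. I will check that the factor $\alpha_3$ there is absorbed by $(m-1)^{-1}$, exactly as in~\eqref{EQ69}.
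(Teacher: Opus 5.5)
Your argument for \eqref{EQ207} is the paper's: it is the proof of \eqref{EQ18}, with the norm on the left read as $L^2$, as your estimate does.

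For \eqref{EQ206} the plan is workable, but your explicit commutator identity is false for $j\ge 2$. Already for $k=2$ a direct computation gives
\[
\phi[L^2,\partial_3]f=-2\phi' L^2 f+\bigl((\phi')^2-\phi\phi''\bigr)Lf ,
\]
whereas your formula yields $-2\phi'L^2f-\phi\phi''Lf$. The correct identity, which is what Lemma~\ref{L15} proves, is
\[
\phi[L^k,\partial_3]f=\sum_{j=1}^{k}\binom{k}{j}\,\phi L^{j}(\phi^{-1})\,L^{k-j+1}f .
\]
The coefficient bound $\|\phi L^j(\phi^{-1})\|_{W^{3,\infty}}\le C^j j!$ comes from Lemma~\ref{L19}, not from Lemmas~\ref{L13}--\ref{L14}. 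Your argument never uses the specific form of $c_j$, so it survives this correction. The coefficient $\binom{k}{j}C^j j!=\frac{k!}{(k-j)!}C^j$ costs only an extra polynomial factor $j$.

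The geometric factor is really $(C\eps\tau)^{j-1}$, not $(C\tau)^{j-1}$. The $j-1$ removed derivatives are normal, so $\eps^{\alpha_3}$ supplies $\eps^{j-1}$. That is where the smallness comes from in the paper; $\tau\le\tau_0$ is never assumed small.

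With that fixed, your route differs from the paper's in how you conormalize, and it is heavier. You use the $H^3$ algebra, as in Lemma~\ref{L06}, so you must control $\|\phi^{-1}K^\beta u_3\|_{H^3}$. The paper instead argues as in Lemma~\ref{L13}. It expands the $H^3$ norm by Leibniz and conormalizes only the term in which all derivatives fall on $c_\ell\,\partial_3K^{\alpha-\beta-\ell e_3}u$. There it needs $\phi^{-1}K^\beta u_3$ only in $L^\infty$; the remaining terms are handled by \eqref{EQ28a} without conormalization. Two consequences follow.

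\emph{The case $\beta_3\ge1$.} The paper just uses $\|\partial_3K^{\beta-e_3}u_3\|_{L^\infty}\les\|K^{\beta-e_3}u\|_{H^3}$. You need $\partial_3L^{\beta_3-1}\partial^{\beta'}u_3$ in $H^3$, which the $A_\tau^\co$ norm does not control. So you must actually unwind the recursion
\[
g_b=-L^b(\partial_1u_1+\partial_2u_2)-\sum_{\ell=1}^{b}\binom{b}{\ell}\phi L^\ell(\phi^{-1})\,g_{b-\ell},\qquad g_b=\partial_3L^bu_3 ,
\]
and carry the resulting factorial coefficients through one more convolution. You only gesture at this with ``up to commutator terms of the same type.''

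\emph{The case $\beta_3=0$.} Here \eqref{EQ28b} produces $\|K^{\beta+\bar e}u\|_{H^3}$, so your $j=1$ term sits at level $m+1$. It does close. Since $\alpha_3\ge1$, Lemma~\ref{L05a} gives the factor $\binom{|\alpha'|}{|\beta'|}\le\binom{m-1}{|\beta|}$. Charging $K^{\beta+\bar e}u$ to $A_\tau^\co$ and $K^{\alpha-\beta}u$ to $B_\tau^\co$ leaves the ratio $\alpha_3/(m-|\beta|-1)\le 2$, with $\alpha-\beta=e_3$ treated separately.

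In the paper, by contrast, every term $F,F_2,F_3$ is at level at most $m-\ell+1\le m$. No derivative is lost, no $E^1/E^2$-type case analysis is needed, and the reindexed sums $\mathcal{G}_{\ell,p,j}$ are elementary.
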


The proofs of Lemmas~\ref{L10} and~\ref{L11} are in Section~\ref{sec04a}.
The third lemma controls the pressure and is proven in Section~\ref{sec04b}.

\begin{Lemma}
\label{L12}
We have 
  \begin{equation}
   \|\nabla p\|_{A_\tau^\co}
   \les 
   \|u\|_{H^4}^2
   + (\tau + \tau^2)
     \|u\|_{B_\tau^\co}
     (\|u\|_{H^4} + \|u\|_{A_\tau^\co})
     .
   \label{EQ208}
  \end{equation}
\end{Lemma}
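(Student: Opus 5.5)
The plan is to follow the structure of Lemma~\ref{L03}, but to replace the parity-based elliptic identities of Lemma~\ref{L07} by an argument that trades conormal derivatives $L=\phi\partial_3$ for tangential ones. The weight $\eps^{\alpha_3}$ absorbs the top-order leftover. First I would reduce to a bound of the form
\begin{equation*}
\sum_{|\alpha|=m}M_\alpha\eps^{\alpha_3}\|K^\alpha\nabla p\|_{H^3}
\les m\sum_{|\beta|=m-1}M_\beta\eps^{\beta_3}\|K^\beta(\partial_iu_k\partial_ku_i)\|_{H^3}
+\eps\sum_{|\beta|=m}M_\beta\eps^{\beta_3}\|K^\beta\nabla p\|_{H^3}+\text{l.o.t.}
\end{equation*}
Once $\eps$ is small, the middle term is absorbed into the left-hand side after summation in $m$ with weights $\tau^{m-1}/(m-1)!$.

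To get this, I proceed by induction on $\alpha_3$. For $\alpha_3=0$, Neumann elliptic regularity applied to $\partial^{\alpha'}p$ (which satisfies the same problem with data $\partial^{\alpha'}v$) gives $\|\partial^{\alpha'}\nabla p\|_{H^3}\les\|\partial^{\alpha'}v\|_{H^3}+\|\partial^{\alpha'}\nabla p\|_{L^2}$. For $\alpha_3\ge1$, I write $K^\alpha\nabla p=\partial^{\alpha'}L^{\alpha_3-1}(\phi\partial_3\nabla p)$ and use the equation $\partial_3^2p=-\Delta'p-v$. The component $\phi\partial_3\partial_3p$ then becomes $-\phi(\Delta'p+v)$. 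For the tangential components, $\phi\partial_3\bar\partial p$ is $\bar\partial$ applied to $\phi\partial_3 p$. In both cases one $L$ is converted into at most one extra tangential derivative or a $v$-term. Commuting $\phi$ through $L^{\alpha_3-1}$ produces terms of the form $L^{j}\phi$ times lower $L$-powers. These are controlled by Lemmas~\ref{L13}--\ref{L14}: the bound $|L^j\phi|\les C^j j!$ follows from the uniform analyticity of $\phi$. The $H^3$ norms of these coefficients are handled as in Lemma~\ref{L04}.

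The result is a recursion relating $K^\alpha\nabla p$ to $K^{\alpha-e_3+\bar e}\nabla p$, to $K^{\alpha-e_3}v$, and to commutator terms weighted by binomials times $C^jj!$. The term $K^{\alpha-e_3+\bar e}\nabla p$ has the same total order $m$ but one fewer $\eps$. This is exactly the highest-order term absorbed by the $\eps$ factor, after checking $M_\alpha\le M_{\alpha-e_3+\bar e}$. The $m$-dependent combinatorics ($M_\alpha$ versus $M_{\alpha-e_3+\bar e}$, and the factorial commutator coefficients against $\tau^{m-1}/(m-1)!$) should be summable for $\tau$ small relative to $C^{-1}$.

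After this reduction, the remaining step is the product estimate for $K^\beta(\partial_iu_k\partial_ku_i)$. I use the Leibniz rule for $K$ (valid since $L$ is a derivation). Then I split into the tangential term and the normal terms as in~\eqref{EQ109}, and conormalize the normal terms using an analogue of Lemma~\ref{L09} with $J$ replaced by $K$. Here $\phi^{-1}L^{\gamma_3}u_3=L^{\gamma_3-1}\partial_3u_3$, so incompressibility applies even more cleanly. The resulting $E^i$ sums are bounded exactly as in~\eqref{EQ112}--\eqref{EQ130}, giving $\|u\|_{H^4}^2+(\tau+\tau^2)\|u\|_{B_\tau^\co}(\|u\|_{H^4}+\|u\|_{A_\tau^\co})$.

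I expect the main obstacle to be the absorption step. I must verify that the commutator coefficients from $[L^{k},\phi]$ and the reduction in $\eps$-powers combine so that the same-order pressure term appears with a factor $\les\eps$ uniformly in $m$, and not with a growing factor such as $m\eps$. If uniformity fails, my first fallback is to iterate the conversion all the way down to $\alpha_3=0$ (as in~\eqref{EQ84}). I would then control the resulting binomial sums with Lemma~\ref{L08}, using $\eps$ only to absorb the analytic commutator series.
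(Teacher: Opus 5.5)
\textbf{Gap: the product step.} After stripping $\phi$ from the source term $K^{\alpha-e_3}(\phi v)$, you invoke a $K$-analogue of Lemma~\ref{L09} with the same terms $E^i$. But Lemma~\ref{L09} rests on two exact identities for $J$: $\phi J^{\beta-\gamma}\partial_3u=J^{\beta-\gamma+e_3}u$ (as in \eqref{EQ36}), and $\phi^{-1}J^\gamma u_3=J^{\gamma-e_3}\partial_3u_3$ for $\gamma_3\geq1$. Neither holds for $K$. Indeed $\phi L^k\partial_3f=L^{k+1}f+\phi[L^k,\partial_3]f$. Likewise $\phi^{-1}L^{\gamma_3}u_3=\partial_3L^{\gamma_3-1}u_3$, which is not $L^{\gamma_3-1}\partial_3u_3$; for $\gamma_3=2$ the two differ by $\phi'\partial_3u_3$. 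So the identity you use to invoke incompressibility is false. The conormalized products therefore carry the commutator series of Lemma~\ref{L15}, with coefficients of size $\binom{k}{\ell}C^\ell\ell!$. These are not bounded ``exactly as in'' \eqref{EQ112} and its sequels. They need the separate $\ell$-summation carried out in the proof of Lemma~\ref{L11}.

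\textbf{Fix.} That route can be completed, but it is simpler not to strip $\phi$ at all. By incompressibility,
$\phi v=\phi\bigl(\bar\partial u_h\bar\partial u_h+(\partial_1u_1+\partial_2u_2)^2\bigr)+2\sum_{h=1,2}\partial_hu_3\,Lu_h$
is already a sum of products of $K$-conormal quantities. The Leibniz rule then gives $K^{\alpha-e_3}(\partial_hu_3\,Lu_h)=\sum_{\gamma}\binom{\alpha-e_3}{\gamma}K^\gamma\partial_hu_3\,K^{\alpha-\gamma}u_h$, with no Hardy inequality and no commutators. The purely tangential part is handled as in Lemma~\ref{L16}. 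This is exactly why the paper keeps the weight $\phi^2$ in front of $\partial_kv$ in \eqref{EQ304} and rewrites it via \eqref{EQ316}--\eqref{EQ317}. In your scheme the unweighted source then appears only at $\alpha_3=0$, where only tangential derivatives act, so conormalization is exact as in \eqref{EQ310}.

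\textbf{What already works.} Your absorption step is sound. It is a legitimate one-step variant of the paper's two-step trade ($L^2=\phi^2\partial_3^2+\phi'Z$ with $\partial_3^2=\Delta-\Delta'$). The uniformity you worried about holds. For $\eta=\alpha-(j+1)e_3+\bar e$ one has $M_\alpha\leq M_\eta$ and $\eta_3\leq|\eta|-1$, so the computation of Lemma~\ref{L16} gives the same-order pressure term with coefficient $\eps\sum_j(C\eps\tau)^j$, uniformly in $m$.

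\textbf{One slip at $\alpha_3=0$.} You need the elliptic gain, $\|\partial^{\alpha'}\nabla p\|_{H^3}\les\|\partial^{\alpha'-\bar e}v\|_{H^3}$ plus lower order, splitting $M_\alpha=M_{\alpha-e_1}+M_{\alpha-e_2}$. The bound by $\|\partial^{\alpha'}v\|_{H^3}$ that you wrote leaves the top term at level $m+1$. It then produces $\|u\|_{H^4}\|u\|_{B_\tau^\co}$ without the factor $\tau$.
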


Now, we prove Theorem~\ref{T02}. 

\begin{proof}[Proof of Theorem~\ref{T02}]
The construction argument is analogous to that of Theorem~\ref{T02}, so we only provide the a~priori estimates. 
From definitions~\eqref{EQ202}--\eqref{EQ203}, we have
  \begin{equation}
   \frac{d}{dt} \|u\|_{A_\tau^\co}
   =
   \dot{\tau} \|u\|_{B_\tau^\co}
   + \sum_{m =1}^\infty
       \left( \frac{d}{dt} |u|_{\Eco^{m,3}}\right)
       \frac{\tau^{m-1}}{(m-1)!}
   .
   \llabel{EQ209}
  \end{equation}
By testing the Euler equations~\eqref{EQ00} with $M_\alpha \eps^{\alpha_3}\partial^\gamma K^\alpha u$ and summing over $|\alpha| =m$ and $|\gamma| \leq 3$,
we obtain
  \begin{align}
   \begin{split}
    \frac{d}{dt} |u|_{\Eco^{m,3}}
    &\leq
    \sum_{|\alpha| = m} \sum_{0 < \beta \leq \alpha} {\alpha \choose \beta}
      M_\alpha \eps^{\alpha_3}
      \|K^\beta u \cdot \nabla K^{\alpha - \beta} u\|_{H^3}
    + \sum_{|\alpha| = m} \sum_{0 \leq \beta < \alpha} {\alpha \choose \beta}
        M_\alpha \eps^{\alpha_3}
        \|K^\beta u_3 [K^{\alpha - \beta}, \partial_3] u\|_{H^3}
    \\&\indeq\indeq
    + \sum_{|\alpha| = m} \sum_{0 \leq |\gamma| \leq 3} \sum_{0 < \mu \leq \gamma} {\gamma \choose \mu}
        M_\alpha \eps^{\alpha_3} 
        \|\partial^\mu u \cdot \nabla \partial^\gamma K^\alpha u\|_{L^2}
    + \sum_{|\alpha| =m} 
        M_\alpha \eps^{\alpha_3}
        \|K^\alpha \nabla p\|_{H^3}
    .
   \end{split}
   \llabel{EQ210}
  \end{align}
After multiplying the above by $\tau^{m-1}(m-1)!^{-s}$, summing over $m \geq 1$, and using Lemmas~\ref{L10}--\ref{L12},
we arrive at
  \begin{equation}
   \frac{d}{dt} \|u\|_{A_\tau^\co}
   - \dot{\tau} \|u\|_{B_\tau^\co}
   \leq
   C\|u\|_{H^4}^2
   + C(\tau+\tau^2)
     \|u\|_{B_\tau^\co}
     (\|u\|_{H^4} + \|u\|_{B_\tau^\co})
   .
   \llabel{EQ211}
  \end{equation}
The proof then proceeds exactly as in Theorem~\ref{T01}.
\end{proof}

In preparation for Sections~\ref{sec04a} and~\ref{sec04b}, we establish two estimates pertaining to the commutator $[K^{\alpha-\beta},\partial_3]$. 

\begin{Lemma}
\label{L14}
The conormal weight $\phi$ is uniformly analytic on~$\mathbb{R}_+$.
That is, for $j \geq 0$,
  \begin{equation}
   \|\partial_x^j \phi\|_{L^\infty(\mathbb{R}_+)} 
   \leq
   C'C^j j!
   ,
   \label{EQ226a}
  \end{equation}
where $C',C$ are positive universal constants.
Moreover, for $j \geq 0$,
  \begin{equation}
   \|L^j\phi\|_{W^{3,\infty}(\mathbb{R}_+)}
   \leq
   C_0 C_1^j j!
   ,
   \label{EQ226}
  \end{equation}
where $C_0,C_1$ are positive universal constants.
The bound~\eqref{EQ226} also holds with $\phi$ replaced by~$\phi^2$.
\end{Lemma}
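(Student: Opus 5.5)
The first claim~\eqref{EQ226a} is immediate from the formula already noted in the proof of Lemma~\ref{L04}: since $\phi(z)=1-(1+z)^{-1}$, we have $\phi^{(k)}=(-1)^{k+1}k!\,(1-\phi)^{k+1}$ for $k\ge1$, and $0\le 1-\phi\le 1$ on $\mathbb{R}_+$. This gives $\|\partial_x^j\phi\|_{L^\infty}\le j!$, so $C'=C=1$ works.

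For~\eqref{EQ226}, I would not expand $(\phi\partial)^j$ through iterated Leibniz rules, since that produces coefficients involving successively higher derivatives of $\phi$ and makes counting hard. Instead I would change variables so that $L=\phi\partial_x$ becomes a flat derivative. Set $y=\int_1^x \phi^{-1}$, which equals $x+\log x$ up to a constant; this maps $(0,\infty)$ onto $\mathbb{R}$ and satisfies $\partial_y=\phi\partial_x$. Then $L^j\phi=\partial_y^j F(y)$, where $F(y)=\phi(x(y))$. The inverse $x(y)$ is expressed through the Lambert-$W$ function, $x=W(e^{y})$, hence
\[
F(y)=\frac{W(e^y)}{1+W(e^y)}.
\]
The key step is to show that $F$ is uniformly analytic on $\mathbb{R}$, i.e., that $F$ extends holomorphically to a strip $\{|\Im y|<\delta\}$ and is bounded there. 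Cauchy's estimates then give $\sup_{\mathbb{R}}|\partial_y^jF|\le C_0\delta^{-j}j!$. To verify this, note that $w\mapsto w+\log w$ is a biholomorphism from a sector neighborhood $\{\Re w>0,\ |\arg w|<\theta\}$ of the positive axis onto a region containing a uniform strip around $\mathbb{R}$. The derivative $1+1/w$ stays away from zero there, and the image contains a strip because $\Im(w+\log w)=\Im w+\arg w$ grows in the sector. On the image, $1+w$ does not vanish and $|w/(1+w)|$ is bounded. I expect this complex-analytic strip argument, particularly uniformity as $y\to\pm\infty$, to be the main obstacle. If it proves awkward, a fallback is to verify directly the recursion $L^{j}\phi=P_j(\phi)$, where $P_j$ is a polynomial. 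Indeed $L\phi=\phi(1-\phi)^2$, so $P_{j+1}=t(1-t)^2P_j'$, and one would bound $\sup_{[0,1]}|P_j|$ by Cauchy estimates for the flow of the holomorphic vector field $t(1-t)^2\partial_t$ near $[0,1]$.

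The $W^{3,\infty}$ norm requires $x$-derivatives, and $\partial_x=\phi^{-1}\partial_y$ is singular at $x=0$. I would use the polynomial representation $L^j\phi=P_j(\phi)$ directly. Then $\partial_x^k(L^j\phi)$ for $k\le3$ is a combination of $P_j^{(i)}(\phi)$, $i\le3$, multiplied by bounded derivatives of $\phi$. Each $P_j(t)=F(y(t))$ for $t\in(0,1)$, and Cauchy estimates in $t$ on a complex neighborhood of $[0,1]$ bound $|P_j^{(i)}|$ by $C^{j}j!$ up to constants. This uses the same holomorphic-flow analysis: the strip in $y$ corresponds to a neighborhood of $(0,1)$ in $t$, and at the endpoints the polynomial nature of $P_j$ together with maximum-principle bounds on a slightly larger disk handles the degeneracy. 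Finally, $\phi^2$ is treated identically, since $L^j(\phi^2)=\partial_y^j(F^2)$ and $F^2$ is also bounded and holomorphic on the strip; equivalently, $L^j\phi^2=Q_j(\phi)$ with the same recursion started from $t^2$.
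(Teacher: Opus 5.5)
Your route is sound and differs from the paper's in two places.

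\textbf{The $L^\infty$ bound.} The paper uses the same substitution $y=x+\log x$, but justifies uniform analyticity of $F=\phi\circ W\circ\exp$ only by observing that $F'=F(1-F)^2$ with $F\in[0,1]$. Your sector-to-strip argument makes this explicit. Injectivity of $w\mapsto w+\log w$ on a sector $|\arg w|<\theta<\pi/2$ holds because the real part of $1+1/w$ exceeds $1$ on a convex set. The boundary rays map to curves with $|\mathrm{Im}\,y|\ge\theta$ whose real parts sweep all of $\mathbb{R}$, and $|w/(1+w)|<1$ there.

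\textbf{The $W^{3,\infty}$ bound.} This is the real difference. The paper writes $\partial_xL^j\phi=\psi(y)\,\partial_y^{j+1}F$ with $\psi=1/\phi$, which is harmless for $y\ge0$. For $y<0$ it pulls out a factor $e^y$ via $\partial_yF=e^yH(e^y)$, and proves $H\circ\exp$ uniformly analytic through a second ODE for $Fe^{-y}$. The second and third derivatives are left to ``the same idea'', although there the weights $\psi^2$ and $\psi\psi'$ grow like $e^{-2y}$ and are offset only by cancellations. Your representation $L^j\phi=P_j(\phi)$, $P_{j+1}=t(1-t)^2P_j'$, never divides by $\phi$. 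The chain rule with bounded $\phi',\phi'',\phi'''$ reduces all three derivatives at once to bounds on $P_j^{(i)}$, $i\le 3$, on $[0,1]$.

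\textbf{The endpoint step is not justified as written.} First, a slip: on $(0,1)$ one has $P_j=F^{(j)}\circ F^{-1}$, not $F(y(t))$. More importantly, $F$ maps the strip onto a region that is not a uniform neighborhood of $[0,1]$. Near $t=0$ it is asymptotically the sector $|\arg t|<\delta$ (since $F\approx e^y$). Near $t=1$ it is a horn tangent to the real axis (since $1-F\approx 1/y$). No disk of fixed radius about points near the endpoints fits inside. The maximum principle gives nothing there unless it is fed the degree of $P_j$ (Bernstein--Walsh).

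There are two simple repairs.
\begin{enumerate}
\item Note that $\deg P_j=2j+1$ and use Markov's inequality $\|P^{(i)}\|_{L^\infty(0,1)}\le(2n^2)^i\|P\|_{L^\infty(0,1)}$ for $\deg P=n$. This costs only a factor $\les(j+1)^6$ on top of $\|P_j\|_{L^\infty(0,1)}=\sup_{\mathbb{R}}|F^{(j)}|\le C_0C_1^jj!$.
\item Run your fallback with complex initial data. Since $V(t)=t(1-t)^2$ is entire, for every $t_0$ in the complex $r$-neighborhood $U_r$ of $[0,1]$ the solution of $\partial_s\Phi=V(\Phi)$, $\Phi|_{s=0}=t_0$, exists and stays in $U_{2r}$ for $|s|<r/M$, where $M=\sup_{U_{2r}}|V|$. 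Moreover $P_j(t_0)=\partial_s^j\Phi|_{s=0}$, because $\partial_s g(\Phi)=(Vg')(\Phi)$ for holomorphic $g$. Cauchy's estimate in $s$ gives $|P_j|\le(1+2r)(2M/r)^jj!$ on $U_r$. Cauchy's estimate in $t$ then bounds $P_j',P_j'',P_j'''$ on $[0,1]$.
\end{enumerate}
The second version proves all of \eqref{EQ226}, including the $\phi^2$ case (start the recursion from $t^2$), with no Lambert function at all.
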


\begin{proof}
The bound~\eqref{EQ226a} is clear from the formula,
  \begin{equation}
   \phi^{(k)} 
   = 
   (-1)^{k+1} k!(1-\phi)^{k+1}
   ,
   \llabel{EQ226b}
  \end{equation}
using also $\phi(x) \leq 1$ for $x > 0$.

Now, we show that~\eqref{EQ226} holds.
We first claim that 
  \begin{equation}
   \|L^j \phi\|_{L^\infty(\mathbb{R}_+)}
   \leq 
   C_0 C_1^j j!
   \label{EQ227}
  \end{equation}
for $j \geq 0$, where $C_0, C_1$ are positive constants. 
Indeed, define $\zeta \colon \mathbb{R}_+ \to \mathbb{R}$ by $\zeta(x) = x + \log x$. 
Then  $\zeta$ is invertible and $\zeta^{-1}(y) = W(e^y)$,
where $W$ is the Lambert $W$-function.
Denoting $y = \zeta(x)$,
we have $x = W(e^y)$ and
  \begin{equation}
   L^j \phi(x) 
   =
   (\phi(x) \partial_x)^j \phi(x)
   =
   \partial_y^j \phi(W(e^y))
   .
   \llabel{EQ228}
  \end{equation}
Since $f = \phi \circ W \circ \exp$ lies in $[0,1]$ and solves
  \begin{equation}
   f' 
   =
   (1-f)^2f
   ,
   \llabel{EQ228a}
  \end{equation}
it follows that $f$ is uniformly analytic on~$\mathbb{R}$.
Hence, there exist constants $C_0$ and $C_1$ such that
  \begin{equation}
   \|L^j \phi\|_{L^\infty(\mathbb{R}_+)}
   =
   \|\partial_y^j (f \circ W \circ \exp)\|_{L^\infty(\mathbb{R})}
   \leq
   C_0 C_1^j j!
   ,
   \llabel{EQ229}
  \end{equation}
which completes the proof of~\eqref{EQ227}.

To bound the higher derivatives in the $W^{3,\infty}$ norm, we explicitly compute~$\partial_x^3 L^j \phi$. 
However, we only estimate $\|\partial_x L^j \phi\|_{L^\infty}$
and omit the computations for the second and third derivatives as they follow the same idea.
Define $\psi(y) = 1 + W(e^y)^{-1}$.
From our coordinate change $y = x + \log x$, we get
  \begin{equation}
   \partial_x L^j \phi(x)
   =
   \psi(y) \partial_y^{j+1} \phi(W(e^y))
   .
   \label{EQ230}
  \end{equation}
For $y \geq 0$, 
  \begin{equation}
   |\partial_x L^j \phi(x)|
   \leq 
   |\psi(y)| \|\partial_y^{j+1} \phi(W(e^y))\|_{L^\infty(\mathbb{R})}
   \leq
   C C_0 C_1^{j+1} (j+1)!
   \leq
   C_2 C_3^j j!
   ,
   \label{EQ231}
  \end{equation}
where we used $|\psi(y)| \leq C$ for $y \geq 0$.

For $y < 0$, we cannot simply repeat the estimate~\eqref{EQ231} since $\psi(y)$ is unbounded as $y \to -\infty$.
Indeed, 
  \begin{equation}
   \psi(y)
   \sim
   e^{-y} + 2 -\frac{1}{2} e^y + O(e^{2y})
   \quad\quad
   \text{as } y \to -\infty
   .
   \llabel{EQ232}
  \end{equation}
However, we may take advantage of the extra derivative on $\phi(W(e^y))$ in~\eqref{EQ230}, which exhibits a decay rate of $e^y$ as $y \to -\infty$.
Denoting $H(u) = (d/dz)|_{z=u}\phi(W(z))$,
we have
  \begin{equation}
   \psi(y) \partial_y^j \partial_y \phi(W(e^y))
   =
   \psi(y) \partial_y^j ( e^y H(e^y))
   = 
   \psi(y) e^y 
   \sum_{0\leq \ell \leq j} {j \choose \ell} 
     \partial_y^\ell H(e^y)
   ,
  \label{EQ233}
  \end{equation}
where $|\psi(y) e^y|$ is bounded for $y \leq 0$.
Now, note that 
  \begin{equation}
   H(e^y) 
   =
   (1-\phi(W(e^y)))^2
   \phi(W(e^y))e^{-y}
   ,
   \llabel{EQ233a}
  \end{equation}
where $g(y) = \phi(W(e^y)) e^{-y}$ solves $g' = g^3 e^{2y} - 2g^2 e^y$ and has a compact image for $y \leq 0$. 
Moreover, $e^{2y}$ and $e^{y}$ are bounded for $y \leq 0$.
Therefore, $g$ is uniformly analytic on $y \leq 0$, implying $H\circ \exp$ is uniformly analytic as well.
This gives us 
  \begin{equation}
   |\partial_x L^j \phi(x)|
   \les
   \sum_{0\leq \ell \leq j} {j \choose \ell}
   C^\ell \ell!
   \les
   C^j j!
   ,
   \llabel{EQ234}
  \end{equation}
completing the proof of the lemma.
\end{proof}
\begin{remark}
\label{R04}
The above theorem holds for~$\phi^2 $. 
To see this, note that $f = \phi \circ W \circ \exp$ satisfies $f' = (1-f)^2 f^2$.
We also note that~\eqref{EQ226} is promptly verified by Proposition~\ref{P01} in the Appendix since $(\phi \partial_x)^n \phi = (\phi \partial_x)^{n+1} x$.
\end{remark}

The next lemma bounds the commutator $[L^j, \partial_3]f$. 
For brevity, we work in one dimension.

\begin{Lemma}
\label{L15}
Let $f$ be a smooth function on~$\mathbb{R}_+$.
For $j \geq 1$, we have 
  \begin{equation}
   [L^j, \partial_x] f
   =
   \sum_{\ell=0}^{j-1} {j \choose \ell}
     c_\ell(x) 
     L^{j-\ell} f
   ,
   \llabel{EQ235}
  \end{equation}
where 
  \begin{equation}
   \|c_\ell \|_{W^{3,\infty}(\mathbb{R}_+)} 
   \les
   C^\ell \ell!
   \llabel{EQ235a}
  \end{equation}
for all $\ell \geq 0$, where $C$ is a positive universal constant. 
\end{Lemma}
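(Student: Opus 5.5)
The plan is to prove the identity by induction on $j$, using the basic commutator $[L,\partial_x] = -\phi'\partial_x$. Write $\phi'\partial_x = (\phi'/\phi)L$ and set $\omega = \phi'/\phi$. Since $\phi'=(1-\phi)^2$ and $\phi = x/(1+x)$, this $\omega = (1-\phi)^2/\phi = 1/(x(1+x))$ is singular at $x=0$. So I would not write $[L,\partial_x]$ as a multiple of $L$ directly. Instead I would note that $[L,\partial_x]f = -\phi'\partial_x f$ and aim for an identity of the form
\[
[L^j,\partial_x] f = \Big(\sum_{\ell=0}^{j-1}\binom{j}{\ell} c_\ell\, L^{j-\ell}\Big)\frac{1}{\phi}\, f ,
\]
understood in the form stated, with $L^{j-\ell}$ acting on $f$ but with one factor $\phi\partial_x$ at the right end "opened up". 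Concretely, the expected form is $[L^j,\partial_x]f = \sum_\ell \binom{j}{\ell} c_\ell L^{j-\ell-1}(\partial_x f)\cdot\phi$. The cleanest route is the telescoping identity
\[
[L^j,\partial_x] = \sum_{i=0}^{j-1} L^{i}[L,\partial_x]L^{j-1-i} = -\sum_{i=0}^{j-1} L^{i}\big(\phi'\partial_x L^{j-1-i}\big).
\]
Since $\phi'\partial_x = (\phi'/\phi)\,L$ and $\phi'/\phi$ is singular, I will instead commute $L^i$ past the coefficient $\phi'$ via the Leibniz rule for the derivation $L$:
\[
L^i(\phi' g) = \sum_{k\le i}\binom{i}{k}(L^k\phi')\,L^{i-k}g .
\]
Here $g=\partial_x L^{j-1-i}f$, and $\phi' g = \phi'\partial_x L^{j-1-i}f$.

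\textbf{Step 1: regroup.} I would write $\phi' = \phi\cdot(\phi'/\phi)$ only after all derivatives are placed, that is, as $\phi'\partial_x h = \frac{\phi'}{\phi}Lh$. The key observation is that $L^k\phi'$ vanishes at $x=0$ to the same order as needed. Since $\phi'=(1-\phi)^2$ is a polynomial in $\phi$, we have $L^k\phi' = -2L^{k-1}\big((1-\phi)\phi(1-\phi)^2\big)$ for $k\ge1$, which is $\phi$ times a bounded function. Thus for $k\ge1$ the coefficient $L^k\phi'/\phi$ is smooth. For $k=0$, the term $\phi'\,L^{i}\partial_x L^{j-1-i}f$ is handled by induction, commuting $\partial_x$ once more with $L^i$. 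This yields a recursion of the form
\[
[L^j,\partial_x] = -\sum_i\big(\phi'\partial_x L^{j-1} + \dots\big),
\]
so I will set up the induction hypothesis as the statement itself, with $j$ replaced by $i<j$, and solve the resulting recursion for $c_\ell$. Alternatively, and more simply, I would use the Lambert-$W$ change of variable $y=\zeta(x)$ from Lemma~\ref{L14}. There $L=\partial_y$ and $\partial_x=\psi(y)\partial_y$ with $\psi = 1/\phi$, so
\[
[\partial_y^j,\psi\partial_y]f = \sum_{\ell\ge1}\binom{j}{\ell}(\partial_y^\ell\psi)\,\partial_y^{j-\ell+1}f .
\]
Reindexing gives precisely the stated form with $c$ built from $\partial_y^{\ell}\psi = L^{\ell}(1/\phi)$. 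Then $L(1/\phi) = -\phi'/\phi = -(1-\phi)^2/\phi$, which is still singular. I would fix this by writing $\partial_y^{j-\ell+1}f = L^{j-\ell}(Lf)$ and absorbing one $\phi$: $L^{\ell}(1/\phi)\,\phi$ is a polynomial in $\phi$ times $1/\phi$. A better pairing is the exact identity $\psi\partial_y = \partial_x$. So $c_\ell$ should be taken as $\phi\,\partial_y^{\ell+1}(1/\phi)$, and the stated $L^{j-\ell}f$ arises with one $L$ converted to $\phi\partial_x$.

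\textbf{Step 2: bounds on $c_\ell$.} Here $c_\ell$ is a polynomial expression in $\phi$, namely $\phi L^{\ell}\big((1-\phi)^2/\phi\big)$, which is smooth on $[0,1]$ in the variable $\phi$. Its $W^{3,\infty}$ norm with factorial growth follows exactly as in Lemma~\ref{L14}. Writing $F(\phi)$ for the function, $L^\ell F(\phi) = \partial_y^\ell F(f(y))$ with $f'=(1-f)^2 f$, and $F$ is analytic near $[0,1]$ after the cancellation. This gives $\|c_\ell\|_{L^\infty}\lesssim C^\ell\ell!$. The $x$-derivatives are handled by the $\psi(y)e^y$ argument of~\eqref{EQ233}.

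\textbf{Main obstacle.} The main obstacle is the singular factor $1/\phi$. One must track that every coefficient $L^\ell(\phi')$ with $\ell\ge1$ carries a factor of $\phi$, so that the coefficients $c_\ell$ are genuinely bounded with their three $x$-derivatives uniformly near $x=0$. I would first verify this cancellation for small $j$ explicitly, then prove it in general via $L\phi=\phi(1-\phi)^2$, which shows $L$ preserves the ideal generated by $\phi$.
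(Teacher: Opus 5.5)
Your Lambert-$W$ computation is the paper's. Pairing $\psi\partial_y=\partial_x$ with the \emph{outermost} factor, $L^{j+1-\ell}f=\phi\,\partial_x L^{j-\ell}f$, what it actually yields is
\[
[L^j,\partial_x]f=\sum_{\ell=1}^{j}\binom{j}{\ell}\,c_\ell\,\partial_x L^{j-\ell}f,\qquad c_\ell=\phi\,L^{\ell}\Big(\frac{1}{\phi}\Big).
\]
State this explicitly instead of claiming ``precisely the stated form.'' With $L^{j-\ell}f$ on the right, the displayed identity cannot hold with bounded coefficients: for $j=1$ it reads $-\phi'\partial_x f=c_0\,\phi\,\partial_x f$, which forces $c_0=-1/(x(1+x))$. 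The $\partial_x L^{j-\ell}$ version above is what the paper's proof establishes and what Lemma~\ref{L11} uses. Three further corrections to your bookkeeping:
your intermediate ``expected form'' $\phi\,L^{j-\ell-1}(\partial_x f)$ pulls the $\phi$ out of the innermost $L$ past $L^{j-\ell-1}$, which is not legitimate; the shift $\ell\mapsto\ell+1$ turns $\binom{j}{\ell}$ into $\binom{j}{\ell+1}$; and $\phi L^\ell(1/\phi)$ is a polynomial in $\phi$, with no leftover $1/\phi$. The telescoping route of Step~1 is never completed, but you do not need it.

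The genuine gap is Step~2, the bound $\|\phi L^\ell(\phi^{-1})\|_{W^{3,\infty}}\lesssim C^\ell\ell!$. This is the real content of the lemma (the paper's Lemma~\ref{L19}). The argument of Lemma~\ref{L14} controls $\partial_y^\ell F(\phi)$ for a fixed $F$ analytic near $[0,1]$. Here $L^\ell$ acts on $1/\phi$ (or $(1-\phi)^2/\phi$), which is singular at $\phi=0$, while the compensating factor $\phi$ sits outside $L^\ell$. So ``analytic after the cancellation'' silently trades $\phi L^\ell F$ for $L^\ell(\phi F)$. The ideal-preservation remark $L\phi=\phi(1-\phi)^2$ only shows that each $c_\ell$ is some bounded polynomial in $\phi$. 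These polynomials change with $\ell$, so nothing follows about $C^\ell\ell!$, and the decay trick of~\eqref{EQ233} does not carry over verbatim either.

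The paper closes this by applying Proposition~\ref{P01} to $1/\phi$, where $\phi^k\partial_x^k(1/\phi)=(-1)^kk!(1+x)^{-k-1}\phi^{-1}$ is explicit, and summing with the closed form of Lemma~\ref{L18}. A fix within your framework:
\begin{itemize}
\item With $h=-\log\phi$ you have $c_\ell=e^{-h}L^\ell e^{h}=e^{-h}\partial_y^\ell e^{h}$.
\item $\partial_y h=-(1-\phi)^2$ is uniformly analytic in $y$ by Lemma~\ref{L14}, so the complete Bell polynomial (Fa\`a di Bruno) majorant gives $\|c_\ell\|_{L^\infty}\le C^\ell\ell!$.
\item Since $c_{\ell+1}=Lc_\ell-(1-\phi)^2c_\ell$, each $c_\ell$ is a polynomial in $\phi$ of degree at most $2\ell$. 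Markov's inequality on $[0,1]$ then bounds the three $x$-derivatives with only a polynomial loss in $\ell$.
\end{itemize}
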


\begin{proof}
Under the change of coordinates $y = x +\log x$, 
we find 
  \begin{align}
   \begin{split}
    [L^j, \partial_{x}] f(x)
    &=
    [\partial_{y}^j, \psi(y)\partial_{y_3}]f(W(e^{y}))
    =
    \sum_{0< \ell \leq j} {j\choose \ell}
      \partial_{y}^\ell \psi(y) \partial_{y}^{j+1 - \ell} f(W(e^{y}))
    \\&=
    \sum_{\ell =1}^j {j \choose \ell}
      \psi(y)^{-1}\partial_{y}^\ell \psi(y) 
      \psi(y) \partial_{y} \partial_{y}^{j-\ell} f(W(e^{y}))
    .
   \end{split}
   \llabel{EQ236}
  \end{align}
Changing back to the original coordinates yields
  \begin{equation}
   [L^j, \partial_{x}] f
   = 
   \sum_{\ell=1}^{j} {j \choose \ell}  
     \phi L^{\ell}\left(\frac{1}{\phi}\right) \partial_{x} L^{j-\ell} f
   .
   \llabel{EQ237}
  \end{equation}
Finally, Lemma~\ref{L19} yields
  \begin{equation}
   \|\phi L^{\ell} (\phi^{-1})\|_{W^{3,\infty}}
   \les
   C^\ell \ell!
   .
   \llabel{EQ237a}
  \end{equation}
This completes the proof of the lemma.
\end{proof}

\subsection{Advection estimates}\label{sec04a}
First, we state and prove a conormalization lemma. 

\begin{Lemma}
\label{L13}
Let $0 \leq \beta < \alpha$.
Then
  \begin{equation}
    \|K^\beta u_3 \partial_3 K^{\alpha- \beta} u\|_{H^3}
    \les
    F_{\alpha,\beta}^1
    + F_{\alpha,\beta}^2
    + F_{\alpha,\beta}^3
   ,
   \llabel{EQ212}
  \end{equation}
where
  \begin{align}
   \begin{split}
    F_{\alpha,\beta}^1
    &= 
    1_{\{\beta_3 = 0\}} 
      \|K^\beta u\|_{H^3} \|K^{\alpha - \beta + e_3} u\|_{H^3}
    ,\\
    F_{\alpha,\beta}^2
    &= 
    1_{\{\beta_3 = 0\}}
      \|K^\beta u\|_{H^3} \|K^{\alpha - \beta} u\|_{H^3}
    + 1_{\{\beta_3 \geq 1\}}
        \|K^{\beta - e_3} u\|_{H^3}\|K^{\alpha - \beta + e_3} u\|_{H^3}
    ,\\
    F_{\alpha,\beta}^3
    &= 
    1_{\{\beta_3 \geq 1\}} 
      \|K^{\beta - e_3} u\|_{H^3} \|K^{\alpha - \beta} u\|_{H^3}
    .
   \end{split}
   \llabel{EQ213}
  \end{align}
\end{Lemma}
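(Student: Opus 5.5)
The plan is to mimic the proof of Lemma~\ref{L06}. The goal is to conormalize the normal derivative $\partial_3 K^{\alpha-\beta}u$ by trading it for one extra $L$, paying with a factor $\phi^{-1}$ that we place on $K^\beta u_3$. The starting identity is
\begin{equation*}
K^\beta u_3\, \partial_3 K^{\alpha-\beta} u
= (\phi^{-1} K^\beta u_3)\,(\phi\partial_3 K^{\alpha-\beta}u)
= (\phi^{-1} K^\beta u_3)\, K^{\alpha-\beta+e_3} u ,
\end{equation*}
which uses that $\phi\partial_3 L^{k} = L^{k+1}$ exactly. This is the advantage of $K$ over $J$: no commutator $[\phi,\cdot]$ appears on the second factor. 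We then split into the cases $\beta_3=0$ and $\beta_3\ge 1$ and use the product estimate~\eqref{EQ40}, that is, $H^3$ norm times $L^\infty$ norm in both orders, together with $H^3\hookrightarrow W^{1,\infty}$.

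\emph{Case $\beta_3\ge1$.} Write $K^\beta u_3=\partial^{\beta'}\phi\partial_3 L^{\beta_3-1}u_3$, so that $\phi^{-1}K^\beta u_3=\partial^{\beta'}\partial_3 L^{\beta_3-1}u_3$. We commute $\partial_3$ through $L^{\beta_3-1}$ using Lemma~\ref{L15} (or simply $\partial_3 L^{k}=L^k\partial_3+[\partial_3,L^k]$). The main piece is $K^{\beta-e_3}\partial_3 u_3=-K^{\beta-e_3}(\partial_1u_1+\partial_2u_2)$ by incompressibility. Its $H^3$ norm is bounded by $\|K^{\beta-e_3}\bar\partial u\|_{H^3}$. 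This should be recorded as $\|K^{\beta-e_3}u\|_{H^3}$ up to the tangential shift, at the same energy level as in $E^3$ of Lemma~\ref{L06}; multiplying by $\|K^{\alpha-\beta+e_3}u\|_{H^3}$ gives the second part of $F^2$. The commutator remainder $\sum_\ell \binom{\beta_3-1}{\ell}c_\ell L^{\beta_3-1-\ell}\partial_3 u_3$ is of lower order. In the sum actually needed, we would keep only the top term and absorb the rest. Alternatively, we bound $\|\phi^{-1}K^\beta u_3\|_{L^\infty}\lesssim\|K^{\beta-e_3}u\|_{H^3}$ via~\eqref{EQ28}. To get $F^3$ we need the opposite placement. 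We put $L^\infty$ on $\phi\partial_3K^{\alpha-\beta}u$, bounded by $\|K^{\alpha-\beta}u\|_{W^{1,\infty}}\lesssim\|K^{\alpha-\beta}u\|_{H^3}$ since $\|\phi\|_{W^{1,\infty}}\lesssim1$. We put $H^3$ on $\phi^{-1}K^\beta u_3$, controlled by~\eqref{EQ28b} applied to $v=K^{\beta-e_3}u$-type quantities. Note that $K^\beta u_3$ vanishes at $x_3=0$ when $\beta_3\ge1$, because of the factor $\phi$.

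\emph{Case $\beta_3=0$.} Here $K^\beta u_3=\partial^{\beta'}u_3$, which is still divergence-free-compatible and vanishes on $\partial\Omega$. By~\eqref{EQ28b} and~\eqref{EQ28}, $\|\phi^{-1}K^\beta u_3\|_{H^3}\lesssim\|K^\beta u\|_{H^3}+\|K^\beta\bar\partial u\|_{H^3}$ and $\|\phi^{-1}K^\beta u_3\|_{L^\infty}\lesssim\|K^\beta u\|_{H^3}$. The $L^\infty$ bound paired with $\|K^{\alpha-\beta+e_3}u\|_{H^3}$ gives $F^1$. The $H^3$ bound of $\phi^{-1}K^\beta u_3$ paired with $\|K^{\alpha-\beta+e_3}u\|_{L^\infty}=\|\phi\partial_3K^{\alpha-\beta}u\|_{L^\infty}\lesssim\|K^{\alpha-\beta}u\|_{H^3}$ produces the first part of $F^2$.

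The main obstacle is this last term. The extra $\bar\partial$ in $\|K^\beta\bar\partial u\|_{H^3}$ is the one-derivative loss flagged in the introduction, and it must be hidden in the ``$\lesssim$'' of the stated energy-level bookkeeping. If the statement is to be read literally, I would instead expand the $H^3$ norm via Leibniz as in~\eqref{EQ48}--\eqref{EQ51}. When all three derivatives fall on the second factor, we use $L^\infty$ on $\phi^{-1}K^\beta u_3$ together with $\|\phi\partial^\gamma\partial_3K^{\alpha-\beta}u\|_{L^2}\lesssim\|K^{\alpha-\beta}u\|_{H^3}+\|K^{\alpha-\beta+e_3}u\|_{H^3}$, where $[\phi,\partial^\gamma]$ costs at most two derivatives. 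When at least one derivative falls on $K^\beta u_3$, we do not conormalize and instead use the multiplicative inequality~\eqref{EQ28a}. The difficulty here, compared with Lemma~\ref{L06}, is that $[\partial_3,K^{\alpha-\beta}]\ne0$ when $\alpha_3>0$, so we would control it by Lemma~\ref{L15} and the bound $\|c_\ell\|_{W^{3,\infty}}\lesssim C^\ell\ell!$.
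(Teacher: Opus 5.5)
The route you lead with, the product estimate \eqref{EQ40} applied to $(\phi^{-1}K^\beta u_3)\,K^{\alpha-\beta+e_3}u$, does not prove the lemma as stated, in either case. Its second half needs $\|\phi^{-1}K^\beta u_3\|_{H^3}$, which always costs one derivative more than the $F$'s allow in that slot.

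For $\beta_3=0$ you noticed this. However, the resulting term $\|K^\beta\bar\partial u\|_{H^3}\|K^{\alpha-\beta}u\|_{H^3}$ has total order $|\alpha|+1$. At best it sits at the level of $F^1_{\alpha,\beta}$, not $F^2_{\alpha,\beta}$ where you place it, and it is not bounded by $F^1+F^2+F^3$.

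For $\beta_3\ge1$ the same loss occurs, and here the proposal goes wrong. Since $\phi^{-1}K^\beta u_3=\partial_3K^{\beta-e_3}u_3$ exactly, $\|\phi^{-1}K^\beta u_3\|_{H^3}$ involves four derivatives of $K^{\beta-e_3}u_3$. It is not controlled by $\|K^{\beta-e_3}u\|_{H^3}$, so your ``opposite placement'' does not produce $F^3_{\alpha,\beta}$.
\begin{itemize}
\item \eqref{EQ28b} does not apply: no $\phi^{-1}$ is left to remove, and $K^\beta u$ is not divergence free when $\beta_3\ge1$.
\item Incompressibility together with Lemma~\ref{L15} only converts this quantity into $\|K^{\beta-e_3}\bar\partial u\|_{H^3}$ plus a commutator series with weights $\binom{\beta_3-1}{\ell}C^\ell\ell!$. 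That is one level above $F^3$ and not of the stated form.
\item For the same reason, ``recording'' $\|K^{\beta-e_3}\bar\partial u\|_{H^3}$ as $\|K^{\beta-e_3}u\|_{H^3}$ is not a valid step.
\end{itemize}
What survives from this route are the $L^\infty$ bounds. For $\beta_3=0$, $\|\phi^{-1}K^\beta u_3\|_{L^\infty}\lesssim\|K^\beta u\|_{H^3}$ by \eqref{EQ28}. For $\beta_3\ge1$, $\|\phi^{-1}K^\beta u_3\|_{L^\infty}=\|\partial_3K^{\beta-e_3}u_3\|_{L^\infty}\lesssim\|K^{\beta-e_3}u\|_{H^3}$, which is just $H^3\hookrightarrow W^{1,\infty}$.

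Your fallback is exactly the paper's proof. It works precisely because it never puts $\phi^{-1}K^\beta u_3$ in $H^3$.
\begin{itemize}
\item In the Leibniz expansion, only the term with all derivatives on $\partial_3K^{\alpha-\beta}u$ is conormalized. There $\phi^{-1}K^\beta u_3$ is needed only in $L^\infty$, via the bounds above, and $\|\phi\partial^\gamma\partial_3K^{\alpha-\beta}u\|_{L^2}\lesssim\|K^{\alpha-\beta}u\|_{H^3}+\|K^{\alpha-\beta+e_3}u\|_{H^3}$.
\item This gives $F^1$ and the first part of $F^2$ when $\beta_3=0$, and the second part of $F^2$ plus $F^3$ when $\beta_3\ge1$.
\item The terms with $|\mu|=j>0$ are bounded by \eqref{EQ28a} without conormalization.
\end{itemize}

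Drop the commutator step, however. There is no $[\partial_3,K^{\alpha-\beta}]$ in this lemma, because the $\partial_3$ already sits outside $K^{\alpha-\beta}$; that commutator was split off into Lemma~\ref{L11}. Hence $\|\partial^{\gamma-\mu}\partial_3K^{\alpha-\beta}u\|_{H^{j-1}}\le\|\partial_3K^{\alpha-\beta}u\|_{H^2}\le\|K^{\alpha-\beta}u\|_{H^3}$ directly. Invoking Lemma~\ref{L15} would only add terms absent from the statement. This is also why, unlike in Lemma~\ref{L06}, the Leibniz route works for all $\alpha,\beta$ without the case analysis on $\alpha_3$ and on $\beta'$ versus $\alpha'$.

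One caveat, which your fallback shares with the paper: for $\beta_3\ge1$ the $\mu>0$ terms give $\|K^\beta u\|_{H^3}\|K^{\alpha-\beta}u\|_{H^3}$. This is not literally among the listed terms for $\beta_3\ge1$. It lies at the energy level of $F^2$, and the paper treats it that way: its summation in Lemma~\ref{L10} does not use the indicator $1_{\{\beta_3=0\}}$.
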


Note that the terms $F_{\alpha,\beta}^i$ are organized based on energy levels.

\begin{proof}[Proof of Lemma~\ref{L13}]
We start by applying the Leibniz rule to the $H^3$ norm, giving 
  \begin{equation}
   \|K^\beta u_3 \partial_3 K^{\alpha - \beta} u\|_{H^3}
   \les
   \sum_{0 \leq |\gamma | \leq 3} 
     \|K^\beta u_3 \partial^\gamma \partial_3 K^{\alpha - \beta} u\|_{L^2}
   + \sum_{0 \leq |\gamma| \leq 3} \sum_{0 < \mu \leq \gamma} 
       \|\partial^\mu K^\beta u_3 \partial^{\gamma - \mu} \partial_3 K^{\alpha - \beta} u\|_{L^2}
   ,
   \label{EQ214}
  \end{equation}
where we separated the terms corresponding to $\mu=0$.
Without loss of generality, we only consider the terms corresponding to $|\gamma| = 3$ 
as the other terms give subordinate bounds. 

We examine the case when $\beta_3 \geq 1$.
For the first term on the right-hand side of~\eqref{EQ214}, we have 
  \begin{align}
   \begin{split}
    &
    \|K^\beta u_3 \partial^\gamma \partial_3 K^{\alpha - \beta} u\|_{L^2}
    \\&\indeq\indeq=
    \|\phi^{-1} K^\beta u_3 \phi \partial^\gamma \partial_3 K^{\alpha - \beta} u\|_{L^2}
    \\&\indeq\indeq\leq
    \|\partial_3 K^{\beta - e_3} u_3\|_{L^\infty} \|[\phi,\partial^\gamma] \partial_3 K^{\alpha - \beta} u\|_{L^2}
    + \|K^{\beta - e_3} u\|_{H^3} \|\partial^\gamma K^{\alpha - \beta + e_3} u\|_{L^2}
    \\&\indeq\indeq\leq
    \|K^{\beta - e_3} u\|_{H^3} 
    (\|K^{\alpha - \beta} u\|_{H^3} + \|K^{\alpha - \beta + e_3} u\|_{H^3} )
    ,
   \end{split}
   \label{EQ215}
  \end{align}
where we note that
$[\phi, \partial_3^3] f \les \|\phi\|_{W^{3,\infty}} \| f\|_{H^2}$ for sufficiently smooth functions~$f$.
For the sum in~\eqref{EQ214}, we use the multiplicative Sobolev inequality~\eqref{EQ28a} to get
  \begin{align}
   \begin{split}
     &
     \sum_{|\gamma| = 3} \sum_{j=1}^3 \sum_{|\mu| = j, \mu \leq \gamma}
       \|\partial^\mu K^\beta u_3 \partial^{\gamma -\mu} \partial_3 K^{\alpha- \beta} u\|_{L^2}
     \\&\indeq\indeq\les
     \sum_{|\gamma| = 3} \sum_{j=1}^3 \sum_{|\mu|=j, \mu \leq \gamma}
       \|\partial^\mu K^\beta u_3\|_{H^{3-j}} \|\partial^{\gamma - \mu} \partial_3 K^{\alpha - \beta} u\|_{H^{j-1}}
    \les 
    \|K^\beta u\|_{H^3} \|K^{\alpha - \beta} u\|_{H^3}
    .
   \end{split}
   \llabel{EQ216}
  \end{align}

For $\beta_3 = 0$, we repeat the same argument as above, except that we replace~\eqref{EQ215} with
  \begin{equation}
   \|K^\beta u_3 \partial^\gamma \partial_3 K^{\alpha - \beta} u\|_{L^2}
   \les
   \|K^\beta u\|_{H^3}
   (\|K^{\alpha - \beta} u\|_{H^3} + \|K^{\alpha - \beta + e_3} u\|_{H^3})
   ,
   \llabel{EQ217}
  \end{equation}
where we used~\eqref{EQ28} and the embedding $H^3 \hookrightarrow W^{1,\infty}$ to estimate $\|\phi^{-1} K^\beta u_3\|_{L^\infty} \les \|K^\beta u\|_{H^3}$.
This completes the proof. 
\end{proof}

The proof of Lemma~\ref{L10} is similar to that of Lemma~\ref{L01}. 
Hence, we only give a sketch of the proof. 

\begin{proof}[Proof of Lemma~\ref{L10}]
We rewrite the left-hand side of~\eqref{EQ205} as 
  \begin{equation}
   \sum_{m=1}^\infty \sum_{|\alpha| = m} \sum_{0 < \beta \leq \alpha} {\alpha \choose \beta}
     M_\alpha \eps^{\alpha_3}
     \|K^\beta u \cdot \nabla K^{\alpha - \beta} u\|_{H^3}
     \frac{\tau^{m-1}}{(m-1)!}
   = 
   \sum_{m=1}^\infty \sum_{j=1}^m 
     (\mathcal{C}_{m,j}^{\rm tan} + \mathcal{C}_{m,j}^{\rm nor})
  ,
  \llabel{EQ218}
  \end{equation}
where 
  \begin{equation}
   \mathcal{C}_{m,j}^{\rm tan}
   =
   \sum_{|\alpha| =m} \sum_{|\beta| =j, \beta \leq \alpha} {\alpha \choose \beta}
     M_\alpha \eps^{\alpha_3}
     \|K^\beta u_h K^{\alpha - \beta} \bar{\partial} u\|_{H^3}
     \frac{\tau^{m-1}}{(m-1)!}
   \llabel{EQ219}
  \end{equation}
and 
  \begin{equation}
   \mathcal{C}_{m,j}^{\rm nor}
   =
   \sum_{|\alpha| = m} \sum_{|\beta| =j,\beta \leq \alpha} {\alpha \choose \beta} 
     M_\alpha \eps^{\alpha_3}
     \|K^\beta u_3 \partial_3 K^{\alpha - \beta} u\|_{H^3}
     \frac{\tau^{m-1}}{(m-1)!}
   .
   \llabel{EQ220}
  \end{equation}
With similar computations as for~\eqref{EQ55a}, we get
  \begin{equation}
   \sum_{m=1}^\infty \sum_{j=1}^m \mathcal{C}_{m,j}^{\rm tan}
   \les 
   \|u\|_{H^4}^2
   + \tau \|u\|_{H^4}\|u\|_{B_\tau^\co}
   + \tau \|u\|_{A_\tau^\co} \|u\|_{B_\tau^\co}
   .
   \llabel{EQ221}
  \end{equation}
For the normal terms, we write
  \begin{equation}
   \sum_{m=1}^\infty \sum_{j=1}^m \mathcal{C}_{m,j}^{\rm nor}
   \leq
   \mathcal{C}_{1,1}^{\rm nor}
   + \sum_{m=1}^\infty \mathcal{C}_{m,m}^{\rm nor}
   + \sum_{m=2}^\infty \sum_{j=1}^{m-1} \mathcal{C}_{m,j}^{\rm nor}
   ,
   \llabel{EQ222}
  \end{equation}
where, similarly to~\eqref{EQ63}, 
  \begin{equation}
   \mathcal{C}_{1,1}^{\rm nor} 
   + \sum_{m=1}^\infty \mathcal{C}_{m,m}^{\rm nor}
   \les 
   \|u\|_{H^4}^2
   + \tau \|u\|_{H^4} \|u\|_{B_\tau^\co}
   .
   \llabel{EQ223}
  \end{equation}
Next, by Lemma~\ref{L13}, 
  \begin{equation}
   \sum_{m=2}^\infty \sum_{j=1}^{m-1} \mathcal{C}_{m,j}^{\rm nor}
   \leq
   \sum_{m=2}^\infty \sum_{|\alpha| = m} \sum_{j=1}^{m-1} \sum_{|\beta| = j, \beta \leq \alpha} 
     \sum_{i=1}^3 {\alpha \choose \beta} M_\alpha \eps^{\alpha_3}
     F_{\alpha,\beta}^i \frac{\tau^{m-1}}{(m-1)!}
   .
   \llabel{EQ224}
  \end{equation}
  The estimates for $F_{\alpha,\beta}^1$ and $F_{\alpha,\beta}^2$ are similar to those for $E_{\alpha,\beta}^3$ and $E_{\alpha,\beta}^4$ in Lemma~\ref{L01}, respectively. 
The lower-order term $F_{\alpha,\beta}^3$ can also be estimated similarly,
leading to
  \begin{equation}
   \sum_{m=2}^\infty \sum_{j=1}^{m-1} \mathcal{C}_{m,j}^{\rm nor}
   \les 
   \|u\|_{H^4}^2
   + (\tau + \tau^2) \|u\|_{B_\tau^\co} (\|u\|_{H^4} + \|u\|_{A_\tau^\co})
   .
   \llabel{EQ225}
  \end{equation}
Combining this with~\eqref{EQ233} completes the proof of the lemma. 
\end{proof}

Now, we prove Lemma~\ref{L11} which bounds two commutators produced by the advection term.
For the rest of Section~\ref{sec04}, we assume that $\eps$ is small enough for any absorption arguments which take place.

\begin{proof}[Proof of Lemma~\ref{L11}]
The proof of~\eqref{EQ207} is identical to that of~\eqref{EQ18}, so we omit it.

Before we prove~\eqref{EQ206}, we must conormalize the commutator product. 
Let $0\leq \beta < \alpha$ with $1 \leq \ell \leq \alpha_3 - \beta_3$.
Using similar reasoning to Lemma~\ref{L13}, we have 
  \begin{equation}
   \|K^\beta u_3 \partial_3 L^{\alpha-\beta -\ell e_3} u\|_{H^3}
   \les
   \sum_{i=1}^3 F_1(\beta,\alpha - \beta - \ell e_3)
   ,
   \llabel{EQ237b}
  \end{equation}
where the terms $F_i$ are defined as 
  \begin{align}
   \begin{split}
    F_1(\beta,\alpha - \beta - \ell e_3)
    &=
    1_{\{\beta_3 = 0\}}
      \|K^\beta u\|_{H^3}
      \|K^{\alpha - \beta -(\ell-1)e_3} u\|_{H^3}
      ,
    \\
    F_2(\beta,\alpha-\beta-\ell e_3)
    &=
    1_{\{\beta_3 \geq 1\}}
      \|K^{\beta - e_3} u\|_{H^3}
      \|K^{\alpha- \beta - \ell e_3} u\|_{H^3}
    \\&\indeq\indeq
    + 1_{\{\beta_3 \geq 1\}}
       \|K^{\beta-e_3} u\|_{H^3}
       \|K^{\alpha - \beta - (\ell-1)e_3} u\|_{H^3}
      ,
    \\
    F_3(\beta, \alpha - \beta - \ell e_3)
    &=
    1_{\{\beta_3 =0\}}
      \|K^\beta u\|_{H^3}
      \|K^{\alpha - \beta -\ell e_3} u\|_{H^3}
   .
   \end{split}
   \llabel{EQ237c}
  \end{align}
However, in the following estimates, the restriction $\beta_3 = 0$ is not needed in estimating the sum of~$F_1$.
Hence, we replace $F_1$ with
  \begin{equation}
   F(\beta,\alpha-\beta-\ell e_3)
   =
   \|K^\beta u\|_{H^3}
   \|K^{\alpha - \beta -(\ell-1)e_3} u\|_{H^3}
   .
   \llabel{EQ237d}
  \end{equation}
Moreover, $F_2$ and $F_3$ are of lower order compared to $F$ and, by repeating the argument in this proof, one can show that the sums of $F_2$ and $F_3$ admit the same bound as~$F$.

We write
  \begin{align}
   \begin{split}
    &
    \sum_{m=1}^\infty \sum_{|\alpha| = m} \sum_{j=0}^{m-1} \sum_{|\beta| = j} {\alpha \choose \beta}
      M_\alpha \eps^{\alpha_3} 
      \|K^\beta u_3 [K^{\alpha - \beta}, \partial_3 ] u\|_{H^3} 
      \frac{\tau^{m-1}}{(m-1)!}
    \\&\indeq\indeq\les
    \sum_{m=1}^\infty \sum_{|\alpha| = m} \sum_{j=0}^{m-1} \sum_{|\beta|=j} 
    \sum_{\ell =1}^{\alpha_3 - \beta_3} {\alpha \choose \beta} \frac{(\alpha_3 - \beta_3)!}{(\alpha_3 - \beta_3-\ell)!} C^\ell
      M_\alpha \eps^{\alpha_3}
      F(\beta, \alpha-\beta -\ell e_3) 
      \frac{\tau^{m-1}}{(m-1)!}
    \\&\indeq\indeq=
    \sum_{\ell=1}^{\infty} \sum_{p,j= 0}^\infty \sum_{\substack{|\eta| =p \\ |\beta| = j}}{\eta + \beta + \ell e_3 \choose \beta}
      \frac{(\eta_3 + \ell)!}{\eta_3!} C^\ell
      M_{\eta + \beta} \eps^{\eta_3 + \beta_3 + \ell} 
      F(\beta, \eta)
      \frac{\tau^{p+j + \ell - 1}}{(p+j + \ell -1)!}
    \\&\indeq\indeq =:
    \sum_{\ell=1}^\infty \sum_{p,j= 0}^\infty 
      \mathcal{G}_{\ell, p, j}
    ,
   \end{split}
   \llabel{EQ238}
  \end{align}
where, in the second-to-last line, we re-indexed with $\eta = \alpha - \beta - \ell e_3$ and $p= |\eta| =m-j-\ell$.
We separate lower order terms using
  \begin{align}
    \sum_{p=1}^\infty \sum_{j=0}^\infty \mathcal{G}_{\ell, p, j}
    &=
    \mathcal{G}_{\ell,0,0} 
    + \mathcal{G}_{\ell,0,1}
    + \sum_{p=1}^\infty 
        \mathcal{G}_{\ell,p,0} 
    + \sum_{j=2}^\infty
        \mathcal{G}_{\ell,0,j}
    + \sum_{p,j=1}^\infty
       \mathcal{G}_{\ell,p,j}
   \label{EQ239}
  \end{align}
and claim that 
  \begin{equation}
   \sum_{\ell=1}^\infty 
   (\mathcal{G}_{\ell,0,0} + \mathcal{G}_{\ell,0,1})
   + \sum_{\ell=1}^\infty\sum_{p=1}^\infty 
     \mathcal{G}_{\ell,p,0} 
   + \sum_{\ell=1}^\infty\sum_{j=2}^\infty
     \mathcal{G}_{\ell,0,j}
   \les
   \|u\|_{H^4}^2
   + \tau \|u\|_{H^4} \|u\|_{B_\tau^\co}
   .
   \llabel{EQ240}
  \end{equation}
First, note that 
  \begin{align}
   \begin{split}
    \sum_{\ell=1}^\infty \mathcal{G}_{\ell,0,0}
    &=
    \sum_{\ell=1}^\infty \sum_{|\eta| = 0}\sum_{|\beta|=0} 
      {\eta + \beta + \ell e_3 \choose \beta} \frac{(\eta_3 + \ell)!}{\eta_3!} C^\ell
    \\&\indeq\indeq
      \times M_{\eta + \beta} \eps^{\eta_3 + \beta_3 + \ell}
      \|K^\beta u\|_{H^3} \|K^{\eta + e_3}u\|_{H^3}
      \frac{\tau^{p+j+\ell -1}}{(p+j+\ell -1)!}
    \\&\les
    C\eps \|u\|_{H^4}^2
    \sum_{\ell=1}^\infty 
      \ell(C\eps\tau)^{\ell-1} 
    \les
    \|u\|_{H^4}^2
    ,
   \end{split}
   \llabel{EQ242}
  \end{align}
and, similarly, 
  \begin{equation}
   \sum_{\ell=1}^\infty
     G_{\ell,0,1}
   \les
   \|u\|_{H^4}^2
   .
   \llabel{EQ242a}
  \end{equation}
Next, we obtain
  \begin{align}
   \begin{split}
    \sum_{\ell=1}^\infty \sum_{p=1}^\infty G_{\ell, p, 0}
    &=
    \sum_{\ell =1} \sum_{p=1}^\infty \sum_{|\eta| =p}\sum_{|\beta| =0} {\eta + \beta + \ell e_3 \choose \beta}
      \frac{(\eta_3 + \ell)!}{\eta_3!} C^\ell
    \\&\indeq\indeq 
      \times M_{\eta + \beta} \eps^{\eta_3 + \beta_3 + \ell}
      \|K^{\beta} u\|_{H^3} \|K^{\eta + e_3}u\|_{H^3}
      \frac{\tau^{p+\ell-1}}{(p+\ell-1)!}
    \\&\les
    C\eps \tau \|u\|_{H^3}
    \sum_{\ell=1}^\infty \sum_{p=1}^\infty \sum_{|\eta| =p}
      \Big(
        M_\eta \eps^{\eta_3} \|K^{\eta+e_3} u\|_{H^3}
        \frac{p \tau^{p-1}}{p!}
      \Big)
      (C\eps \tau)^{\ell-1}
      \mathcal{A}_{\eta,\ell,s}
    \\&\les
    \tau \|u\|_{H^4} \|u\|_{B_\tau^\co}
    ,
   \end{split}
   \label{EQ243}
  \end{align}
where, for $|\beta| = 0$,
  \begin{align}
   \begin{split}
    \mathcal{A}_{\eta,\ell,s}
    &=
    M_{\eta+ \beta} M_\eta^{-1} {\eta + \beta + \ell e_3 \choose \beta} \eps^{\beta_3}
    \frac{p!}{(p+\ell-1)!}
    \frac{(\eta_3 + \ell)!}{\eta_3!}
    \frac{1}{p}
    \\&\leq
    \frac{p!}{(p+\ell-1)!} 
    \frac{(p+\ell)!}{p!}
    \frac{1}{p}
    \leq
    \frac{p+\ell}{p}
    \les
    \ell
    .
   \end{split}
   \llabel{EQ244}
  \end{align}
Note that $\sum_{\ell=0}^\infty \ell (C\eps \tau)^\ell < \infty$ justifies the last line of~\eqref{EQ243}. 
Following a similar argument, we also get
  \begin{equation}
    \sum_{\ell = 1}^\infty \sum_{j=2}^\infty 
      \mathcal{G}_{\ell, 0,j}
    \les
    \tau^2 \|u\|_{H^4} \|u\|_{B_\tau^\co}
    .
    \llabel{EQ244b}
  \end{equation}
Finally, we bound the last term in~\eqref{EQ239}. 
We write 
  \begin{align}
   \begin{split}
    \sum_{\ell=1}^\infty \sum_{p=1}^\infty \sum_{j=1}^\infty \mathcal{G}_{\ell,p,j}
    &=
    \sum_{\ell=1}^\infty \sum_{p=1}^\infty \sum_{|\eta| =p} \sum_{j=1}^\infty \sum_{|\beta| =j}
      {\eta+\beta + \ell e_3 \choose \beta} \frac{(\eta_3 + \ell)!}{\eta_3!} C^\ell
    \\&\indeq\indeq
      \times M_{\eta + \beta} \eps^{\eta_3 + \beta_3 + \ell}
      1_{\{\beta_3 = 0\}}\|K^\beta u\|_{H^3} \|K^{\eta + e_3} u\|_{H^3}
      \frac{\tau^{p+j + \ell -1}}{(p+j + \ell -1)!}
    \\&=
    \tau
    \sum_{\ell=1}^\infty \sum_{p=1}^\infty \sum_{|\eta| =p} \sum_{j=1}^\infty \sum_{|\beta|=j}
      \Big(
        M_\beta \eps^{\beta_3}
        \|K^\beta u\|_{H^3}
        \frac{\tau^{j-1}}{(j-1)!}
      \Big)
    \\&\indeq\indeq
      \times \Big(
        M_\eta \eps^{\eta_3} 
        \|K^{\eta+e_3} u\|_{H^3}
        \frac{p\tau^{p-1}}{p!}
      \Big)
      (C\eps \tau)^\ell
      \mathcal{A}_{\eta,\beta,\ell,s}
    \\&\les
    \tau\|u\|_{A_\tau^\co}\|u\|_{B_\tau^\co}
    .
   \end{split}
   \llabel{EQ245}
  \end{align}
For the last line above, 
we note that 
  \begin{align}
   \begin{split}
    \mathcal{A}_{\eta,\beta,\ell,s}
    &=
    M_{\beta+ \eta} M_\beta^{-1} M_\eta^{-1}
    {\eta+ \beta + \ell e_3 \choose \beta}
    \frac{(\eta_3 + \ell)!}{\eta_3!} 
    \frac{(j-1)! p!}{(p+j + \ell -1)!}
    \frac{1}{p}
    \\&\leq
    {p+j + \ell \choose j} 
    \frac{(p + \ell)!}{(p-1)!} 
    \frac{(j-1)! p!}{(p+j+\ell -1)!} 
    \frac{1}{p}
    \leq 
    \frac{p+j+\ell}{jp}
    \les \ell
    ,
   \end{split}
   \llabel{EQ246}
  \end{align}
completing the proof.
\end{proof}

\subsection{Pressure estimates}\label{sec04b}
In this section, we prove Lemma~\ref{L12}, which bounds $\|\nabla p\|_{A_\tau^\co}$.
Before this, we provide a lemma which controls the product~$\phi f$.

\begin{Lemma}
\label{L16}
Let $f \in C^\infty$. 
There exists a positive universal constant $C$ such that 
  \begin{equation}
   \sum_{m=2}^\infty \sum_{\substack{|\alpha| =m \\ \alpha_3 \geq 2}}
     M_\alpha \eps^{\alpha_3}
     \|K^{\alpha - 2e_3} (\phi f) \|_{H^3}
     \frac{\tau^{m-1}}{(m-1)!}
   \leq
   \frac{C}{1 - C\eps \tau_0}
   \sum_{m=2}^\infty\sum_{\substack{|\alpha| = m \\ \alpha_3 \geq 2}} 
     M_\alpha \eps^{\alpha_3}
     \|K^{\alpha - 2e_3} f\|_{H^3}
     \frac{\tau^{m-1}}{(m-1)!}
  .
  \llabel{EQ300}
  \end{equation}
\end{Lemma}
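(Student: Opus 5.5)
The plan is to expand $K^{\alpha-2e_3}(\phi f)$ by the Leibniz rule for the first-order operator $L=\phi\partial_3$, which is a derivation. Write $\alpha-2e_3=\eta$. Since $\partial^{\eta'}$ commutes with $\phi(x_3)$, we get
\begin{equation*}
K^{\eta}(\phi f)=\sum_{\ell=0}^{\eta_3}{\eta_3\choose \ell}\,(L^\ell\phi)\,K^{\eta-\ell e_3}f .
\end{equation*}
Taking the $H^3$ norm and using $\|gh\|_{H^3}\les\|g\|_{W^{3,\infty}}\|h\|_{H^3}$, Lemma~\ref{L14} (the bound~\eqref{EQ226}) gives
\begin{equation*}
\|K^{\alpha-2e_3}(\phi f)\|_{H^3}\le C_0\sum_{\ell=0}^{\alpha_3-2}{\alpha_3-2\choose \ell}C_1^\ell\,\ell!\,\|K^{\alpha-2e_3-\ell e_3}f\|_{H^3}.
\end{equation*}
The $\ell=0$ term is harmless. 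The remaining task is purely combinatorial: resum the weighted series and show the weights produce a geometric factor $\sum_\ell (C\eps\tau)^\ell\le (1-C\eps\tau_0)^{-1}$, using $\tau\le\tau_0$.

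For the resummation, re-index by $\beta=\alpha-\ell e_3$, so that $|\beta|=m-\ell$, $\beta_3\ge2$, $\beta'=\alpha'$, and hence $M_\alpha=M_\beta$. Also $\eps^{\alpha_3}=\eps^{\beta_3}\eps^\ell$. The coefficient of $M_\beta\eps^{\beta_3}\|K^{\beta-2e_3}f\|_{H^3}\tau^{|\beta|-1}/(|\beta|-1)!$ is then
\begin{equation*}
\sum_{\ell\ge0}{\beta_3-2+\ell\choose \ell}\ell!\,C_1^\ell\eps^\ell\,\frac{(|\beta|-1)!\,\tau^\ell}{(|\beta|+\ell-1)!} .
\end{equation*}
It remains to show that
\begin{equation*}
{\beta_3-2+\ell\choose \ell}\ell!\,\frac{(|\beta|-1)!}{(|\beta|+\ell-1)!}=\frac{(\beta_3-2+\ell)!}{(\beta_3-2)!}\cdot\frac{(|\beta|-1)!}{(|\beta|-1+\ell)!}\le 1 .
\end{equation*}
This holds because $\beta_3-2\le|\beta|-1$, so the quotient is a ratio of rising factorials $(a+1)_\ell/(b+1)_\ell$ with $a\le b$. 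Hence the coefficient is bounded by $\sum_\ell (C_1\eps\tau)^\ell\le(1-C_1\eps\tau_0)^{-1}$. Combining this with $C_0$ and interchanging the order of summation (Tonelli, since all terms are nonnegative) yields the claim with $C=\max(C_0,C_1)$.

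The only delicate point is that the index shift must preserve the constraint $\alpha_3\ge2$ and the range of $m$. Since $\beta_3=\alpha_3-\ell\ge2$ exactly when $\ell\le\alpha_3-2$, the re-indexed sum stays inside the right-hand side's index set. The factorial comparison above is the main step; I expect it, rather than the Leibniz expansion, to be where care is needed, in particular making sure it is $|\beta|-1$ (not $|\beta|$) that dominates $\beta_3-2$. That uses $|\beta|\ge\beta_3$, which is trivially true.
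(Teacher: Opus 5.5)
Your proposal is correct and follows the paper's proof: the Leibniz expansion for the derivation $L=\phi\partial_3$, the bound $\|L^j\phi\|_{W^{3,\infty}}\le C_0C_1^j j!$ from Lemma~\ref{L14}, the re-indexing $\eta=\alpha-je_3$ with $p=|\eta|$, and a geometric sum in $C\eps\tau$. Your rising-factorial comparison is the paper's step $\frac{(\eta_3+j-2)!}{(\eta_3-2)!}\frac{(p-1)!}{(p+j-1)!}\le\frac{p-1}{p+j-1}\le 1$, and your version keeps the factor $M_\alpha\eps^{\alpha_3}$ and the power $\tau^{m-1}$ explicit, which one intermediate line of the paper's chain drops.
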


\begin{proof}
By the Leibniz rule and Lemma~\ref{L14}, we have
  \begin{align}
   \begin{split}
    &
    \sum_{m=2}^\infty \sum_{\substack{|\alpha| = m \\ \alpha_3 \geq 2}}
      M_\alpha \eps^{\alpha_3}
      \|K^{\alpha_3 -2e_3} (\phi f) \|_{H^3}
      \frac{\tau^{m-1}}{(m-1)!}
    \\&\indeq\indeq=
    \sum_{m=2}^\infty \sum_{\substack{|\alpha| = m \\ \alpha_3 \geq 2}}
    \sum_{j =0}^{\alpha_3 -2} {\alpha_3 -2 \choose j}
      M_\alpha \eps^{\alpha_3}
      \|L^j \phi\|_{W^{3,\infty}} \|K^{\alpha - (j+2)e_3} f\|_{H^3}
      \frac{\tau^{m-1}}{(m-1)!}
    \\&\indeq\indeq\leq
    \sum_{m=2}^\infty \sum_{\substack{|\alpha| = m \\ \alpha_3 \geq 2}} 
    \sum_{j=0}^{\alpha_3 - 2} \frac{(\alpha_3 -2)!}{(\alpha_3 - j -2 )!} C^j
      \|K^{\alpha - (j+2)e_3} f\|_{H^3}
      \frac{\tau^{m-2}}{(m-1)!}
    \\&\indeq\indeq =
    \sum_{j=0}^\infty \sum_{p =2}^\infty \sum_{|\eta| = p}
      \Big(
        M_\eta \eps^{\eta_3}
        \|K^{\eta - 2e_3} f\|_{H^3}
        \frac{\tau^{p-1}}{(p-1)!}
      \Big)
      (C\eps \tau)^j
      \frac{(\eta_3 + j- 2)!}{(\eta_3 -2)!}
      \frac{(p-1)!}{(p+j - 1)!}
    ,
   \end{split}
   \llabel{EQ301}
  \end{align}
where we denote $\eta = \alpha - j e_3$ and $p= |\eta| = m - j$.
From here, we bound 
  \begin{equation}
   \frac{(\eta_3 +j -2)!}{(\eta_3 -2)!} 
   \frac{(p-1)!}{(p+j-1)!}
   \leq
   \frac{(p+j -2)!}{(p-2)!}
   \frac{(p-1)!}{(p+j-1)!}
   \leq 
   \frac{p-1}{p+j -1}
   \leq 1
   .
   \llabel{E302}
  \end{equation}
Thus, noting that $\sum_{j =0}^\infty (C\eps \tau)^j \leq C(1-C\eps \tau)^{-1}$, the proof is complete.
\end{proof}

Here, we make the following technical note, which transfers two conormal derivatives to tangential derivatives.
Effectively, this allows us to absorb the highest order terms in the pressure estimate using the weight~$\eps^{\alpha_3}$.
Namely, for $ n \geq 2$, 
  \begin{align}
   \begin{split}
    Z_n \partial_k p
    &=
    Z_{n-2} (\phi^2 \partial_3^2 (\partial_k p))
    + Z_{n-2}( \phi' Z \partial_k p)
    =
    Z_{n-2}(\phi^2 (\Delta - \Delta')\partial_k p)
    + Z_{n-2} (\phi' Z \partial_k p)
    .
   \end{split}
   \llabel{EQ303}
  \end{align}
Hence, if $\alpha_3 \geq 2$, 
  \begin{equation}
   K^\alpha \partial_k p
   =
   K^{\alpha - 2e_3} (\phi^2 \partial_k (\partial_j v_i \partial_i v_j))
   -\sum_{j=1,2} K^{\alpha +2e_j - 2e_3} (\phi^2(\partial_k p))
   + K^{\alpha - 2e_3} (\phi' Z \partial_k p)
   .
   \label{EQ304}
  \end{equation}
Now, we prove the main pressure estimate.  

\begin{proof}[Proof of Lemma~\ref{L12}]
It suffices to show that 
  \begin{equation}
   \|\nabla p\|_{A_\tau^\co} 
   \les
   \|u\|_{H^4}^2
   + \tau\|u\|_{A_\tau^\co} \|u\|_{B_\tau^\co}
   + (\eps^2 +\eps) \|\nabla p\|_{A_\tau^\co}
   ,
   \llabel{EQ304a}
  \end{equation}
since taking $\eps > 0$ sufficiently small allows us to absorb the higher-order term into the left-hand side, implying~\eqref{EQ208}.

First, we claim that 
  \begin{equation}
   \sum_{m=1}^\infty \sum_{\substack{|\alpha| =m \\ \alpha_3= 0,1}}
     M_\alpha \eps^{\alpha_3} 
     \|K^\alpha \nabla p\|_{H^3}
     \frac{\tau^{m-1}}{(m-1)!}
     \les
     \|u\|_{H^4}^2 + \tau \|u\|_{A_\tau^\co} \|u\|_{B_\tau^\co}
   .  
   \label{EQ305}
  \end{equation}
We only establish~\eqref{EQ305} for $\alpha_3 = 1$ as the case when $\alpha_3 = 0$ is similar. 
For $\alpha_3 =1$, elliptic regularity gives
  \begin{equation}
   \|K^\alpha \partial_k p\|_{H^3}
   =
   \|\partial^{\alpha'} p\|_{H^5}
   \les
   \|\partial^{\alpha'} (\partial_j v_i \partial_i v_j)\|_{H^3}
   \leq
   \|\partial^{\alpha'} (\bar{\partial} v_h \bar{\partial} v_h)\|_{H^3}
   + \|\partial^{\alpha'} (\bar{\partial} v_3 \partial_3 v_h)\|_{H^3}
   ,
   \label{EQ306}
  \end{equation}
where we used incompressibility in the last inequality. 
For the purely tangential term in~\eqref{EQ306}, we obtain
  \begin{align}
   \begin{split}
    &
    \sum_{m=1}^\infty \sum_{|\alpha| =m} 
      M_\alpha \eps^{\alpha_3} 
      \|\partial^{\alpha'}(\bar{\partial} v_h \bar{\partial} v_h)\|_{H^3}
      \frac{\tau^{m-1}}{(m-1)!}
    \\&\indeq\indeq= 
    \sum_{m=1}^\infty \sum_{|\alpha'|=m-1} \sum_{j=0}^{m-1} \sum_{|\beta'| = j} {\alpha' \choose \beta'}
      M_\alpha \eps^{\alpha_3} 
      \|\partial^{\beta'} \bar{\partial v}\|_{H^3} \|\partial^{\alpha' - \beta'} \bar{\partial}v\|_{H^3}
      \frac{\tau^{m-1}}{(m-1)!}
    \\&\indeq\indeq\les
    \|u\|_{H^4}^2
    + \tau \|u\|_{H^4}
    \sum_{m=2}^\infty \sum_{|\alpha'| = m-1} \sum_{|\beta'| =m-1} 
      \Big( 
        M_\beta \eps^{\beta_3} \|\partial^{\beta'} \bar{\partial} v\|_{H^3}
        \frac{(m-1)\tau^{m-2}}{(m-1)!}
      \Big)
      \mathcal{A}_{\alpha,\beta,s}^1
    \\&\indeq\indeq\indeq\indeq
    +\sum_{m=2}^{\infty} \sum_{|\beta'| = m-1} \sum_{j=0}^{m-2} \sum_{|\beta| = j}
      \Big(
        M_\beta \eps^{\beta_3} 
        \|\partial^{\beta'} \bar{\partial}v\|_{H^3}
        \frac{\tau^j}{j!}
      \Big)
    \\&\indeq\indeq\indeq\indeq\indeq\indeq
      \times \Big( 
        M_{\alpha-\beta} \eps^{\alpha_3 - \beta_3} 
        \|\partial^{\alpha' - \beta'} \bar{\partial} v\|_{H^3}
        \frac{(m-j-1) \tau^{m-j-2}}{(m-j-1)!}
      \Big)
      \mathcal{A}_{\alpha,\beta,s}^2
    \\&\indeq\indeq
    \les
    \|u\|_{H^4}^2
    + \tau \|u\|_{B_\tau^\co}
      (\|u\|_{H^4} + \|u\|_{A_\tau^\co})
    ,
   \end{split}
   \llabel{EQ307}
  \end{align}
where, for $|\alpha'| = |\beta'| = m-1$, 
  \begin{equation}
   \mathcal{A}_{\alpha,\beta,s}^1
   =
   M_\alpha M_\beta^{-1} {\alpha'\choose \beta'} 
   \frac{1}{m-1} 
   \eps^{\alpha_3 - \beta_3}
   \les 
   \frac{1}{m-1} 
   \leq 1
   ,
   \llabel{EQ308}
  \end{equation}
and
  \begin{equation}
    \mathcal{A}_{\alpha,\beta,s}^2
    = 
    M_\alpha M_\beta^{-1} M_{\alpha-\beta}^{-1} {\alpha' \choose \beta'}
    \frac{j! (m-j-1)!}{(m-1)!}
    \frac{1}{m-j-1}
    \leq
    {m-1 \choose j}
    {m-1 \choose j}^{-s}
    \frac{1}{m-j-1}
    \leq
    1
    .
   \llabel{EQ309}
  \end{equation}
For the normal term in~\eqref{EQ306}, its corresponding sum gives
  \begin{align}
   \begin{split}
    &
    \sum_{m=1}^\infty \sum_{|\alpha'| = m-1} \sum_{j=0}^{m-1} \sum_{|\beta'| = j} {\alpha' \choose \beta'}
      M_\alpha \eps^{\alpha_3}
      \|\partial^{\beta'} \bar{\partial} v_3 \partial^{\alpha' - \beta'} \partial_3 v_h\|_{H^3}
      \frac{\tau^{m-1}}{(m-1)!}
    \\&\indeq\indeq\les
    \|u\|_{H^4}^2
    + \tau \|u\|_{H^4} 
      \sum_{m=2}^\infty \sum_{|\alpha'| = m-1} \sum_{|\beta'| = m-1}
        \Big( 
           M_\beta \eps^{\beta_3} 
           \|\partial^{\beta'} \bar{\partial} v_3\|_{H^3}
           \frac{(m-1)\tau^{m-2}}{(m-1)!}
        \Big)
        \mathcal{A}_{\alpha,\beta,s}^1
    \\&\indeq\indeq\indeq\indeq
     + \sum_{m=2}^\infty \sum_{|\alpha'| = m-1} \sum_{j=0}^{m-2} \sum_{|\beta'| = j}{\alpha' \choose \beta'}
         M_\alpha \eps^{\alpha_3} 
         \|\partial^{\beta'} \bar{\partial}^2 v\|_{H^3}\|\partial^{\alpha' - \beta'} Z v\|_{H^3}
         \frac{\tau^{m-1}}{(m-1)!}
     \\&\indeq\indeq\les
     \|u\|_{H^4}^2
     + \tau \|u\|_{H^4} \|u\|_{B_\tau^\co}
     \\&\indeq\indeq\indeq\indeq
     + \sum_{m=2}^\infty \sum_{|\alpha'| = m-1}\sum_{j=0}^{m-2} \sum_{|\beta'| = j}
       \Big( 
         M_\beta \eps^{\beta_3}
         \|\partial^{\beta'} \bar{\partial}^2 u\|_{H^3}
         \frac{(j+1)\tau^{j}}{(j+1)!}
       \Big)
     \\&\indeq\indeq\indeq\indeq\indeq\indeq
       \times \Big(
         M_{\alpha - \beta} \eps^{\alpha_3 - \beta_3} 
         \|\partial^{\alpha' - \beta'} Z v\|_{H^3}
         \frac{\tau^{m-j-1}}{(m-j-1)!}
       \Big)
       \mathcal{A}_{\alpha,\beta,s}^2
     \\&\indeq\indeq\les
     \|u\|_{H^4}^2
     + \tau \|u\|_{B_\tau^\co} 
       (\|u\|_{H^4} + \|u\|_{A_\tau^\co})
    ,
   \end{split}
   \label{EQ310}
  \end{align}
where, for the third term after the first inequality, we conormalized the product.
To justify the last line of~\eqref{EQ310}, we bound 
  \begin{equation}
   \mathcal{A}_{\alpha,\beta,s}^1
   = 
   M_\alpha M_{\beta} {\alpha' \choose \beta'} \frac{1}{m-1} 
   \leq 
   1  
   \llabel{EQ311}
  \end{equation}
and 
  \begin{equation}
    \mathcal{A}_{\alpha,\beta,s}^2
    =
    M_\alpha M_\beta^{-1} M_{\alpha-\beta}^{-1} 
    {\alpha' \choose \beta'}
    \frac{(j+1)! (m-j-1)!}{(m-1)!}
    \frac{1}{j+1}
    \leq
    {m-1 \choose j}^{1-s} (j+1)^{s-1}
    \leq 1
    .
   \llabel{EQ312}
  \end{equation}
This completes the proof of~\eqref{EQ305} when $\alpha_3 = 1$.

Now, assume that $\alpha_3 \geq 2$. 
Recalling the decomposition~\eqref{EQ304}, we use Lemma~\ref{L16} to bound
  \begin{align}
   \begin{split}
    & 
    \sum_{m=2}^\infty \sum_{\substack{|\alpha| =m \\ \alpha_3 \geq 2}}
      M_\alpha \eps^{\alpha_3} 
      \|K^{\alpha + 2\bar{e} - 2e_3}(\phi^2 \partial_k p)\|_{H^3}
      \frac{\tau^{m-1}}{(m-1)!}
    \\&\indeq\indeq\leq
    \frac{C\eps^2}{1-C\eps \tau_0}
    \sum_{m=2}^\infty \sum_{\substack{|\alpha| = m \\ \alpha_3 \geq 2}}
      M_\alpha \eps^{\alpha_3 -2} 
      \|K^{\alpha_3 +2\bar{e} - 2e_3} \partial_k p\|_{H^3}
      \frac{\tau^{m-1}}{(m-1)!}
    \les
    \eps^2 \|\nabla p\|_{A_\tau^\co}
   \end{split}
   \llabel{EQ313}
  \end{align}
and 
  \begin{align}
   \begin{split}
    &
    \sum_{m=2}^\infty \sum_{\substack{|\alpha| = m\\ \alpha_3 \geq 2 }}
      M_\alpha \eps^{\alpha_3} 
      \|K^{\alpha -2e_3} (\phi' Z \partial k p)\|_{H^3}
      \frac{\tau^{m-1}}{(m-1)!}
    \\&\indeq\indeq \leq
    \frac{C}{1-C\eps \tau_0}
    \sum_{m=2} \sum_{\substack{|\alpha|=m \\ \alpha_3 \geq 2}}
      M_\alpha \eps^{\alpha_3} 
      \|K^{\alpha - e_3} \partial_k p\|_{H^3}
      \frac{\tau^{m-1}}{(m-1)!}
    \les
    \eps \|\nabla p\|_{A_\tau^\co}
    .
   \end{split}
   \llabel{EQ314}
  \end{align}

Next, we claim that 
  \begin{equation}
   \sum_{m=1}^\infty \sum_{|\alpha|=m, \alpha_3 \geq 2} 
     M_\alpha \eps^{\alpha_3} 
     \|K^\alpha \nabla p\|_{H^3}
     \frac{\tau^{m-1}}{(m-1)!}
   \les
   \tau \|u\|_{A_\tau^\co} \|u\|_{B_\tau^\co}
   .
   \llabel{EQ314a}
  \end{equation}
For the first term in~\eqref{EQ304}, we record some preliminary simplifications.
Recall that, formally, 
  \begin{equation}
   \partial_k(\partial_j v_i \partial_i v_j)
   =
   \partial_k (\bar{\partial} v_h \bar{\partial} v_h)
   + \partial_k (\bar{\partial}v_3 \partial_3 v_h)
   .
   \llabel{EQ315}
  \end{equation}
If $k=3$, we write
  \begin{align}
   \begin{split}
    \phi^2 \partial_3 (\partial_j v_i \partial_i v_j)
    &=
    \phi Z (\bar{\partial}v_h \bar{\partial} v_h)
    + \phi Z(\bar{\partial} v_3 \partial_3 v_h)
     =
    \phi \bar{\partial} Z v_h \bar{\partial}v_h
    + \bar{\partial} v Z_2 v
    - \phi' \bar{\partial} v Zv
    ,
   \end{split}
   \label{EQ316}
  \end{align}
and if $k=1,2$, we obtain
  \begin{equation}
   \phi^2 \partial_k(\partial_j v_i \partial_i v_j)
   =
   \phi \bar{\partial}^2 v Zv
   + \phi \bar{\partial} v Z \bar{\partial}v
   + \phi^2 \bar{\partial}^2 v \bar{\partial}v
   .
   \label{EQ317}
  \end{equation}
While our goal is to estimate the analytic norm of $\phi^2 \partial_k (\partial_j v_i \partial_i v_j)$, by considering the energy levels present in~\eqref{EQ316} and~\eqref{EQ317}, it suffices to only estimate the analytic norm of
  \begin{equation}
   P(\phi)\bar{\partial} Z \bar{\partial} v
   + \phi' \bar{\partial}v Z v
   ,
   \llabel{EQ318}
  \end{equation}
where $P(\phi) = 1 + \phi + \phi^2$.
For example, there are no differences in the estimates of $\bar{\partial}^2 v \bar{\partial}v$ versus $Z_2 v \bar{\partial}v$.
Moreover, we omit the estimate for the analytic norm of $\phi' \bar{\partial}v Zv$ as it is a lower-order term and $\phi' = (1-\phi)^2$; it admits nearly the same estimate as $P(\phi) \bar{\partial}Zv \bar{\partial}v$.

We claim that 
  \begin{equation}
   \sum_{m=2}^\infty \sum_{\substack{|\alpha| =m \\ \alpha_3 \geq 2}}
     M_\alpha \eps^{\alpha_3}
     \|K^{\alpha - 2e_3} ( P(\phi) \bar{\partial} Zv \bar{\partial} v)\|_{H^3}
     \frac{\tau^{m-1}}{(m-1)!}
   \les
   \tau \|u\|_{A_\tau^\co} \|u\|_{B_\tau^\co}
   .
   \label{EQ319}
  \end{equation}
By Lemma~\ref{L16}, we may neglect the polynomial $P(\phi)$ in~\eqref{EQ319}.
Beginning with a change of index given by $\gamma = \alpha - 2e_3$, we have
  \begin{align}
   \begin{split}
    &
    \sum_{m=2}^\infty \sum_{\substack{|\alpha| =m \\ \alpha_3 \geq 2}}
      M_\alpha \eps^{\alpha_3}
      \|K^{\alpha-2e_3} (\bar{\partial}Zv \bar{\partial v})\|_{H^3}
      \frac{\tau^{m-1}}{(m-1)!}
    \\&\indeq\indeq=
    \eps^2 
    \sum_{m=2}^\infty \sum_{|\gamma| = m-2}
      M_\gamma \eps^{\gamma_3}
      \|K^\gamma (\bar{\partial} Z v \bar{\partial} v)\|_{H^3}
      \frac{\tau^{m-1}}{(m-1)!}
    \\&\indeq\indeq\les
    \eps^2
    \sum_{m=2}^\infty \sum_{|\gamma| = m-2} \sum_{j=0}^{m-2} \sum_{|\beta|= j}
      \Big(
        M_\beta \eps^{\beta_3} 
        \|K^\gamma \bar{\partial} Z v\|_{H^3}
        \frac{(j+1)\tau^j}{(j+1)!}
      \Big)
    \\&\indeq\indeq\indeq\indeq\times
      \Big(
        M_{\gamma- \beta} \eps^{\gamma_3 - \beta_3}
        \|K^{\gamma - \beta} \bar{\partial} v\|_{H^3}
        \frac{\tau^{m-j-1}}{(m-j-2)!}
      \Big)
      \mathcal{A}_{\gamma, \beta, s}
    \\&\indeq\indeq\les
    \tau \eps^2 \|u\|_{A_\tau^\co} \|u\|_{B_\tau^\co}
    ,
   \end{split}
   \llabel{EQ320}
  \end{align}
where, for $m \geq 2$ and $0\leq j \leq m-2$,
  \begin{align}
   \begin{split}
    \mathcal{A}_{\gamma,\beta,s}
    &=
    M_\gamma M_\beta^{-1} M_{\gamma-\beta}^{-1} {\gamma \choose \beta}
    \frac{(j+1)! (m-j-2)!}{(m-1)!}
    \frac{1}{j+1}
    \\&\leq
    {m-2 \choose j} 
    {m-1\choose j+1}^{-s}
    \frac{1}{j+1}
    = 
    {m-1 \choose j+1}^{1-s} 
    \frac{j+1}{m-1}
    \frac{1}{j+1}
    \\&\leq
    \frac{1}{m-1}
    \leq 1
    .
   \end{split}
   \llabel{EQ321}
  \end{align}
This completes the proof.
\end{proof}

\section{Proof of Theorem~\ref{T03}: Equivalence in norm of $X_\tau^\co$ and $P_\tau^\co$}\label{sec05}
In this section, we prove Theorem~\ref{T03}, which compares the norms of $X_\tau^\co$ and~$P_\tau^\co$.
Recall that our conormal operators are defined as $Z_n = \phi^n \partial_3^n $ and $L^n = (\phi \partial_3)^n$.
The conormal analytic norms are given by
  \begin{equation}
    \|u\|_{X_\tau^\co}
    =
    \sum_{m=1}^\infty
     \sum_{|\alpha| = m} 
      M_\alpha
      \|J^\alpha u\|_{H^3}
    \frac{\tau^{m-1}}{(m-1)!}
   \llabel{EQ322}
  \end{equation}
and
  \begin{equation}
   \|u\|_{P_\tau^\co}
   =
   \sum_{m=1}^\infty
    \sum_{|\alpha| =m}
     M_\alpha
     \|K^\alpha u\|_{H^3}
    \frac{\tau^{m-1}}{(m-1)!}
    ,
   \llabel{EQ323}
  \end{equation}
where $J^\alpha = \partial^{\alpha'}Z_{\alpha_3}$ and $K^\alpha = \partial^{\alpha'}L^{\alpha_3}$.

\begin{proof}[Proof of Theorem~\ref{T03}]
Let $0 < \tau < (4C_1)^{-1}$.
First, we establish the second inequality in~\eqref{EQ30a}. 
More precisely, we claim that
  \begin{equation}
   \|u\|_{P_\tau^\co} 
   \les
   \frac{1}{\sqrt{1-4C\tau}}
   \|u\|_{X_{\rho_1}^\co}
   ,
   \label{EQ324}
  \end{equation}
where $\rho_1(\tau) = 2\tau (1+\sqrt{1-4C\tau})^{-1}$. 
Indeed, Proposition~\ref{P01} and Remark~\ref{R07} yield
  \begin{equation}
    \|u\|_{P_\tau^\co} 
    =
    \sum_{m=1}^\infty \sum_{|\alpha| = m}
      M_\alpha 
      \|K^\alpha u\|_{H^3}
      \frac{\tau^{m-1}}{(m-1)!}
    \les
    \sum_{m=1}^\infty \sum_{|\alpha| = m} \sum_{k=0}^{\alpha_3 -1}
      C_1^k \frac{(\alpha_3 + k-1)!}{k!(\alpha_3 - k-1)!}
      \|J^{\alpha-ke_3} u\|_{H^3}
      \frac{\tau^{m-1}}{(m-1)!}
      .
   \llabel{EQ325}
  \end{equation}
Now, denote $\eta = \alpha - ke_3$ and $p = |\eta| =m-k$.
This gives us 
  \begin{equation}
   \|u\|_{P_\tau^\co}
   \les
   \sum_{p=1}^\infty \sum_{|\eta| =p, \eta_3 \geq 1}\sum_{k=0}^\infty
     \Big(
       M_\eta 
       \|J^\eta u\|_{H^3}
       \frac{\tau^{p-1}}{(p-1)!}
     \Big)
     (C_1\tau)^k\mathcal{A}_{\eta,k}
   ,
   \label{EQ326}
  \end{equation}
where
  \begin{align}
   \begin{split}
    \mathcal{A}_{\eta,k}
    &=
    \frac{(p-1)!}{(p+k-1)!} \frac{(\eta_3 + 2k - 1)!}{k!(\eta_3 -1)!}
    \leq
    \frac{(p-1)!}{(p+k-1)!} \frac{(p+2k-1)!}{k!(p-1)!}
    \\&= 
    \frac{(p+2k-1)!}{k!(p+k-1)!}
    =
    {p+2k -1 \choose k}
    .
   \end{split}
   \llabel{EQ327}
  \end{align}
By equation~(2.5.15) in~\cite{W}, we have
  \begin{equation}
   \sum_{k=0}^\infty {p+2k-1 \choose k} (C\tau)^k
   =
   \frac{1}{\sqrt{1-4C\tau}}
   \left( \frac{2}{1+ \sqrt{1-4C\tau}}\right)^{p-1}
   .
   \label{EQ328}
  \end{equation}
Substituting~\eqref{EQ328} into~\eqref{EQ326} completes the proof of~\eqref{EQ324}.

Now, we prove the first inequality in~\eqref{EQ30a}. 
Let $0 < \rho < (4C_2)^{-1} $. 
By a similar proof as above but with Proposition~\ref{P02},
we obtain
  \begin{equation}
   \|u\|_{X_\rho^\co} 
   \les
   \frac{1}{\sqrt{1-4C_2 \rho}}
   \|u\|_{P_{\tau(\rho)}^\co}
   ,
   \label{EQ329}
  \end{equation}
where $\tau(\rho) = 2\rho (1+\sqrt{1-4C_2\rho})^{-1}$.
Equivalently, we may write $\rho$ in terms of $\tau$ as 
  \begin{equation}
   \rho(\tau)
   =
   \tau - C_2 \tau^2
   .
   \llabel{EQ330}
  \end{equation}
Since $\rho < (4C_2)^{-1}$ is equivalent to $\tau < (2C_2)^{-1}$, the bound~\eqref{EQ329} then yields
  \begin{equation}
   (1-2C_2 \tau )
   \|u\|_{X_{\rho(\tau)}^\co}
   \les
   \|u\|_{P_\tau^\co}
   \llabel{EQ331}
  \end{equation}
for $\tau < (2C_2)^{-1}$.
This completes the proof. 
\end{proof}

\appendix
\section{}\label{sec06}
The purpose of this section is to compare the operators $Z_k = \phi^k \partial_x^k$ and $L^k = (\phi \partial_x)^k$, where $\phi(x) = x(1+x)^{-1}$ and $x > 0$.
Our first identity expands $(\phi \partial_x)^n$ in terms of~$\phi^k \partial_x^k$. 
\begin{Proposition}
\label{P01}
We have
  \begin{equation}
   (\phi \partial_x)^n 
   =
   \sum_{k=1}^n 
     d_k^n(x)
     \phi^k \partial_x^k 
   ,
   \label{EQ401}
  \end{equation}
where
  \begin{equation}
   d_k^n(x)
   =
   \sum_{\lambda \vdash n-k}
     c_{\lambda}^n 
     \left( \prod_{i=1}^{\ell(\lambda)} \lambda_i! \right)
     (-1)^{n-k + \ell(\lambda)}
     \phi^{n-k-\ell(\lambda)} (1-\phi)^{n-k + \ell(\lambda)}
   .
   \label{EQ401a}
  \end{equation}
In~\eqref{EQ401a}, $\lambda \vdash n-k$ means $\lambda = (\lambda_1,\dots, \lambda_{\ell})$ is a partition of $n-k$ and $\ell = \ell(\lambda)$ denotes the length of the partition. 
Moreover, 
  \begin{equation}
   \|d_k^n\|_{W^{3,\infty}}
   \les
     C^{n-k}
     \frac{(2n-k-1)!}{(n-k)!(k-1)!} 
   ,
   \label{EQ402}
  \end{equation}
where $C$ is a positive universal constant less than~$1$.
\end{Proposition}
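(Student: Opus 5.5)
The plan is to take the normally ordered expansion of $(h\partial_x)^n$ from Briand, Lopes, and Rosas (Lemma~\ref{L17}) with $h=\phi$, and read off~\eqref{EQ401}--\eqref{EQ401a} by inserting the explicit derivatives of $\phi$. The bound~\eqref{EQ402} would then reduce to a closed form for the constants $S_{n,k}$ from the introduction.

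\textbf{Step 1: the identity.} In Lemma~\ref{L17} each coefficient of $\partial_x^k$ is a sum over partitions $\lambda\vdash n-k$. It is a universal constant $c_\lambda^n$ times a monomial containing $n$ factors of $h$, of which $\ell(\lambda)$ are differentiated $\lambda_1,\dots,\lambda_{\ell}$ times, namely $h^{n-\ell(\lambda)}\prod_i h^{(\lambda_i)}$. For $\phi$ we use $\phi^{(j)}=(-1)^{j+1}j!(1-\phi)^{j+1}$ from the proof of Lemma~\ref{L14}. This gives
\begin{equation*}
\prod_{i=1}^{\ell(\lambda)}\phi^{(\lambda_i)}=(-1)^{n-k+\ell(\lambda)}\Big(\prod_i\lambda_i!\Big)(1-\phi)^{n-k+\ell(\lambda)} .
\end{equation*}
Factoring $\phi^k$ out of $\phi^{n-\ell(\lambda)}$ leaves $\phi^{n-k-\ell(\lambda)}$, which yields exactly~\eqref{EQ401a}. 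The exponent is nonnegative because $\lambda_i\ge1$ forces $\ell(\lambda)\le n-k$. As a sanity check, $n=2$ gives $(\phi\partial_x)^2=\phi^2\partial_x^2+\phi\phi'\partial_x$, consistent with $d_1^2=(1-\phi)^2$.

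\textbf{Step 2: the $L^\infty$ bound.} Since $0\le\phi\le1$, every term $\phi^{a}(1-\phi)^{b}$ has modulus at most $1$. The coefficients $c_\lambda^n$ are nonnegative, being counts in the combinatorial interpretations of~\cite{BLR}. Hence $\|d_k^n\|_{L^\infty}\le S_{n,k}$.

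Next I would derive the recurrence
\begin{equation*}
S_{n+1,k}=S_{n,k-1}+(2n-k)S_{n,k}
\end{equation*}
from the BLR recurrence~\eqref{EQ404a}. The idea is to apply $h\partial_x$ to the normal form of $(h\partial_x)^n$. The derivative either lands on $\partial_x^{k-1}$, producing the $S_{n,k-1}$ term, or it lands on one of the $h$-factors. When it hits an undifferentiated factor it creates a new part of size $1$; when it hits $h^{(\lambda_i)}$ it raises $\lambda_i$ by one, and the weight $\prod\lambda_i!$ accounts for the extra factor. Summing these contributions with the weights gives the total factor $2n-k$, which I would verify against the small cases $n\le3$.

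Then I would check by induction that
\begin{equation*}
S_{n,k}=\frac{(2n-k-1)!}{2^{\,n-k}(n-k)!(k-1)!}
\end{equation*}
solves this recurrence, together with $S_{n,n}=1$ and $S_{n,0}=0$. These are the Bessel-polynomial coefficients; for example $S_{2,1}=1$. This is the content of Lemma~\ref{L18}, and I expect it to be the main obstacle. The delicate point is getting the weighted bookkeeping right when the derivative falls on the $h$-factors, since that is exactly what produces $2n-k$ rather than a $k$-dependent multiplier as in Scherk's~\eqref{EQ400a}.

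\textbf{Step 3: the $W^{3,\infty}$ bound.} Write $a=n-k-\ell(\lambda)$ and $b=n-k+\ell(\lambda)$. Each $x$-derivative of $\phi^{a}(1-\phi)^{b}$ produces a factor $\phi'=(1-\phi)^2$ times at most $a+b\le 2(n-k)$ terms of the same type, each still bounded by $1$. Three derivatives therefore cost at most $C(1+n-k)^3$, so
\begin{equation*}
\|d_k^n\|_{W^{3,\infty}}\lesssim (1+n-k)^3\,2^{-(n-k)}\frac{(2n-k-1)!}{(n-k)!(k-1)!}.
\end{equation*}
Since $(1+j)^3 2^{-j}\lesssim C^{j}$ for any fixed $C\in(1/2,1)$, this gives~\eqref{EQ402} with a constant $C<1$.
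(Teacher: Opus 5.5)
Your proposal is correct and follows essentially the paper's proof. The paper likewise reads off the identity from Lemma~\ref{L17} using $\phi^{(j)}=(-1)^{j+1}j!(1-\phi)^{j+1}$, and derives $S_{n+1,k}=S_{n,k-1}+(2n-k)S_{n,k}$ by evaluating the recursion~\eqref{EQ404a} at $y_i=i!$ (your tally $(n-\ell(\lambda))+\sum_j(\lambda_j+1)=2n-k$, independent of $\lambda$, is exactly what makes it close). It then proves the Bessel closed form of Lemma~\ref{L18} by induction and absorbs a cubic factor in $n-k$ into $2^{-(n-k)}$; your $(1+n-k)^3$ is slightly more careful than the paper's $(n-k)^3$, which degenerates at $k=n$.
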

\begin{remark}
\label{R07}
In application to Theorem~\ref{T03}, we use Proposition~\ref{P01} to write
  \begin{equation}
   \|(\phi \partial_3)^n f\|_{H^3}
   \les
   \sum_{k=1}^n 
     C^{n-k}
     \frac{(2n-k-1)!}{(n-k)!(k-1)!} 
     \|\phi^k \partial_3^kf\|_{H^3}
   =
   \sum_{k=0}^{n-1} 
     C^k 
     \frac{(n+k-1)!}{k!(n-k -1)!}
     \|\phi^{n-k} \partial_3^{n-k} f\|_{H^3}
     .
   \llabel{EQ402a}
  \end{equation}
\end{remark}
Next, we state the inverse identity to~\eqref{EQ401}.
\begin{Proposition}
\label{P02}
We have
  \begin{equation}
   \phi^n \partial_x^n
   =
   \sum_{k=1}^n 
     a_k^n(x) 
    (\phi \partial_x)^k 
   ,
   \label{EQ403}
  \end{equation}
where
  \begin{equation}
   a_k^n(x)
   =
   \sum_{\lambda \vdash n-k} 
     c_{\lambda}^n 
     \Bigg(
       \prod_{i=1}^{\ell(\lambda)} 
       \phi(\phi \partial_x)^{\lambda_i}\left( \frac{1}{\phi}\right)
     \Bigg)
     .
   \llabel{EQ403a}
  \end{equation}
Moreover, 
  \begin{equation}
   \|a_k^n\|_{W^{3,\infty}}
   \les
    C^{n-k}
    \frac{(2n-k-1)!}{(n-k)!(k-1)!}
   ,
   \llabel{EQ404}
  \end{equation}
where $C$ is a positive universal constant.
\end{Proposition}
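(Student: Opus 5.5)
The plan is a change of variables that turns $\partial_x$ into an operator of the form $\psi(y)\partial_y$, after which Proposition~\ref{P01} and its proof can be reused with $\phi$ replaced by $1/\phi$. Set $y=\zeta(x)=x+\log x$, as in the proof of Lemma~\ref{L14}. Then $\phi\partial_x=\partial_y$ and $\partial_x=\psi(y)\partial_y$ with $\psi=1/\phi$, so $\partial_x^k=(\psi\partial_y)^k$.

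\textbf{Step 1: normal ordering.} Apply the normal-ordering formula of Briand--Lopes--Rosas (Lemma~\ref{L17}) to $(h\partial)^n$ with $h=\psi$ and $\partial=\partial_y$. This gives
\[
\partial_x^n=(\psi\partial_y)^n=\sum_{k=1}^n\sum_{\lambda\vdash n-k}c_\lambda^n\Big(\prod_i\partial_y^{\lambda_i}\psi\Big)\psi^{k-\ell(\lambda)}\partial_y^k .
\]
The constants $c^n_\lambda$ here are the same universal ones as in Proposition~\ref{P01}.

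\textbf{Step 2: return to $x$.} Multiply by $\phi^n=\psi^{-n}$. Since $\sum_i\lambda_i=n-k$, we distribute one factor $\phi$ to each part of $\lambda$ and write $\partial_y^{\lambda_i}\psi=(\phi\partial_x)^{\lambda_i}(1/\phi)$. The remaining powers of $\phi$ then balance, and the coefficient of $\partial_y^k=(\phi\partial_x)^k$ becomes exactly $a_k^n$. The bookkeeping of the exponent $n-(k-\ell)$ against $\ell$ factors is the one place to check carefully; if the powers do not balance, the formula for $a_k^n$ would be corrected by a power of $\phi$, which is harmless for the estimates.

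\textbf{Step 3: single-factor bounds.} Estimate each factor $g_j:=\phi(\phi\partial_x)^j(1/\phi)$ in $W^{3,\infty}$. In the $y$ variable, $1/\phi=1+1/W(e^y)=\psi(y)$. Using the ODE for $f=\phi\circ W\circ\exp$, one computes $\partial_y\psi=-f'/f^2=-(1-f)^2/f$. Hence
\[
g_j=f\,\partial_y^{j-1}\big(-(1-f)^2/f\big)\qquad (j\ge1),
\]
and Leibniz plus analyticity give $|g_j|\le C_0C^j j!$. This is the bound already quoted as Lemma~\ref{L19} in the proof of Lemma~\ref{L15}. The $x$-derivatives are handled exactly as in \eqref{EQ230}--\eqref{EQ234}: $\partial_x=\psi\partial_y$, and the blow-up of $\psi$ as $y\to-\infty$ is compensated by the $e^y$ decay coming from the extra $y$-derivative. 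Then $W^{3,\infty}$ being an algebra gives
\[
\Big\|\prod_i g_{\lambda_i}\Big\|_{W^{3,\infty}}\le C_0^{\ell}C^{n-k}\prod_i\lambda_i!,
\]
with the $C_0^\ell\le C_0^{n-k}$ absorbed into $C^{n-k}$.

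\textbf{Step 4: combinatorial sum.} This reduces the estimate to bounding
\[
S_{n,k}=\sum_{\lambda\vdash n-k}c_\lambda^n\prod_i\lambda_i! .
\]
This is exactly the sum controlled in the proof of Proposition~\ref{P01} via the closed form of Lemma~\ref{L18}, using the recurrence $S_{n+1,k}=S_{n,k-1}+(2n-k)S_{n,k}$. That gives $S_{n,k}\les C^{n-k}\frac{(2n-k-1)!}{(n-k)!(k-1)!}$, and hence the claimed bound on $\|a_k^n\|_{W^{3,\infty}}$.

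\textbf{Main obstacle.} I expect the hardest step to be Step 3 near the boundary, $y\to-\infty$ (i.e.\ $x\to0$), where $1/\phi$ itself is singular. The point to verify is that $\phi\,(\phi\partial_x)^j(1/\phi)$ and its first three $x$-derivatives stay bounded with $j!$ growth. I would first try the explicit formula $\phi\,L(1/\phi)=-(1-\phi)^2$ and induct: $g_{j+1}=\phi L(g_j/\phi)=Lg_j+(1-\phi)^2g_j$. Every term then stays a polynomial in $\phi$, and Lemma~\ref{L14} applies directly.
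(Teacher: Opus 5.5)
Your proposal follows the paper's proof of Proposition~\ref{P02}: the substitution $y=x+\log x$ turns $\partial_x^n$ into $(\psi\partial_y)^n$ with $\psi=1/\phi$. Lemma~\ref{L17} normal-orders it, Lemma~\ref{L19} bounds each factor $\phi(\phi\partial_x)^{\lambda_i}(1/\phi)$, and Lemma~\ref{L18} evaluates $\sum_{\lambda\vdash n-k}c_\lambda^n\prod_i\lambda_i!$.

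The one real slip is in Step~1. Lemma~\ref{L17} gives the power $\psi^{n-\ell(\lambda)}$, not $\psi^{k-\ell(\lambda)}$, because every monomial of $(\psi\partial_y)^n$ contains exactly $n$ factors of $\psi$ or its derivatives. Then $\phi^n\psi^{n-\ell(\lambda)}=\phi^{\ell(\lambda)}$ supplies exactly one $\phi$ per part, and the stated $a_k^n$ appears with no correction. A leftover power of $\phi$ would not have been harmless, since it would change the identity itself.

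Two smaller points. Your recursion should read $g_{j+1}=Lg_j-(1-\phi)^2g_j$, with a minus sign. The boundary difficulty you anticipate in Step~3 is avoided in the paper's proof of Lemma~\ref{L19}, which expands via Proposition~\ref{P01} and uses $\phi^k\partial_x^k(1/\phi)=(-1)^k k!\,(1+x)^{-k-1}\phi^{-1}$, so no analysis as $y\to-\infty$ is needed.
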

\begin{remark}
\label{R05}
We can think of the identities~\eqref{EQ401} and~\eqref{EQ403} in terms of $n \times n$ matrix multiplication. 
Hence, the triangular matrices $(d_k^m)_{k,m =1}^n$ and $(a_k^m)_{k,m=1}^n$ are inverse to each other.
\end{remark}
Before we prove Proposition~\ref{P01}, we provide some exposition related to normally ordered forms.
Define $\Lambda = \sum_{i\geq 0} y_{i+1}\partial_{y_i}$. 
We recursively define the polynomials $U_n = U_n(y_0,\dots, y_{n-1},t)$ as 
  \begin{equation}
   U_0 = 1
   \quad\quad
   \text{and}
   \quad\quad
   \forall n \geq 0,\,
   U_{n+1} = y_0 U_n t + y_0 \Lambda(U_n)
   .
   \label{EQ404a}
  \end{equation}
Furthermore, 
for $\lambda = (\lambda_1, \dots, \lambda_{\ell})$, 
we define $y_\lambda = \prod_{i=1}^{\ell(\lambda)} y_{\lambda_i}$.
For a more extensive and abstract exposition, see~\cite{BLR}.
Below is a lemma which summarizes Theorem~2.3 and Proposition~2.6 of the same paper in our specific case.

\begin{Lemma}
\label{L17}
Assume $n \geq 1$. 
There exist positive integers $c_\lambda^n$, where $\lambda$ runs through the set of integer partitions of size $0 \leq |\lambda| < n$, such that 
  \begin{equation}
    U_n(y_0,y_2,\dots, y_{n-1}; t)
    =
    \sum_{k=1}^n \sum_{\lambda \vdash n-k}
      c_\lambda^n y_0^{n-\ell(\lambda)} y_\lambda
      t^k
   .
   \label{EQ404b}
  \end{equation}
Moreover, for a smooth function $h$ on $\mathbb{R}_+$, 
  \begin{equation}
   (h \partial_x)^n
   =
   U_n|_{y_i = h^{(i)}, t = \partial_x}
   =
   \sum_{k=1}^n \sum_{\lambda \vdash n-k} 
     c_\lambda^n h^{n-k-\ell(\lambda)}
     \left(\prod_{i=1}^{\ell(\lambda)} h^{(\lambda_i)} \right)
       h^k\partial_x^k
   .
   \llabel{EQ404c}
  \end{equation}
\end{Lemma}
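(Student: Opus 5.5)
The plan is to prove Lemma~\ref{L17} by induction on $n$ using the recursion~\eqref{EQ404a}. Two things are needed: the structural form~\eqref{EQ404b} of $U_n$, and the identification of $(h\partial_x)^n$ with $U_n$ under the substitution $y_i = h^{(i)}$, $t=\partial_x$, where every $t$ is placed to the right (normal ordering).

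\textbf{Step 1: the operator identity.} This step is independent of the coefficients. The inductive hypothesis is that $(h\partial_x)^n f = \sum_k A_k^n\, \partial_x^k f$ for every smooth $f$, where $\sum_k A_k^n t^k$ is $U_n$ evaluated at $y_i=h^{(i)}$. Applying $h\partial_x$ to this and using the Leibniz rule gives
\[
(h\partial_x)^{n+1} f = \sum_k \bigl(h A_k^n \partial_x^{k+1} f + h (\partial_x A_k^n)\partial_x^k f\bigr).
\]
Since $A_k^n$ is a polynomial in $h,h',\dots$, the chain rule gives $\partial_x A_k^n = \Lambda(A_k^n)$ under the substitution; this is precisely the role of $\Lambda = \sum_i y_{i+1}\partial_{y_i}$. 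Therefore the coefficient polynomial of $(h\partial_x)^{n+1}$ is $y_0 U_n t + y_0\Lambda(U_n)$, which is $U_{n+1}$. The base case is $U_0=1$, $(h\partial_x)^0=\mathrm{id}$.

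\textbf{Step 2: the form of $U_n$.} I would again induct, now tracking two gradings. Assign each $y_i$ weight $i$ and $t$ weight $1$. Each $y_0$ has weight zero, so multiplying by $y_0t$ or applying $y_0\Lambda$ raises the total weight by exactly one. It follows that every monomial of $U_n$ has total weight $n$.

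Next count degree in the $y$'s. The operation $y_0\cdot t$ raises the $y$-degree by one. The operation $y_0\Lambda$ replaces one $y_i$ by $y_{i+1}$ and then multiplies by $y_0$, so it also raises the $y$-degree by one. Hence every monomial has $y$-degree exactly $n$. A monomial $y_0^a y_\lambda t^k$, with $\lambda$ collecting the indices $\geq 1$, then satisfies $|\lambda|+k=n$ and $a+\ell(\lambda)=n$. This gives $a = n-\ell(\lambda)$ and $\lambda\vdash n-k$.

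Finally, the power of $t$ is at least $1$ for every $n\geq1$. For $n=1$ this is the single term $U_1=y_0t$. For larger $n$, $\Lambda$ leaves the power of $t$ unchanged, and the term $y_0U_nt$ only increases it. This gives $1\le k\le n$.

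\textbf{Step 3: the coefficients.} The $c_\lambda^n$ are nonnegative integers because both operations in the recursion have nonnegative integer coefficients. Strict positivity for every partition $\lambda$ with $|\lambda|<n$ needs an explicit construction, and I expect this to be the main (mild) obstacle.

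For a given $\lambda\vdash n-k$, one must show $y_0^{n-\ell}y_\lambda t^k$ actually occurs. I would build it from $y_0 t$ by a sequence of steps. Each part $\lambda_i$ is raised from $y_0$ to $y_{\lambda_i}$ through $\lambda_i$ applications of $y_0\Lambda$, each hitting the same factor. The remaining steps are multiplications by $y_0t$, arranged so the total number of steps is $n$ and the $t$-power is $k$. Since all contributions are nonnegative, no cancellation occurs, so $c_\lambda^n\ge1$.

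If this bookkeeping becomes messy, I would simply cite Theorem~2.3 and Proposition~2.6 of~\cite{BLR}, as the statement indicates. Substituting $y_i=h^{(i)}$ into the result of Step 2 and writing $h^{n-\ell}=h^{n-k-\ell}h^k$ then yields the displayed formula for $(h\partial_x)^n$.
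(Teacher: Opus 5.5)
Your argument is correct. Your fallback is in fact all the paper does: it gives no proof of Lemma~\ref{L17} and simply quotes Theorem~2.3 and Proposition~2.6 of~\cite{BLR}, which treat the normally ordered form of $(h\partial)^n$ for a derivation $\partial$ of a noncommutative ring and a central element $h$. Your primary route is a genuinely different, self-contained induction. The Leibniz rule and the chain-rule identity $\partial_x P(h,h',\dots)=(\Lambda P)(h,h',\dots)$ show that the normally ordered coefficient polynomial of $(h\partial_x)^{n+1}=(h\partial_x)(h\partial_x)^n$ is $y_0U_nt+y_0\Lambda U_n=U_{n+1}$. The weight ($y_i\mapsto i$, $t\mapsto 1$) and the $y$-degree each rise by exactly one under both operations in~\eqref{EQ404a}. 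This forces every monomial to be $y_0^{n-\ell(\lambda)}y_\lambda t^k$ with $\lambda\vdash n-k$, and $t\mid U_n$ gives $k\ge 1$. Your positivity construction also closes. Every step multiplies by $y_0$, so each monomial of $U_m$, $m\ge 1$, contains a $y_0$ from which a new part can be raised. The $|\lambda|=n-k$ applications of $y_0\Lambda$ together with $k-1$ multiplications by $y_0t$ account for exactly the $n-1$ steps after $U_1=y_0t$. The $y_0$-exponent is then automatically $n-\ell(\lambda)$ by the degree count, and since all coefficients in the recursion are nonnegative, no cancellation can occur. The citation buys brevity, the general algebraic setting, and the combinatorial descriptions of $c_\lambda^n$ alluded to in Remark~\ref{R06}. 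Your route makes the lemma elementary and exposes the two facts the paper actually uses downstream: the operator identity $(h\partial_x)^n=U_n|_{y_i=h^{(i)},\,t=\partial_x}$, which is the source of~\eqref{EQ401}, and $c_\lambda^n\ge 0$, which is what allows $\|d_k^n\|_{W^{3,\infty}}$ to be bounded through $S_{n,k}$ in Proposition~\ref{P01} and Lemma~\ref{L18}.
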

\begin{remark}
\label{R06}
The constants $c_\lambda^n$ satisfy certain recursive relations which are given in~\cite{BLR}. 
However, they are unnecessary for the proof of Propositions~\ref{P01} and~\ref{P02} and are omitted. 
For the reader's interest, however, we note that 
  \begin{equation}
   \sum_{\lambda \vdash n-k} c_\lambda^n  
   =
   {n \brace j}
   ,
  \llabel{EQ404d}
  \end{equation}
where ${n \brace j}$ denotes the Stirling numbers of the first kind.
This can be shown by identifying it with the polynomial~$U_n|_{y_i =1}(t)$.
For a full proof of this as well as other combinatorial interpretations of $c_\lambda^n$, see~\cite[Section~4.1]{BLR}.
\end{remark}

Before we prove Proposition~\ref{P01}, we give a lemma which establishes an exact identity for the constant coefficients in~\eqref{EQ401a}.

\begin{Lemma}
\label{L18}
Let $n \geq 1$ and $1\leq k \leq n$. 
Then
  \begin{equation}
   S_{n,k}
   :=
   \sum_{\lambda \vdash n-k}
     c_\lambda^n 
     \prod_{i=1}^{\ell(\lambda)} \lambda_i!
   =
   \frac{(2n-k-1)!}{2^{n-k} (n-k)! (k-1)!}
   .
   \label{EQ405}
  \end{equation}
\end{Lemma}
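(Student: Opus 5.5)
The plan is to derive a recurrence in $n$ for $S_{n,k}$ directly from the defining recursion~\eqref{EQ404a} of the polynomials $U_n$, and then check that the right-hand side of~\eqref{EQ405} satisfies the same recurrence and the same initial data. The key device is a specialization of the formal variables that turns the monomial weight into $\prod_i \lambda_i!$. Set $y_i = i!$ for all $i \geq 0$, so that $y_0 = 1$. By Lemma~\ref{L17}, each monomial $c_\lambda^n y_0^{n-\ell(\lambda)} y_\lambda t^k$ then becomes $c_\lambda^n \bigl(\prod_i \lambda_i!\bigr) t^k$. Hence $S_{n,k}$ is the coefficient of $t^k$ in $U_n|_{y_i = i!}$.

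The specialization does not commute with $\Lambda$, so the first step is to compute how $y_0\Lambda$ acts on a single monomial $y_0^{n-\ell} y_\lambda t^k$, where $\lambda \vdash n-k$ and $\ell=\ell(\lambda)$, and to specialize only afterwards. By the Leibniz rule, $\Lambda$ either hits $y_0^{n-\ell}$ or one of the factors $y_{\lambda_i}$.
\begin{itemize}
\item Hitting $y_0^{n-\ell}$ produces $(n-\ell)y_0^{n-\ell-1}y_1 y_\lambda$. This appends a part $1$ to $\lambda$, and after multiplication by $y_0$ the $y_0$-exponent is again $(n+1)-\ell(\lambda')$. Under $y_i=i!$ this contributes $(n-\ell)\prod_i\lambda_i!$.
\item Hitting $y_{\lambda_i}$ replaces it by $y_{\lambda_i+1}$, i.e.\ it raises one part by one. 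Under the specialization this contributes $(\lambda_i+1)\prod_j\lambda_j!$.
\end{itemize}
Summing over $i$ gives $\sum_i(\lambda_i+1) = (n-k)+\ell$. The total multiplier is therefore $(n-\ell)+(n-k)+\ell = 2n-k$, which is independent of $\lambda$.

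The term $y_0U_nt$ shifts $t^k$ to $t^{k+1}$ without changing $\lambda$. It keeps the correct $y_0$-exponent $n+1-\ell$, since $\lambda \vdash (n+1)-(k+1)$. Comparing coefficients of $t^k$ in $U_{n+1}$ gives
\[
S_{n+1,k} = S_{n,k-1} + (2n-k)\,S_{n,k}.
\]
The boundary data are as follows. $U_1 = y_0 t$ gives $S_{1,1}=1$. For $n\ge 1$, $U_n$ has no $t^0$ term and no $t^k$ term with $k>n$, which can be checked inductively from~\eqref{EQ404a}. So $S_{n,0}=0$ and $S_{n,k}=0$ for $k>n$.

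It remains to verify that $f(n,k) = (2n-k-1)!/(2^{n-k}(n-k)!(k-1)!)$ satisfies the same recurrence and boundary values. We use the convention $1/(-1)!=0$, which gives $f(n,0)=0$, and $f(1,1)=1$ holds directly. Write both $f(n,k-1)$ and $(2n-k)f(n,k)$ over the common denominator $2^{n-k+1}(n-k+1)!(k-1)!$. The numerators are $(2n-k)!(k-1)$ and $(2n-k)!\,2(n-k+1)$, and they add to $(2n-k)!(2n-k+1) = (2n-k+1)!$. This is exactly the numerator of $f(n+1,k)$, and induction on $n$ concludes the proof.

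I expect the main obstacle to be the bookkeeping in the Leibniz computation of $\Lambda$. One must check that the new partitions carry the correct $y_0$-exponent $(n+1)-\ell(\lambda')$, so that they are correctly identified with terms of $U_{n+1}$ in the form~\eqref{EQ404b}. One must also confirm that the multiplier $2n-k$ is uniform over all $\lambda\vdash n-k$; this uniformity is what allows the recurrence for the weighted sums $S_{n,k}$ to close without any knowledge of the individual constants $c_\lambda^n$.
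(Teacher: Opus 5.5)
Your proof is correct and follows the paper's argument essentially step for step. Both specialize $y_i = i!$ in $U_n$ and compute $y_0\Lambda$ on a monomial by the Leibniz rule, obtaining the $\lambda$-independent multiplier $(n-\ell(\lambda)) + (n-k+\ell(\lambda)) = 2n-k$, which gives $S_{n+1,k} = S_{n,k-1} + (2n-k)S_{n,k}$; the closed form is then checked against this recurrence by induction on $n$. Your numerators $(2n-k)!\,(k-1)$ and $2(n-k+1)\,(2n-k)!$ are the right ones (the paper's display has the typo $(2n-k+1)!$ in the $S_{n,k-1}$ term), and the level-$(n+1)$ $y_0$-exponent check you flag as the main obstacle is not needed, since $y_0\mapsto 1$ and Lemma~\ref{L17} already gives the form of $U_{n+1}$.
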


\begin{proof}[Proof of Lemma~\ref{L18}]
It suffices to show that
  \begin{equation}
   S_{n+1,k}
   =
   (2n-k)S_{n,k} + S_{n,k-1}
   \comma
   1\leq k \leq n+1
   ,
   \label{EQ406}
  \end{equation}
where $S_{n,n} =S_{1,1} = 1$ and $S_{n,k} = 0$ for $k > n$. 
Assuming~\eqref{EQ406} holds, we prove~\eqref{EQ405} by induction.
For $n =1$, we have $S_{1,1} = 1$.
Now assume~\eqref{EQ405} holds for a fixed $n\geq 1$ and $1 \leq k \leq n$.
The recursion formula gives
  \begin{align}
   \begin{split}
    S_{n+1,k} 
    &= 
    \frac{(2n-k)!}{2^{n-k} (n-k)!(k-1)!}
    + \frac{(2n-k+1)!}{2^{n-k+1} (n-k+1)! (k-2)!}
    \\&=
    \frac{(2n-k)!(2)(n+1-k)}{2^{n+1-k} (n+1-k)!(k-1)!}
    + \frac{(2n-k+1)! (k-1)}{2^{n+1-k} (n+1-k)!(k-1)!}
    \\&=
    \frac{(2(n+1)-k-1)!}{2^{n+1-k} (n+1-k)!(k-1)!}
    .
   \end{split}
   \llabel{EQ406a}
  \end{align}
To prove~\eqref{EQ406}, we begin by identifying $S_{n,k}$ with the polynomial $U_n$ evaluated at $y_i = i!$.
From~\eqref{EQ404b}, we have
  \begin{equation}
   U_n|_{y_i = i!}(t)
   =
   \sum_{k=1}^n \sum_{\lambda \vdash n-k} 
     c_\lambda^n 
     \left(\prod_{i=1}^{\ell(\lambda)} \lambda_i!\right)
     t^k
   =
   \sum_{k=1}^n S_{n,k} t^k
   .
   \label{EQ407}
  \end{equation}
By using the recursive definition of $U_n(y,t)$, see~\eqref{EQ404a}, we derive 
  \begin{equation}
    U_{n+1}
    =
    y_0 U_n t
    + y_0 \Lambda U_n
    =
    \sum_{k=1}^n \sum_{\lambda \vdash n-k} 
      c_{\lambda}^n
      y_0^{n-\ell(\lambda) + 1} y_\lambda t^{k+1}
      + \sum_{k=1}^n \sum_{\lambda \vdash n-k}
          c_\lambda^n y_0
          \Lambda( y_0^{n-\ell(\lambda)} y_\lambda)
          t^k
    ,
   \label{EQ408}
  \end{equation}
where, recalling that $\Lambda := \sum_{i\geq 0} y_{i+1}\partial_{y_i}$,
  \begin{equation}
   y_0\Lambda (y_0^{n-\ell(\lambda)} y_{\lambda})
   =
   (n-\ell(y))y_1 y_0^{n-\ell(\lambda)} y_\lambda
   + y_0^{n-\ell(y) + 1} 
   \sum_{j=1}^{\ell(\lambda)} 
     y_{\lambda_j + 1}
     \prod_{i=1, i\neq j}
       y_{\lambda_i}
   .
   \label{EQ409}
  \end{equation}
Furthermore, taking $y_i = i!$ yields
  \begin{equation}
   \sum_{j=1}^{\ell(\lambda)} 
     y_{\lambda_j + 1}
     \prod_{i=1, i\neq j}
       y_{\lambda_i}
   =
   \sum_{j=1}^{\ell(\lambda)}
     (\lambda_j + 1)!
     \prod_{i=1, i\neq j}
       \lambda_i!
   =
   (n-k + \ell(\lambda))
   \prod_{i=1}\lambda_i!
   ,
   \label{EQ410}
  \end{equation}
where we used that $\lambda \vdash n-k$.
Next, substituting~\eqref{EQ409} and~\eqref{EQ410} into~\eqref{EQ408} gives us 
  \begin{align}
   \begin{split}
   U_{n+1}|_{y_i = i!}
   &=
   \sum_{k=1}^n 
   \Bigg(
     \sum_{\lambda \vdash n-k}
     c_\lambda^n 
     \prod_{i=1}^{\ell(\lambda)} 
       \lambda_i!
   \Bigg) 
   t^{k+1}
   + \sum_{k=1}^n 
   (2n-k)
   \Bigg(
     \sum_{\lambda \vdash n-k} 
     c_\lambda^n
     \prod_{i=1}^{\ell(\lambda)}
       \lambda_i!
    \Bigg)
    t^k
    .
   \end{split}
   \llabel{EQ411}
  \end{align}
By~\eqref{EQ407}, this is exactly
  \begin{equation}
   \sum_{k=1}^{n+1} S_{n+1,k} t^k
   =
   \sum_{k=1}^n S_{n,k} t^{k+1}
   + \sum_{k=1}^n (2n-k) S_{n,k} t^k
   .
   \llabel{EQ412}
  \end{equation}
By equating the terms corresponding to $t^k$, we arrive at~\eqref{EQ406},
completing the proof.
\end{proof}
\begin{remark}
The sequence defined by~\eqref{EQ406} is the triangle of coefficients of Bessel polynomials, see~\cite[A001497]{OEIS}.
\end{remark}
We now prove Proposition~\ref{P01} which expands $(\phi \partial_x)^n$ in terms of $\phi^k \partial_x^k$. 

\begin{proof}[Proof of Proposition~\ref{P01}]
Recall that $\phi^{(p)} = (-1)^{p+1}p!(1-\phi)^{p+1}$. 
Thus, by Lemma~\ref{L17}, we immediately obtain~\eqref{EQ401}, which establishes the identity given by~\eqref{EQ401} and~\eqref{EQ401a}. 

For the $H^3$ estimate~\eqref{EQ402}, we first verify that 
  \begin{equation}
   \|\phi^{n-k-\ell(\lambda)} (1-\phi)^{n-k+\ell(\lambda)}\|_{W^{3,\infty}}
   \les
   (n-k)^3
   .
   \label{EQ413}
  \end{equation}
For simplicity, we replace $n-k$ with $M$ and $\ell(\lambda)$ with $N$ and establish the bound for $M \geq N$.
The $L^\infty$ norm is bounded uniformly by $1$ since $\phi \leq 1$ for all $x > 0$.
For the third derivative, the worst case is when $M = N$.
We get,
  \begin{equation}
   \partial_x^3 (1-\phi)^{2M}
   \les
   M^3(1-\phi)^{2M+3}
   \leq
   M^3
   .
   \llabel{EQ413a}
  \end{equation}
The first and second derivatives give subordinate bounds. 

Now, let $f$ be a smooth function.
Using~\eqref{EQ413} and Lemma~\ref{L18} on~\eqref{EQ401a}, we get
  \begin{equation}
   \|d_k^n\|_{W^{3,\infty}}
   \les
    (n-k)^3
    \Bigg(
      \sum_{\lambda \vdash n-k} 
       c_\lambda^n 
       \prod_{i=1}^{\ell(\lambda)} 
        \lambda_i!
    \Bigg)
   \les
   C^{n-k}\frac{(2n-k-1)!}{2^{n-k} (n-k)!(k-1)!}
   .
   \llabel{EQ414}
  \end{equation}
For the second inequality, note that $(n-k)^3/2^{n-k} \les (3/4)^{n-k}$.
This completes the proof.
\end{proof}

Next, we consider expanding $\phi^n \partial_x^n$ in terms of $(\phi\partial_x)^k$, that is, Proposition~\ref{P02}.
The method we take is to perform a change of variables such that $\partial_x^n = (\psi(y) \partial_y)^n$ and use Proposition~\ref{P01}.
Being that the resulting non-constant  coefficients in this expansion will depend on $\psi$, this merits a preliminary estimate. 

\begin{Lemma}
\label{L19}
We have 
  \begin{equation}
   \left\|\phi (\phi \partial_x)^n \frac{1}{\phi} \right\|_{W^{3,\infty}}
   \leq
   C^n n!
   ,
   \llabel{EQ415}
  \end{equation}
where $C$ is a positive universal constant.
\end{Lemma}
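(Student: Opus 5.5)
The plan is to avoid any change of variables and derive a short linear recursion for $a_n := \phi(\phi\partial_x)^n(1/\phi)$. That recursion reduces the bound to the estimate~\eqref{EQ226} of Lemma~\ref{L14} on $\|L^j\phi\|_{W^{3,\infty}}$, with $L=\phi\partial_x$ in one dimension. The case $n=0$ is trivial, since $a_0=1$.

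\emph{Step 1: compute $L(1/\phi)$ exactly.} Using $\phi' = (1-\phi)^2$ we get
\begin{equation*}
 L\Big(\frac{1}{\phi}\Big) = \phi\cdot\Big(-\frac{\phi'}{\phi^2}\Big) = -\frac{(1-\phi)^2}{\phi}
 = -\frac{1}{\phi} + 2 - \phi .
\end{equation*}
As a check, both sides equal $-1/(x(1+x))$. Consequently $a_1 = -(1-\phi)^2$, which is bounded in $W^{3,\infty}$ because all derivatives of $\phi$ are bounded.

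\emph{Step 2: set up the recursion.} Apply $L^{n-1}$ with $n\ge 2$ to the identity of Step~1. Since $L$ annihilates constants, this gives $L^n(1/\phi) = -L^{n-1}(1/\phi) - L^{n-1}\phi$. Multiplying by $\phi$ yields
\begin{equation*}
 a_n = -a_{n-1} - \phi\, L^{n-1}\phi ,
\end{equation*}
and iterating down to $a_1$ gives
\begin{equation*}
 a_n = (-1)^{n-1}a_1 - \sum_{j=1}^{n-1}(-1)^{n-1-j}\,\phi\, L^{j}\phi .
\end{equation*}

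\emph{Step 3: estimate.} $W^{3,\infty}$ is an algebra and $\|\phi\|_{W^{3,\infty}}\les 1$. By~\eqref{EQ226} we have $\|L^j\phi\|_{W^{3,\infty}}\le C_0C_1^j j!$. Hence
\begin{equation*}
 \|a_n\|_{W^{3,\infty}} \les 1 + \sum_{j=1}^{n-1} C_0 C_1^j j! \les n\, C_1^{n-1}(n-1)! \le C^n n!
\end{equation*}
for a suitable universal $C$, which is the claim.

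The only point needing care is that the argument is not circular. Lemma~\ref{L14} is proved directly, through the Lambert-$W$ change of variables and the ODE $f'=(1-f)^2f$, without invoking the present lemma; only Lemma~\ref{L15} uses the present lemma. So invoking~\eqref{EQ226} here is legitimate. Beyond that I expect no real obstacle: the identity in Step~1 removes the apparent singularity of $1/\phi$ at $x=0$, because every term of the recursion is multiplied by $\phi$ or is a bounded function of $\phi$.
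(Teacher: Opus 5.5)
Your proof is correct. The identity $L(1/\phi) = -1/\phi + 2 - \phi$ checks out, and so do the recursion $a_n = -a_{n-1} - \phi L^{n-1}\phi$ and its unrolled form. For instance, both the recursion and direct computation give $a_2 = (1-\phi)^2(1-\phi^2)$ and $a_3 = -(1-\phi)^4(1+2\phi+3\phi^2)$. You are also right that Lemma~\ref{L14} does not depend on the present lemma, so there is no circularity.

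Your route is genuinely different from the paper's. The paper applies Proposition~\ref{P01} to $1/\phi$ and uses the explicit formula $\phi^k\partial_x^k(1/\phi) = (-1)^k k!\,(1+x)^{-k-1}/\phi$. This gives $\phi(\phi\partial_x)^n(1/\phi) = \sum_{k=1}^n d_k^n\,(-1)^k k!\,(1+x)^{-k-1}$. The sum is then controlled with \eqref{EQ413}, the closed form for $S_{n,k}$ from Lemma~\ref{L18}, and a binomial-sum estimate.

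What your route buys:
\begin{itemize}
\item It replaces all of that normal-ordering combinatorics with a single first-order identity.
\item It makes the lemma essentially a corollary of \eqref{EQ226}.
\item It shows transparently that each $a_n$ is an explicit alternating sum of the polynomials $\phi L^j\phi$ in $\phi$.
\end{itemize}

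What the paper's route buys: the appendix stays self-contained, resting only on Proposition~\ref{P01} and Lemma~\ref{L18}. It does not depend on Lemma~\ref{L14}, whose $W^{3,\infty}$ part is only sketched in the paper (the second and third derivatives are omitted).

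If you want to avoid leaning on that sketch, Remark~\ref{R04} also gives \eqref{EQ226} independently of the present lemma. Indeed, $L^j\phi = (\phi\partial_x)^{j+1}x = d_1^{j+1}\phi$, so \eqref{EQ402} yields $\|L^j\phi\|_{W^{3,\infty}} \lesssim C^j (2j)!/j! \le (4C)^j j!$. Your argument then needs only the $k=1$ coefficients of Proposition~\ref{P01} rather than the full expansion.
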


\begin{proof}
Note that $\partial_x^k \phi^{-1} = (-1)^k k! x^{-k-1}$ and  
  \begin{equation}
   \phi^k \partial_x^k \left( \frac{1}{\phi}\right)
   =
   (-1)^k k! 
   \frac{1}{(1+x)^k} 
   \frac{1}{x}
   =
   (-1)^k k! 
   \frac{1}{(1+x)^{k+1}} 
   \frac{1}{\phi}
   .
  \llabel{EQ416}
  \end{equation}  
Thus, Proposition~\ref{P01} yields
  \begin{equation}
    \phi(\phi \partial_x)^n \left( \frac{1}{\phi}\right)
    =
    \sum_{k=1}^n \sum_{\lambda \vdash n-k}
      c_\lambda^n (-1)^{n}
      \left( \prod_{i=1}^{\ell(\lambda)} \lambda_i!\right)
        \phi^{n-k-\ell(\lambda)} (1-\phi)^{n-k+\ell(\lambda)}
      k!
      \frac{1}{(1+x)^{k+1}}
   .
   \llabel{EQ417}
  \end{equation}
By~\eqref{EQ413} and Proposition~\ref{P01}, we obtain
  \begin{equation}
   \|\phi (\phi \partial_x)^n(1/\phi)\|_{W^{3,\infty}}
   \les
   \sum_{k=1}^n 
     k^3(n-k)^3
     k!
     \frac{(2n-k-1)!}{2^{n-k}(n-k)!(k-1)!}
   \leq
   \sum_{k=1}^n 
    C^{n-k} 
     \frac{(2n-k-1)!}{(n-k)!}
   \leq
   C^nn!
   ,
   \llabel{EQ418}
  \end{equation}
where $C$ denotes a general constant.
For the last inequality, note that $\frac{(2n-k-1)!}{(n-k)!} = (n-1)! {2n -k -1 \choose n-k}$ and 
  \begin{equation}
   \sum_{k=1}^n 
     C^{n-k} 
     (n-1)! 
     {2n-k-1 \choose n-k}
   =
   (n-1)!
   \sum_{k=0}^{n-1}
     C^k {n+k-1 \choose k}
   \leq
   (n-1)!C^n 2^{n-1}
   \leq
   C^nn!
   .
   \llabel{EQ418a}
  \end{equation}
This completes the proof.
\end{proof}
In the proof of Proposition~\ref{P02}, 
we perform the following change of variables.
Define $\zeta \colon \mathbb{R}_+ \to \mathbb{R}$ by $\zeta(x) = x + \log x$. 
Then  $\zeta$ is invertible and $\zeta^{-1}(y) = W(e^y)$,
where $W$ is the Lambert $W$-function. 
Denoting $y = \zeta(x)$ and $\psi(y) = 1/\phi(W(e^y))$,
we have $x = W(e^y)$ and
  \begin{equation}
   \partial_x^n f(x)
   =
   (\psi(y)\partial_y)^n f(W(e^y)) 
   \llabel{EQ419}
  \end{equation}
for a smooth function $f \colon \mathbb{R}_+ \to \mathbb{R}$. 
In the following we write $\phi = \phi(x)$ for brevity.

\begin{proof}[Proof of Proposition~\ref{P02}]
By Proposition~\ref{P01} and our change of variables above,
  \begin{align}
   \begin{split}
    \phi^n \partial_x^n f(x)
    &=
    \phi^n(W(e^y))
    \sum_{k=1}^n \sum_{\lambda \vdash n-k}
      c_\lambda^n
      \psi(y)^{n-\ell(\lambda)}
      \Bigg(
        \prod_{i=1}^{\ell(\lambda)}
        \partial_y^{\lambda_i} \psi(y)
      \Bigg)
      \partial_y^k f(W(e^y))
    \\&=
    \phi^n
    \sum_{k=1}^n \sum_{\lambda \vdash n-k}
      c_\lambda^n 
      \phi^{\ell(\lambda) - n}
      \Bigg(
      \prod_{i=1}^{\ell(\lambda)}
        (\phi\partial_x)^{\lambda_i} \left( \frac{1}{\phi}\right)
      \Bigg)
      (\phi \partial_x)^k f(x)
    \\&=
    \sum_{k=1}^n \sum_{\lambda \vdash n-k} 
      c_\lambda^n 
      \Bigg(
      \prod_{i=1}^{\ell(\lambda)}
        \phi
        (\phi \partial_x)^{\lambda_i}\left( \frac{1}{\phi}\right)
      \Bigg)
      (\phi \partial_x)^k f(x)
   .
   \end{split}
  \llabel{EQ420}
  \end{align}
Then, using Lemma~\ref{L19}, 
we obtain
  \begin{equation}
    \|a_{k^n}\|_{W^{3,\infty}}
    \les
      C^{n-k}
      \sum_{\lambda \vdash n-k}
      c_\lambda^n
      \Bigg(
        \prod_{i=1}^{\ell(\lambda)}
         \lambda_i! 
      \Bigg)
    =
    C^{n-k} \frac{(2n-k-1)!}{(n-k)!(k-1)!}
    ,
   \llabel{EQ421}
  \end{equation}
completing the proof. 
\end{proof}

\section*{Acknowledgments}
BI and IK were supported in part by the NSF grant DMS-2205493.
The authors have not used AI while preparing this paper.

\end{document}